\newcounter{results}[section]
\theoremstyle{plain}
\newtheorem{theorem}[results]{Theorem}
\newtheorem{lemma}[results]{Lemma}
\newtheorem{proposition}[results]{Proposition}
\newtheorem{corollary}[results]{Corollary}
\theoremstyle{remark}
\newtheorem{remark}[results]{Remark}
\newtheorem{example}[results]{Example}
\theoremstyle{definition}
\newtheorem{definition}[results]{Definition}
\numberwithin{equation}{section}
\newcommand{\R}{\ensuremath{\mathbb R}} 
\newcommand{\N}{\ensuremath{\mathbb N}} 
\newcommand{\Borel}{\ensuremath{\mathscr{B}}} 
\newcommand{\meas}{\ensuremath{\mathcal{M}}} 
\newcommand{\prob}{\ensuremath{\mathcal{P}}} 
\newcommand{\D}[1]{\ensuremath{\Delta_+(#1)}} 
\newcommand{\de}{\ensuremath{\,\mathrm d}}
\newcommand{\eps}{\ensuremath{\varepsilon}} 
\newcommand{\weakto}{\ensuremath{\rightharpoonup}} 
\newcommand{\scalprod}[2]{\ensuremath{\langle #1, #2\rangle}} 
\DeclareMathOperator{\diam}{diam}
\newcommand{\rmC}{\mathrm C} 
\newcommand{\net}[1]{\ensuremath{ \left ( #1_{\lambda} \right )_{\lambda \in \mathbb{L}}}}
\newcommand{\cost}[1]{\ensuremath{\mathscr{S}_{#1}}}
\newcommand{\m}{\ensuremath{\mathscr U}} 
\newcommand{\sfH}{\mathsf H} 
\newcommand{\sfc}{\mathsf c} 
\newcommand{\sfd}{\mathsf d} 
\newcommand{\sfX}{\mathsf X} 
\newcommand{\sfx}{\mathsf x} 
\newcommand{\sfr}{\mathsf r} 
\newcommand{\sfY}{\mathsf Y} 
\newcommand{\sfg}{\mathsf g} 
\newcommand{\f}[1]{\mathfrak{#1}} 
\newcommand{\aalpha}{\boldsymbol \alpha} 
\newcommand{\ggamma}{\boldsymbol \gamma} 
\newcommand{\bbeta}{{\boldsymbol\beta}}
\DeclareMathOperator{\dil}{\mathrm{dil}} 
\DeclareMathOperator{\prd}{\mathrm{prd}} 
\DeclareMathOperator{\discr}{\mathcal{M}_{+,f}} %
\newcommand{\cof}[1]{\ensuremath{\widehat{\operatorname{\mathrm{co}}}\left(#1\right)}}
\newcommand{\ce}[1]{\ensuremath{\operatorname{\mathrm{co}}\left(#1\right)}}
\newcommand{\cce}[1]{\ensuremath{\operatorname{\overline{\mathrm{co}}}\left(#1\right)}}
\newcommand{\hmg}[1]{\ensuremath{\operatorname{\mathrm{hom}}\left(#1\right)}}
\newcommand{\restricts}[2] {
	#1 
	\raisebox{-.3ex}{$|$}_{#2}	
}
\newcommand{\pc}{\f{C}[\sfX_1, \sfX_2]} 
\newcommand{\pcr}[1]{\f{C}_{#1}[\sfX_1, \sfX_2]} 
\newcommand{\pco}{\pcr{\boldsymbol\fo}}
\newcommand{\dual}{\mathcal{D}} 
\newcommand{\supp}[1]{ \mathsf{supp} \left ( #1 \right )} 
\newcommand{\intt}[1]{\operatorname{int}\left(#1\right)} 
\newcommand{\rintt}[1]{\operatorname{rad-int}\left(#1\right)} 
\newcommand{\pcs}{\f{C}[\sfX, \sfX]} 
\newcommand{\pcrs}[1]{\f{C}_{#1}[\sfX, \sfX]} 
\newcommand{\nchi}{{\raise.3ex\hbox{$\chi$}}} 
\newcommand{\rmD}{{\mathrm D}} 
\newcommand{\mres}{\mathbin{\vrule height 1.6ex depth 0pt width  0.13ex\vrule height 0.13ex depth 0pt width 1.3ex}} 
\newcommand{\fy}{{\f y}} 
\newcommand{\fo}{{\f o}} 
\newcommand{\bfo}{{\boldsymbol\fo}}
\newcommand{\sfa}{{\mathsf a}} 
\newcommand{\piGamma}[1]{{\Gamma_{#1}}} 
\newcommand{\dcc}{\varrho} 
\newcommand{\dist}{\mathcal{D}_{\dcc,p}} 
\newcommand{\wass}{W_{\dcc, p}} 
\newcommand{\arc}{\overline{\mathcal{A}}} 
\title[A relaxation viewpoint to Unbalanced Optimal Transport]{A relaxation viewpoint to Unbalanced Optimal Transport: duality, optimality and Monge formulation}
\author{Giuseppe Savar\'e}
\address{Giuseppe Savar\'e: Department of Decision Sciences,
  Bocconi University. Via Roentgen 1, 20136 Milan (Italy)}
\email{giuseppe.savare@unibocconi.it}
\author{Giacomo Enrico Sodini}
\address{Giacomo Enrico Sodini: Institut für Mathematik - Fakultät für Mathematik - Universität Wien, Oskar-Morgenstern-Platz 1, 1090 Wien (Austria)}
\email{giacomo.sodini@univie.ac.at}
\subjclass{Primary: 49Q22, 28A33 ; Secondary: 49K27.}
 \keywords{Unbalanced Optimal Transport, Kantorovich-Monge problem, Optimality conditions}
\begin{document}
\begin{abstract}
  We present a general convex relaxation approach to study a wide class of Unbalanced Optimal Transport problems for finite non-negative measures with possibly different masses. These are obtained as the lower semicontinuous and convex envelope of a cost for non-negative Dirac masses. 
 New general primal-dual formulations, optimality conditions, and metric-topological properties are carefully studied and discussed.
\end{abstract}
\maketitle
\tableofcontents
\thispagestyle{empty}

\section{Introduction}
The problem of extending Optimal Transport methods to pairs of unbalanced non-negative measures has been considered in a large number of works with different techniques and different aims. 

For what concerns dynamical formulations, many models inspired by the Benamou-Brenier's \cite{3uu}
fluid dynamic formulation of the classical Kantorovich-Rubinstein-Wasserstein Optimal Transport metric  have been proposed, see for example \cite{msg, 49eee, 18kkk, RosPic, Rospicprop,25eee,CPSV18,LMS18,Vaios-Mielke19}. In such works, the authors consider source terms in the continuity equation, thus leading to gain/loss of mass during the evolution. The models proposed differ in the kind of source or penalization. We refer to \cite{25eee} where a more detailed description of these models is given.

Static formulations of the Unbalanced Optimal Transport problem were proposed already by Kantorovich and Rubinstein \cite{KR:58} and subsequently extended by Hanin \cite{haninee} (see also the dual norm in \cite{38eee}). These approaches can be thought as a classical Optimal Transport problem where a fraction of the mass is allowed to go (or come from) a point at infinity (see also \cite{figig, Guittet}). More recent approaches are given by the so called optimal partial transport \cite{caffax, 11uu}, which was previously related to image retrieval \cite{21kkk, 27kkk}.

Optimal partial transport (see \cite{25eee}) is in turn also related to \cite{RosPic, Rospicprop}, since this latter works also provide a dynamic formulation of optimal partial transport. We also mention that \cite{RosPic, Rospicprop} are also connected to \cite{jdb} where it was proposed to change the marginal constraints and to add a penalization term.

The important case of the Hellinger-Kantorovich metric (a.k.a.~Wasserstein-Fisher-Rao), somehow interpolating the Wasserstein and the Hellinger metrics,
has been proposed independently in \cite{msg,25eee,CPSV18,LMS18}, with different equivalent characterizations. 

A very useful one \cite{LMS18} involves an entropic relaxation of the marginal constraints in the classical static formulation of Optimal Transport, 
it provides a paradigmatic example of the wide class of Optimal Entropy-Transport problems, and suggests the crucial role of the so-called cone geometry.

\cite{CPSV18} introduces a general class of Unbalanced Optimal Transport problems in compact subsets of some Euclidean space induced by a sublinear cost function depending on positions and masses, using a suitable static semi-coupling formulation and proving a general duality result, with interesting applications to various dynamic problems.

It turns out that all these different viewpoints are specific examples of
a general construction based on convex relaxation, which we want to exploit in the present paper.

\subsubsection*{\textit{\bfseries A convex relaxation viewpoint.}}
Adopting the same approach of \cite{SS20} used for the classical Optimal Transport problem and extending some ideas already contained in \cite{LMS18}, the aim of this work is to define 
a general class of Unbalanced Optimal Transport problems
between non-negative measures 
as the \emph{convex and lower semicontinuous envelope} of a cost  initially defined only between weighted Dirac masses, and then to study the corresponding duality formulas, optimality conditions and metric-topologial properties.

Let us briefly recall the result of \cite{SS20} which is the starting point for the present work; given a pair of completely regular spaces $\sfX_1$ and $\sfX_2$, every proper and lower semicontinuous cost function $\mathsf{c}: \sfX_1 \times \sfX_2 \to [0, +\infty]$ can be naturally lifted to a singular cost functional $\mathcal{F}_\mathsf{c}: \meas(\sfX_1) \times \meas(\sfX_2) \to [0,+\infty]$,
where $\meas(\sfX_i)$ denote the 
space of signed and finite Radon measures in 
$\sfX_i$. 
$\mathcal{F}_\mathsf{c}$ is finite only between balanced pairs of Dirac masses
and can be defined as
\begin{equation}\label{eq:singcostbal}
 \mathcal{F}_\mathsf{c} (\mu_1, \mu_2) := \begin{cases} r\sfc(x_1,x_2) \quad &\text{ if } \begin{aligned} &\quad \mu_1= r\delta_{x_1},\, \mu_2 = r\delta_{x_2},\\ &\quad x_1 \in \sfX_1, x_2 \in \sfX_2,\, r \ge 0,\end{aligned} \\ 
 + \infty \quad &\text{ elsewhere.} \end{cases}
\end{equation}
By \cite[Theorem 4.4]{SS20}
the convex and lower semicontinuous (w.r.t.~the product weak topology) envelope of $\mathcal{F}_\mathsf{c}$ coincides with the
Optimal Transport functional 
\begin{equation}\label{eq:intro-OT}
 \mathsf{OT}_\sfc(\mu_1, \mu_2) := \begin{cases} 
 \displaystyle \inf\Big\{\int  \sfc\,\de\ggamma:
 \ggamma\in \Gamma(\mu_1,\mu_2)\Big\} &\text{ if } \mu_i\in \meas_+(X_i),\ 
 \mu_1(\sfX_1) = \mu_2(\sfX_2),\\
 + \infty \quad &\text{ elsewhere}, \end{cases}
\end{equation}
where $\meas_+(\sfX_i)$ denotes
the cone of nonnegative (finite Radon) measures in $\meas(\sfX_i)$ and
$\Gamma(\mu_1,\mu_2)$ is the subset
of couplings between $\mu_1$ and $\mu_2$, i.e.~measures  in $\meas_+(\sfX_1\times \sfX_2)$
with marginals $\mu_1$ and $\mu_2$ respectively.
Clearly $\Gamma(\mu_1,\mu_2)$ is empty
if $\mu(\sfX_1)\neq \mu_2(\sfX_2).$

In order to extend this construction to the case of non-negative measures with possibly different masses, we represent weighted Dirac masses in $\sfX$
of the form
$r\delta_x$, $r\ge0$, as points $(x,r)$ of the so called geometric cone $\f{C}[\sfX]:=\big(\sfX\times [0,+\infty)\big)/\sim$ 
(see Section \ref{sec:theconee}), 
where the equivalence relation $\sim$ identifies all the points of the form
$(x,0)$, $x\in \sfX$, which 
correspond to the null measure in $\sfX$. 
A cost on weighted Dirac masses can thus 
be expressed by a function
\[ \sfH: \f{C}[\sfX_1] \times \f{C}[\sfX_2] \to [0, +\infty],\]
which we will assume to be proper, lower semicontinuous and \emph{radially $1$-homogeneous}, in the sense that the map
\[ \R_2^+ \ni (r_1,r_2) \mapsto \sfH([x_1,r_1],[x_2,r_2])\]
is $1$-homogeneous for every fixed $(x_1,x_2) \in \sfX_1 \times \sfX_2$. While the properness and the lower semicontinuity assumptions are natural, the $1$-homogeneity assumption deserves a comment: from a modeling point of view we are saying that moving $m r_1 \delta_{x_1}$ to $m r_2 \delta_{x_2}$ costs exactly $m$ times moving $r_1 \delta_{x_1}$ to $r_2 \delta_{x_2}$. In analogy with \eqref{eq:singcostbal} we can define the unbalanced singular cost $\cost{\sfH} : \meas (\sfX_1) \times \meas(\sfX_2) \to [0, + \infty]$ as
\[ \cost{\sfH} (\mu_1, \mu_2) := \begin{cases} \mathsf{H}([x_1, r_1], [x_2,r_2]) \quad &\text{ if } \begin{aligned} &\quad \mu_1= r_1\delta_{x_1},\, \mu_2 = r_2\delta_{x_2},\\ &\quad x_1 \in \sfX_1, x_2 \in \sfX_2,\, r_1,r_2 \ge 0,\end{aligned} \\ 
 + \infty \quad &\text{ elsewhere}, \end{cases}   
\]
and look for 
the largest convex and lower semicontinuous 
functional $\m_\sfH$ on $\meas(\sfX_1)\times \meas(\sfX_2)$ dominated by $\cost{\sfH}$. 
It turns out that 
such a functional 
admits an explicit characterization  which 
resembles \eqref{eq:intro-OT}
via the formula
\begin{equation}\label{eq:toconlms}
\m_{\sfH}(\mu_1, \mu_2) := \inf \left \{ \int_{\f{C}[\sfX_1] \times \f{C}[\sfX_2]} \sfH \de \aalpha \mid \aalpha \in \f{H}^1(\mu_1, \mu_2) \right \},
\end{equation}
where $\f{H}^1(\mu_1, \mu_2)$ is the set of \emph{homogeneous couplings},
i.e.~measures $\aalpha$ on $\f{C}[\sfX_1] \times \f{C}[\sfX_2]$ with \emph{$1$-homogeneous marginals} $\mu_1$ and $\mu_2$ (see \eqref{eq:hommarg}):
$\aalpha$ belongs to 
$\f{H}^1(\mu_1, \mu_2)$ if
\[
\left\{
\begin{aligned}
    \mu_1(A_1)&=\int_{A_1\times [0,+\infty)\times \f{C}[\sfX_2] }
    r_1\,\de\aalpha(x_1,r_1;x_2,r_2),\\
    \mu_2(A_2)&=\int_{
    \f{C}[\sfX_1]\times A_2\times [0,+\infty)}
    r_2\,\de\aalpha(x_1,r_1;x_2,r_2),  
    \end{aligned}
    \right.
    \qquad
    \text{for every Borel subsets $A_i\subset \sfX_i$.}
\]
This choice has been inspired 
by \cite{LMS18}, 
where the authors prove that 
the class of Optimal Entropy-Transport problems
can be formulated precisely as in \eqref{eq:toconlms} for a suitable choice of the function $\sfH$ (see in particular \cite[Definition 5.1]{LMS18}).

\subsubsection*{\textit{\bfseries
Structural properties of Unbalanced Optimal Transport problems}}
The following result collects  some of 
the fundamental properties
of the unbalanced framework (see Theorems \ref{ss22:prop:mh}, \ref{ss22:teo:representation} and \ref{ss22:teo:duality}),
in parallel to similar results of the 
classical Optimal Transport theory.

\begin{theorem} Let $\sfX_1, \sfX_2$ be completely regular spaces and let $\sfH:\f{C}[\sfX_1] \times \f{C}[\sfX_2] \to [0, + \infty]$ be a proper, radially $1$-homogeneous and lower semicontinuous function. 
\begin{enumerate}
    \item 
 For every $(\mu_1 ,\mu_2) \in \meas_+(\sfX_1) \times \meas_+(\sfX_2)$ such that $\m_\sfH(\mu_1, \mu_2) <+\infty$, there exists an optimal homogeneous coupling
 $\aalpha \in \f{H}^1(\mu_1, \mu_2)$ such that 
\[ \m_\sfH(\mu_1, \mu_2) = \int_{\f{C}[\sfX_1] \times \f{C}[\sfX_2]}\sfH \de \aalpha.\]
\item $\m_\sfH$ is a lower semicontinuous convex function and satisfies 
\[ \m_\sfH(r_1 \delta_{x_1}, r_2 \delta_{x_2}) =
\cce{\sfH}([x_1,r_1],[x_2,r_2]) \le \sfH([x_1,r_1], [x_2, r_2])
\]
for every $(x_1, x_2) \in \sfX_1 \times \sfX_2$ and every $(r_1, r_2) \in \R_+^2$, where $\cce{\sfH}$
is the l.s.c.~convex envelope of $\sfH([x_1,\cdot],[x_2,\cdot])$ with respect to the 
variables $(r_1,r_2)\in \R^2_+.$ 
If, in addition, $\sfH$ is also radially convex
(i.e.~the map $(r_1,r_2)\mapsto \sfH([x_1,r_1],[x_2,r_2])$ is convex for every fixed $(x_1,x_2) \in \sfX_1 \times \sfX_2$), then the above inequality is an equality. 
\item 
$\m_\sfH$ is the convex l.s.c.~envelope of  $\cost{\sfH}$ and admits the dual formulation
\begin{equation} \label{eq:ladualintro} \begin{aligned}
    \m_\sfH(\mu_1, \mu_2)=
     \m_{\cce{\sfH}}(\mu_1, \mu_2) 
    &=  \cce{\cost{\sfH}}(\mu_1, \mu_2)\\
    &= \sup \Big \{ \int_{\sfX_1}\varphi_1
    \,\de\mu_1+
    \int_{\sfX_2}\varphi_2\,\de\mu_2
    \mid (\varphi_1, \varphi_2) \in \Phi_\sfH \Big \} 
\end{aligned}
\end{equation}
for every $(\mu_1, \mu_2) \in \meas_+(\sfX_1) \times \meas_+(\sfX_2)$,
    where
\begin{equation}\label{eq:constrintro}
  \begin{aligned} 
\Phi_\sfH := \Big\{ 
     (\varphi_1, \varphi_2) \in \rmC_b(\sfX_1) \times \rmC_b(\sfX_2):{}&
  \varphi_1(x_1)r_1+ \varphi_2(x_2)r_2 \le \sfH([x_1, r_1], [x_2,r_2])\\
  &
    \text{ for every } (x_1, x_2) \in \sfX_1 \times \sfX_2, \, r_1, r_2\ge 0
    \Big\}.
    \end{aligned}
  \end{equation}
  \end{enumerate}
\end{theorem}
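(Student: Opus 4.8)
The plan is to prove the three items essentially in the order: structural properties of $\m_\sfH$, then existence of optimizers, then the envelope/duality formulas, which turn out to be a formal consequence of what precedes. I would start from the definition \eqref{eq:toconlms} and first record two elementary facts about homogeneous couplings: the family $\bigcup_{\mu_i}\f{H}^1(\mu_1,\mu_2)$ is stable under sums, with $\int\sfH\de(\aalpha+\aalpha')=\int\sfH\de\aalpha+\int\sfH\de\aalpha'$, and the cone dilation $([x_1,r_1],[x_2,r_2])\mapsto([x_1,cr_1],[x_2,cr_2])$ maps $\f{H}^1(\mu_1,\mu_2)$ onto $\f{H}^1(c\mu_1,c\mu_2)$ while rescaling $\int\sfH\de\aalpha$ by $c$ (here the radial $1$-homogeneity of $\sfH$ enters). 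These two facts immediately give that $\m_\sfH$ is positively $1$-homogeneous and subadditive, hence convex on the cone $\meas_+(\sfX_1)\times\meas_+(\sfX_2)$ and, being $+\infty$ elsewhere, convex on all of $\meas(\sfX_1)\times\meas(\sfX_2)$.

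The technical core — and the step I expect to be the main obstacle — is a compactness statement for homogeneous couplings, from which both the existence in (1) and the lower semicontinuity in (2) will follow. The essential device is a \emph{normalization}: given $\aalpha\in\f{H}^1(\mu_1,\mu_2)$, after discarding the mass at the pair of apexes (which carries zero cost and does not affect the marginals), one pushes $\aalpha$ forward under $([x_1,r_1],[x_2,r_2])\mapsto([x_1,r_1/s],[x_2,r_2/s])$ with $s=r_1+r_2$ and reweights by the factor $s$; using radial $1$-homogeneity once more, this produces $\tilde\aalpha\in\f{H}^1(\mu_1,\mu_2)$ supported in the compact radial slice $\{r_1+r_2=1\}$ with $\int\sfH\de\tilde\aalpha=\int\sfH\de\aalpha$. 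On that slice $r_1,r_2\in[0,1]$, so normalized couplings have total mass $\le 2(\mu_1(\sfX_1)+\mu_2(\sfX_2))$, and the crude bound $\aalpha(\{r_i\ge\varepsilon,\ x_i\notin K_i\})\le\mu_i(\sfX_i\setminus K_i)/\varepsilon$ together with the tightness of the Radon measures $\mu_i$ yields tightness of the normalized family. A Prokhorov-type argument in the completely regular space $\f{C}[\sfX_1]\times\f{C}[\sfX_2]$ then extracts a weakly convergent subnet; since $(\xi_i,r_i)\mapsto r_i\varphi(x_i)$ is bounded and continuous on the normalized slice for $\varphi\in\rmC_b(\sfX_i)$, the $1$-homogeneous marginal constraints pass to the limit, and $\aalpha\mapsto\int\sfH\de\aalpha$ is weakly lower semicontinuous because $\sfH\ge0$ is lower semicontinuous. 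Running this along a minimizing net gives an optimal $\aalpha$ whenever $\m_\sfH(\mu_1,\mu_2)<+\infty$ (item (1)); running it along (near-)optimal couplings over a weakly convergent net $(\mu_1^\lambda,\mu_2^\lambda)\weakto(\mu_1,\mu_2)$, using the uniform tightness of weakly convergent nets of nonnegative measures, gives the lower semicontinuity of $\m_\sfH$, completing item (2). The subtlety to be handled with care is precisely that, without the normalization, the total mass of a coupling is completely uncontrolled (it may escape to the apexes or to vanishing radial parameters), and that the cone is neither compact nor locally compact, so the whole argument must run through nets rather than sequences.

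Next I would establish the Dirac-mass identity. Fix $(x_1,x_2)$, set $h(r_1,r_2):=\sfH([x_1,r_1],[x_2,r_2])$ (a proper, lower semicontinuous, $1$-homogeneous function on $\R_+^2$) and $F(r_1,r_2):=\m_\sfH(r_1\delta_{x_1},r_2\delta_{x_2})$, which is convex and lower semicontinuous as the composition of $\m_\sfH$ with the linear weakly continuous map $(r_1,r_2)\mapsto(r_1\delta_{x_1},r_2\delta_{x_2})$. Testing the marginal constraints against $A_i=\sfX_i\setminus\{x_i\}$ forces any $\aalpha\in\f{H}^1(r_1\delta_{x_1},r_2\delta_{x_2})$ to be concentrated on the product of the two rays through $x_1$ and $x_2$, hence to be identified with a finite measure $\tilde\aalpha$ on $\R_+^2$ with $\int(s_1,s_2)\de\tilde\aalpha=(r_1,r_2)$ and $\int\sfH\de\aalpha=\int h\de\tilde\aalpha$. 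Writing $\cce h$ as the supremum of its linear minorants $\ell\le h$ and using $\int h\de\tilde\aalpha\ge\int\cce h\de\tilde\aalpha\ge\sup_\ell\int\langle\ell,s\rangle\de\tilde\aalpha=\sup_\ell\langle\ell,(r_1,r_2)\rangle=\cce h(r_1,r_2)$ gives $F\ge\cce h$. On the other hand the single point mass $\aalpha=\delta_{([x_1,r_1],[x_2,r_2])}$ lies in $\f{H}^1(r_1\delta_{x_1},r_2\delta_{x_2})$ with cost $h(r_1,r_2)$, so $F\le h$ pointwise, and since $F$ is convex and lower semicontinuous this forces $F\le\cce h$. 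Hence $F=\cce h$, with equality to $h$ exactly when $h$ is convex, i.e. when $\sfH$ is radially convex.

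Finally, for item (3): from $F=\cce h\le h$ on Dirac pairs and $\cost{\sfH}\equiv+\infty$ elsewhere we get $\m_\sfH\le\cost{\sfH}$, and since $\m_\sfH$ is convex and lower semicontinuous it is $\le\cce{\cost{\sfH}}$. For the converse I would use the Hahn--Banach description of $\cce{\cost{\sfH}}$ as the supremum of its continuous affine minorants on $\meas(\sfX_1)\times\meas(\sfX_2)$, i.e. of maps $(\nu_1,\nu_2)\mapsto\int\varphi_1\de\nu_1+\int\varphi_2\de\nu_2+c$ with $\varphi_i\in\rmC_b(\sfX_i)$; such a functional is dominated by $\cost{\sfH}$ iff it is so on Dirac pairs, and by homogeneity (divide $tr_i\varphi_i(x_i)+c\le t\,\sfH([x_1,r_1],[x_2,r_2])$ by $t$ and let $t\to+\infty$, resp.\ $t\to0^+$) this happens exactly when $(\varphi_1,\varphi_2)\in\Phi_\sfH$ and $c\le0$, so that its supremum at $(\mu_1,\mu_2)\in\meas_+(\sfX_1)\times\meas_+(\sfX_2)$ equals the right-hand side of \eqref{eq:ladualintro}. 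Weak duality then closes the loop: for $(\varphi_1,\varphi_2)\in\Phi_\sfH$ and $\aalpha\in\f{H}^1(\mu_1,\mu_2)$ one has $\int\varphi_1\de\mu_1+\int\varphi_2\de\mu_2=\int\big(r_1\varphi_1(x_1)+r_2\varphi_2(x_2)\big)\de\aalpha\le\int\sfH\de\aalpha$, whence $\cce{\cost{\sfH}}\le\m_\sfH$ and all three quantities coincide. The identity $\m_\sfH=\m_{\cce\sfH}$ follows in the same spirit: $\cost{\cce\sfH}\le\cost{\sfH}$ gives one inequality, while comparing on Dirac pairs (where $\cce{\cost{\sfH}}$ already equals $\cce h=\cost{\cce\sfH}$, by the previous step) gives $\cce{\cost{\sfH}}\le\cost{\cce\sfH}$ and hence, taking lower semicontinuous convex envelopes, the reverse one.
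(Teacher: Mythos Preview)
Your proof is correct and follows essentially the same approach as the paper: the normalization of homogeneous couplings to a bounded radial slice matches the paper's dilation Lemma~\ref{ss22:le:dialations} and the compactness Lemma~\ref{ss22:lemma:generalmarginals}, and your duality argument via affine minorants is exactly the Fenchel--Moreau computation in Theorem~\ref{ss22:teo:duality}. The only organizational difference is that you establish the Dirac-pair identity $\m_\sfH(r_1\delta_{x_1},r_2\delta_{x_2})=\cce{\sfH_{x_1,x_2}}(r_1,r_2)$ directly (via linear minorants of the $1$-homogeneous slice function) \emph{before} duality, whereas the paper first proves only $\m_\sfH\le\sfH$ and the convex case via Jensen (Theorem~\ref{ss22:prop:mh}) and then recovers the full $\cce\sfH$ identity as a corollary of duality; your ordering is arguably cleaner and avoids the apparent circularity.
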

\noindent
Notice that, if the cost function $\sfH$ is given by
\[
\sfH([x_1, r_1] , [x_2, r_2]) := \begin{cases} r \sfc(x_1, x_2) \quad &\text{ if } r_1 = r_2=r \ge 0, \\ + \infty \quad &\text{ elsewhere,} \end{cases} 
\]
for some proper and lower semicontinuous function $\sfc:\sfX_1 \times \sfX_2 \to [0,+\infty]$, then $\m_{\sfH}= \mathsf{OT}_\sfc$, $\cost{\sfH}=\mathcal{F}_\sfc$, and we recover precisely the analogous results for the classical Optimal Transport theory (see in particular \cite[Theorems 3.5 and 4.4]{SS20}).
The relaxation formula 
\eqref{eq:ladualintro}
extends the corresponding one 
of \cite[Proposition 3.11]{CPSV18}, showing in particular that the primal formulation
\eqref{eq:toconlms}
in terms of homogeneous couplings
is equivalent to the semi-coupling approach of
\cite{CPSV18}.
\eqref{eq:ladualintro} has also nice applications
to the structure of the Hellinger-Kantorovich metric,
see Remark \ref{rem:interesting-app}.

\medskip
\noindent
 Along the same lines of the classical Optimal Transport theory, it is natural to investigate the 
 \emph{existence of optimal potentials maximizing 
\eqref{eq:ladualintro} and thus solving the dual problem.}
We first focus on the class of continuous functions, when $\sfX_1, \sfX_2$ are compact and metrizable and $\sfH$ is finite and
continuous on the whole product cone $\f{C}[\sfX_1] \times \f{C}[\sfX_2]$, radially $1$-homogeneous and convex.
In this case a sufficient condition relies on
the singular behaviour of the normal derivative of $\sfH$ 
at the boundary of the product cone as in 
the fundamental
example
\begin{equation}\label{eq:fundex}
    \sfH([x_1,r_1],[x_2,r_2]):=
    r_1+r_2-2\sqrt{r_1r_2}\, \mathrm e^{-|x_1-x_2|^2/2}, \quad x_i \in \R^d, \, r_i \ge 0.
\end{equation}
We refer to Section \ref{sec:5} for a detailed discussion of these hypotheses and Appendix \ref{app:1} for an alternative set of assumptions (i.e.~when $\sfH$ is finite and continuous on an open conical domain).
In both these situations it is possible to define the analogue of the celebrated $\sfc$-transform for a pair $(\varphi_1, \varphi_2) \in \Phi_\sfH$ as 
\begin{align*}
    \varphi_1^{\sfH}(x_2) &:= \inf_{x_1 \in \sfX_1} \inf_{\alpha \ge 0} \biggl \{ \sfH([x_1, \alpha], [x_2, 1])-\alpha \varphi_1(x_1) \biggr \}, \quad x_2 \in \sfX_2,\\
    \varphi_1^{\sfH \sfH}(x_1) &:=\inf_{x_2 \in \sfX_2} \inf_{\alpha \ge 0} \biggl \{ \sfH([x_1, 1], [x_2, \alpha])-\alpha \varphi_1^\sfH(x_2) \biggr \}, \quad x_1 \in \sfX_1,
\end{align*}
and prove that the transformed potentials are equicontinuous so that the use of a compactness argument
in the space $\rmC(\sfX_1)\times \rmC(\sfX_2)$ is possible. This will produce an optimal pair of potentials, see Theorems \ref{ss22:theo:potexist} and \ref{ss22:theo:potexist2}.

\subsubsection*{\textit{\bfseries Optimality conditions and relaxed duality}}
In this unbalanced setting, general optimality conditions still involve the notion of cyclical monotonicity: a subset $\Gamma \subset \f{C}[\sfX_1] \times \f{C}[\sfX_2] $ is $\sfH$-cyclically monotone if for every finite family of points $\{(\f{y}_1^i, \f{y}_2^i)\}_{i=1}^N \subset \Gamma$  and every permutation $\sigma$ of $\{1, \dots, N\}$ it holds
\[ \sum_{i=1}^N \sfH(\f{y}_1^i, \f{y}_2^i) \le \sum_{i=1}^N \sfH(\f{y}_1^i, \f{y}_2^{\sigma(i)}).\]
Similarly to the classical Optimal Transport case, \emph{every optimal homogeneous coupling is concentrated on a $\sfH$-cyclically monotone set $\Gamma$} (see Proposition \ref{prop:necee}) which in addition is \emph{a radial convex cone} in the sense that
\[ ([x_1,r_1^i], [x_2, r_2^i]) \in \Gamma,\, \lambda_i \ge 0, \, i=1,2 \, \Rightarrow \Big(\big [x_1, \sum_{i=1}^2\lambda_ir_1^i], [x_2, \sum_{i=1}^2\lambda_ir_2^i] \Big ) \in \Gamma.\]
This further property comes from the radial homogeneity and convexity assumption on $\sfH$. 

Formulating a converse statement to the above proposition, thus involving sufficient optimality conditions, requires additional assumptions on the compatibility between the radial cone $\Gamma$, on which an admissible homogeneous coupling $\aalpha$ between measures $\mu_i \in \meas_+(\sfX_i)$ is concentrated, and the cost function $\sfH$: in Section \ref{subs:graph} we study the natural directed graph structures induced by $\sfH$ and $\Gamma$, similarly to what has been done in the classical Optimal Transport theory \cite{beiblo,biancara} for possibly infinite costs. This leads to the notion of $\sfH$-connectedness: $\Gamma$ is $\sfH$-connected if, whenever $\f{y}_1, \f{y}'_1 \in \pi^1(\Gamma)$ (the projection of $\Gamma$ on the first cone $\f{C}[\sfX_1]$), we can find a sequence of points $\{\f{y}_2^1, \f{y}_1^2, \f{y}_2^2, \dots, \f{y}_1^N, \f{y}_2^N\}$ such that
\[ (\f{y}_1, \f{y}_2^1), (\f{y}_1^i, \f{y}_2^i) \in \Gamma, \, i=2, \dots, N, \quad \sfH(\f{y}_1', \f{y}_2^N), \sfH(\f{y}_1^{i+1}, \f{y}_2^i) < + \infty, \, i=1, \dots, N-1. \]
In other words, the sequence $\{\f{y}_2^1, \f{y}_1^2, \f{y}_2^2, \dots, \f{y}_1^N, \f{y}_2^N\}$ ``connects" $\f{y}_1$ to $\f{y}'_1$ keeping the cost finite when moving from a point in $\f{C}[\sfX_2]$ to a point in $\f{C}[\sfX_1]$ and imposing that the considered pair is in $\Gamma$, when moving from a point in $\f{C}[\sfX_1]$ to a point in $\f{C}[\sfX_2]$. For example, if $\sfH$ is everywhere finite, then any $\Gamma$ is $\sfH$-connected (see Theorem \ref{thm:connectedness} for simple conditions implying $\sfH$-connectedness).

Upon assuming that $\Gamma$ is a $\sfH$-cyclically monotone and $\sfH$-connected subset of the effective domain of $\sfH$ also intersecting the interior of such domain, it is possible to prove the existence of \emph{relaxed optimal potentials}:  these are Borel functions $\varphi_i: \sfX_i \to \R \cup \{\pm \infty\}$ satisfying 
\begin{align}  \label{eq:genpot2}
\varphi_1(x_1) r_1 +_o \varphi_2(x_2)r_2 &\le \sfH([x_1,r_1],[x_2,r_2]) \quad \text{ for every } x_i \in \sfX_i, \, r_i \ge 0, \\
\varphi_1(x_1) r_1 + \varphi_2(x_2)r_2 & =\sfH([x_1,r_1],[x_2,r_2]) \quad \text{ if } ([x_1,r_1],[x_2,r_2]) \in \Gamma, 
\end{align}
where the inequality in \eqref{eq:genpot2} corresponds to the constraint as in \eqref{eq:constrintro} and the notation $+_o$ means that, whenever an ambiguity $\pm \infty \mp \infty$ arises, the sum is set equal to $0$.
This existence result, together with some finiteness conditions relating $\mu_i$ and $\sfH$ (see \eqref{ss22:cond1}), imply that 
 $\varphi_i \in \mathcal{L}^1(\sfX_i, \mu_i)$
 solve the dual problem
 \begin{equation}
     \label{eq:genpot1}
\int_{\sfX_1} \varphi_1 \de \mu_1 + \int_{\sfX_2} \varphi_2 \de \mu_2 = \m_\sfH(\mu_1, \mu_2),
 \end{equation}
 and yields
 optimality for the homogeneous plan $\aalpha$. We refer to Theorem \ref{ss22:suff1} for the detailed sufficiency result.

\subsubsection*{\textit{\bfseries A Monge-like formulation via transport-growth maps.}}
As in classical Optimal Transport problems, Unbalanced Optimal Transport admits a more  restrictive
Monge-like formulation in terms of \emph{transport-growth} pairs \cite{lms22}:
they are maps $(\mathsf T,\mathsf g)$ from $\sfX_1$ to $\sfX_2\times [0,+\infty)$ 
acting on measures $\mu_1\in \meas(\sfX_1)$ via the formula
\begin{equation}\label{eq:twmap}
    (\mathsf T,\mathsf g)_\star \mu_1=\mathsf T_\sharp (\mathsf g \mu_1),
    \quad
    \mu_2=(\mathsf T,\mathsf g)_\star \mu_1
    \ \Leftrightarrow\
    \mu_2(B)=\int_{\mathsf T^{-1}(B)}\mathsf g\,\de\mu_1\quad\text{for every Borel set }
    B\subset \sfX_2.
\end{equation}
Whenever $\mathsf g\in L^1(\sfX_1,\mu_1)$, a transport-growth 
pair induces a 
homogeneous coupling $\aalpha\in \mathfrak H^1(\mu_1,\mu_2)$,
$\mu_2=(\mathsf T,\mathsf g)_\star \mu_1$, via the formula
\begin{equation}\label{eq:induced}
    \aalpha=([\text{id}_{\sfX_1},1],[\mathsf T,\mathsf g])_\sharp \mu_1,\quad
    \int \sfH\,\de\aalpha=
    \int \sfH([x,1],[\mathsf T(x),\mathsf g(x)])\,\de\mu_1(x),
\end{equation}
and one can study the corresponding Monge formulation of \eqref{eq:toconlms}:
\begin{equation}
    \label{eq:Monge-intro}
    \mathscr M_{\sfH}(\mu_1,\mu_2):=\inf\Big\{\int_{\sfX_1}
    \sfH([x,1],[\mathsf T(x),\mathsf g(x)])\,\de\mu_1(x) :
    \mu_2=(\mathsf T,\mathsf g)_\star \mu_1\Big\}.
\end{equation}
Clearly, $\mathscr U_\sfH\le \mathscr M_\sfH$; notice also that, whenever an optimal plan $\aalpha$ between $\mu_1$ and $\mu_2$ charges the set $\{\f{y}_1 = \f{o}_1\}$, then it is impossible that such $\aalpha$ can be induced by a transport-growth map $(\mathsf{T}, \sfg)$ as in \eqref{eq:induced}. This corresponds to the fact that some of the mass of $\mu_2$ does not come from $\mu_1$ but it is ``created''. Under the same hypotheses of the classical balanced case \cite{pratelli} (i.e.~the cost function is continuous and $\mu_1$ is atomless),\begin{theorem} Let $\sfX_i$ be Polish spaces, let $\sfH: \f{C}[\sfX_1] \times \f{C}[\sfX_2] \to [0,+\infty]$ be a proper, radially $1$-homogeneous and continuous function, $\mu_i \in \meas_+(\sfX_i)$ with $\mu_1$ diffuse (i.e.~$\mu_1(\{x\})=0$ for every $x\in \sfX_1$) and such that $\mu_1(\sfX_1)>0$. Then
\[ \m_\sfH(\mu_1, \mu_2) = \mathscr M_\sfH(\mu_1, \mu_2).\]
\end{theorem}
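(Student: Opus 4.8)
The plan is to reduce the equality $\mathscr U_\sfH=\mathscr M_\sfH$ to the balanced Monge--Kantorovich result of \cite{pratelli}, exploiting the cone representation. Since $\mathscr U_\sfH\le\mathscr M_\sfH$ always holds, it suffices to prove $\mathscr M_\sfH(\mu_1,\mu_2)\le\mathscr U_\sfH(\mu_1,\mu_2)$, and we may assume the right-hand side is finite. By part (1) of the first theorem there exists an optimal homogeneous coupling $\aalpha\in\mathfrak H^1(\mu_1,\mu_2)$ with $\mathscr U_\sfH(\mu_1,\mu_2)=\int\sfH\,\de\aalpha$. The goal is to approximate $\aalpha$ — in cost — by homogeneous couplings that are \emph{induced by transport-growth pairs} as in \eqref{eq:induced}, i.e.\ concentrated on graphs $x\mapsto([x,1],[\mathsf T(x),\mathsf g(x)])$; continuity of $\sfH$ will then pass the cost estimate to the limit.

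First I would bring the problem onto a compact stage and into a genuinely balanced setting on product cones. The radial $1$-homogeneity lets one rescale the first fibre variable to $r_1=1$: after disintegrating $\aalpha$ against its $\mathsf X_1$-type marginal one may reduce to plans of the form $\int\sfH([x_1,1],\cdot)\,\de(\cdots)$; here one must handle the part of $\aalpha$ charging $\{\mathfrak y_1=\mathfrak o_1\}$, i.e.\ mass of $\mu_2$ not coming from $\mu_1$. The key device is to choose an auxiliary point or to exploit $\mu_1(\mathsf X_1)>0$ and diffuseness to "borrow" an arbitrarily small portion of $\mu_1$ on which $\mathsf g$ is taken very large, sending the excess mass of $\mu_2$ through it while paying only $\varepsilon$ in cost (continuity and properness of $\sfH$, together with the radial $1$-homogeneity, make this quantitatively controllable). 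Next, by inner regularity one restricts to compact subsets $K_i\subset\mathsf X_i$ carrying all but $\varepsilon$ of the relevant masses, and truncates the growth weights $\mathsf g$ to a bounded range $[0,R]$, so that the effective support of $\aalpha$ lies in a compact subset of $\f C[\mathsf X_1]\times\f C[\mathsf X_2]$ on which $\sfH$ is uniformly continuous; this makes the eventual limiting argument quantitative.

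With $\aalpha$ supported on a compact set and having first marginal essentially $[\mathrm{id},1]_\sharp\mu_1$ with $\mu_1$ diffuse, I would then invoke the Pratelli-type approximation theorem \cite{pratelli}: any coupling whose source marginal is atomless can be approximated in the weak topology by plans induced by Borel maps, with convergence of $\int\!c\,\de(\cdot)$ for continuous bounded $c$. Applied on the product space $\mathsf X_1\times(\mathsf X_2\times[0,R])$ — with source measure $\mu_1$ (diffuse) and target the second marginal of $\aalpha$ in the variables $(x_2,r_2)$ — this produces maps $x\mapsto(\mathsf T_n(x),\mathsf g_n(x))$ with $(\mathsf T_n,\mathsf g_n)_\star\mu_1\rightharpoonup\mu_2$ (up to the $\varepsilon$-truncation) and $\int\sfH([x,1],[\mathsf T_n(x),\mathsf g_n(x)])\,\de\mu_1\to\int\sfH\,\de\aalpha$. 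A final correction step fixes the marginal exactly: since $(\mathsf T_n,\mathsf g_n)_\star\mu_1$ is close to $\mu_2$ and $\mu_1$ is diffuse with positive mass, one adjusts $\mathsf g_n$ on a small set to make the pushforward equal $\mu_2$ on the nose, again at a cost $\le\varepsilon$. Letting $\varepsilon\downarrow0$ yields $\mathscr M_\sfH(\mu_1,\mu_2)\le\mathscr U_\sfH(\mu_1,\mu_2)$.

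The main obstacle is the \emph{mass-creation / mass-destruction bookkeeping}: unlike the balanced case, the optimal $\aalpha$ may put mass on $\{\mathfrak y_1=\mathfrak o_1\}$ (mass of $\mu_2$ created from nothing) and, after rescaling to $r_1=1$, the needed growth weights $\mathsf g$ may be unbounded or the target mass may exceed what a graph over $\mu_1$ can carry. Making the "borrow a tiny slice of $\mu_1$, blow up $\mathsf g$ there, pay only $\varepsilon$" trick rigorous — and simultaneously compatible with the compact truncation and with the exact-marginal correction — is the delicate part, and is exactly where continuity and radial $1$-homogeneity of $\sfH$ together with diffuseness and $\mu_1(\mathsf X_1)>0$ are all genuinely used. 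Once that reduction is in place, the remaining steps are the standard Pratelli approximation plus a routine diagonal argument.
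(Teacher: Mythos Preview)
Your core strategy --- remove the mass that $\aalpha$ places on $\{\mathfrak y_1=\mathfrak o_1\}$, rescale to $r_1=1$, then invoke Pratelli --- is exactly the paper's. The execution, however, is noticeably cleaner in the paper and avoids two of the detours you introduce.

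First, the vertex-mass step. Rather than ``borrowing a tiny slice of $\mu_1$ and blowing up $\mathsf g$ there'', the paper proves a dedicated approximation result (Proposition~\ref{prop:approxnov}): it couples \emph{all} of $\mu_1$ with the created mass $\mu_2^o$ via an arbitrary plan, then uses radial dilations $\dil_{\vartheta,2}$ (acting only on the second factor) to shrink the $r_1$-coordinate of this piece toward $0$ while simultaneously perturbing $\aalpha^{tg}$ so that the sum has \emph{exact} homogeneous marginals $\mu_1,\mu_2$ and cost within $\varepsilon$ of the original. The continuity of $\sfH$ enters through the upper-semicontinuous functions $\lambda_\varepsilon',\lambda_\varepsilon''$ that quantify how far one can perturb $r_1$ while changing $\sfH$ by at most $\varepsilon_0$. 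Your slice-and-blow-up idea is morally equivalent by $1$-homogeneity, but you never explain how the remainder $\mu_1|_{\sfX_1\setminus E}$ picks up the transport duty that $\mu_1|_E$ abandoned; the paper's dilation device handles this bookkeeping automatically.

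Second, the paper applies Pratelli \emph{directly on the cones}: once $\aalpha_\varepsilon$ does not charge $\{\mathfrak y_1=\mathfrak o_1\}$, a further dilation concentrates it on $\{\sfr_1=1\}$, so $\pi^1_\sharp\aalpha_\varepsilon$ is diffuse (inherited from $\mu_1$) and Pratelli yields a Borel map $\mathsf F:\mathfrak C[\sfX_1]\to\mathfrak C[\sfX_2]$ with $\mathsf F_\sharp(\pi^1_\sharp\aalpha_\varepsilon)=\pi^2_\sharp\aalpha_\varepsilon$ exactly and cost within $\varepsilon/2$. Setting $(\mathsf T,\mathsf g)(x):=(\sfx,\sfr)(\mathsf F([x,1]))$ finishes the proof. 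No compactness truncation is needed (Pratelli works for continuous costs on Polish spaces) and no marginal correction is needed (the map has exact marginals by construction). Your proposed ``adjust $\mathsf g_n$ on a small set to make the pushforward equal $\mu_2$ on the nose'' is not a well-posed fix as stated: changing $\mathsf g$ alone cannot alter the image of $\mathsf T$, so a generic discrepancy between $(\mathsf T_n,\mathsf g_n)_\star\mu_1$ and $\mu_2$ is not repairable this way. Dropping the truncation entirely, as the paper does, makes this issue disappear.
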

In general, the infimum in the definition of $\mathscr M_{\sfH}(\mu_1, \mu_2)$ is not attained; however, as a consequence of the existence of relaxed optimal potentials as in \eqref{eq:genpot1} and \eqref{eq:genpot2}, when $\sfX_1=\sfX_2=\R^d$, $\mu_1$ is absolutely continuous with respect to 
the Lebesgue measure in $\R^d$ and the differential of $\sfH$ satisfies suitable assumptions,
we also obtain the existence of an optimal transport-growth map 
attaining the minimum in \eqref{eq:Monge-intro}. This is the case, for example, of the cost function in \eqref{eq:fundex} associated with the 
Gaussian Hellinger-Kantorovich metric. 
\begin{theorem} Let $\sfH: \f{C}[\R^d]\times \f{C}[\R^d] \to \R$ be given by  
\[ \sfH([x_1,r_1],[x_2,r_2]):=
    r_1+r_2-2\sqrt{r_1r_2}\, \mathrm e^{-|x_1-x_2|^2/2}, \quad x_i \in \R^d, \, r_i \ge 0.\]
If $\mu_i \in \meas_+(\R^d)$ with $\mu_i(\R^d)>0$ for $i=1,2$ and $\mu_1 \ll \mathcal{L}^d$, then there exists a transport-growth map $(\mathsf{T},\mathsf{g}):\R^d \to \R^d \times [0,+\infty)$ such that 
\[ \mu_2 = (\mathsf T,\mathsf g)_\star \mu_1 = \mathsf{T}_\sharp (\mathsf{g} \mu_1),  
\quad \int_{\R^d} \sfH([x, 1], [\mathsf{T}(x),\mathsf g(x)]) \de \mu_1(x) = 
\m_\sfH(\mu_1, \mu_2).\]
\end{theorem}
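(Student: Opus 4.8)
The plan is to combine the relaxed optimal potentials produced by Theorem~\ref{ss22:suff1} with a Brenier--McCann type argument, using that for the Gaussian cost \eqref{eq:fundex} a logarithmic change of variables turns the optimality conditions into the classical ones for the quadratic cost $-2\,x_1\cdot x_2$. I would first record that \eqref{eq:fundex} satisfies all the standing hypotheses — it is finite, continuous, radially $1$-homogeneous and radially convex, since its only nonlinear term $-2\sqrt{r_1r_2}$ is convex on $\R_+^2$ — and that $(\sqrt{r_1}-\sqrt{r_2})^2\le\sfH([x_1,r_1],[x_2,r_2])\le r_1+r_2$, so in particular $\m_\sfH(\mu_1,\mu_2)\le\mu_1(\R^d)+\mu_2(\R^d)<+\infty$. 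By the existence of optimal homogeneous couplings (Theorem~\ref{ss22:prop:mh}) there is an optimal $\aalpha\in\f{H}^1(\mu_1,\mu_2)$, and by Proposition~\ref{prop:necee} it is concentrated on an $\sfH$-cyclically monotone radial convex cone $\Gamma$. Since $\sfH$ is everywhere finite, $\mathrm{dom}\,\sfH$ is the whole product cone and $\Gamma$ is automatically $\sfH$-connected, so the only remaining hypothesis of Theorem~\ref{ss22:suff1} is the finiteness condition \eqref{ss22:cond1}, which holds thanks to $\sfH\le r_1+r_2$ and $\mu_i(\R^d)<+\infty$. Theorem~\ref{ss22:suff1} then provides relaxed optimal potentials $\varphi_i:\R^d\to\R\cup\{\pm\infty\}$, Borel, with $\varphi_i\in\mathcal L^1(\R^d,\mu_i)$, satisfying \eqref{eq:genpot2} and $\int\varphi_1\,\de\mu_1+\int\varphi_2\,\de\mu_2=\m_\sfH(\mu_1,\mu_2)$.

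The next step is the logarithmic substitution. Testing the inequality in \eqref{eq:genpot2} with $(r_1,r_2)=(1,0)$ and $(0,1)$ gives $\varphi_1,\varphi_2\le 1$ everywhere; set $u:=1-\varphi_1$ and $v:=1-\varphi_2$. Dividing \eqref{eq:genpot2} by $r_1>0$ yields $\varphi_1(x_1)\le 1+v(x_2)s-2\sqrt{s}\,\mathrm e^{-|x_1-x_2|^2/2}$ for every $s:=r_2/r_1\ge0$; were $v(x_2)=0$ the right-hand side would be unbounded below as $s\to+\infty$, forcing $\varphi_1\equiv-\infty$, which contradicts $\varphi_1\in\mathcal L^1(\R^d,\mu_1)$ and $\mu_1\ne0$. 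Hence $v>0$ everywhere, and symmetrically $u>0$ everywhere; minimising over $s$ (the minimum of $1+bs-2a\sqrt s$ over $s\ge0$ equals $1-a^2/b$ for $a\in(0,1]$, $b>0$) then gives
\[
  u(x_1)\,v(x_2)\ \ge\ \mathrm e^{-|x_1-x_2|^2}\qquad\text{for all }x_1,x_2\in\R^d,
\]
with equality whenever $([x_1,r_1],[x_2,r_2])\in\Gamma$ and $r_1>0$, the optimal ratio being $s=u(x_1)/v(x_2)$. Moreover $u<+\infty$ $\mu_1$-a.e.\ and $v<+\infty$ $\mu_2$-a.e., because $\varphi_i>-\infty$ $\mu_i$-a.e. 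In particular $\aalpha$ charges no point with $r_1=0$ and $r_2>0$ — such a point of $\Gamma$ would force $\varphi_2(x_2)=1$, i.e.\ $v(x_2)=0$ — so no mass of $\mu_2$ is created. Setting $\tilde u:=\log u+|\cdot|^2$ and $\tilde v:=\log v+|\cdot|^2$, the displayed inequality becomes $\tilde u(x_1)+\tilde v(x_2)\ge 2\,x_1\cdot x_2$, with equality on $\Gamma\cap\{r_1>0\}$.

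I would then extract the transport--growth map as in Brenier's theorem. The function $\phi(x_1):=\sup_{y\in\R^d}\bigl(2\,x_1\cdot y-\tilde v(y)\bigr)$ is convex and lower semicontinuous, and equals $\tilde u(x_1)$ whenever the supremum is attained; because $\aalpha$ has first $1$-homogeneous marginal $\mu_1$ and no mass on $\{r_1=0\}$, this holds for $\mu_1$-a.e.\ $x_1$. Thus $\phi$ is finite on a set of positive Lebesgue measure, hence locally Lipschitz in the interior of its (convex) effective domain, which — since $\mu_1\ll\mathcal L^d$ and $\phi$ is $\mu_1$-a.e.\ finite — has full $\mu_1$-measure; by Rademacher $\phi$ is then differentiable $\mu_1$-a.e., and at every point of differentiability the supremum is attained at the unique point $\tfrac12\nabla\phi(x_1)$. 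I would define, for $\mu_1$-a.e.\ $x_1$,
\[
  \mathsf T(x_1):=\tfrac12\nabla\phi(x_1),\qquad
  \mathsf g(x_1):=\frac{u(x_1)}{v(\mathsf T(x_1))}=\frac{\mathrm e^{-|x_1-\mathsf T(x_1)|^2}}{v(\mathsf T(x_1))^2},
\]
with the convention $\mathsf g(x_1):=0$ when $v(\mathsf T(x_1))=+\infty$ (which, by the equality case, only occurs where the optimal growth vanishes); these are Borel maps, finite $\mu_1$-a.e.

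Finally, to conclude: the equality case of \eqref{eq:genpot2} together with the uniqueness just obtained shows that $\aalpha$ is concentrated on the radial cone $\{([x_1,\lambda],[\mathsf T(x_1),\lambda\,\mathsf g(x_1)]):\lambda\ge0,\ x_1\in D\}$ over the graph of $(\mathsf T,\mathsf g)$, for some $\mu_1$-full Borel set $D$. Disintegrating $\aalpha$ with respect to the first variable and using the $1$-homogeneity of $\sfH$ and the definition of the $1$-homogeneous marginals, one obtains $\int\sfH\,\de\aalpha=\int_{\R^d}\sfH([x,1],[\mathsf T(x),\mathsf g(x)])\,\de\mu_1(x)$ and $\mu_2(B)=\int_{\mathsf T^{-1}(B)}\mathsf g\,\de\mu_1$ for every Borel $B\subset\sfX_2$, i.e.\ $\mu_2=(\mathsf T,\mathsf g)_\star\mu_1$; optimality of $\aalpha$ then yields the asserted identity, and in particular the infimum in \eqref{eq:Monge-intro} is attained. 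The main obstacle should be the two middle steps: rigorously justifying the logarithmic substitution — the strict positivity of $u$ and $v$ everywhere and their $\mu_i$-a.e.\ finiteness, which is exactly what excludes created mass — and extracting from the Gaussian structure the strict convexity that makes the pair $(\mathsf T,\mathsf g)$ uniquely determined $\mu_1$-a.e.; verifying \eqref{ss22:cond1} and the disintegration identifying $\aalpha$ with the graph of $(\mathsf T,\mathsf g)$ are more routine, but need care since the potentials $\varphi_i$ may take the value $-\infty$.
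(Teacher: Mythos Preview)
Your proof is correct and takes a genuinely different route from the paper. The paper derives this statement as a special case of the general Theorem~\ref{theo:themap}: it verifies that $\sfH_{\mathsf{GHK}}$ satisfies the abstract hypotheses (partial $C^1$-regularity in $(x_1,r_1)$, the blow-up condition $\partial_{r_1}\sfH([x_1,0],[x_2,1])=-\infty$, and the twist-type injectivity of $[y,q]\mapsto(\partial_{x_1}\sfH,\partial_{r_1}\sfH)$), and then invokes that theorem, whose proof produces the map through \emph{approximate} differentiability of $\varphi_1$ obtained by comparison with the locally Lipschitz truncations $\varphi_1^R$.

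You instead exploit the exact Gaussian form: after the substitution $u=1-\varphi_1$, $v=1-\varphi_2$ and the logarithmic change $\tilde u=\log u+|\cdot|^2$, $\tilde v=\log v+|\cdot|^2$, the constraint becomes $\tilde u(x_1)+\tilde v(x_2)\ge 2\,x_1\cdot x_2$ with equality on $\Gamma$, so the problem reduces to Brenier's theorem for the bilinear cost; the transport map is $\mathsf T=\tfrac12\nabla\phi$ for the \emph{convex} function $\phi=\tilde v^{*}$, and $\mathsf g=u/(v\circ\mathsf T)$. What this buys you is an explicit structural statement---$\mathsf T$ is (half) the gradient of a convex function---and the argument is more elementary, using only Rademacher and elementary convex analysis rather than approximate differentials. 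What the paper's approach buys is generality: the same machinery covers any cost satisfying the three abstract conditions of Theorem~\ref{theo:themap}, not just $\sfH_{\mathsf{GHK}}$. Your argument that $u,v>0$ everywhere (hence no mass creation/destruction, $r_1,r_2>0$ on $\Gamma_\fo$) is exactly the role played in the paper by the blow-up condition~(2) of Theorem~\ref{theo:themap}, and your uniqueness via single-valuedness of $\partial\phi$ at differentiability points replaces the abstract twist condition~(3). One minor omission: when invoking Theorem~\ref{ss22:suff1} you should also note that one of \eqref{eq:36}, \eqref{eq:11} holds; this is immediate (as in the proof of Theorem~\ref{theo:themap}) since otherwise one of the $\mu_i$ would vanish.
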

The above theorem holds also for large class of cost functions $\sfH$, we refer to Theorem \ref{theo:themap} for the general result.

\subsubsection*{\textit{\bfseries Metric and topological properties.}} 
Finally we frame in our general setting  some of the results of \cite{LMS18,CPSV18,DP20, DM22} concerning the metric properties 
of Unbalanced Optimal Transport functionals.  
In particular, we 
prove that, in case $\sfH$ is (the $p$-th power of) a distance on $\f{C}[\sfX]$, then the resulting cost $\m_\sfH$ is itself (the $p$-th power of) a distance on an appropriate subset of $\meas_+(\sfX)$ metrizing the weak convergence of measures (see Theorems \ref{ss22:thmmetric} and \ref{ss22:thmmnarrow}), precisely as it is for the standard Optimal Transport problem \cite[Proposition 7.1.5]{AGS08}.

\medskip
\paragraph{\em\bfseries Plan of the paper.} Section \ref{sec:2} is devoted to establish the general setting and a few technical tools that will be used in the sequel. 

Section \ref{sec:4} contains the core of our results: the convexification approach,  some structural properties
and the comparison with the semi-coupling approach of \cite{CPSV18},
the duality, and the Monge formulation.

Section \ref{sec:5} treats the dual attainment in spaces of continuous potentials under additional regularity assumptions on $\sfH$ and on the spaces $\sfX_i$.

In Section \ref{sec:6} we present the general optimality conditions, the relaxed duality result and, as a consequence of the latter, the existence of optimal transport-growth maps. 

Appendix \ref{sec:7} contains a few remarks on the metric and topological properties of $\m_\sfH$ in case $\sfH$ is (the $p$-th power of) a distance. Finally Appendix \ref{app:1} reproduces the results of Section \ref{sec:5} under a different set of additional assumptions on $\sfH$.

\medskip
\paragraph{\em\bfseries Acknowledgments.}
The authors gratefully acknowledge the support of the Institute for Advanced Study of the Technical University of Munich, funded by the German Excellence Initiative.
G.S. has also been supported by IMATI-CNR, Pavia and by the MIUR-PRIN
202244A7YL project {\em Gradient Flows and Non-Smooth Geometric Structures with Applications to Optimization and Machine Learning}.

\section{Preliminaries, measures and functions on the cone}\label{sec:2}
In this section, $\sfX$ is a completely regular space (i.e.~it is Hausdorff and for every closed set $C$ and point $x\in \sfX\setminus C$ there exists a continuous function $f: \sfX \to [0,1]$ s.t.~$f(x)=0$ and $f(C)=\{1\}$). We denote by $\Borel(\sfX)$ the Borel $\sigma$-algebra on $\sfX$ and by $\meas(\sfX)$ (resp.~$\meas_+(\sfX)$) the vector space of real valued (resp.~the cone of non-negative) Radon measures on $\sfX$ i.e.~the countably additive set functions $\mu: \Borel(\sfX) \to \R$ (resp. $\mu: \Borel(\sfX) \to [0, + \infty)$) s.t.
\begin{equation*} 
\text{ for every } B \in \Borel (\sfX) \text{ and every } \eps >0 \text{ there exists }K \subset B \text{ compact  s.t. } \left |\mu \right |(\sfX \setminus K)  < \eps,
\end{equation*}
where $|\mu|$ denotes the total variation measure of $\mu$.
We say that $\mu\in \meas_+(\sfX)$ is concentrated on a set
$\Gamma\subset \sfX$ if for
every $\eps>0$
there exists a Borel set $\Gamma_\eps\subset \Gamma$ such that
$\mu(\sfX\setminus \Gamma_\eps)<\eps$. The Radon property ensures
that if $\mu$ is concentrated on $\Gamma$ then it is concentrated on
a $\sigma$-compact subset $\Gamma'\subset \Gamma$. 

We denote by $\prob(\sfX)$ the set of probability measures on $\sfX$
i.e.~the elements $\mu$ of $\meas_+(\sfX)$ s.t. $\mu(\sfX)=1$. When $\sfX$ is a Polish space (i.e.~its topology is induced by
a metric $\mathsf d$ such that $(\sfX,\mathsf d)$ is complete and
separable),
then every finite Borel measure is Radon so that
$\meas(\sfX)$ coincides with the space of real valued Borel measures.

The zero measure in $\sfX$ is denoted by $\bm{0}_\sfX$. We define the sets of non-negative discrete measures and non-negative weighted Dirac masses respectively as
\begin{align*} 
\discr(\sfX) &:= \left \{ \sum_{j=1}^m c_i \delta_{x_j} : \{c_j\}_{j=1}^m \subset [0, + \infty) \, , \, \{x_j\}_{j=1}^m \subset \sfX \, , \, m \in \N \right \}, \\ 
\D{\sfX} &:= \left \{ r \delta_x : r \in [0, + \infty) \, , \, x \in \sfX \right \}.
\end{align*}
Notice that $\discr(\sfX), \D{\sfX} \subset \meas_+(\sfX)$. If $\sfX,\sfY$ are two completely regular spaces and $f:\sfX \to \sfY$ is a Borel function we denote by $f_{\sharp} : \meas(\sfX) \to \meas(\sfY)$ the push forward operator defined, for every $\mu \in \meas(\sfX)$, as
\[ f_{\sharp}\mu (B) = \mu(f^{-1}(B)) \quad \text{ for every } \, B \in \Borel(\sfY).\]
We denote by $\rmC_b(\sfX)$ the vector space of continuous and bounded real functions.
There is a natural duality pairing $\langle\cdot,\cdot\rangle$ between $\meas(\sfX)$ and $\rmC_b(\sfX)$ 
\begin{equation}
  \label{ss22:eq:2}
  \langle \mu,\varphi\rangle:=\int_\sfX \varphi\de\mu\quad
  \text{for every }\mu\in \meas(\sfX),\ \varphi\in \rmC_b(\sfX).
\end{equation}
\eqref{ss22:eq:2} defines a real nondegenerate bilinear form in $\meas(\sfX)\times
\rmC_b(\sfX)$, for if a Radon measure $\mu\in \meas(\sfX)$ satisfies 
$\int_\sfX \varphi\de\mu=0$ for every $\varphi\in \rmC_b(\sfX)$,
then $|\mu|(B)=0$ for every $B\in \Borel(\sfX)$
(e.g.~by the approximation result
\cite[Lemma 7.2.8]{Bogachev}) so that $\mu$
is the null measure.
Hence we can endow $\meas(\sfX)$ with
the weak Hausdorff topology $\sigma(\meas(\sfX),\rmC_b(\sfX))$:
the coarsest topology on $\meas(\sfX)$ for which the maps
$\mu \mapsto \int_\sfX \varphi \de \mu$ are continuous for every $\varphi \in \rmC_b(\sfX)$. 
Since in general $\meas(\sfX)$ is not first-countable (but $\meas_+(\sfX)$ is Polish, thus metrizable, if $\sfX$ is
Polish), 
we will mostly deal with general nets $(\mu_\lambda)_{\lambda\in
  \mathbb L}$, i.e.~maps $\lambda\to \mu_\lambda$ defined in a
directed set $\mathbb L$ with values in $\meas(\sfX)$, see
e.g.~\cite[][\S 4.3]{Folland99}.
By definition, a net $\net{\mu} \subset \meas(\sfX)$ converges to $\mu \in \meas(\sfX)$  in the weak topology if 
\begin{equation*}
\lim_{\lambda \in \mathbb{L}} \int_{\sfX} \varphi \de \mu_{\lambda} = \int_{\sfX} \varphi \de \mu \quad \text{ for every } \, \varphi \in \rmC_b(\sfX).
\end{equation*}
If $f: \sfX \to (-\infty, + \infty]$ is a function, we denote by $D(f)$ its effective domain, defined as
\[ D(f) := \left \{ x \in \sfX \mid f(x) < + \infty \right \}\]
and by $\bar{f}$ its lower semicontinuous envelope i.e.
\begin{equation}
\label{eq:lsc}
 \bar{f}(x) := \inf \left \{ \liminf_{\lambda} f(x_{\lambda}) : \net{x} \subset \sfX \, , \, \, x_{\lambda} \to x \right \}, \quad x \in \sfX;
\end{equation}
$\bar{f}$ is the largest lower semicontinuous function below $f$. If $\sfX$ is also a topological vector space and $A \subset \sfX$, we denote by $\ce{A}$ (resp.~by $\cce{A}$) the convex (resp.~closed and convex) envelope of $A$. If $g: \sfX \to (-\infty, + \infty]$ is a function, we denote its convex envelope and its closed convex envelope by $\ce{g}$ and $\cce{g}$, defined by
\begin{align}
 \ce{g}(x)&:= \inf \left \{ \sum_{i=1}^n \alpha_i g(x_i) : \{x_i\}_{i=1}^n \subset \sfX, \, \{\alpha_i\}_{i=1}^n \in S_n,\, \sum_{i=1}^n \alpha_i x_i = x, \, n \ge 1 \right \},\ x \in \sfX
 \label{eq:co}\\
 \cce{g}&:=\overline{\ce{g}},
 \label{eq:cco}
 \end{align}
 where
 \[ S_n := \left \{ \{\alpha_i\}_{i=1}^n : \alpha_i \in [0,1], \, \sum_{i=1}^n \alpha_i = 1 \right \}.\]
 $\ce{g}$ is the largest convex function below $g$ and $\cce{g}$ is the largest lower semicontinuous and convex function below $g$. The following proposition is a simple density result whose proof easily follows adapting \cite[Example 8.1.6]{Bogachev}.
  \begin{proposition}\label{ss22:prop:density}
Let $\sfX$ be a completely regular space. Then for every $c \ge 0$ we have
\[
  \overline{\{ \mu \in   \discr(\sfX) : \mu(\sfX)
    = c \}}
  = \{ \mu \in \meas_+(\sfX) : \mu(\sfX) = c \},
  \quad
  \overline{\discr(\sfX)}= \meas_+(\sfX).
\]
\end{proposition}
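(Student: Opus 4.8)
The plan is to prove the two displayed identities inclusion by inclusion, reducing the second one to the first. Since $\discr(\sfX)=\bigcup_{c\ge 0}\{\mu\in\discr(\sfX):\mu(\sfX)=c\}$ and $\meas_+(\sfX)=\bigcup_{c\ge 0}\{\mu\in\meas_+(\sfX):\mu(\sfX)=c\}$, once the first identity is established the inclusion $\overline{\discr(\sfX)}\supseteq\meas_+(\sfX)$ is immediate, while $\overline{\discr(\sfX)}\subseteq\meas_+(\sfX)$ follows once we know that $\meas_+(\sfX)$ is weakly closed. Hence everything reduces to two facts: \emph{(i)} $\meas_+(\sfX)$ and the affine hyperplane $\{\mu\in\meas(\sfX):\mu(\sfX)=c\}$ are weakly closed — this gives the ``$\subseteq$'' inclusions, because $\discr(\sfX)\subseteq\meas_+(\sfX)$; and \emph{(ii)} every $\mu\in\meas_+(\sfX)$ with $\mu(\sfX)=c$ lies in the weak closure of $\{\nu\in\discr(\sfX):\nu(\sfX)=c\}$ — this is the ``$\supseteq$'' inclusion, and the substantial point.

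For \emph{(i)}, the hyperplane is weakly closed because $\mathbf 1\in\rmC_b(\sfX)$, so $\mu\mapsto\mu(\sfX)=\langle\mu,\mathbf 1\rangle$ is weakly continuous. For $\meas_+(\sfX)$ I would write it as the intersection $\bigcap\{\mu\in\meas(\sfX):\langle\mu,\varphi\rangle\ge 0\}$ over $\varphi\in\rmC_b(\sfX)$ with $\varphi\ge0$, each member of this family being weakly closed; the content is that this intersection is exactly $\meas_+(\sfX)$, i.e.\ that $\langle\mu,\varphi\rangle\ge 0$ for all nonnegative $\varphi\in\rmC_b(\sfX)$ forces $\mu\ge0$. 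I would argue by contradiction through the Jordan decomposition $\mu=\mu^+-\mu^-$ with $\mu^+\perp\mu^-$: if $\mu^-\neq0$, inner regularity of $\mu^-$ yields a compact $K'$ with $\mu^-(K')>0$ contained in a Borel set that is $\mu^+$-null; outer regularity of the finite Radon measure $\mu^+$ (a consequence of its inner regularity applied to complements) traps $K'$ in an open set $U$ with $\mu^+(U)<\mu^-(K')$; and complete regularity of $\sfX$ produces $\psi\in\rmC_b(\sfX)$ with $\mathbf 1_{K'}\le\psi\le\mathbf 1_U$, whence $0\le\langle\mu,\psi\rangle\le\mu^+(U)-\mu^-(K')<0$, a contradiction. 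Thus $\mu=\mu^+\ge0$. (If one prefers to read the closures in the statement within $\meas_+(\sfX)$ equipped with its subspace topology, this step can be skipped altogether.)

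The density statement \emph{(ii)} is the adaptation of \cite[Example 8.1.6]{Bogachev}. Fix $\mu\in\meas_+(\sfX)$ with $\mu(\sfX)=c$; the case $c=0$ is trivial since then $\mu=\bm{0}_\sfX=0\,\delta_x\in\discr(\sfX)$, so assume $c>0$. A basic weak neighbourhood of $\mu$ has the form $\{\nu\in\meas(\sfX):|\langle\nu-\mu,\varphi_j\rangle|<\eps,\ j=1,\dots,n\}$ with $\varphi_1,\dots,\varphi_n\in\rmC_b(\sfX)$ and $\eps>0$, and it suffices to exhibit in it an element of $\{\nu\in\discr(\sfX):\nu(\sfX)=c\}$. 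I would form the continuous bounded map $\Phi=(\varphi_1,\dots,\varphi_n):\sfX\to\R^n$, whose range lies in a bounded box $Q\subset\R^n$, partition $Q$ into finitely many Borel cells $E_1,\dots,E_m$ with $\diam E_k<\delta$, consider the induced Borel partition $\{\Phi^{-1}(E_k)\}_{k=1}^m$ of $\sfX$, pick a point $x_k\in\Phi^{-1}(E_k)$ whenever $\mu(\Phi^{-1}(E_k))>0$, and set $\nu:=\sum_k\mu(\Phi^{-1}(E_k))\,\delta_{x_k}\in\discr(\sfX)$. Then $\nu(\sfX)=\sum_k\mu(\Phi^{-1}(E_k))=\mu(\sfX)=c$, and since $|\varphi_j(x)-\varphi_j(x_k)|\le|\Phi(x)-\Phi(x_k)|\le\diam E_k<\delta$ for $x\in\Phi^{-1}(E_k)$, one gets $|\langle\nu-\mu,\varphi_j\rangle|\le\sum_k\int_{\Phi^{-1}(E_k)}|\varphi_j(x_k)-\varphi_j(x)|\,\de\mu\le\delta\,\mu(\sfX)=\delta c$ for every $j$. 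Choosing $\delta<\eps/c$ puts $\nu$ in the given neighbourhood, so $\mu$ belongs to the desired closure.

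The only genuinely delicate point is the weak closedness of $\meas_+(\sfX)$ in step \emph{(i)} (and it disappears if one reads the closures in $\meas_+(\sfX)$); the rest is the quantization argument of \cite[Example 8.1.6]{Bogachev} together with the weak continuity of the total-mass functional.
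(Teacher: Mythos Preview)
Your proof is correct and follows exactly the route the paper indicates: the quantization argument of \cite[Example 8.1.6]{Bogachev} for the density, together with weak continuity of the total mass and weak closedness of $\meas_+(\sfX)$ for the easy inclusion. The paper does not spell out a proof, so there is nothing further to compare.
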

\begin{remark}  It holds 
\[ \cce{\D{\sfX}} =\meas_+(\sfX). \]
This is an immediate consequence of the fact that $\ce{\D{\sfX}} = \discr(\sfX)$ and Proposition \ref{ss22:prop:density}.
\end{remark}
The following Lemma is a refinement of Proposition \ref{ss22:prop:density} showing that, given a Borel function $f$ and a non-negative measure $\alpha$, we can construct an approximating sequence of discrete measures for which we have convergence also of the integral of $f$.

\begin{lemma}\label{ss22:lemma:reti}
Let $\sfX$ be a completely regular space and let $\alpha \in
\meas_+(\sfX)$. Let $f: \sfX \to [0, + \infty]$ be a Borel
function. Then there exists a net $\net{\ggamma} \subset
 \{ \mu \in \discr(\sfX) : \mu(\sfX) = \alpha(\sfX)\}$ s.t.
\[ \lim_{\lambda \in \mathbb{L}} \ggamma_{\lambda} = \alpha, \quad \lim_{\lambda \in \mathbb{L} } \int_\sfX f \de \ggamma_{\lambda} = \int_\sfX f \de \alpha.\]
\end{lemma}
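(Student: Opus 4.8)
The plan is to combine the purely measure-theoretic density statement of Proposition \ref{ss22:prop:density} with a partitioning argument that controls the integral of $f$. First I would reduce to the case $\alpha(\sfX)>0$ (the case $\alpha=\bm 0_\sfX$ being trivial, with $\ggamma_\lambda\equiv\bm 0_\sfX$) and, by monotone convergence / truncation, it suffices to treat bounded Borel $f$; more precisely, once the bounded case is settled one recovers the general case by a diagonal net argument, replacing $f$ with $f\wedge n$ and letting $n\to\infty$. Actually, a cleaner route is to keep $f:\sfX\to[0,+\infty]$ and work directly, building the index set as pairs $\lambda=(\mathcal U,n)$ where $\mathcal U$ ranges over a suitable directed family (encoding both a finite Borel partition adapted to the level sets of $f$ and a finite open cover encoding the weak topology), ordered by refinement.

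The core construction is as follows. Fix $n\in\N$ and consider the Borel sets $E_k:=f^{-1}\big([k/n,(k+1)/n)\big)$ for $k=0,1,\dots,n^2-1$ together with $E_\infty:=f^{-1}\big([n,+\infty]\big)$; these form a finite Borel partition of $\sfX$. On each piece $E$ of positive $\alpha$-mass, apply Proposition \ref{ss22:prop:density} to the rescaled restriction $\alpha\mres E/\alpha(E)$: one obtains a net of discrete probability measures on $\sfX$ converging weakly to $\alpha\mres E/\alpha(E)$; by inspecting the construction behind \cite[Example 8.1.6]{Bogachev} one may in fact take these discrete measures to be supported near $E$, i.e.\ with atoms chosen in small neighbourhoods of points of $E$. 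Summing $\alpha(E)$ times these pieces over all $E$ produces discrete measures $\ggamma$ with $\ggamma(\sfX)=\alpha(\sfX)$ and $\ggamma\weakto\alpha$ along the net. For the integral, on $E_k$ one has $|f-k/n|\le 1/n$, and since the atoms of the approximating piece for $E_k$ sit in a neighbourhood of $E_k$ on which (after refining the cover) $f$ stays within $2/n$ of $k/n$ — here one uses that $f$ need not be continuous, so instead I would estimate $\int f\,\de\ggamma$ by splitting $\int_{\sfX}f\,\de\ggamma=\sum_k\int f\,\de\ggamma_k+\int f\,\de\ggamma_\infty$ and bounding each term using the level-set partition of the \emph{target}, not of the source. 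Concretely, refine the partition so that the approximating atoms for $E_k$ are themselves assigned $f$-values in $[k/n,(k+1)/n)$ is too strong; instead I would note $\int f\,\de\ggamma$ can be made close to $\sum_k (k/n)\,\alpha(E_k)$, which in turn is within $1/n$ of $\int f\,\de\alpha$, while the tail $\int_{E_\infty}f\,\de\alpha$ is controlled by monotone convergence as $n\to\infty$.

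The main obstacle is precisely this last point: $f$ is only Borel, so one cannot expect $\int f\,\de\ggamma_\lambda$ to be close to $\int f\,\de\alpha$ merely because $\ggamma_\lambda\weakto\alpha$ — weak convergence tests against $\rmC_b$, not against $f$. The resolution is that we are free to \emph{design} the approximating discrete measures, so we can place each atom arising from the piece $E_k$ carrying exactly the $f$-value of the point it sits at, and arrange that these points all lie in $E_k$ itself (the construction in \cite[Example 8.1.6]{Bogachev} partitions $\sfX$ into small Borel sets and puts one atom in each; intersecting that partition with our partition $\{E_k,E_\infty\}$ keeps every atom inside a single $E_k$). Then $\int f\,\de\ggamma=\sum_j f(x_j)c_j$ with each $f(x_j)\in[k/n,(k+1)/n)$ whenever $x_j$ comes from $E_k$, so $\big|\int f\,\de\ggamma-\sum_k (k/n)\,\ggamma(E_k^{\mathrm{atoms}})\big|\le 1/n\cdot\alpha(\sfX)$ on the finite part, and the $E_\infty$-part contributes $\int_{E_\infty}f\,\de\alpha\to 0$. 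Letting $\lambda$ run over the directed set of (partition refinement, neighbourhood refinement, $n$) and taking $n\to\infty$ along it yields simultaneously $\ggamma_\lambda\weakto\alpha$ and $\int f\,\de\ggamma_\lambda\to\int f\,\de\alpha$, which is the claim. I would remark that the only subtlety requiring care is the interaction of the three refinement parameters, handled by taking the product directed set and a standard diagonal/subnet extraction.
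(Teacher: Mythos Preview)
There is a genuine gap in your handling of the tail $E_\infty = f^{-1}\big([n,+\infty]\big)$. You write that ``the $E_\infty$-part contributes $\int_{E_\infty} f\,\de\alpha \to 0$,'' but that is the tail of $\int f\,\de\alpha$, not of $\int f\,\de\ggamma$. The atoms you place in $E_\infty$ (with total mass $\alpha(E_\infty)$) may sit at points where $f$ is arbitrarily large---even $+\infty$---so their contribution to $\int f\,\de\ggamma$ is uncontrolled from above. Your construction therefore only yields $\liminf_\lambda \int f\,\de\ggamma_\lambda \ge \int f\,\de\alpha$ (which does dispose of the case $\int f\,\de\alpha = +\infty$), but not the matching upper bound when $\int f\,\de\alpha < +\infty$. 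The same issue undermines your proposed reduction to bounded $f$ via $f\wedge n$: knowing $\int (f\wedge n)\,\de\ggamma^n_\lambda \to \int (f\wedge n)\,\de\alpha$ gives no upper control on $\int f\,\de\ggamma^n_\lambda$, since atoms may land where $f \gg n$.

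A fix along your lines is possible but needs to be made explicit: in each Borel piece $B\subset E_\infty$ of positive $\alpha$-mass, choose the atom at a point $x\in B$ with $f(x)\le \alpha(B)^{-1}\int_B f\,\de\alpha$ (such a point always exists), so that the tail contribution to $\int f\,\de\ggamma$ is bounded above by $\int_{E_\infty} f\,\de\alpha$. The paper instead sidesteps the whole difficulty via Lusin's theorem: it produces an increasing sequence of compacta $\sfX_k\subset\sfX$ with $\alpha(\sfX\setminus\sfX_k)\le 1/k$ on which $f$ is bounded and continuous, approximates the renormalized restriction $\alpha_k := \tfrac{\alpha(\sfX)}{\alpha(\sfX_k)}\,\alpha\mres\sfX_k$ by discrete measures supported in $\sfX_k$ (Proposition~\ref{ss22:prop:density}), and then weak convergence alone already gives convergence of $\int f$ because $\restricts{f}{\sfX_k}\in\rmC_b(\sfX_k)$. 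A diagonal net over $k$ finishes the argument. This Lusin route is both shorter and avoids the delicate tail bookkeeping your level-set scheme requires.
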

\begin{proof}
By Lusin's theorem, we can find an increasing sequence of compact sets such that
\[
 \sfX_k \subset \sfX_{k+1} \quad \text{ for every } \, k \ge 1, \quad \alpha(\sfX \setminus \sfX_k) \le \frac{1}{k}, \quad \restricts{f}{\sfX_k} \text{ is bounded and continuous.}
\]
Consider now the family of measures $\{\alpha_k\}_{k \ge 1} \subset \{ \mu \in \meas_+(\sfX) : \mu(\sfX) = \alpha(\sfX)\}$ defined as
\[ \alpha_k := \frac{\alpha(\sfX)}{\alpha(\sfX_k)} \restricts{\alpha}{\sfX_k} \quad \text{ for every } \, k \ge 1.\]
We can easily observe that
\[
\lim_{k \to +\infty} \alpha_k = \alpha
\]
indeed, if $\varphi \in \rmC_b(\sfX)$, we have
\[ \lim_{k \to +\infty} \int_{\sfX} \varphi \de \alpha_k = \lim_{k \to +\infty} \frac{\alpha(\sfX)}{\alpha(\sfX_k)}  \int_\sfX \varphi \nchi_{\sfX_k} \de \alpha = \int_\sfX \varphi \de \alpha\]
by monotone convergence. The same argument shows that we also have
\begin{equation} \label{ss22:alpha}
\lim_{k \to +\infty} \int_\sfX f \de \alpha_k = \int_\sfX f \de \alpha.
 \end{equation}
By Proposition \ref{ss22:prop:density}, for every $k \ge 1$, we can find a net $\{\ggamma^k_{\lambda}\}_{\lambda \in \mathbb{L}_k} \subset \discr(\sfX_k)  \cap \{ \mu \in \meas_+(\sfX) : \mu(\sfX) = \alpha(\sfX)\}$, such that
\[ \lim_{\lambda \in \mathbb{L}_k } \ggamma^k_{\lambda} = \alpha_k.\]
Moreover, since $\restricts{f}{\sfX_k}$ is bounded and continuous, it holds
\[ \lim_{\lambda \in \mathbb{L}_k} \int_\sfX f \de \ggamma^k_{\lambda} = \lim_{\lambda \in \mathbb{L}_k} \int_{\sfX_k} f \de \ggamma^k_{\lambda} = \int_{\sfX_k} f \de \alpha_k = \int_\sfX f \de \alpha_k.\]
This allows us to find, for every $k \ge 1$, some $\bar{m}(k) \in \mathbb{L}_k$ s.t. 
\[ \left | \int_\sfX f \de \ggamma^k_{\lambda} - \int_\sfX f \de \alpha_k \right | \le \frac{1}{k} \quad \text{ for every } \, \lambda \ge \bar{m}(k).\]
Hence we can consider, for every $k \ge 1$, the directed sets $\mathbb{E}_k := \{ \lambda \in \mathbb{L}_k : \lambda_k \ge \bar{m}(k) \}$ and the corresponding new sequence of nets $\{ \ggamma^k_{\lambda}\}_{\lambda \in \mathbb{E}_k}$. Obviously it holds
\begin{equation} \label{ss22:1k}
\lim_{\lambda \in \mathbb{E}_k} \ggamma^k_{\lambda} = \alpha_k, \quad \left | \int_\sfX f \de \ggamma^k_{\lambda} - \int_\sfX f \de \alpha_k \right | \le \frac{1}{k} \quad \text{ for every } \, \lambda \in \mathbb{E}_k.
\end{equation}
Define now the directed set 
\[ \N \otimes \mathbb{E}_k := \{ (k, \lambda) : \lambda \in \mathbb{E}_k \} \text{ with order } (k,\lambda) \le (k',\lambda') \iff k < k' \text{ or } (k=k' \land \lambda \le \lambda').\]
By the diagonal principle for nets, we can find a directed set $\mathbb{B}$ and a monotone final function 
\[ h: \mathbb{B} \to \N \otimes \mathbb{E}_k, \quad h(\beta) = (h_1(\beta), h_2(\beta)) \text{ with } h_2(\beta) \in \mathbb{E}_{h_1(\beta)} \quad \text{ for every } \, \beta \in \mathbb{B}\]
such that the diagonal net $\{ \ggamma_{\beta} \}_{\beta \in \mathbb{B}} := \{ \ggamma^{h_1(\beta)}_{h_2(\beta)} \}_{\beta \in \mathbb{B}} \subset \discr(\sfX) \cap \{ \mu \in \meas_+(\sfX) : \mu(\sfX) = \alpha(\sfX)\}$ converges to $\alpha$. We only need to prove that also the integral of $f$ converges:
\begin{align*}
\left | \int_\sfX f \de \ggamma_{\beta} - \int_\sfX f \de \alpha \right | &\le \left | \int_\sfX f \de \ggamma_{\beta} - \int_\sfX f \de \alpha_{h_1(\beta)} \right | + \left | \int_\sfX f \de \alpha_{h_1(\beta)} -  \int_\sfX f \de \alpha \right |\\
&= \left | \int_\sfX f \de \ggamma^{h_1(\beta)}_{h_2(\beta)} - \int_\sfX f \de \alpha_{h_1(\beta)} \right | + \left | \int_\sfX f \de \alpha_{h_1(\beta)} -  \int_\sfX f \de \alpha \right |\\
& \le \frac{1}{h_1(\beta)} + \left | \int_\sfX f \de \alpha_{h_1(\beta)} -  \int_\sfX f \de \alpha \right |, 
\end{align*}
where we have used \eqref{ss22:1k}. Now it is enough to observe that $h_1 : \mathbb{B} \to \N$ is a final monotone function i.e. it is an increasing monotone sequence converging to $+ \infty$. Passing to $\lim_{\beta \in \mathbb{B}}$ and using \eqref{ss22:alpha}, we conclude.
\end{proof}

\subsection{The cone construction}\label{sec:theconee}
It will be natural to state some definitions and results in the context of the so called geometric cone: we introduce on $\sfX \times \R_+$ the equivalence relation 
\begin{equation}
\label{eq:cone-relation}
(x,r) \sim (y,s) \quad \overset{\text{def}}{\Leftrightarrow} \quad \left [ x=y, r=s \ne 0 \quad \vee \quad r=s=0 \right ] 
\end{equation}
and the corresponding geometric cone $\f{C}[\sfX]:=(\sfX \times \R_+)/\sim$, whose points are denoted by gothic letters as $\f{y}$. We denote by $\f{p}$ the quotient map $\f{p}: \sfX \times \R_+ \to \f{C}[\sfX]$ sending a point $(x,r)$ to its equivalence class $[x,r]$. Notice that $\f{p}$ is just the identity map except for those points with $r=0$, which are all sent to the same equivalence class, the so called vertex of the cone, that we denote with $\f{o}$. We will consider the natural product operation on the cone given by
\[ \lambda [x,r] := [x,\lambda r] \quad \text{ for every } \lambda, r \ge 0, \, x \in \sfX.\]
 
In certain cases it is useful to 
add a further isolated point $o$ 
(which plays the role of a source or a sink) 
to $\sfX$ and to consider the spaces $\sfX_o:=
\sfX\sqcup\{o\}$ and
$\sfY_o:= (\sfX\times \R_+)\sqcup 
\{(o,0)\}\subset \sfX_o\times \R_+$. We can identify  $\sfX$ (resp.~$\sfX \times \R_+$) 
with a (closed) subset of $\sfX_o$ (resp.~$\sfY_o$) and 
$\meas_+(\sfX)$ (resp.~$\meas_+(\sfX\times \R_+)$) with the (closed) subset 
of $\meas_+(\sfX_o)$ (resp.~$\meas_+(\sfY_o)$) given by
all the measures not charging $\{o\}$
(resp.~$\{(o,0)\}$). 
Notice that $\f C[\sfX]=\sfY_o/\sim$
and we can trivially extend $\f p$ to $\sfY_o$.

On the cone we introduce the projections on $\R_+$ and $\sfX_o$ simply
defined as $\sfr([x,r]) = r$ and $\sfx([x,r]) = x$ if $r>0$ and
$\sfx([x,r])=
\sfx(\f o)=o$ if $r=0$.

We can define a right inverse of $\f{p}:\sfY_o\to \f C[\sfX]$ as
\[
\f{q}([x,r]) = (\sfx([x,r]), \sfr([x,r])).
\]
Notice that also that $\f{q}(\f C[\sfX]\setminus \{\fo\})=\sfX\times (0,+\infty).$ 

On $\f{C}[\sfX]$ we consider the following topology, weaker then the quotient one: a local system of neighbourhoods of a point $[x,r]$ is just the image trough $\f{p}$ of the local system of neighbourhoods given by the product topology at $(x,r) \in \sfX \times \R_+$, if $r>0$. A local system of neighbourhoods at $\f{0}$ is given by
\begin{equation}
\label{eq:o-neighborhoods}
\left \{ \left \{ [x,r] \in \f{C}[\sfX] : 0 \le r < \eps \right \} \right \}_{\eps >0}.
\end{equation}
If the topology of $\sfX$ is induced by a metric $\sfd$, then the topology of $\f{C}[\sfX]$ is induced by the metric $\sfd_\f{C}: \f{C}[\sfX]\times \f{C}[\sfX] \to [0, +\infty)$ defined as
\begin{equation}\label{ss22:eq:distcone} \sfd_\f{C}([x,r],[y,s]) := \left ( r^2+s^2-2rs\cos(\sfd(x,y) \wedge \pi) \right )^{\frac{1}{2}}, \quad [x,r], [y,s] \in \f{C}[\sfX].
\end{equation}
With the above topology, $\f{C}[\sfX]$ is completely regular and it is the right object to consider when one wants to represent elements in $\Delta_+(\sfX)$; in particular we have the following result.
\begin{lemma} \label{ss22:lem:homeo} Let $\sfX$ be a completely regular space. Then $\Delta_+(\sfX)$ is homeomorphic to $\f{C}[\sfX]$.
\end{lemma}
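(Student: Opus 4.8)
The plan is to exhibit the explicit bijection $\Phi\colon\f{C}[\sfX]\to\D{\sfX}$ given by $\Phi([x,r]):=r\delta_x$ and to check that both $\Phi$ and $\Phi^{-1}$ are continuous, where $\D{\sfX}\subset\meas_+(\sfX)$ carries the subspace topology induced by the weak topology $\sigma(\meas(\sfX),\rmC_b(\sfX))$. First one observes that $\Phi$ is well defined: the only identification made by $\sim$ in \eqref{eq:cone-relation} is $(x,0)\sim(y,0)$, and $0\cdot\delta_x=0\cdot\delta_y=\bm{0}_\sfX$; it is onto by the very definition of $\D{\sfX}$; and it is one-to-one since $r\delta_x=s\delta_y$ with $r,s>0$ forces $r=s$ (test against the constant $1$) and $x=y$ (compare supports). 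Since neither space is first countable in general, I would argue throughout with nets.

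For the continuity of $\Phi$, take a net $[x_\lambda,r_\lambda]\to[x,r]$ in $\f{C}[\sfX]$ and fix $\varphi\in\rmC_b(\sfX)$; I must show $r_\lambda\varphi(x_\lambda)\to r\varphi(x)$, i.e.\ $\scalprod{r_\lambda\delta_{x_\lambda}}{\varphi}\to\scalprod{r\delta_x}{\varphi}$. If $r>0$, then by the definition of the topology of $\f{C}[\sfX]$ at a point with positive radial coordinate, the net $(x_\lambda,r_\lambda)$ is eventually in $\sfX\times(0,+\infty)$ and converges there to $(x,r)$, so $x_\lambda\to x$ in $\sfX$ and $r_\lambda\to r$, whence $r_\lambda\varphi(x_\lambda)\to r\varphi(x)$ by continuity of $\varphi$ and of multiplication. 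If $r=0$, i.e.\ the limit is the vertex $\fo$, then by \eqref{eq:o-neighborhoods} we have $r_\lambda=\sfr([x_\lambda,r_\lambda])\to0$, and $|r_\lambda\varphi(x_\lambda)|\le r_\lambda\|\varphi\|_\infty\to0=r\varphi(x)$. This direction is routine.

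For the continuity of $\Phi^{-1}$, suppose $r_\lambda\delta_{x_\lambda}\to\mu$ weakly with $\mu=r\delta_x\in\D{\sfX}$; I must deduce $[x_\lambda,r_\lambda]\to[x,r]$. Testing against $\varphi\equiv1$ immediately gives $r_\lambda\to r$. If $r=0$ then $\mu=\bm{0}_\sfX$ and $r_\lambda\to0$ forces $[x_\lambda,r_\lambda]\to\fo$ by \eqref{eq:o-neighborhoods}. The substantive case is $r>0$, where $r_\lambda>0$ eventually and it remains to prove $x_\lambda\to x$ in $\sfX$; then $(x_\lambda,r_\lambda)\to(x,r)$ in $\sfX\times(0,+\infty)$ and hence $[x_\lambda,r_\lambda]\to[x,r]$ by the definition of the cone topology. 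To get $x_\lambda\to x$ I would argue by contradiction: if $x_\lambda\not\to x$, there is an open $U\ni x$ and a subnet with $x_{\lambda'}\in C:=\sfX\setminus U$; complete regularity of $\sfX$ provides $f\colon\sfX\to[0,1]$ continuous with $f(x)=0$ and $f\equiv1$ on $C$; weak convergence gives $r_\lambda f(x_\lambda)\to rf(x)=0$, so along the subnet $r_{\lambda'}=r_{\lambda'}f(x_{\lambda'})\to0$, contradicting $r_{\lambda'}\to r>0$.

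This last step — converting weak convergence of the Dirac masses into convergence of the base points by separating $x$ from the tail of the net with a bounded continuous function — is the only point that is not purely formal, and it is exactly where complete regularity of $\sfX$ enters; everything else is bookkeeping with the definitions of the two topologies. Combining the two continuity statements, $\Phi$ is a homeomorphism of $\f{C}[\sfX]$ onto $\D{\sfX}$.
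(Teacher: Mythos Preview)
Your proof is correct and uses exactly the same map as the paper. The paper merely exhibits the map $\varphi([x,r])=r\delta_x$ (written with a case distinction at $r=0$) and leaves the verification of continuity in both directions to the reader; you have supplied those details, including the only nontrivial step---using complete regularity to separate $x$ from the tail of a non-convergent net---cleanly and correctly.
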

\begin{proof} The homeomorphism is given by the map $\varphi : \f{C}[\sfX] \to \Delta_+(\sfX)$ defined as
\[ \varphi([x,r]) := \begin{cases} r\delta_x \quad &\text{ if } r>0, \\ \bm{0}_\sfX \quad &\text{ if } r=0. \end{cases}\]
\end{proof}
\begin{remark}\label{rem:ossessivo}
Let us point out an obvious but useful fact that we will extensively use in the following without having to recall it further:
if a measure $\bbeta\in \meas_+\big(\f C[\sfX]\big)$
does not charge $\{\fo\}$
then 
$\sfx_\sharp \bbeta$ 
does not charge $\{o\}$
and therefore we will identify it with a measure
in $\meas_+(\sfX)$. 
In particular, this construction does not depend on the choice of the point $o$
and the map $\sfx$ takes value in $\sfX$ $\bbeta$-a.e.
\end{remark}
If $R>0$, we define 
\begin{equation} \label{ss22:eq:coner}
    \f{C}_R[\sfX] := \left \{ [x,r] \in \f{C}[\sfX] : 0 \le r \le R \right \}
\end{equation}
and we will often identify measures on $\f{C}[\sfX]$ with support contained in $\f{C}_R[\sfX]$ with elements of $\meas(\f{C}_R[\sfX])$.
For every $p \ge 1$, we introduce moreover the set
\begin{equation*}
\mathcal{\f{M}}_+^p(\f{C}[\sfX]) := \left \{ \alpha \in \meas_+(\f{C}[\sfX]) : \int_{\f{C}[\sfX]} \sfr^p \de \alpha < + \infty  \right \}, 
\end{equation*}
and the map (see the above Remark
\ref{rem:ossessivo})
\begin{equation*}
\f{h}^p : \mathcal{\f{M}}_+^p(\f{C}[\sfX]) \to \meas_+(\sfX), \quad \f{h}(\alpha) = \sfx_\sharp (\sfr^p \alpha).
\end{equation*}

We stress the fact that the map $\f{h}^p$ does not depend on the point $o$ 
occuring in the definition of $\sfx$. \\

\noindent
We introduce now the product cone: given $\sfX_1$ and $\sfX_2$ completely regular spaces, we define $\pc:= \f{C}[\sfX_1]\times \f{C}[\sfX_2]$ endowed with the product topology. Points in the product cone are denoted by bold gothic letters as $\bm{\f{y}}= (\f{y}_1, \f{y}_2)= ([x_1, r_1], [x_2, r_2])$. We denote with $\f o_i$ 
(resp.~$o_i$) the vertex of $\f C[\sfX_i]$ (resp.~isolated points added to $\sfX_i$
and forming the disjoint union $\sfX_{i,o}:=
\sfX_i\sqcup\{o_i\}$),
$i=1,2$ and we set $\boldsymbol\fo:=(\fo_1,\fo_2),$
$\boldsymbol o:=(o_1,o_2),$
$\f C_{\boldsymbol \fo}[\sfX_1,\sfX_2]:=
\pc\setminus \{\boldsymbol\fo\}.$

On the product cone we can consider the projections on the two
components
$\pi^i : \pc \to \f{C}[\sfX_i]$ sending $([x_1,
r_1], [x_2, r_2])$ to $[x_i, r_i]$ and the projections on $\R_+$ and
$\sfX_i$ simply defined as $\sfr _i := \sfr \circ
\pi^i$ and $\sfx_i := \sfx \circ \pi^i$ ($\sfx_i$ maps $\f C[\sfX_1,\sfX_2]$ into
$\sfX_{i,o}$).
In analogy with \eqref{ss22:eq:distcone}, if the topologies of $\sfX_1$ and $\sfX_2$ are induced by distances $\sfd_1$ and $\sfd_2$ respectively, the topology of the product cone is induced by the distance
\begin{equation}\label{ss22:eq:prcdist}
 (\sfd_1 \otimes_{\f{C}} \sfd_2)((\f{y}_1, \f{y}_2),(\f{w}_1, \f{w_2})) := \left (\sfd_{1,\f{C}}^2(\f{y}_1, \f{w}_1) + \sfd_{2,\f{C}}^2(\f{y}_2, \f{w}_2)  \right )^{\frac{1}{2}}, \quad (\f{y}_1, \f{y}_2), (\f{w}_1, \f{w}_2) \in \pc.
 \end{equation}
As in \eqref{ss22:eq:coner}, given $R>0$, we define
\[
    \pcr{R} := \f{C}_R[\sfX_1] \times \f{C}_R[\sfX_2] = \left \{
      \bm{\f{y}} \in \pc : 0 \le \sfr_i(\bm{\f{y}}) \le R, \, i=1,2\right \}
\]
and we will identify measures on $\pc$ with support contained in $\pcr{R}$ with elements of $\meas(\pcr{R})$.
For every $p \ge 1$, we introduce the set
\begin{equation*}
\mathcal{\f{M}}_+^p(\pc) := \left \{ \aalpha \in \meas_+(\pc) : \int_{\pc} (\sfr_1^p + \sfr_2^p) \de \aalpha < + \infty  \right \}, 
\end{equation*}
and the maps (see the above Remark \ref{rem:ossessivo})
\begin{equation*}
\f{h}_i^p : \mathcal{\f{M}}_+^p(\pc) \to \meas_+(\sfX_i), \quad \f{h}_i^p(\aalpha) = (\sfx_i)_\sharp (\sfr_i^p \aalpha) \quad i=1,2.
\end{equation*}
Finally we define, for every $(\mu_1, \mu_2) \in \meas_+(\sfX_1) \times \meas_+(\sfX_2)$ and every $p \ge 1$, the set 
\begin{equation}\label{eq:hommarg}
\f{H}^p(\mu_1, \mu_2) := \left \{ \aalpha \in  \mathcal{\f{M}}_+^p(\pc) : \f{h}_i^p(\aalpha) = \mu_i, \, i=1,2\right \}.
\end{equation}
If $\aalpha \in \f{H}^p(\mu_1, \mu_2)$, we say that $\mu_1$ and
$\mu_2$ are the $p$-homogeneous marginals of $\aalpha$ and that $\aalpha$ is a $p$-homogeneous plan/coupling between $\mu_1$ and $\mu_2$. 
Notice that if $\aalpha \in \f{H}^p(\mu_1, \mu_2)$
then also
the restriction of $\aalpha$ to
$\pco$, $\aalpha_{\bfo}:=\aalpha\mres \pco$,
belongs to $\f{H}^p(\mu_1, \mu_2)$.

\begin{remark}[Trivial cases]
    \label{rem:trivial-cases}
    If $\mu_i(\sfX_i)=0$ for $i=1,2$ ($\mu_i$ are the null
    measures in $\sfX_i$), then 
    $\f{H}^p(\mu_1, \mu_2)=\{\lambda \delta_{\bfo}:\lambda\ge 0\}.$

    If $\mu_1(\sfX_1)>\mu_2(\sfX_2)=0$
    then 
    $\f{H}^p(\mu_1, \mu_2)
    = \left \{\alpha\otimes \delta_{\fo_2}:
    \alpha\in \meas_+(\f C[\sfX_1]),\ 
    \f h^p(\alpha)=\mu_1\right \}.$
    A similar characterization holds 
    when 
    $0=\mu_1(\sfX_1)<\mu_2(\sfX_2).$

    We say that $(\mu_1,\mu_2)$ is a non-trivial pair
    if $\mu_i(\sfX_i)>0$ for $i=1,2$.
\end{remark}
\begin{remark}
\label{rem:invariance1}
    The class $\f H^p(\mu_1,\mu_2)$ 
    satisfies a natural invariance property with respect 
    to inclusion of the ambient spaces $\sfX_i$. 
    First of all,
    if $\sfX_i$ are subsets of 
    completely regular spaces
    $\tilde \sfX_i$
    (and the topology of
    $\sfX_i$ coincides with the
    relative topology induced by the inclusion in $\tilde \sfX_i$)
    there is a natural 
    identification between 
    (pair of) Radon measures
    $\mu_i\in \meas_+(\sfX_i)$
    and (pair of) Radon measures
    $\tilde \mu_i\in \meas_+(\tilde \sfX_i)$
    concentrated on $\sfX_i$
    (i.e.~such that $\tilde\sfX_i\setminus 
    \sfX_i$ is $\mu_i$-negligible).
    Since $\f C[\sfX_1,\sfX_2]$ can be considered as a subset of 
    $\f C[\tilde\sfX_1,\tilde\sfX_2]$ (with identification of the corresponding vertexes $\f o_i$ and $\tilde{\f o}_i$)
    every homogeneous coupling
    $\aalpha\in \f H^p(\mu_1,\mu_2)$
    can be considered as a measure
    in $\meas_+(\f C[\tilde\sfX_1,\tilde\sfX_2])$ 
    which belongs to 
    $\f H^p(\tilde\mu_1,\tilde\mu_2)$.
    Conversely, 
    if $\tilde\mu_i\in \meas_+(\tilde\sfX_i)$ 
    are concentrated 
    in $\sfX_i$
    then every plan $\tilde\aalpha\in \f H^p(\tilde\mu_1,\tilde\mu_2)$
    is concentrated in 
    $\f C[\sfX_1,\sfX_2]$
    and its restriction to $\f C[\sfX_1,\sfX_2]$
    belongs to $\f H^p(\mu_1,\mu_2).$
    In particular, recalling 
    the construction 
    of the spaces $\sfX_{i,o}$
    at the beginning of Section
    \ref{sec:theconee}, for every
    pair of measures $\mu_i\in \meas_+(\sfX_i)$
    the set
    $\f H^p(\mu_1,\mu_2)$
    can also be considered 
    as a subset of
    $\meas_+\big(\f C[\sfX_{1,o},\sfX_{2,o}]\big)$.
\end{remark}
 
The following renormalization result comes from \cite{LMS18}.
\begin{lemma} \label{ss22:le:dialations} Let $\sfX_i$ for $i=1, 2$ be completely regular spaces and let $p \ge 1$. Given $\aalpha \in \meas_+(\pc)$ and $\vartheta: \pc \to (0, + \infty)$ Borel measurable in $L^p(\pc,\aalpha)$ we can define
\begin{align*}
\prd_{\vartheta}(\bm{\f{y}}) := \left ( \vartheta(\bm{\f{y}})^{-1} \f{y}_1, \vartheta(\bm{\f{y}})^{-1} \f{y}_2 \right ), \quad \bm{\f{y}} \in \pc, \quad \quad \quad 
\dil_{\vartheta, p}(\aalpha) :=(\prd_{\vartheta})_\sharp (\vartheta^p \aalpha).
\end{align*}
Then we have
\[
\f{h}_i(\dil_{\vartheta,p}(\aalpha)) = \f{h}_i^p(\aalpha), \quad i=1,2.
\]
In particular, if we define
\[
 \vartheta_{\aalpha, p}(\bm{\f{y}}) := \frac{1}{r^*(\aalpha)} \begin{cases} \sfr_1^p(\bm{\f{y}}) + \sfr_2^p(\bm{\f{y}}) \quad &\text{ if } \bm{\f{y}} \ne (\f o_1, \f o_2) \\ 1 \quad &\text{ if } \bm{\f{y}} = (\f o_1, \f o_2) \end{cases}, 
\]
where $r^*(\aalpha)$ is a normalization constant s.t.~$\int_{\pc}\vartheta_{\aalpha, p}^p \de \aalpha =1$ given by
\[
r^*(\aalpha) := \int_{\pc} (\sfr_1^p + \sfr_2^p) \de \aalpha + \aalpha(\{(\f o_1, \f o_2)\}),
\]
we have that $\dil_{\vartheta_{\aalpha, p},p}(\aalpha) \in \prob(\pc)$, it has the same $p$-homogeneous marginals of $\aalpha$ and its support is contained in $\pcr{r^*(\aalpha)}$.
\end{lemma}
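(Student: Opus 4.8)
The plan is to prove the marginal identity by an explicit change of variables on the cone, and then to obtain the final assertion by specialising the identity to $\vartheta=\vartheta_{\aalpha,p}$. The starting point is to record how the coordinate maps interact with $\prd_{\vartheta}$: multiplying the radial coordinate of a point of $\pc$ by a strictly positive factor $\lambda$ multiplies each $\sfr_i$ by $\lambda$, leaves each $\sfx_i$ unchanged and fixes the vertices $\fo_i$; taking $\lambda=\vartheta(\bm{\f{y}})^{-1}$ this gives
\[
\sfr_i\circ\prd_{\vartheta}=\vartheta^{-1}\sfr_i,\qquad \sfx_i\circ\prd_{\vartheta}=\sfx_i\qquad\text{on }\pc,\ i=1,2.
\]
I would also observe that $\prd_{\vartheta}$ is Borel, being the composition of $\bm{\f{y}}\mapsto(\vartheta(\bm{\f{y}})^{-1},\bm{\f{y}})$ with the continuous scaling action $(\lambda,\bm{\f{y}})\mapsto\lambda\bm{\f{y}}$ on $(0,+\infty)\times\pc$; that $\vartheta^p\aalpha$ is a finite Radon measure, finiteness being precisely the hypothesis $\vartheta\in L^p(\pc,\aalpha)$ and Radon-ness following from that of $\aalpha$ by truncating $\vartheta$ at level $n$ and letting $n\to+\infty$; and hence that $\dil_{\vartheta,p}(\aalpha)=(\prd_{\vartheta})_\sharp(\vartheta^p\aalpha)$ is a well-defined finite non-negative measure, which is again Radon because, by Lusin's theorem applied as in the proof of Lemma~\ref{ss22:lemma:reti}, $\vartheta^p\aalpha$ is concentrated on a $\sigma$-compact set on whose compact pieces $\prd_{\vartheta}$ coincides with a continuous map.

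For the marginal identity I would test $\f{h}_i^p\big(\dil_{\vartheta,p}(\aalpha)\big)$ against an arbitrary non-negative Borel $\psi:\sfX_i\to[0,+\infty]$. Using the definitions of $\f{h}_i^p$ and $\dil_{\vartheta,p}$ and the elementary rules $\int g\de(h\beta)=\int gh\de\beta$ and $\int g\de f_\sharp\beta=\int (g\circ f)\de\beta$,
\[
\int_{\sfX_i}\psi\de\f{h}_i^p\big(\dil_{\vartheta,p}(\aalpha)\big)
=\int_{\pc}\big(\psi\circ\sfx_i\circ\prd_{\vartheta}\big)\,\big(\sfr_i\circ\prd_{\vartheta}\big)^p\,\vartheta^p\de\aalpha ,
\]
and by the identities of the previous paragraph the integrand on the right equals $(\psi\circ\sfx_i)\,\vartheta^{-p}\sfr_i^p\,\vartheta^p=(\psi\circ\sfx_i)\,\sfr_i^p$, so the integral is $\int_{\pc}(\psi\circ\sfx_i)\,\sfr_i^p\de\aalpha=\int_{\sfX_i}\psi\de\f{h}_i^p(\aalpha)$. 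As $\psi$ is arbitrary — and the weights $\sfr_i^p$ make neither measure charge the added point $o_i$, cf.\ Remark~\ref{rem:ossessivo} — this yields $\f{h}_i^p\big(\dil_{\vartheta,p}(\aalpha)\big)=\f{h}_i^p(\aalpha)$ for $i=1,2$, as claimed.

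For the last part I would specialise to $\vartheta=\vartheta_{\aalpha,p}$; for the expressions to make sense one must have $\aalpha\in\mathcal{\f{M}}_+^p(\pc)$ (so $r^*(\aalpha)<+\infty$) and $\aalpha\neq 0$ (so $r^*(\aalpha)>0$), which I assume. Splitting $\int_{\pc}\vartheta_{\aalpha,p}^p\de\aalpha$ over $\pc\setminus\{\bfo\}$ and $\{\bfo\}$ — on the latter $\sfr_1=\sfr_2=0$ — gives exactly $\tfrac1{r^*(\aalpha)}\big(\int_{\pc}(\sfr_1^p+\sfr_2^p)\de\aalpha+\aalpha(\{\bfo\})\big)=1$; in particular $\vartheta_{\aalpha,p}\in L^p(\pc,\aalpha)$, so the identity just proved applies and $\dil_{\vartheta_{\aalpha,p},p}(\aalpha)$ has the same $p$-homogeneous marginals as $\aalpha$, while its total mass equals $\int_{\pc}\vartheta_{\aalpha,p}^p\de\aalpha=1$, so it lies in $\prob(\pc)$. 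Finally, since $\prd_{\vartheta_{\aalpha,p}}$ scales radial coordinates by $\vartheta_{\aalpha,p}^{-1}$ and $\vartheta_{\aalpha,p}$ is chosen so that $\sfr_i/\vartheta_{\aalpha,p}$ stays bounded by the constant defining $\pcr{r^*(\aalpha)}$ (using $\sfr_i^p\le\sfr_1^p+\sfr_2^p$), while $\prd_{\vartheta_{\aalpha,p}}(\bfo)=\bfo$, one concludes $\supp{\dil_{\vartheta_{\aalpha,p},p}(\aalpha)}\subset\pcr{r^*(\aalpha)}$, the latter set being closed.

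The computation is mostly routine bookkeeping; the two points that genuinely need care are the behaviour at the vertices $\fo_i$, where $\prd_{\vartheta}$ degenerates to the identity and $\sfr_i$ vanishes — so both the cancellation $\vartheta^{-p}\cdot\vartheta^p$ in the change of variables and the normalisation $\int\vartheta_{\aalpha,p}^p\de\aalpha=1$ must be verified separately on $\{\bfo\}$ — and the measurability point that $\dil_{\vartheta,p}(\aalpha)$ is again a Radon measure although $\prd_{\vartheta}$ is only Borel, which is where Lusin's theorem (as used in Lemma~\ref{ss22:lemma:reti}) enters.
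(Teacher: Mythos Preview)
The paper does not prove this lemma at all; it merely attributes the result to \cite{LMS18}. Your argument for the marginal identity $\f{h}_i^p(\dil_{\vartheta,p}(\aalpha))=\f{h}_i^p(\aalpha)$ is correct and is the natural one: record the identities $\sfr_i\circ\prd_\vartheta=\vartheta^{-1}\sfr_i$ and $\sfx_i\circ\prd_\vartheta=\sfx_i$, then test against non-negative Borel functions so that the factors $\vartheta^{-p}$ and $\vartheta^p$ cancel. The remark on Radon-ness via Lusin is also fine.

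There is, however, a computational slip in your treatment of the specific choice $\vartheta_{\aalpha,p}$ for $p>1$. With the definition in the statement one has $\vartheta_{\aalpha,p}=(\sfr_1^p+\sfr_2^p)/r^*(\aalpha)$ away from $\bfo$, hence $\vartheta_{\aalpha,p}^p=(\sfr_1^p+\sfr_2^p)^p/r^*(\aalpha)^p$; your displayed computation instead integrates $\vartheta_{\aalpha,p}$ to the first power and divides by $r^*(\aalpha)$ to the first power. Likewise, the support bound needs $\sfr_i\le \sfr_1^p+\sfr_2^p$, which does \emph{not} follow from your hint $\sfr_i^p\le\sfr_1^p+\sfr_2^p$ when $p>1$ and the radii are small. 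Both steps are valid for $p=1$, and indeed the lemma as stated is itself inconsistent for $p>1$: with the given $\vartheta_{\aalpha,p}$ and the given $r^*(\aalpha)$ one does not have $\int\vartheta_{\aalpha,p}^p\de\aalpha=1$. Since the paper only invokes the lemma with $p=1$ (see the proof of Lemma~\ref{ss22:le:mineq}), the issue is harmless downstream, but you should either restrict your argument to $p=1$ or note that the intended definition is presumably $\vartheta_{\aalpha,p}:=\big((\sfr_1^p+\sfr_2^p)/r^*(\aalpha)\big)^{1/p}$ away from $\bfo$, under which both the normalisation and the support bound (now into $\pcr{r^*(\aalpha)^{1/p}}$) go through for every $p\ge1$.
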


The following Lemma is a compactness result for homogeneous marginals (the corresponding statement for classical marginals is \cite[Theorem 3.1]{SS20}). 
\begin{lemma} \label{ss22:lemma:generalmarginals} Let $p \ge 1$, $R>0$ and let $\sfX, \sfX_i$, $i=1, 2$ be completely regular spaces. Then:
\begin{enumerate}
    \item if $\net{\alpha}$ is a net in $\prob(\f{C}_R[\sfX])$ such that $\mu_\lambda:= \f{h}^p(\alpha_\lambda)\in \meas_+(\sfX)$, $\lambda \in \mathbb{L}$, converges to some $\mu \in \meas_+(\sfX)$, then there exists a subnet $(\alpha'_\beta)_{\beta \in\mathbb B}$ of $\net \alpha$ convergent to some $\alpha \in \prob(\f{C}_R[\sfX])$ with $\f{h}^p(\alpha) = \mu$;
    \item if $\net \aalpha$ is a net in $\prob(\pcr{R})$ such that $\mu_{i,\lambda}:=\f{h}_i^p(\aalpha_\lambda)\in \meas_+(\sfX_i)$,
  $i=1,2$, $\lambda\in \mathbb L$, converge to some $\mu_i$ in $\meas(\sfX_i)$, 
  then there exists a subnet $(\aalpha'_\beta)_{\beta \in\mathbb B}$ of $\net \aalpha$
  convergent to some
  $\aalpha \in \f{H}^p(\mu_1, \mu_2)$.
\end{enumerate}
  \end{lemma}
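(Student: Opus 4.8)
\emph{The plan} is to reduce both parts to the classical marginal-compactness theorem \cite[Theorem 3.1]{SS20}, exploiting that in the truncated cone the radial coordinate runs over the \emph{compact} interval $[0,R]$ and that the radial weight $\sfr^p$ degenerates at the vertex. For part~(1) one first disposes of the case $\mu=\bm{0}_\sfX$: from $\alpha_\lambda(\{\sfr\ge\delta\})\le\delta^{-p}\int\sfr^p\de\alpha_\lambda=\delta^{-p}\mu_\lambda(\sfX)\to0$ for every $\delta>0$ it follows directly that the whole net converges to $\delta_{\fo}$. Assuming $\mu\neq\bm{0}_\sfX$, identify $\f C_R[\sfX]\setminus\{\fo\}$ with $\sfX\times(0,R]$ through the right inverse $\f q$ of the quotient map and set $\hat\beta_\lambda:=\f q_\sharp(\sfr^p\alpha_\lambda)\in\meas_+(\sfX\times[0,R])$; this measure is concentrated on $\sfX\times(0,R]$, has total mass $\mu_\lambda(\sfX)\to\mu(\sfX)$, and first marginal $(\pi^1)_\sharp\hat\beta_\lambda=\f h^p(\alpha_\lambda)=\mu_\lambda\to\mu$. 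Since $[0,R]$ is compact, \cite[Theorem 3.1]{SS20} (applied, if one wishes, to $\hat\beta_\lambda/\mu_\lambda(\sfX)$) yields a subnet along which $\hat\beta_\lambda\to\hat\beta$ in $\meas_+(\sfX\times[0,R])$ with $(\pi^1)_\sharp\hat\beta=\mu$. The decisive point is the \emph{uniform} bound $\hat\beta_\lambda(\sfX\times[0,\delta))=\int_{\{0<\sfr<\delta\}}\sfr^p\de\alpha_\lambda\le\delta^p$, which forces $\hat\beta(\sfX\times\{0\})=0$; moreover $\int r^{-p}\de\hat\beta\le\liminf_\lambda\int r^{-p}\de\hat\beta_\lambda=\liminf_\lambda\alpha_\lambda(\{\sfr>0\})\le1$ by lower semicontinuity of integration against the lsc map $r\mapsto r^{-p}$. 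One then takes as candidate limit
\[
\alpha:=\f p_\sharp\big(r^{-p}\hat\beta\big)+\Big(1-\int r^{-p}\de\hat\beta\Big)\delta_{\fo}\ \in\ \prob(\f C_R[\sfX]),
\]
which is Radon and, since $\hat\beta$ does not charge $\{r=0\}$, satisfies $\f h^p(\alpha)=(\pi^1)_\sharp\hat\beta=\mu$.

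It remains --- and this is the only genuinely delicate point --- to check that $\alpha_\lambda\to\alpha$ weakly in $\meas(\f C_R[\sfX])$ along the selected subnet. By a uniform approximation, replacing $\varphi\in\rmC_b(\f C_R[\sfX])$ by $\varphi_\delta:=\varphi(\fo)+(\varphi-\varphi(\fo))(g_\delta\circ\sfr)$ with $g_\delta\colon[0,R]\to[0,1]$ continuous, $\equiv0$ on $[0,\delta]$, $\equiv1$ on $[2\delta,R]$ --- so that $\|\varphi-\varphi_\delta\|_\infty\le\sup_{\{\sfr<2\delta\}}|\varphi-\varphi(\fo)|\to0$, legitimately because $\alpha_\lambda$ and $\alpha$ are probability measures --- it suffices to test against $\varphi$ that are constant, equal to $\varphi(\fo)$, on a neighbourhood $\{\sfr\le\delta_0\}$ of the vertex. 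For such $\varphi$ the definitions of $\alpha$ and of $\hat\beta_\lambda$ give
\[
\int_{\f C_R[\sfX]}\varphi\de\alpha_\lambda=\varphi(\fo)+\int_{\sfX\times(0,R]}\psi_\varphi\de\hat\beta_\lambda,\qquad \psi_\varphi:=(\varphi\circ\f p-\varphi(\fo))\,r^{-p},
\]
and $\psi_\varphi$ extends to a \emph{bounded and continuous} function on $\sfX\times[0,R]$: the singularity of $r^{-p}$ at $r=0$ is killed because $\varphi\circ\f p-\varphi(\fo)$ vanishes on the strip $\{r\le\delta_0\}$. Hence $\hat\beta_\lambda\to\hat\beta$ passes to the limit in the displayed identity, and the analogous identity for $\alpha$ gives $\int\varphi\de\alpha_\lambda\to\int\varphi\de\alpha$; this concludes part~(1).

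For part~(2) I would invoke part~(1) marginal by marginal. Since $\f h^p\big((\pi^1)_\sharp\aalpha_\lambda\big)=\f h_1^p(\aalpha_\lambda)=\mu_{1,\lambda}\to\mu_1$, part~(1) gives a subnet with $(\pi^1)_\sharp\aalpha_\lambda\to\alpha^1\in\prob(\f C_R[\sfX_1])$, $\f h^p(\alpha^1)=\mu_1$; a further refinement gives also $(\pi^2)_\sharp\aalpha_\lambda\to\alpha^2\in\prob(\f C_R[\sfX_2])$, $\f h^p(\alpha^2)=\mu_2$. Thus $\aalpha_\lambda$ is a net of probability measures on the product of the completely regular spaces $\f C_R[\sfX_1]$ and $\f C_R[\sfX_2]$ whose two marginals converge, so \cite[Theorem 3.1]{SS20}, now on that product, produces a subnet with $\aalpha_\lambda\to\aalpha\in\prob(\pcr{R})$ and $(\pi^i)_\sharp\aalpha=\alpha^i$; therefore $\f h_i^p(\aalpha)=\f h^p\big((\pi^i)_\sharp\aalpha\big)=\f h^p(\alpha^i)=\mu_i$, i.e.~$\aalpha\in\f H^p(\mu_1,\mu_2)$.

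The step I expect to be the real obstacle is the convergence $\alpha_\lambda\to\alpha$ in part~(1): weak convergence on $\f C_R[\sfX]$ is strictly stronger than weak convergence of the auxiliary measures $\hat\beta_\lambda$ on $\sfX\times[0,R]$, and bridging this gap amounts to controlling the mass near the vertex --- exactly where $\sfr^p$ degenerates --- which is what the uniform estimate $\hat\beta_\lambda(\{\sfr<\delta\})\le\delta^p$ and the vertex-cutoff approximation are designed to achieve.
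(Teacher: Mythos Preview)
Your proof is correct and follows essentially the same strategy as the paper: push forward by $\f q$ with weight $\sfr^p$ to land in $\meas_+(\sfX\times[0,R])$, invoke \cite[Theorem~3.1]{SS20} there, and rebuild the limit on the cone by adding a Dirac at the vertex; part~(2) is then obtained from part~(1) exactly as you do. The only difference is in how the weak convergence $\alpha_\lambda\to\alpha$ is verified: the paper extracts a further subnet so that $m_n:=\lim_\lambda\alpha_\lambda(\{\sfr\le 1/n\})$ exists for every $n$, sets the vertex mass to $m:=\inf_n m_n$, and checks convergence via Portmanteau on open sets, whereas your density argument (reduce to test functions constant near $\fo$, then use the identity $\int\varphi\,\de\alpha_\lambda=\varphi(\fo)+\int\psi_\varphi\,\de\hat\beta_\lambda$ with $\psi_\varphi\in\rmC_b$) is a clean alternative that avoids the extra subnet extraction.
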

\begin{proof} Thanks to \cite[Theorem 3.1]{SS20}, it is enough to prove only the first claim. Define, for every $\lambda \in \mathbb{L}$,
\begin{equation*}
\vartheta_{\lambda} := \f{q}_\sharp (\sfr^p \alpha_{\lambda}) \in \meas_+(\sfX \times [0,R]).
\end{equation*}
Notice that this definition does not depend on the point $\bar{x}$
w.r.t.~$\f{q}$ is defined. Observe that $\pi^{[0,R]}_\sharp
\vartheta_{\lambda} \in \meas_+([0,R])$ with mass bounded by $R^p$ and
$\pi^\sfX_\sharp \vartheta_{\lambda} = \mu_{\lambda}$. Then we can
apply \cite[Theorem 3.1]{SS20} to $\net{\vartheta}$ and obtain that,
up to passing to a subnet, there exists
$\vartheta \in \meas_+(\sfX \times [0,R])$ s.t. $\lim_{\lambda \in \mathbb{L}} \vartheta_{\lambda} = \vartheta$. Now we define
\[
O_n := \left \{ [x,r] \in \f{C}[\sfX] : 0 \le r \le \frac{1}{n} \right \}, \quad n \ge 1
\]
and, for every $n \ge 1$, the nets of real numbers 
\[
m_{\lambda, n} := \aalpha_{\lambda}(O_n).
\]
Observe that $0 \le m_{\lambda, n} \le 1$ for every $n \ge 1$ and $\lambda \in \mathbb{L}$ then, up to passing to a subnet (the same for every $n \in \N$), they converge in $\lambda \in \mathbb{L}$ to some $m_n \in [0,1]$. Define then $m := \inf_{n \ge 1} m_n$. We claim then that 
\[
\lim_{\lambda \in \mathbb{L}} \aalpha_{\lambda} = \frac{1}{\sfr^p} \f{p}_\sharp \vartheta + m \delta_{\f{o}} =: \aalpha.
\]
Take any $\Omega \subset \f{C}[\sfX]$ open; if $\f{o} \notin \Omega$, we have
\begin{align*}
\liminf_{\lambda \in \mathbb{L}} \alpha_{\lambda}(\Omega) &= \liminf_{\lambda \in \mathbb{L}} \int_{\sfX \times [0,R]} (\nchi_{\Omega} \circ \f{q})(x,r) \frac{1}{(\sfr^p \circ \f{q})(x,r)} \de \vartheta_{\lambda}(x,r)\\
&\ge \int_{\sfX \times [0,R]} (\nchi_{\Omega} \circ \f{q})(x,r) \frac{1}{(\sfr^p \circ \f{q})(x,r)} \de \vartheta(x,r)\\
&= \aalpha(\Omega).
\end{align*}
If, on the other hand, $\f{o} \in \Omega$, we have that $O_N \subset \Omega$ for some $N \ge 1$; calling $\Omega_n := \Omega \setminus O_n$ (which is an open set), we have, for every $n \ge N$, that 
\begin{align*}
\liminf_{\lambda \in \mathbb{L}} \aalpha_{\lambda} (\Omega) &\ge \liminf_{\lambda \in \mathbb{L}} \aalpha_{\lambda}(O_n) +  \liminf_{\lambda \in \mathbb{L}} \aalpha_{\lambda}(\Omega_n) \\
&\ge \aalpha(\Omega_n) + m_n.
\end{align*}
Now we pass to the limit as $n \to + \infty$ and, using the monotone convergence theorem and the fact that $\Omega_n \uparrow \Omega \setminus \{\f{o}\}$, we obtain
\[
\liminf_{\lambda \in \mathbb{L}} \aalpha_{\lambda} (\Omega) \ge \aalpha(\Omega \setminus \{\f{o}\}) + m = \aalpha(\Omega),
\]
and this concludes the proof thanks to Portmanteau theorem (see e.g.~\cite[Corollary 8.2.10]{Bogachev}).
 \end{proof}

\subsection{Functions on the product cone} We describe a few
properties of functions defined on the product cone that will be
useful in the sequel. In this subsection $\sfX_1$ and $\sfX_2$ are
completely regular spaces.
Recall that the indicator function
$\mathsf I_G:X\to \R\cup\{+\infty\}$
associated with 
a subset $G$ of a set $X$ is defined by
\[
    \mathsf I_G(x):=
    \begin{cases}
      0&\text{if }x \in G\\
      +\infty&\text{otherwise}
    \end{cases},\quad
    x\in X.
\]

\begin{definition} \label{ss22:def:cof} Consider a function $\sfH: \pc \to [0, +
  \infty]$ and a subset $\Gamma\subset \pc$.
  For every $(x_1, x_2) \in \sfX_1 \times \sfX_2$ we define
\begin{align*}
    \sfH_{x_1, x_2} &:{}\, \,  \R_+^2 \to [0, + \infty]:\qquad
                                                       (r_1, r_2) \mapsto \sfH([x_1, r_1], [x_2, r_2]),\\
  \Gamma_{x_1,x_2}&:={}\Big\{(r_1,r_2)\in \R^2_+:([x_1,r_1],[x_2,r_2])\in \Gamma\Big\}.
\end{align*}
We say that
\begin{itemize}
\item $\sfH$ is radially 
   $p$-homogeneous, $p \in [1,+\infty)$,  if $\sfH_{x_1, x_2}$ is positively  $p$-homogeneous for every $(x_1, x_2) \in \sfX_1 \times \sfX_2$ i.e.
    \[ \sfH_{x_1,x_2}(\lambda r_1, \lambda r_2) = \lambda^{ p} \sfH_{x_1,x_2}(r_1, r_2) \quad \text{ for every } \lambda >0, \, (r_1, r_2) \in \R_+^2;\] 
  \item $\sfH$ is radially
    convex if $\sfH_{x_1, x_2}$ is convex for every $(x_1, x_2) \in \sfX_1 \times \sfX_2$.
  \end{itemize}
   We say that $\Gamma\subset \pc$ is a radial cone
  (resp.~radial convex set) if its indicator function
  is radially
  $1$-homogeneous (resp.~radially convex).
Notice that we do not assume
  that 
  a radially $p$-homogenoeus function
  vanishes at $(\fo_1,\fo_2)$ 
  (or, similarly, that a radial cone contains $(\fo_1,\fo_2)$);
  however such a property follows immediately 
  under lower semicontinuity and properness conditions, see
  Remark 
  \ref{rem:non-si-finisce-mai-di-dimenticare} below.  
  \\
We define the (radially)
$1$-homogeneous, convex, and closed convex envelopes $\hmg{\sfH}, \ce{\sfH}, \cce{\sfH} : \pc \to [0, + \infty]$ of $\sfH$ as
(recall (\ref{eq:lsc},\ref{eq:co},\ref{eq:cco}))
\begin{align*}
  \hmg{\sfH}([x_1, r_1], [x_2, r_2]) &:=
\inf_{\lambda\ge0}\sfH([x_1,\lambda                                       r_1],[x_2,\lambda r_2]),\\
  \ce{\sfH}([x_1, r_1], [x_2, r_2]) &:= \ce{\sfH_{x_1, x_2}}(r_1, r_2) \quad \text{ for every } (x_1, x_2) \in \sfX_1 \times \sfX_2, \, (r_1, r_2) \in \R_+^2, \\
    \cce{\sfH} &:= \overline{ \ce{\sfH}}.
\end{align*}
Similarly, in the case of $\Gamma\subset \pc$, we define  $\hmg\Gamma,
\ce\Gamma,$
and $\cce\Gamma$ so that $\mathsf I_{\hmg \Gamma}:=\hmg{\mathsf
  I_\Gamma}$,
$\mathsf I_{\ce \Gamma}:=\ce{\mathsf
  I_\Gamma}$ and
$\mathsf I_{\cce \Gamma}:=\cce{\mathsf   I_\Gamma}$ respectively.
\end{definition}

\begin{remark}
\label{rem:non-si-finisce-mai-di-dimenticare}
 We added the terms \emph{radial and radially} just to avoid
  ambiguities in the case when $\sfX_1,\sfX_2$ are linear spaces and
  the notions of $1$-homogeneity and convexity could also refer to the joint behaviour of
  $\sfH$ w.r.t.~all the variables $x_i,r_i$.
  In this paper, we will
  always interpret convexity and $1$-homogeneity w.r.t.~the radial
  variables $r_i$.\\
  Notice that $\hmg\Gamma$ (resp.~$\ce\Gamma,\cce\Gamma$)
  is the set whose sections $\hmg\Gamma_{x_1,x_2}$ are
  the cone (resp.~convex, closed convex) envelopes of the corresponding
  sections $\Gamma_{x_1,x_2}$ of $\Gamma$. In particular, racalling Carath\'eodory
  Theorem in $\R^2$, we have
  \begin{align*}
    \hmg\Gamma:={}&\bigcup_{\lambda>0}\Big\{([x_1,\lambda
                    r_1],[x_2,\lambda r_2]): ([x_1,r_1],[x_2,r_2])\in \Gamma\Big\},\\
    \ce\Gamma:={}&\bigcup\Big\{\Big(\big[x_1,\sum_{i=0}^2 \alpha_i r_1^i\big]
                   ,\big[x_2,\sum_{i=0}^2 \alpha_ir_2^i\big]\Big):
                   ([x_1,r_1^i],[x_2,r_2^i])\in \Gamma,\
                   \alpha_i\ge0,\ \sum_{i=0}^2\alpha^i=1\Big\}.
  \end{align*}
  We also note that if $\sfH$ is proper (i.e.~not identically $+\infty$), lower semicontinuous, and radially
  $1$-homogeneous, then $\sfH(\f{0}_1, \f{0}_2)=0$.
\end{remark}

 The following result is a simple consequence of the $1$-homogeneity property.
\begin{lemma}  \label{ss22:le:mineq} Let $\sfX_i$, $i=1, 2$ be completely regular spaces, let $\sfH: \pc \to [0, + \infty]$ be a radially $1$-homogeneous Borel function 
and let $(\mu_1, \mu_2) \in \meas_+(\sfX_1) \times \meas_+(\sfX_2)$. Then 
\[ \inf \left \{ \int\sfH \de \aalpha : \aalpha \in \f{H}^1(\mu_1, \mu_2) \right \} = \inf \left \{ \int \sfH \de \aalpha : \aalpha \in \f{H}^1(\mu_1, \mu_2) \cap \prob(\pcr{R(\mu_1, \mu_2)}) \right \}, \]
where 
\begin{equation}\label{eq:theradi}
R(\mu_1, \mu_2):= \mu_1(\sfX_1) + \mu_2(\sfX_2).   
\end{equation}
\end{lemma}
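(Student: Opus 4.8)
The plan is to show that every homogeneous coupling can be renormalized, without increasing its $\sfH$-cost, into a probability measure supported in the truncated product cone $\pcr{R(\mu_1,\mu_2)}$; the tool is the dilation Lemma \ref{ss22:le:dialations}, whose compatibility with $\sfH$ is guaranteed precisely by the radial $1$-homogeneity assumption. Since $\f H^1(\mu_1,\mu_2)\cap\prob(\pcr{R(\mu_1,\mu_2)})\subset\f H^1(\mu_1,\mu_2)$, the inequality ``$\le$'' between the two infima is trivial, so it suffices to prove that for every $\aalpha\in\f H^1(\mu_1,\mu_2)$ there exists $\aalpha'\in\f H^1(\mu_1,\mu_2)\cap\prob(\pcr{R(\mu_1,\mu_2)})$ with $\int\sfH\,\de\aalpha'\le\int\sfH\,\de\aalpha$. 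We may clearly assume $\int\sfH\,\de\aalpha<+\infty$, and we may also assume $R:=R(\mu_1,\mu_2)=\mu_1(\sfX_1)+\mu_2(\sfX_2)>0$, the complementary case in which $\mu_1$ and $\mu_2$ are both null measures being elementary (then $\pcr 0=\{\bfo\}$ and, by Remark \ref{rem:trivial-cases}, the admissible set reduces to multiples of $\delta_{\bfo}$).

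Next I would discard the mass that $\aalpha$ possibly puts on the vertex of the product cone. Setting $\aalpha_{\bfo}:=\aalpha\mres\pco$, we have recalled right before Remark \ref{rem:trivial-cases} that $\aalpha_{\bfo}\in\f H^1(\mu_1,\mu_2)$; since $\sfH\ge 0$ we get $\int\sfH\,\de\aalpha_{\bfo}\le\int\sfH\,\de\aalpha$, and $\aalpha_{\bfo}\neq \bm 0$ because $R>0$. The point of this reduction is that, as $\aalpha_{\bfo}(\{\bfo\})=0$ and $\f h_i^1(\aalpha_{\bfo})=\mu_i$, the normalization constant in Lemma \ref{ss22:le:dialations} collapses to
\[
  r^*(\aalpha_{\bfo})=\int_{\pc}(\sfr_1+\sfr_2)\,\de\aalpha_{\bfo}+\aalpha_{\bfo}(\{\bfo\})=\mu_1(\sfX_1)+\mu_2(\sfX_2)=R .
\]
Since $r^*(\aalpha_{\bfo})=R>0$, the weight $\vartheta:=\vartheta_{\aalpha_{\bfo},1}$ of Lemma \ref{ss22:le:dialations} is a well-defined, strictly positive, $\aalpha_{\bfo}$-integrable Borel function, and the measure $\aalpha':=\dil_{\vartheta,1}(\aalpha_{\bfo})=(\prd_{\vartheta})_\sharp(\vartheta\,\aalpha_{\bfo})$ is then a probability measure whose support is contained in $\pcr{r^*(\aalpha_{\bfo})}=\pcr R$ and which has the same $1$-homogeneous marginals $\mu_1,\mu_2$ as $\aalpha_{\bfo}$; hence $\aalpha'\in\f H^1(\mu_1,\mu_2)\cap\prob(\pcr R)$, as required.

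The remaining — and genuinely non-routine — step is to check that this renormalization preserves the cost, $\int\sfH\,\de\aalpha'=\int\sfH\,\de\aalpha_{\bfo}$, which is exactly where radial $1$-homogeneity enters. By the push-forward formula, $\int\sfH\,\de\aalpha'=\int_{\pc}(\sfH\circ\prd_{\vartheta})\,\vartheta\,\de\aalpha_{\bfo}$; and for $\aalpha_{\bfo}$-a.e.\ point $\bm{\f y}=([x_1,r_1],[x_2,r_2])\neq\bfo$ one has $\vartheta(\bm{\f y})\in(0,+\infty)$, $\prd_{\vartheta}(\bm{\f y})=\big([x_1,\vartheta(\bm{\f y})^{-1}r_1],[x_2,\vartheta(\bm{\f y})^{-1}r_2]\big)$, whence, applying radial $1$-homogeneity of $\sfH$ with $\lambda=\vartheta(\bm{\f y})^{-1}$,
\[
  \sfH\big(\prd_{\vartheta}(\bm{\f y})\big)=\sfH_{x_1,x_2}\big(\vartheta(\bm{\f y})^{-1}r_1,\vartheta(\bm{\f y})^{-1}r_2\big)=\vartheta(\bm{\f y})^{-1}\sfH_{x_1,x_2}(r_1,r_2)=\vartheta(\bm{\f y})^{-1}\sfH(\bm{\f y}),
\]
so that $(\sfH\circ\prd_{\vartheta})\,\vartheta=\sfH$ holds $\aalpha_{\bfo}$-a.e.\ and $\int\sfH\,\de\aalpha'=\int\sfH\,\de\aalpha_{\bfo}\le\int\sfH\,\de\aalpha$. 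Taking the infimum over $\aalpha\in\f H^1(\mu_1,\mu_2)$ yields the claimed identity. I expect the only real obstacle to be the bookkeeping around the dilation lemma — checking that $\vartheta_{\aalpha_{\bfo},1}$ is positive and integrable and that the constant $r^*(\aalpha_{\bfo})$ coincides with $R(\mu_1,\mu_2)$, which is what forces the preliminary removal of the vertex mass — while the homogeneity computation displayed above is the conceptual heart of the argument.
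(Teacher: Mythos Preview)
Your proof is correct and follows essentially the same route as the paper: restrict to $\aalpha_{\bfo}=\aalpha\mres\pco$ (same marginals, smaller cost), then apply the dilation Lemma \ref{ss22:le:dialations} with $p=1$ to normalize into $\prob(\pcr{R})$, and use radial $1$-homogeneity to conclude that the cost is preserved. You are in fact more explicit than the paper in verifying that $r^*(\aalpha_{\bfo})=R(\mu_1,\mu_2)$ and in spelling out the identity $(\sfH\circ\prd_\vartheta)\,\vartheta=\sfH$ $\aalpha_{\bfo}$-a.e., which the paper simply asserts.
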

\begin{proof}
It is of course enough to prove the $\ge$ inequality. If $\aalpha \in \f{H}^1(\mu_1, \mu_2)$ we can assume that $\aalpha(\{(\f o_1, \f o_2)\}) =0$
(if not, we can replace $\aalpha$ with 
$\aalpha_{\bfo}=\aalpha\mres \pco$ which has the same homogeneous marginals and a lower $\sfH$-cost, 
since $\sfH(\f o_1, \f o_2)\ge 0$). 
By Lemma \ref{ss22:le:dialations}, we have that \[ \tilde{\aalpha}:=\dil_{\vartheta_{\aalpha,1}, 1}(\aalpha) \in \f{H}^1(\mu_1, \mu_2) \cap \prob(\pcr{R(\mu_1, \mu_2)}) \]
and
\[ \int_{\pc} \sfH \de \aalpha = \int_{\pc} \sfH \de \tilde{\aalpha}.\]
This concludes the proof.
\end{proof}

\begin{remark}[$1$-homogeneity and $q$-homogeneity]\label{rem:tp} 
Given $q \in [1,+\infty)$, we define the map $\mathsf T_q: \pc \to \pc$ as
\begin{equation}
    \label{eq:tq}
\mathsf T_q([x_1,r_1],[x_2,r_2]) := ([x_1,r_1^{1/q}], [x_2, r_2^{1/q}]).
\end{equation} 
It is easy to check that $(\mathsf T_q)_\sharp$ is a bijective transformation from $\f{H}^1(\mu_1, \mu_2)$ to $\f{H}^q(\mu_1, \mu_2)$ for any pair $(\mu_1, \mu_2) \in \meas_+(\sfX_1) \times \meas_+(\sfX_2)$. Moreover, if $\sfH_q: \pc \to [0,\+\infty]$ is radially $q$-homogeneous, then $\sfH:=\sfH_q \circ \mathsf T_q$ is radially $1$-homogeneous and 
\begin{equation}
    \label{eq:qhom}
\inf \left \{ \int_{\pc} \sfH_q \de \aalpha_q : \aalpha_q \in \f{H}^q(\mu_1, \mu_2) \right \} = \inf \left \{ \int_{\pc} \sfH
\de \aalpha : \aalpha \in \f{H}^1(\mu_1, \mu_2) \right \}.
\end{equation}
As an immediate consequence of the above formula, 
many of the results of this work can also be stated in the case of 
a $q$-homogeneous cost function $\sfH_q$, provided that 
the unbalanced optimal transport functional is defined by using the $q$-homogeneous marginals $\f h^q$ 
and the corresponding set $\f H^q$
as in \eqref{eq:qhom}.
For this reason, in order to keep a simpler notation we will limit our analysis to the $1$-homogeneous case.
\end{remark}
 
\subsection{Examples of cost functions} In this subsection we present some examples of functions $\sfH: \pc \to [0, + \infty]$, where $\sfX_1$ and $\sfX_2$ are completely regular spaces, satisfying (some of) the hypotheses we will assume throughout the paper. See also the examples of \cite[Section5]{CPSV18}.

Notice that given two radially $1$-homogeneous, convex, lower semicontinuous and proper functions $\sfH_1, \sfH_2: \pc \to [0,+\infty)$, and two lower semicontinuous and proper functions $\sfc_1, \sfc_2 : \sfX_1 \times \sfX_2 \to [0,+\infty)$ also the function
\[ \sfH([x_1,r_1], [x_2,r_2]) = \sfc_1(x_1,x_2)\sfH_1([x_1,r_1], [x_2,r_2]) +\sfc_2(x_1,x_2)\sfH_2([x_1,r_1], [x_2,r_2]) \]
is radially $1$-homogeneous, convex, lower semicontinuous and proper. In this way, many other examples can be obtained starting from the ones presented below. Notice moreover 
that the radial behavior of a $1$-homogeneous convex function
$\sfH_{x_1,x_2}:\R_+\times \R_+\to [0,+\infty]$ can be characterized by the reduced function and its recession slope
\begin{equation}
    \label{eq:reduced}
    \mathsf h(x_1,x_2;g)=\mathsf h_{x_1,x_2}(g):=
    \sfH_{x_1,x_2}(1,g),\quad
    h^\infty_{x_1,x_2}:=\lim_{g\to+\infty}g^{-1}
    \mathsf h_{x_1,x_2}(g),
\end{equation}
since
\begin{equation}
    \label{eq:expand
    }
    \sfH_{x_1,x_2}(r_1,r_2)=
    \begin{cases}
        r_1\mathsf h_{x_1,x_2}(r_2/r_1)&\text{if }r_1>0,\\
        r_2 \mathsf h^\infty_{x_1,x_2}&\text{if }r_1=0,\ r_2>0,\\
        0&\text{if }r_1=r_2=0.
    \end{cases}
\end{equation}

\subsubsection{Mass-space product costs}
We consider cost functions of the form 
\[ \sfH([x_1, r_1], [x_2, r_2]):= \sfH_+(r_1,r_2)+\sfH_-(r_1,r_2)\sfc(x_1,x_2), \quad ([x_1, r_1], [x_2,r_2]) \in \pc,\]
where $\sfH_+, \sfH_-: \R^2_+ \to \R$ are convex, $1$-homogeneous and lower semicontinuous and $\sfc:\sfX_1 \times \sfX_2\to [0,+\infty]$ is a lower semicontinuous function satisfying
\[ \sfH_+(r_1,r_2) +\sfH_-(r_1,r_2) \sfc(x_1,x_2) \ge 0 \quad \text{ for every } (x_1,x_2) \in \sfX_1\times \sfX_2 , \, r_1, r_2 \ge 0.\]
Possible choices of $\sfH_+$ and $\sfH_-$ (to be then coupled with a suitable cost $\sfc$) are given by e.g.
\begin{enumerate}
    \item $\displaystyle m_p(r_1,r_2):=\left (r_1^{p} + r_2^{p} \right )^{\frac{1}{p}}, \quad p \in [1, +\infty)$,
    \item $\displaystyle m_p(r_1,r_2):=-m_{p}(r_1,r_2), \quad p \in (-\infty, 0) \cup (0,1)$,
    \item $\displaystyle m_\infty(r_1,r_2)=r_1 \vee r_2, \quad m_{-\infty}(r_1,r_2)=-(r_1 \wedge r_2), \quad m_0 =-\sqrt{r_1r_2}$,
    \item $\displaystyle n_\alpha(r_1,r_2):=\frac{|r_1-r_2|^{\alpha}}{(r_1+r_2)^{\alpha-1}}, \quad \alpha \ge 1$,
    \item $\displaystyle |r_1^{\alpha}-r_2^{\alpha}|^{1/\alpha}, \quad -|r_1^{\alpha}+r_2^{\alpha}|^{1/\alpha}, \quad 0<\alpha \le 1$.
\end{enumerate}
In the various examples we are adopting standard conventions when the expressions are not defined. In particular, for $p<0$ we set $m_p(r_1,r_2)=0$ if $r_1r_2=0$, $n_\alpha(0,0)=0$.

\subsubsection{Homogeneous marginal perspective functional}
Following \cite[Section 5]{LMS18} we can build $\sfH$ starting from two entropy functions $F_i : \sfX_i \to [0, +\infty]$, $i=1,2$ and a proper and lower semicontinuous cost function $\sfc: \sfX_1 \times \sfX_2 \to [0, + \infty]$. Assuming that each $F_i$, $i=1,2$ is convex, lower semicontinuous and finite in at least one positive point, we can define, for every number $c \in [0, + \infty]$, the function $\sfH_c: \R_+^2 \to [0, +\infty]$, as the lower semicontinuous envelope of 
\[ \tilde{\sfH}_c(r_1,r_2) := \begin{cases} \inf_{\theta >0} \left \{ r_1 F_1(\theta/r_1) + r_2 F_2(\theta/r_2) + \theta c \right \}, \quad &\text{ if }c \in [0, + \infty)\\ F_1(0)r_1+F_2(0)r_2\quad &\text{ if }c =\infty, \end{cases} \quad r_1, r_2 \in \R_+^2.\]
The function $\sfH: \pc \to [0, +\infty]$ is then defined as
\[ \sfH([x_1, r_1], [x_2, r_2]):= \sfH_{\sfc(x_1,x_2)}(r_1,r_2), \quad ([x_1, r_1], [x_2, r_2]) \in \pc.\]
Such function $\sfH$ is radially
convex and $1$-homogeneous (see \cite[Lemma 5.3]{LMS18}). Possible choices (see e.g.~\cite{LMS16, DP20}) for $F_i$ are given by: 
\begin{enumerate}
    \item power like entropies: for $p \in \R$ we define \[ U_p(s) := \begin{cases} \frac{1}{p(p-1)} \left (s^p - p(s-1)-1) \right ) \quad &\text{ if }p \ne 0,1,\\
    s\log s -s+1, \quad &\text{ if }p =1,\\
    s-1-\log s, \quad &\text{ if }p =0,\\
    \end{cases} \quad \text{ for } s>0,\] with $U_p(0)=1/p$ if $p>0$ and $U_p(0)=+\infty$ if $p\le0$;
    \item indicator functions: for numbers $0 \le a \le 1 \le b \le +\infty$ we define \[ I_{[a,b]}(s):= \begin{cases} 0 \quad &\text{ if } s \in [a,b],\\
    + \infty \quad &\text{ if } s \notin [a,b];\end{cases}\]
    \item $\nchi^\alpha$ divergences: for a parameter $\alpha \ge 1$ we define
    \[ \nchi^\alpha(s):= |s-1|^\alpha, \quad s \in \R.\]
    \end{enumerate}

Some of the corresponding expression for $\sfH$ are for example
\begin{enumerate}
    \item In case of power like entropies with $F_1=F_2=U_p$:
    \[ \sfH([x_1,r_1],[x_2,r_2])= \begin{dcases} \frac{1}{p} \left [ (r_1+r_2)- \frac{r_1r_2}{(r_1^{p-1}+r_2^{p-1})^{\frac{1}{p-1}}}(2-(p-1)\sfc(x_1,x_2))_+^{\frac{p}{p-1}} \right ], \quad &\text{ if } p \ne 0,1,\\
    (\sqrt{r_1}-\sqrt{r_2})^2+2\sqrt{r_1 r_2} (1-e^{-\sfc(x_1,x_2)/2}), \quad &\text{ if }p=1,\\
    r_1\log(r_1)+r_2\log(r_2)-(r_1+r_2)\log\left (\frac{r_1+r_2}{2+\sfc(x_1,x_2)}\right ), \quad &\text{ if } p=0.
    \end{dcases}\]
    In particular, in case $p=1$, for $\sfX_1=\sfX_2=\R^d$ we can chose as cost functions $\sfc$
    \[ \sfc_{\mathsf{GHK}}(x_1,x_2):= |x_1-x_2|^2, \quad \sfc_{\mathsf{HK}}(x_1,x_2):=\begin{cases} -\log(\cos^2(|x_1-x_2|)) \quad &\text{ if } |x_1-x_2| < \pi/2, \\ + \infty \quad & \text{ else}.\end{cases}\]
    The resulting cost functions $\sfH$ are thus given respectively by
    \begin{align} \label{eq:ghk}
        \sfH_{\mathsf {GHK}}([x_1,r_1],[x_2,r_2])&= r_1+r_2-2\sqrt{r_1r_2}e^{-|x_1-x_2|^2/2},\\ \label{eq:hk}
        \sfH_{\mathsf{HK}}([x_1,r_1],[x_2,r_2])&= r_1+r_2-2\sqrt{r_1r_2}\cos(|x_1-x_2|\wedge \pi/2).
    \end{align}
\eqref{eq:ghk} and 
\eqref{eq:hk} are metrics on $\f C[\R^d]$, inducing the same canonical cone topology.  
In particular, $\sfH_{\mathsf{HK}}$ is related to \eqref{ss22:eq:distcone} via the transformation in Remark \ref{rem:tp}
with $q=2$ (apart from the specific value of the truncation constant).
Both functions are considered in \cite{LMS18} and they generate the Gaussian Hellinger-Kantorovich and the Hellinger-Kantorovich metrics on non-negative measures, respectively. 
    \item In case of indicator functions with $F_1=F_2=I_{[a,b]}$:
    \[ \sfH([x_1,r_1],[x_2,r_2])= \begin{cases} 0 \quad &\text{ if } \frac{a}{b} \le \frac{r_1}{r_2} \le \frac{b}{a}, \\ + \infty \quad &\text{ else}, \end{cases}\]
    where $\frac{b}{a}=+\infty$ if $a=0$ and $\frac{a}{b}=0$ if $b=+\infty$.
    \item In case of the $\chi^1$ divergence with $F_1=F_2=\chi^1$:
    \[ \sfH([x_1,r_1],[x_2,r_2])= |r_2-r_1|+ (\sfc(x_1,x_2) \wedge 2)(r_1 \wedge r_2).\]
\end{enumerate}

\section{Convexification and duality}\label{sec:4} This section presents the main convexification and duality results. In this section $\sfX_1$ and $\sfX_2$ are completely regular spaces.

\subsection{Unbalanced Optimal Transport as convex relaxation of
  singular cost on Dirac masses}
\begin{definition}[Homogeneous conical formulation of Unbalanced Optimal Transport problems]
\label{ss22:def:uh} Let $\sfH: \pc \to [0, + \infty]$ be a proper (i.e.~not identically equal to $+\infty$) Borel function. We define the singular cost and the Unbalanced Optimal Transport cost $\cost{\sfH}, \m_\sfH: \meas(\sfX_1) \times \meas(\sfX_2) \to [0, + \infty]$ respectively as:
\begin{align*}
    \cost{\sfH}(\mu_1, \mu_2)&:= \begin{dcases} \sfH([x_1, r_1], [x_2, r_2]) \quad &\text{ if } \mu_1 = r_1 \delta_{x_1},\, \mu_2= r_2 \delta_{x_2}, \, (x_1, x_2) \in \sfX_1 \times \sfX_2, \, (r_1, r_2) \in \R_+^2, \\
    + \infty \quad &\text{ elsewhere}.\end{dcases}\\
    \m_{\sfH}(\mu_1, \mu_2)&:= \begin{dcases} \inf \left \{ \int_{\pc} \sfH \de \aalpha : \aalpha \in \f{H}^1(\mu_1, \mu_2) \right \} \quad &\text{ if } (\mu_1, \mu_2) \in \meas_+(\sfX_1) \times \meas_+(\sfX_2),\\
    + \infty \quad &\text{ elsewhere}.\end{dcases}
\end{align*}
\end{definition}
The aim of this section is to study the relation between $\cost{\sfH}$ and $\m_\sfH$; in particular we are interested in studying the lower semicontinuous and convex envelope of $\cost{\sfH}$. 
 
Let us first recall two simple properties in the following remarks.
\begin{remark}
    \label{rem:invariance2}
    The value of $\m_{\sfH}(\mu_1,\mu_2)$
    does not depend on the ambient
    spaces $\sfX_i$ (recall the discussion of Remark 
    \ref{rem:invariance1}):
    if $\sfX_i\subset \tilde\sfX_i$,
    $\sfH$ is the restriction
    to $\f C[\sfX_1,\sfX_2]$
    of a function $\tilde \sfH$
    defined in $\f C[\tilde\sfX_1,\tilde\sfX_2]$,
    and $\tilde \mu_i\in \meas_+(\tilde\sfX_i)$
    are the canonical extensions of $\mu_i$, then
    Remark \ref{rem:invariance1}
    yields
    \begin{equation}
        \label{eq:che-noia}
        \m_{\sfH}(\mu_1,\mu_2)=
        \m_{\tilde\sfH}(\tilde \mu_1,\tilde\mu_2).
    \end{equation}
    In particular, we can always
    ``embed'' Unbalanced Optimal Transport problems
    in $\sfX_{i,o}$ or, equivalently, suppose
    that there is
    at least one $\mu_i$-negligible 
    point $o_i$ in each space $\sfX_i$.
\end{remark}

\begin{remark} If $\sfH: \pc \to [0, + \infty]$ is a proper Borel function, then 
\[
    \overline{\cost{\sfH}} = \cost{\overline{ \sfH}}.
\]
Indeed, both are equal to $+ \infty$ outside the closed set $\D{\sfX_1} \times \D{\sfX_2}$ and the equality on $\D{\sfX_1} \times \D{\sfX_2}$ follows by Lemma \ref{ss22:lem:homeo}.
\end{remark}
For this reason and to exploit Lemma \ref{ss22:le:mineq}, we will usually assume that 
\begin{equation}\label{ss22:eq:assumption} \sfH: \pc \to [0, + \infty] \text{ is a proper, radially $1$-homogeneous and l.s.c.~function.}
\end{equation}

In the following result we prove that the Unbalanced Optimal Transport cost $\m_\sfH$ is a lower
semicontinuous convex functional in $\meas_+(\sfX_1)\times
\meas_+(\sfX_2)$
and the infimum in the Definition of $\m_\sfH$ is attained, when the
problem is feasible.
\begin{theorem}[Existence of solutions
to the Unbalanced Optimal Transport problem] \label{ss22:prop:mh} Let $\sfH$ be as in
  \eqref{ss22:eq:assumption} and let $\m_\sfH$ be as in Definition
  \ref{ss22:def:uh}.
  \begin{enumerate}
  \item For every
    $(\mu_1 ,\mu_2) \in \meas_+(\sfX_1) \times \meas_+(\sfX_2)$ such
    that $\m_\sfH(\mu_1, \mu_2) <+\infty$, there exists an optimal
    $1$-homogeneous coupling
    $\aalpha \in \f{H}^1(\mu_1, \mu_2) \cap \prob(\pcr{R(\mu_1,
      \mu_2)})$ 
     not charging $\bfo=(\fo_1,\fo_2)$ 
       such that
    \[ \m_\sfH(\mu_1, \mu_2) = \int_{\pc}\sfH \de \aalpha,\] where
    $R(\mu_1, \mu_2)$ is as in \eqref{eq:theradi}.
    \item $\m_\sfH$
      is a lower semicontinuous convex functional
      in $\meas_+(\sfX_1)\times
      \meas_+(\sfX_2)$
      and satisfies 
      \begin{equation}
        \label{ss22:eq:recovering} \m_\sfH(r_1
      \delta_{x_1}, r_2 \delta_{x_2}) \le \sfH([x_1,r_1], [x_2, r_2])
    \end{equation}
    for every $(x_1, x_2) \in \sfX_1 \times \sfX_2$ and every
    $(r_1, r_2) \in \R_+^2$.
    \item If, in addition, $\sfH$ is also radially convex,
      then \eqref{ss22:eq:recovering} is an equality:
            \[ \m_\sfH(r_1
      \delta_{x_1}, r_2 \delta_{x_2}) = \sfH([x_1,r_1], [x_2, r_2]).
    \]
  \end{enumerate}

\end{theorem}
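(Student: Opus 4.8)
The plan is to prove the three assertions by the direct method of the calculus of variations, systematically exploiting the renormalization Lemma~\ref{ss22:le:dialations}, the mass-truncation Lemma~\ref{ss22:le:mineq} and the compactness Lemma~\ref{ss22:lemma:generalmarginals}(2). \emph{For Part (1)}, assume $(\mu_1,\mu_2)$ is non-trivial (the cases in which some $\mu_i$ is the null measure are immediate from Remark~\ref{rem:trivial-cases}), so that $R:=R(\mu_1,\mu_2)>0$. By Lemma~\ref{ss22:le:mineq} the value $\m_\sfH(\mu_1,\mu_2)$ is the infimum of $\int\sfH\de\aalpha$ over $\aalpha\in\f{H}^1(\mu_1,\mu_2)\cap\prob(\pcr R)$; I would take a minimizing net $\net\aalpha$ in this class. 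By Lemma~\ref{ss22:lemma:generalmarginals}(2), applied with the constant marginals $\f{h}^1_i(\aalpha_\lambda)=\mu_i$, a subnet converges weakly to some $\aalpha\in\f{H}^1(\mu_1,\mu_2)$; testing with the constant function $1\in\rmC_b(\pc)$ and using that $\pcr R$ is closed, the Portmanteau theorem gives $\aalpha\in\prob(\pcr R)$. Since $\sfH$ is nonnegative and lower semicontinuous on the completely regular space $\pc$, it is the supremum of the upward directed family of continuous bounded functions below it, hence $\aalpha\mapsto\int\sfH\de\aalpha$ is weakly lower semicontinuous; therefore $\int\sfH\de\aalpha\le\liminf_\beta\int\sfH\de\aalpha_\beta=\m_\sfH(\mu_1,\mu_2)$, and $\aalpha$ is optimal. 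Finally, to force $\aalpha$ not to charge $\bfo$, replace it by its restriction $\aalpha_\bfo=\aalpha\mres\pco$ (which has the same homogeneous marginals and, as $\sfH\ge 0$, no larger cost) and then by $\tilde\aalpha:=\dil_{\vartheta_{\aalpha_\bfo,1},1}(\aalpha_\bfo)$: by Lemma~\ref{ss22:le:dialations} this is a probability measure in $\f{H}^1(\mu_1,\mu_2)$ supported in $\pcr{r^*(\aalpha_\bfo)}=\pcr R$ (here $r^*(\aalpha_\bfo)=\int(\sfr_1+\sfr_2)\de\aalpha_\bfo=\mu_1(\sfX_1)+\mu_2(\sfX_2)=R$), it does not charge $\bfo$ because $\prd_{\vartheta}$ sends a point to $\bfo$ only when it is already $\bfo$, and $\int\sfH\de\tilde\aalpha=\int\sfH\de\aalpha_\bfo$ by radial $1$-homogeneity, so it is still optimal.

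\emph{For Part (2)}, convexity follows since $\aalpha\mapsto\f{h}^1_i(\aalpha)$ and $\aalpha\mapsto\int\sfH\de\aalpha$ are positively linear on $\meas_+(\pc)$: if $\aalpha^j$ is optimal for $(\mu^j_1,\mu^j_2)$, $j=0,1$, then for $t\in[0,1]$ the plan $t\aalpha^0+(1-t)\aalpha^1$ belongs to $\f{H}^1\bigl(t\mu^0_1+(1-t)\mu^1_1,\,t\mu^0_2+(1-t)\mu^1_2\bigr)$ and has cost $t\,\m_\sfH(\mu^0_1,\mu^0_2)+(1-t)\,\m_\sfH(\mu^1_1,\mu^1_2)$. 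For lower semicontinuity, given $(\mu_{1,\lambda},\mu_{2,\lambda})\to(\mu_1,\mu_2)$ in $\meas_+(\sfX_1)\times\meas_+(\sfX_2)$ I would pass to a subnet realizing $\liminf\m_\sfH(\mu_{1,\lambda},\mu_{2,\lambda})=:L$ (assuming $L<+\infty$), pick optimal $\aalpha_\lambda$ from Part (1), observe $R(\mu_{1,\lambda},\mu_{2,\lambda})\to R(\mu_1,\mu_2)$ (test with $1$) so that eventually $\aalpha_\lambda\in\prob(\pcr{R'})$ with $R'$ fixed, and apply Lemma~\ref{ss22:lemma:generalmarginals}(2) together with the weak lower semicontinuity of the cost to obtain $\aalpha\in\f{H}^1(\mu_1,\mu_2)$ with $\int\sfH\de\aalpha\le L$, whence $\m_\sfH(\mu_1,\mu_2)\le L$. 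For \eqref{ss22:eq:recovering}, it suffices to test the infimum with the single plan $\aalpha:=\delta_{([x_1,r_1],[x_2,r_2])}$: a direct check gives $\f{h}^1_i(\aalpha)=r_i\delta_{x_i}$ (also when $r_i=0$, in which case $[x_i,r_i]=\fo_i$), so $\aalpha\in\f{H}^1(r_1\delta_{x_1},r_2\delta_{x_2})$ and $\int\sfH\de\aalpha=\sfH([x_1,r_1],[x_2,r_2])$.

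\emph{For Part (3)}, it remains to prove the reverse inequality $\m_\sfH(r_1\delta_{x_1},r_2\delta_{x_2})\ge\sfH([x_1,r_1],[x_2,r_2])$ when $\sfH$ is radially convex. By Lemma~\ref{ss22:le:mineq} it is enough to bound $\int\sfH\de\aalpha$ from below for $\aalpha\in\f{H}^1(r_1\delta_{x_1},r_2\delta_{x_2})\cap\prob(\pcr{r_1+r_2})$. The constraint $(\sfx_1)_\sharp(\sfr_1\aalpha)=r_1\delta_{x_1}$ forces the measure $\sfr_1\aalpha$ to be concentrated on $\{\sfx_1=x_1\}$, hence $\aalpha$ to be concentrated on $\{\sfr_1=0\}\cup\{\sfx_1=x_1\}$; arguing symmetrically in the second variable, $\aalpha$ is concentrated on $\{([x_1,s],[x_2,t]):s,t\ge0\}$, which through $(\sfr_1,\sfr_2)$ is identified with $[0,r_1+r_2]^2\subset\R^2_+$. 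Under this identification $\aalpha$ becomes a probability measure $\nu$ with barycenter $\bigl(\int\sfr_1\de\aalpha,\int\sfr_2\de\aalpha\bigr)=(r_1,r_2)$ and $\int\sfH\de\aalpha=\int\sfH_{x_1,x_2}\de\nu$. Since $\sfH_{x_1,x_2}:\R^2_+\to[0,+\infty]$ is convex and lower semicontinuous, Jensen's inequality yields $\int\sfH_{x_1,x_2}\de\nu\ge\sfH_{x_1,x_2}(r_1,r_2)=\sfH([x_1,r_1],[x_2,r_2])$. Taking the infimum over $\aalpha$ and combining with Part (2) gives the equality.

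\emph{Main difficulty.} Beyond the bookkeeping with nets (the spaces being non-metrizable in general), the two genuinely substantial points are: the weak lower semicontinuity of $\aalpha\mapsto\int\sfH\de\aalpha$, which rests on the representation of a nonnegative l.s.c.\ function on a completely regular space as a supremum of continuous bounded functions; and, in Part (3), the reduction of a general admissible plan to a probability measure on $\R^2_+$ with the prescribed barycenter $(r_1,r_2)$ --- it is precisely the preliminary passage to probability plans supported in the bounded truncated cone $\pcr{r_1+r_2}$, via Lemma~\ref{ss22:le:mineq}, that makes the barycenter finite and allows a plain application of Jensen's inequality.
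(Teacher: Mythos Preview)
Your proposal is correct and follows essentially the same route as the paper's proof: Lemma~\ref{ss22:le:mineq} to restrict to $\prob(\pcr{R})$, Lemma~\ref{ss22:lemma:generalmarginals}(2) for compactness, lower semicontinuity of $\aalpha\mapsto\int\sfH\de\aalpha$, convexity via convex combinations of optimal plans, and Jensen for Part~(3). One small point where you are actually more careful than the paper: after restricting a minimizer $\aalpha$ to $\pco$ the resulting $\aalpha_{\bfo}$ need not be a probability; your final renormalization via $\dil_{\vartheta_{\aalpha_{\bfo},1},1}$ (Lemma~\ref{ss22:le:dialations}) cleanly restores membership in $\prob(\pcr{R})$ while preserving optimality and the property of not charging $\bfo$, whereas the paper simply writes ``possibly replacing $\aalpha$ by $\aalpha_{\bfo}$'' and leaves this detail implicit.
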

\begin{proof}
  (1)
  Let $(\mu_1 ,\mu_2) \in \meas_+(\sfX_1) \times \meas_+(\sfX_2)$; by Lemma \ref{ss22:le:mineq}, it holds
  \begin{equation*}
    \m_\sfH(\mu_1, \mu_2) = \inf \left \{ \int_{\pc} \sfH \de \aalpha : \aalpha \in \f{H}^1(\mu_1, \mu_2) \cap \prob(\pcr{R(\mu_1, \mu_2)}) \right \}.
\end{equation*}
Thanks to Lemma \ref{ss22:lemma:generalmarginals}, we have that $\f{H}^1(\mu_1, \mu_2) \cap \prob(\pcr{R(\mu_1, \mu_2)})$ is compact and the lower semicontinuity of $\sfH$ gives that the functional
\begin{equation}\label{ss22:eq:lsc} \aalpha \mapsto \int_{\pc}\sfH \de \aalpha
\end{equation}
is lower semicontinuous. We can thus conclude that a minimizer exists
by the direct method in Calculus of Variations.
Possibly replacing $\aalpha$ by 
$\aalpha_{\fo}=\aalpha\mres \pco$ we obtain an optimal homogeneous coupling not charging $(\fo_1,\fo_2).$

\medskip\noindent
(2)
The convexity of $\m_{\sfH}$ follows by the convexity of the
constraints and the linearity of the objective function characterizing
$\m_\sfH$: 
if $(\mu_1^1, \mu_2^1), (\mu_2^1, \mu_2^2) \in \meas_+(\sfX_1) \times \meas_+(\sfX_2)$ and $t\in [0,1]$, we can take, thanks to point (1),  $\aalpha_1 \in \f{H}^1(\mu_1^1, \mu_2^1)$ and $\aalpha_2 \in \f{H}^1(\mu_1^2, \mu_2^2)$ such that
\[  \m_\sfH(\mu_1^1, \mu_2^1)
  = \int_{\pc} \sfH \de \aalpha_1, \quad
   \m_\sfH(\mu_1^2, \mu_2^2)
   = \int_{\pc} \sfH \de \aalpha_2.\]
It is then enough to observe that $\aalpha := (1-t) \aalpha_1 + t
\aalpha_2 \in \f{H}^1((1-t)\mu_1^1+ t \mu_1^2, (1-t)\mu_2^1+ t
\mu_2^2)$.

The lower semicontinuity of $\m_\sfH$ is a consequence of Lemma \ref{ss22:le:mineq}: if $\{(\mu_1^{\lambda}, \mu_2^{\lambda})\}_{\lambda \in \mathbb{L}} \subset \meas_+(\sfX_1) \times \meas_+(\sfX_2)$ is a net converging to $(\mu_1, \mu_2) \in \meas_+(\sfX_1) \times \meas_+(\sfX_2)$, we can set $R:=\sup_\lambda R(\mu_1^\lambda, \mu_2^\lambda)$ (see \eqref{eq:theradi}) and consider, for every $\lambda \in \mathbb{L}$, some $\aalpha_{\lambda} \in \f{H}^1(\mu_1^{\lambda}, \mu_2^{\lambda}) \cap \prob(\pcr{R})$ such that
\[ \m_\sfH(\mu_1^{\lambda}, \mu_2^{\lambda}) = \int_{\pc}\sfH \de \aalpha_\lambda.\]
We can thus use Lemma \ref{ss22:lemma:generalmarginals} to extract a
convergent subnet of $\net{\aalpha}$ with limit $\aalpha \in
\f{H}^1(\mu_1, \mu_2)$. Using again the lower semicontinuity of the
functional in \eqref{ss22:eq:lsc}, we can conclude that $\m_\sfH$ is
lower semicontinuous.

\eqref{ss22:eq:recovering} follows by the fact that 
\[ \aalpha = \delta_{([x_1, r_1],[x_2, r_2])}\]
is an element of $\f{H}^1(r_1\delta_{x_1}, r_2 \delta_{x_2})$.

\medskip\noindent (3)
Let us assume that $\sfH$ is radially convex and let
$\aalpha \in \f{H}^1(r_1 \delta_{x_1}, r_2 \delta_{x_2})$ be such that 
\[ \m_\sfH(r_1 \delta_{x_1}, r_2 \delta_{x_2}) = \int_{\pc} \sfH \de \aalpha;\]
we observe that $\aalpha$ is concentrated on 
\[ \big \{ \lambda_1 [x_1,1], \lambda_2 [x_2, 1] : \lambda_1,
  \lambda_2 \ge 0
  \big\} \]
with
\[ \int_{\pc} \sfr_1 \de \aalpha = r_1, \quad \int_{\pc} \sfr_2 \de \aalpha = r_2. \]
Hence, using Jensen's inequality and the convexity of $\sfH_{x_1, x_2}$, we have
\begin{align*}
    \m_\sfH(r_1 \delta_{x_1}, r_2 \delta_{x_2}) &= \int_{\pc} \sfH \de \aalpha \\
    &= \int_{\R_+^2} \sfH_{x_1,x_2} \de (\sfr_1, \sfr_2)_\sharp  \aalpha \\
    &\ge \sfH_{x_1,x_2} \left (\int_{\R_+^2} (\sfr_1,\sfr_2) \de \aalpha \right )\\
    &= \sfH_{x_1, x_2}(r_1, r_2) \\
    &= \sfH([x_1, r_1], [x_2, r_2]).\qedhere
\end{align*}
\end{proof}
\noindent
Thanks to Theorem \ref{ss22:prop:mh}, given $(\mu_1,\mu_2) \in \meas_+(\sfX_1) \times \meas_+(\sfX_2)$ such that $\m_\sfH(\mu_1, \mu_2) <+\infty$, the set 
\begin{equation}\label{ss22:eq:optplan}
\f{H}^1_o(\mu_1, \mu_2) := \left \{ \aalpha \in \f{H}^1(\mu_1, \mu_2) : \int_{\pc}\sfH \de \aalpha = \m_\sfH(\mu_1, \mu_2)\right \}
\end{equation}
is not empty.

In the following we give a direct proof that the Unbalanced Optimal Transport cost $\m_\sfH$ is the lower
semicontinuous convex envelope of the singular cost $\cost{\sfH}$. This result will also
be obtained in Theorem \ref{ss22:teo:duality} as a simple consequence of the dual characterization of $\m_\sfH$. However, the following proof highlights the role played by the discrete $1$-homogeneous marginals, which is not evident in the proof of Theorem \ref{ss22:teo:duality}.
\begin{theorem}[Convex l.s.c.~envelope]
\label{ss22:teo:representation} Let $\sfH$ be as in \eqref{ss22:eq:assumption} and let $\cost{\sfH}$ and $\m_\sfH$ be as in Definition \ref{ss22:def:uh}. Then
\[\cce{\cost{\sfH}}= \m_\sfH \text{ in } \meas(\sfX_1) \times \meas(\sfX_2). \]
\end{theorem}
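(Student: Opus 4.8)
The plan is to prove the two inequalities $\m_\sfH\le\cce{\cost{\sfH}}$ and $\cce{\cost{\sfH}}\le\m_\sfH$ on $\meas(\sfX_1)\times\meas(\sfX_2)$ separately. For the first I would simply invoke Theorem \ref{ss22:prop:mh}: there $\m_\sfH$ is shown to be convex and lower semicontinuous on $\meas_+(\sfX_1)\times\meas_+(\sfX_2)$, and since it equals $+\infty$ off this closed convex cone it is convex and lower semicontinuous on all of $\meas(\sfX_1)\times\meas(\sfX_2)$; moreover \eqref{ss22:eq:recovering} gives $\m_\sfH\le\cost{\sfH}$ on $\D{\sfX_1}\times\D{\sfX_2}$, hence everywhere, as $\cost{\sfH}\equiv+\infty$ elsewhere. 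Since $\cce{\cost{\sfH}}$ is by definition the largest convex lower semicontinuous minorant of $\cost{\sfH}$, this yields $\m_\sfH\le\cce{\cost{\sfH}}$; in particular both functions equal $+\infty$ outside $\meas_+(\sfX_1)\times\meas_+(\sfX_2)$, so it only remains to prove $\cce{\cost{\sfH}}(\mu_1,\mu_2)\le\m_\sfH(\mu_1,\mu_2)$ for $(\mu_1,\mu_2)\in\meas_+(\sfX_1)\times\meas_+(\sfX_2)$, and we may assume $\m_\sfH(\mu_1,\mu_2)<+\infty$.

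For this I would discretize an optimal plan. Fix such a pair, put $R:=R(\mu_1,\mu_2)$, and use Theorem \ref{ss22:prop:mh}(1) to choose an optimal $\aalpha\in\f{H}^1(\mu_1,\mu_2)\cap\prob(\pcr R)$. Viewing the closed subspace $\pcr R$ as a completely regular space and $\aalpha$ as a Radon probability measure on it, I would apply Lemma \ref{ss22:lemma:reti} with $f:=\sfH|_{\pcr R}$ to obtain a net $\net\ggamma\subset\discr(\pcr R)$ with $\ggamma_\lambda(\pcr R)=1$, $\ggamma_\lambda\to\aalpha$ weakly, and $\int\sfH\de\ggamma_\lambda\to\int\sfH\de\aalpha=\m_\sfH(\mu_1,\mu_2)$. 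Writing $\ggamma_\lambda=\sum_{j=1}^{m_\lambda}c_j^\lambda\,\delta_{\bm{\f y}_j^\lambda}$ with $\bm{\f y}_j^\lambda=([x_{1,j}^\lambda,r_{1,j}^\lambda],[x_{2,j}^\lambda,r_{2,j}^\lambda])$, $c_j^\lambda\ge0$ and $\sum_j c_j^\lambda=1$, the $1$-homogeneous marginals of $\ggamma_\lambda$ are $\mu_{i,\lambda}:=\f{h}_i^1(\ggamma_\lambda)=\sum_j c_j^\lambda\,\nu_{i,j}^\lambda$, where $\nu_{i,j}^\lambda:=r_{i,j}^\lambda\delta_{x_{i,j}^\lambda}\in\D{\sfX_i}$. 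Hence $(\mu_{1,\lambda},\mu_{2,\lambda})=\sum_j c_j^\lambda(\nu_{1,j}^\lambda,\nu_{2,j}^\lambda)$ is a convex combination and $\cost{\sfH}(\nu_{1,j}^\lambda,\nu_{2,j}^\lambda)=\sfH(\bm{\f y}_j^\lambda)$, so by the definition \eqref{eq:co} of the convex envelope together with $\cce{\cost{\sfH}}\le\ce{\cost{\sfH}}$,
\[
\cce{\cost{\sfH}}(\mu_{1,\lambda},\mu_{2,\lambda})\ \le\ \sum_{j=1}^{m_\lambda}c_j^\lambda\,\cost{\sfH}(\nu_{1,j}^\lambda,\nu_{2,j}^\lambda)\ =\ \int_{\pcr R}\sfH\de\ggamma_\lambda .
\]

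To finish I would show $\mu_{i,\lambda}\to\mu_i$ weakly for $i=1,2$ and then take the $\liminf$ over $\lambda$ in the last display: lower semicontinuity of $\cce{\cost{\sfH}}$ and the convergence of the right-hand side give $\cce{\cost{\sfH}}(\mu_1,\mu_2)\le\liminf_\lambda\int_{\pcr R}\sfH\de\ggamma_\lambda=\m_\sfH(\mu_1,\mu_2)$, which is the claim. The step I expect to be the main obstacle is precisely the convergence of the homogeneous marginals: $\f{h}_i^1(\aalpha)=(\sfx_i)_\sharp(\sfr_i\aalpha)$ involves the weight $\sfr_i$, which is unbounded on $\pc$, so weak convergence of the plans does not transfer to their marginals in general. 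This is where working on the truncated cone $\pcr R$, on which $\sfr_i\le R$, is essential: for every $\psi\in\rmC_b(\sfX_i)$ the function $\bm{\f y}\mapsto\sfr_i(\bm{\f y})\,\psi(\sfx_i(\bm{\f y}))$, extended by $0$ on $\{\sfr_i=0\}$, is bounded and continuous on $\pcr R$ (continuity at the vertex being forced by $\sfr_i\to0$ there), whence $\int_{\sfX_i}\psi\de\mu_{i,\lambda}=\int_{\pcr R}\sfr_i\,(\psi\circ\sfx_i)\de\ggamma_\lambda\to\int_{\pcr R}\sfr_i\,(\psi\circ\sfx_i)\de\aalpha=\int_{\sfX_i}\psi\de\mu_i$. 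Restricting Lemma \ref{ss22:lemma:reti} to $\pcr R$ is harmless since $\aalpha$ is concentrated there and so are all the discrete approximants it produces; the remaining bookkeeping is routine.
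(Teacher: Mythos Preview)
Your proof is correct and follows essentially the same approach as the paper: both first use Theorem \ref{ss22:prop:mh} to get $\m_\sfH\le\cce{\cost{\sfH}}$, then for the converse apply Lemma \ref{ss22:lemma:reti} on the truncated cone $\pcr{R(\mu_1,\mu_2)}$ to approximate an optimal plan by discrete probabilities, and pass to the limit using that $\sfr_i(\psi\circ\sfx_i)\in\rmC_b(\pcr R)$ forces weak convergence of the homogeneous marginals. The only cosmetic difference is that the paper first spells out $\ce{\cost{\sfH}}$ as an infimum over finitely supported homogeneous couplings, whereas you bound $\cce{\cost{\sfH}}$ directly by the discrete cost via a single convex combination; the substance is identical.
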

\begin{proof} We only need to prove equality on $\meas_+(\sfX_1)
  \times \meas_+(\sfX_2)$, being both functions equal to $+ \infty$
  outside it. First of all let us compute $\ce{\cost{\sfH}}$ on
  $\ce{\D{\sfX_1} \times \D{\sfX_2}} = \ce{\D{\sfX_1}} \times
  \ce{\D{\sfX_2}}$ (outside this set $\ce{\cost{\sfH}}$ is equal to
  $+\infty$).
   We take $(\mu_1, \mu_2) \in  \ce{\D{\sfX_1}} \times
  \ce{\D{\sfX_2}}$
  and we observe that any finite set in
  $ \D{\sfX_1} \times \D{\sfX_2}$ is always contained in
  the cartesian product of two finite sets
  $M_1=\{\mu_1^i=r_1^i\delta_{x_1^i}:i=1,\cdots, I\}$
  and
  $M_2=\{\mu_2^j=r_1^j\delta_{x_2^j}:j=1,\cdots, J\}$,
   so that 
\begin{align*} \begin{split}
\ce{\cost{\sfH}}(\mu_1, \mu_2) ={}& \inf \Biggl \{ \sum_{ij} \gamma_{ij}
\cost{\sfH}(r^i_1\delta_{x_1^i},r_2^j\delta_{x_2^j}) :
\Big(r_1^i,x_1^i,r_2^j,x_2^j,\gamma_{ij}\Big)_{ij} \in A(\mu_1,
\mu_2)\Biggr\},\\
A(\mu_1,
\mu_2):={}& \Biggr\{\Big(r_1^i,x_1^i,r_2^j,x_2^j,\gamma_{ij}\Big)_{ij}:
(\mu_1,\mu_2)=\sum_{ij} \gamma_{ij} (r^i_1\delta_{x_1^i},r_2^j\delta_{x_2^j}) \, , \, \sum_{ij} \gamma_{ij} =1, 
\gamma_{ij} \ge 0 
\Biggr \}.
\end{split} \end{align*}
In particular it holds
that
\[
\mu_1 =
  \sum_{ij} \gamma_{ij} r_1^i
  \delta_{{x}_1^i}, \quad \mu_2 =
  \sum_{ij} \gamma_{ij} r_2^j \delta_{x_2^j}
\]
for some $r_1^i, r_2^j \in \R_+$ and some ${x}_1^i \in \sfX_1$,
${x}_2^j \in \sfX_2$ not necessarily distinct points.
Setting
\begin{equation}
  \label{eq:1}
  \aalpha:=\sum_{i,j}\gamma_{ij}\delta_{([x_1^i,r_1^i],[x_2^j,r_2^j])}\in
  \mathcal P_{\mathrm f}(\pc)
\end{equation}
one immediately checks that
\begin{equation}
  \label{eq:2}
  \aalpha\in \f H^1(\mu_1,\mu_2),\quad
  \sum_{ij} \gamma_{ij}
  \cost{\sfH}(r^i_1\delta_{x_1^i},r_2^j\delta_{x_2^j}) =
  \int \sfH\,\de\aalpha.
\end{equation}
Conversely, every $\aalpha\in   \mathcal P_{\mathrm f}(\pc)\cap \f
H^1(\mu_1,\mu_2) $ 
can be written as in \eqref{eq:1}
with coefficients $(r_1^i,x_1^i,r_2^j,x_2^j,\gamma_{ij})_{ij}$
in $A(\mu_1,\mu_2)$; the integral identity of \eqref{eq:2}
eventually shows that
$\ce{\cost{\sfH}}$ can be written as
\[\ce{\cost{\sfH}}(\mu_1, \mu_2) = \inf \left \{ \int_{\pc} \sfH \de \aalpha : \aalpha \in \f{H}^1(\mu_1, \mu_2) \cap 
    \prob_{\mathrm f}(\pc) \right \}. \]
Reasoning as in the proof of Lemma \ref{ss22:le:mineq}, we have that
\begin{align*}
 \ce{\cost{\sfH}}(\mu_1, \mu_2) &= \inf \left \{ \int_{\pc} \sfH \de
                                  \aalpha : \aalpha \in
                                  \f{H}^1(\mu_1, \mu_2) \cap
                                  \prob_{\mathrm f}(\f{C}_*) \right \},
\end{align*}
where $\f{C}_* := \pcr{R(\mu_1, \mu_2)}$, with $R(\mu_1, \mu_2)$ as in \eqref{eq:theradi}.
Thus
\[ \m_\sfH(\mu_1, \mu_2) \le \ce{\cost{\sfH}}(\mu_1, \mu_2) \quad \text{ for every } \, (\mu_1, \mu_2) \in \ce{\D{\sfX_1}} \times \ce{\D{\sfX_2}}.\]
Moreover $\m_\sfH$ is lower semicontinuous and convex hence, by definition of $\cce{\cost{\sfH}}$, it must hold
\[ \cce{\cost{\sfH}}(\mu_1, \mu_2) \ge \m_\sfH (\mu_1, \mu_2) \quad \text{ for every } \, (\mu_1, \mu_2) \in \meas_+(\sfX_1) \times \meas_+(\sfX_1).\]
Then, in order to prove equality, we only need to prove the other inequality. To do so, fixed $(\mu_1, \mu_2) \in \meas_+(\sfX_1) \times \meas_+(\sfX_2)$, we prove that there exists a net $\{(\mu_1^{\eta}, \mu_2^{\eta})\}_{\eta \in \mathbb{E}} \subset \ce{\D{\sfX_1}} \times \ce{\D{\sfX_2}}$ s.t. $\lim_{\eta}(\mu_1^{\eta}, \mu_2^{\eta})= (\mu_1, \mu_2)$ and a net 
\[\{\ggamma_{\eta}\}_{\eta \in \mathbb{E}} \subset \discr(\f{C}_*) \cap \prob(\f{C}_*)\]
s.t. $\ggamma_{\eta} \in \f{H}^1(\mu_1^{\eta}, \mu_2^{\eta})$ for every $\eta \in \mathbb{E}$ satisfying 
\[ \lim_{\eta \in \mathbb{E}} \int_{\pc} \sfH \de \ggamma_{\eta} = \int_{\pc} \sfH \de \aalpha^* \]
where $\aalpha^* \in \f{H}^1_o(\mu_1, \mu_2) \cap \prob(\f{C}_*)$ (see \eqref{ss22:eq:optplan}). To do so, we use Lemma \ref{ss22:lemma:reti} with $\sfX:=\f{C}_*$, $f:= \sfH$, $\alpha:=\aalpha^*$ and we find $\{\ggamma_{\eta}\}_{\eta \in \mathbb{E}} \subset \discr(\f{C}_*) \cap \prob(\f{C}_*)$ s.t.
\[ \lim_{\eta \in \mathbb{E}} \ggamma_{\eta} = \aalpha^*, \quad \lim_{\eta \in \mathbb{E}} \int_{\f{C}_*} \sfH \de \ggamma_{\eta} = \int_{\f{C}_*} \sfH \de \aalpha^*. \]
Finally we can define
\[ \mu_1^{\eta} := \f{h}_1^1(\ggamma_{\eta}), \quad \mu_2^{\eta} := \f{h}_2^1(\ggamma_{\eta}) \quad \text{ for every } \, \eta \in \mathbb{E}.\]
Obviously $\ggamma_{\eta} \in \f{H}^1(\mu_1^\eta, \mu_2^\eta)$ and $\mu_i = \lim_{\eta \in \mathbb{E}} \mu_i^{\eta}$, indeed if $\varphi_i \in \rmC_b(\sfX_i)$, then
\begin{align*}
\lim_{\eta \in \mathbb{E}} \int_{\sfX_i} \varphi_i \de \mu_i^{\eta} &= \lim_{\eta \in \mathbb{E}} \int_{\sfX_i} \varphi_i \de \f{h}^1_i(\ggamma_{\eta}) = \lim_{\eta \in \mathbb{E}} \int_{\pc} (\varphi \circ \sfx_i) \sfr_i \de \ggamma_{\eta} \\
& = \lim_{\eta \in \mathbb{E}} \int_{\f{C}_*} (\varphi_i \circ \sfx_i) \sfr_i \de \ggamma_{\eta} = \int_{\f{C}_*} (\varphi_i \circ \sfx_i) \sfr_i \de \aalpha^* \\
& = \int_{\pc} (\varphi_i \circ \sfx_i) \sfr_i \de \aalpha^* = \int_{\sfX_i} \varphi_i \de \f{h}^1_i(\aalpha^*) \\
& = \int_{\sfX_i} \varphi_i \de \mu_i,
\end{align*}
where we have used that $(\varphi \circ \sfx_i) \sfr_i \in \rmC_b(\f{C}_*)$ and the convergence of $\ggamma_{\eta}$ to $\aalpha^*$ in $\prob(\f{C}_*)$. Notice that, in general, it is not true that  $(\varphi \circ \sfx_i) \sfr_i \in \rmC_b(\pc)$.
Finally
\begin{align*}
\cce{\cost{\sfH}}(\mu_1, \mu_2) &=  \inf \biggl \{\liminf_{\lambda} \ce{\cost{\sfH}}(\mu_1^{\lambda}, \mu_2^{\lambda}) : \{ (\mu_1^{\lambda}, \mu_2^{\lambda}) \}_{\lambda \in \mathbb{L}} \subset \ce{\D{\sfX_1} \times \D{\sfX_2}} , \\
& \quad \quad \quad  (\mu_1, \mu_2) = \lim_{\lambda}(\mu_1^{\lambda}, \mu_2^{\lambda}) \biggr \} \\
& \le \liminf_{\eta} \ce{\cost{\sfH}}(\mu_1^{\eta}, \mu_2^{\eta}) \le \liminf_{\eta} \int_{\pc} \sfH \de \ggamma_{\eta} \\
&=  \int_{\pc} \sfH \de \aalpha^* = \m_\sfH(\mu_1,\mu_2). \qedhere
\end{align*}
\end{proof}
Theorem \ref{ss22:teo:representation}
immediately yields the following 
useful property.

\begin{theorem}[Sublinearity of $\m_{\sfH}$]
\label{th:sublinearity}
The functional $\m_{\sfH}$ is sublinear
(i.e.~convex and positively $1$-homogeneous) in
$\meas_+(\sfX_1)\times \meas_+(\sfX_2)$:
for every $\mu_i',\mu_i''\in \meas_+(\sfX_i)$
and every $\lambda',\lambda''\in [0,+\infty)$
\begin{equation}
    \label{eq:sublinearity}
    \m_\sfH(\lambda'\mu_1'+\lambda''\mu_1'',\lambda'\mu_2'+\lambda''\mu_2'')
    \le \lambda'\m_\sfH(\mu_1',\mu_2')+
    \lambda''\m_{\sfH}(\mu_1'',\mu_2'').
\end{equation}
\end{theorem}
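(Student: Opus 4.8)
\emph{Proof plan.} The strategy is to obtain sublinearity directly from the identification $\m_\sfH=\cce{\cost{\sfH}}$ established in Theorem~\ref{ss22:teo:representation}, exploiting that the singular cost $\cost{\sfH}$ is already positively $1$-homogeneous. The first step is to record this homogeneity of $\cost{\sfH}$: for $\lambda>0$ one has $\lambda(r_i\delta_{x_i})=(\lambda r_i)\delta_{x_i}$, so on the set $\D{\sfX_1}\times\D{\sfX_2}$ the identity $\cost{\sfH}(\lambda\mu_1,\lambda\mu_2)=\lambda\,\cost{\sfH}(\mu_1,\mu_2)$ is exactly the radial $1$-homogeneity of $\sfH$, while outside that set both sides equal $+\infty$ because $\D{\sfX_1}\times\D{\sfX_2}$ is itself a cone; moreover $\cost{\sfH}(\bm{0}_{\sfX_1},\bm{0}_{\sfX_2})=\sfH(\fo_1,\fo_2)=0$ by Remark~\ref{rem:non-si-finisce-mai-di-dimenticare}.

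The second step is to check that taking convex and lower semicontinuous envelopes preserves positive $1$-homogeneity. Fix $\lambda>0$ and let $S_\lambda(\mu_1,\mu_2):=(\lambda\mu_1,\lambda\mu_2)$, which is a linear homeomorphism of $\meas(\sfX_1)\times\meas(\sfX_2)$ for the weak topology. Reparametrizing via $S_\lambda$ the finite convex combinations appearing in the definition~\eqref{eq:co} of the convex envelope gives $\ce{\cost{\sfH}}\circ S_\lambda=\lambda\,\ce{\cost{\sfH}}$; since $S_\lambda$ is a homeomorphism it commutes with the $\liminf$'s defining the l.s.c.\ envelope~\eqref{eq:lsc}, and positive scalars pass through a $\liminf$, so also $\cce{\cost{\sfH}}\circ S_\lambda=\lambda\,\cce{\cost{\sfH}}$. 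Together with Theorem~\ref{ss22:teo:representation} this yields $\m_\sfH(\lambda\mu_1,\lambda\mu_2)=\lambda\,\m_\sfH(\mu_1,\mu_2)$ for every $\lambda>0$, while the case $\lambda=0$ is covered by $\m_\sfH(\bm{0}_{\sfX_1},\bm{0}_{\sfX_2})=0$ (consistent with the convention $0\cdot(+\infty)=0$).

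The last step is to combine this with the convexity of $\m_\sfH$, which is part of Theorem~\ref{ss22:prop:mh} (and is again immediate from $\m_\sfH=\cce{\cost{\sfH}}$): given $\mu_i',\mu_i''\in\meas_+(\sfX_i)$ and $\lambda',\lambda''\ge0$ with $s:=\lambda'+\lambda''>0$, positive $1$-homogeneity lets one factor out $s$, and convexity applied with the weights $\lambda'/s,\lambda''/s$ produces exactly \eqref{eq:sublinearity}; the degenerate case $\lambda'=\lambda''=0$ is trivial since both sides equal $\m_\sfH(\bm{0}_{\sfX_1},\bm{0}_{\sfX_2})=0$. I do not expect a genuine obstacle here; the only points deserving a little care are the handling of the value $+\infty$ (positive $1$-homogeneity being read as $g(\lambda x)=\lambda g(x)$ for $\lambda>0$, with $g$ possibly infinite) and the elementary but essential fact that scalar multiplication by a positive constant is a topological-linear automorphism of $\meas(\sfX_1)\times\meas(\sfX_2)$, which is precisely what makes both envelope operations commute with homotheties.
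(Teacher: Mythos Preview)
Your proposal is correct and follows exactly the route the paper indicates: the paper simply states that Theorem~\ref{ss22:teo:representation} ``immediately yields'' the sublinearity, and your three steps spell out precisely why the identity $\m_\sfH=\cce{\cost{\sfH}}$, together with the radial $1$-homogeneity of $\sfH$ (hence of $\cost{\sfH}$) and the stability of positive $1$-homogeneity under convex and l.s.c.\ envelopes, gives the result. Your treatment of the edge cases ($\lambda=0$, infinite values) is appropriate and nothing is missing.
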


\subsection{Disintegration, barycentric projection and decomposition of homogeneous couplings}
Every plan $\aalpha\in \meas_+(\f C[\sfX_1,\sfX_2])$
can be expressed as a superposition of 
a Borel family of probability measures 
$(\alpha_{\boldsymbol x})_{\boldsymbol x}$ in $\R^2_+$,
$\boldsymbol x=(x_1,x_2)\in \sfX_{1,o}\times \sfX_{2,o}.$
It is sufficient to consider
the map $\boldsymbol \sfx:=(\sfx_1,\sfx_2)
:\pc\to \sfX_{1,o}\times \sfX_{2,o}$, 
where $\sfx_i:\f C[\sfX_1,\sfX_2]\to\sfX_{i,o}$ are defined in Section \ref{sec:theconee}, 
and the plan
$\ggamma:=(\sfx_1,\sfx_2)_\sharp \aalpha\in 
\meas_+(\sfX_{1,o}\times \sfX_{2,o})$.
The disintegration of $\aalpha$ w.r.t.~$\gamma$ yields the Borel family 
$(\alpha_{\boldsymbol x})_{\boldsymbol x} \subset \mathcal P(\R^2_+)$ satisfying 
\begin{equation}
    \label{eq:disintegration}
    \aalpha=\int_{\sfX_{1,o}\times \sfX_{2,o}} \alpha_{x_1,x_2}\,\de \ggamma(x_1,x_2),\quad
\ggamma=\boldsymbol \sfx_\sharp \aalpha.
\end{equation} 
Notice that if 
$\aalpha(\{\bfo\})=0$ then 
$\ggamma$ is concentrated on $\sfX_{1,o}\times 
\sfX_{2,o}\setminus\{\boldsymbol o\}.$ 
If moreover $\aalpha\in \meas_+^1(\pc)$ then
we can define 
\begin{equation}
\label{eq:baricenters}
    \boldsymbol\varrho:=(\varrho_1,\varrho_2),\quad
    \varrho_i(x_1,x_2):=
    \int_{\R^2_+}r_i\,\de\alpha_{x_1,x_2}(r_1,r_2),\quad
    \varrho_i\in L^1(\sfX_{1,o}\times \sfX_{2,o},\ggamma), \, \, \varrho_i \ge 0.
\end{equation}
\begin{definition}[Barycentric projection and reduced couplings]
    The barycentric projection
    of a nontrivial plan $\aalpha\in \meas_+^1(\pc)$
    is the plan $\aalpha_b\in \meas_+^1(\pc)$
    defined by
    \begin{equation}
    \label{eq:b-representation}
    \begin{aligned}
    \aalpha_b:={}\mathsf T[\boldsymbol\varrho]_\sharp\ggamma,\quad
    \text{where}\quad
    \mathsf T[\boldsymbol\varrho](\boldsymbol x):={}
    \big([x_1,\varrho_1(\boldsymbol x)],[x_2,\varrho_2(\boldsymbol x)]\big),\quad
    \boldsymbol x\in \sfX_{1,o}\times \sfX_{2,o}.
    \end{aligned}
\end{equation}
When $\aalpha(\pc)=0$ (i.e.~$\aalpha$ is the null measure) we set $\aalpha_b:=\aalpha$.\\
We say that $\aalpha$ is a reduced plan
if $\aalpha=\aalpha_b$ or, equivalently, if $\alpha_{\boldsymbol x}$ is a Dirac mass
$\delta_{\boldsymbol\varrho(\boldsymbol x)}$
for $\ggamma$-a.e.~$\boldsymbol x\in \sfX_{1,o}\times \sfX_{2,o}.$
\end{definition}
The interest in the barycentric projection
and in reduced couplings
is justified by the following result.
\begin{proposition}[Reduced homogeneous couplings and minimizers]
    \label{prop:bari}
    If $\mu_i \in \meas_+(\sfX_i)$, $\aalpha\in \f H^1(\mu_1,\mu_2)$
    then also $\aalpha_b\in \f H^1(\mu_1,\mu_2).$ If $\sfH$ satisfies \eqref{ss22:eq:assumption}
    and it is radially convex then
    \begin{equation}
        \label{eq:bar-is-better}
        \int_{\pc} \sfH\,\de\aalpha_b
        \le \int_{\pc} \sfH\,\de\aalpha.
    \end{equation}
    In particular, if $\m_\sfH(\mu_1,\mu_2)<+\infty$, then the minimum of the Unbalanced Optimal Transport problem 
    \ref{ss22:def:uh} is attained at a reduced
    coupling.
\end{proposition}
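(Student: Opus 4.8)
The plan is to work throughout with the disintegration \eqref{eq:disintegration} of $\aalpha$ with respect to its position projection $\ggamma=\boldsymbol\sfx_\sharp\aalpha$, which yields the Borel family $(\alpha_{\boldsymbol x})_{\boldsymbol x}\subset\prob(\R^2_+)$ and, since $\aalpha\in\meas_+^1(\pc)$, the barycenters $\boldsymbol\varrho=(\varrho_1,\varrho_2)$ of \eqref{eq:baricenters} with $\varrho_i\in L^1(\ggamma)$. The elementary fact that drives everything is that for $\ggamma$-a.e.\ $\boldsymbol x=(x_1,x_2)$ the probability $\alpha_{\boldsymbol x}$ is carried by the fibre $\boldsymbol\sfx^{-1}(\boldsymbol x)$, which, since $\sfx_i=o_i$ is equivalent to $\sfr_i=0$, is contained in $\{\sfr_i>0\}$ if $x_i\in\sfX_i$ and in $\{\sfr_i=0\}$ if $x_i=o_i$. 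Consequently $\varrho_i(\boldsymbol x)>0$ exactly when $x_i\in\sfX_i$, and for $\ggamma$-a.e.\ $\boldsymbol x$ one has $\sfx_i\big(\mathsf T[\boldsymbol\varrho](\boldsymbol x)\big)=x_i$ and $\sfr_i\big(\mathsf T[\boldsymbol\varrho](\boldsymbol x)\big)=\varrho_i(\boldsymbol x)$, $i=1,2$ (in the case $x_i=o_i$ both sides of the first identity are $o_i$ and $\varrho_i(\boldsymbol x)=0$). The $\ggamma$-measurability of $\boldsymbol\varrho$ — hence, up to a $\ggamma$-null modification, the Borel measurability of $\mathsf T[\boldsymbol\varrho]$ and the Radon property of $\aalpha_b$ — is part of the disintegration theorem, which I treat as routine.

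First I would verify that $\aalpha_b\in\f H^1(\mu_1,\mu_2)$. Using $\aalpha_b=\mathsf T[\boldsymbol\varrho]_\sharp\ggamma$ together with the change-of-variables formula and the two identities above, then the definition of $\varrho_1$, and finally \eqref{eq:disintegration}, for every Borel $A_1\subset\sfX_1$ one computes
\[
\f{h}_1^1(\aalpha_b)(A_1)=\int \sfr_1\,\nchi_{A_1}(\sfx_1)\,\de\aalpha_b=\int \varrho_1(\boldsymbol x)\,\nchi_{A_1}(x_1)\,\de\ggamma(\boldsymbol x)=\int_{\pc}\sfr_1\,\nchi_{A_1}(\sfx_1)\,\de\aalpha=\mu_1(A_1),
\]
and symmetrically for $i=2$; in particular $\int\sfr_i\,\de\aalpha_b=\mu_i(\sfX_i)<+\infty$, so $\aalpha_b\in\meas_+^1(\pc)$. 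Moreover the same identities say that $\boldsymbol\sfx\circ\mathsf T[\boldsymbol\varrho]=\mathrm{id}$ $\ggamma$-a.e., so $\mathsf T[\boldsymbol\varrho]$ is a Borel section of $\boldsymbol\sfx$ on a $\ggamma$-conull set; hence the disintegration of $\aalpha_b=\mathsf T[\boldsymbol\varrho]_\sharp\ggamma$ with respect to $\boldsymbol\sfx_\sharp\aalpha_b=\ggamma$ has the Dirac conditionals $\delta_{\mathsf T[\boldsymbol\varrho](\boldsymbol x)}$, i.e.\ $\aalpha_b$ is a reduced plan.

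The core of the matter is \eqref{eq:bar-is-better}, which I would obtain by a fibrewise Jensen inequality. By \eqref{eq:disintegration},
\[
\int_{\pc}\sfH\,\de\aalpha=\int_{\sfX_{1,o}\times\sfX_{2,o}}\Big(\int_{\R^2_+}\sfH\big([x_1,r_1],[x_2,r_2]\big)\,\de\alpha_{\boldsymbol x}(r_1,r_2)\Big)\,\de\ggamma(\boldsymbol x),
\]
and for $\ggamma$-a.e.\ $\boldsymbol x=(x_1,x_2)$ I distinguish two cases. If $x_1\in\sfX_1$ and $x_2\in\sfX_2$, the inner integrand is exactly the section $\sfH_{x_1,x_2}$, which is convex (radial convexity of $\sfH$) and lower semicontinuous (it is $\sfH$ precomposed with the continuous map $(r_1,r_2)\mapsto([x_1,r_1],[x_2,r_2])$); since $\alpha_{\boldsymbol x}$ has finite first moments with barycenter $\boldsymbol\varrho(\boldsymbol x)$, Jensen's inequality for lower semicontinuous convex functions (valid with both sides possibly $+\infty$, using that a proper l.s.c.\ convex function is the supremum of its affine minorants, and trivial if $\sfH_{x_1,x_2}\equiv+\infty$) gives $\int\sfH_{x_1,x_2}\,\de\alpha_{\boldsymbol x}\ge\sfH_{x_1,x_2}(\boldsymbol\varrho(\boldsymbol x))=\sfH(\mathsf T[\boldsymbol\varrho](\boldsymbol x))$. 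If instead $x_i=o_i$ for some $i$, then $\alpha_{\boldsymbol x}$ is carried by the corresponding coordinate half-line (or the origin), along which the integrand is linear by the radial $1$-homogeneity of $\sfH$, so Jensen holds with equality — again with $\sfH(\mathsf T[\boldsymbol\varrho](\boldsymbol x))$ on the right since $\varrho_i(\boldsymbol x)=0$. Integrating in $\ggamma$ and recalling $\aalpha_b=\mathsf T[\boldsymbol\varrho]_\sharp\ggamma$ yields \eqref{eq:bar-is-better}. The last assertion is then immediate: when $\m_\sfH(\mu_1,\mu_2)<+\infty$, an optimal $\aalpha$ exists by Theorem \ref{ss22:prop:mh}(1), and by the previous steps $\aalpha_b\in\f H^1(\mu_1,\mu_2)$ is reduced with $\int\sfH\,\de\aalpha_b\le\int\sfH\,\de\aalpha=\m_\sfH(\mu_1,\mu_2)$, while admissibility of $\aalpha_b$ forces equality.

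I expect the main obstacle to be the fibrewise Jensen step, and specifically the bookkeeping for the fibres lying over the added points $o_i$ — equivalently, for the portion of $\aalpha$ supported where a radial coordinate vanishes — where one must check that the relevant one-variable trace of $\sfH$ is genuinely linear (this is where raw $1$-homogeneity, rather than convexity, is used) and that its barycentric value is precisely $\sfH\circ\mathsf T[\boldsymbol\varrho]$. The remaining ingredients — the marginal identities, the reducedness of $\aalpha_b$, and the measurability/Radon-ness statements — are straightforward consequences of the disintegration theorem.
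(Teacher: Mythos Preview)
Your proof is correct and follows essentially the same approach as the paper's: verify the homogeneous marginals via the disintegration and the barycenter identities, then apply Jensen's inequality fibrewise to the radially convex section $\sfH_{x_1,x_2}$. Your treatment is in fact more detailed than the paper's, which does not spell out the case analysis at the added points $o_i$ nor the explicit verification that $\aalpha_b$ is reduced.
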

\begin{proof}
    We can assume that $\aalpha$ is nontrivial.
    Let us first check that 
    $\aalpha_b$ preserves homogeneous marginals.
    For every $\zeta_i\in \mathrm C_b(\sfX_i)$, we have
\begin{align*}
    \int (\zeta_i \circ \sfx_i) \sfr_i\,\de\aalpha_b&=
    \int \zeta_i(x_i)\varrho_i(x_1,x_2)\,\de\ggamma(x_1,x_2)=
    \int \zeta_i(x_i)\Big(\int r_i\,\de\alpha_{x_1,x_2}(r_1,r_2)\Big)\,\de\ggamma(x_1,x_2)
    \\&=
    \int (\zeta_i \circ \sfx_i) \sfr_i\,\de\aalpha=
    \int \zeta_i\,\de\mu_i.    
\end{align*}
By radial convexity, using Jensen's inequality, we have
\begin{align*}
\int_{\pc} \sfH \de \aalpha &= \int_{\sfX_{1,o} \times \sfX_{2,o}} 
\Big(\int_{\R_+^2} \sfH \de \alpha_{x_1,x_2}(r_1,r_2) \Big)\de \gamma(x_1,x_2) \\
& \ge \int_{\sfX_{1,o} \times \sfX_{2,o}} \sfH ([x_1, \varrho_1(x_1,x_2)], [x_2,\varrho_2(x_1,x_2)]) \de \gamma(x_1,x_2) 
=
\int_{\pc} \sfH \de \aalpha_b.\qedhere
\end{align*}
\end{proof}
We can now show that 
it is possible to improve the previous representation by decomposing
the measures $\mu_i$ in two parts, corresponding to complete or partial distruction/creation of mass. 
We introduce the 
partition $\f C',\f C''_{1},\f C''_{2}$ 
of $\pco$, where
\begin{displaymath}
    \f C':=(\f C[\sfX_1]\setminus \{\fo_1\})
    \times (\f C[\sfX_2]\setminus \{\fo_2\}),\quad
    \f C_{1}'':=
    (\f C[\sfX_1]\setminus \{\fo_1\})
    \times \{\fo_2\},\quad
    \f C_{2}'':=
    \{\fo_1\}\times (\f C[\sfX_2]\setminus \{\fo_2\}).
\end{displaymath}
Every homogeneous plan $\aalpha\in \meas_+^1(\pc)$
not charging $\bfo=(\fo_1,\fo_2)$
can be decomposed into the corresponding sum 
$\aalpha=\aalpha'+\aalpha''_{1}+\aalpha''_{2}$,
where $\aalpha'=\aalpha\mres\f C'$
and $\aalpha_{i}'':=\aalpha\mres\f C_{i}''.$
Correspondingly we can write
$\ggamma=\ggamma'+\ggamma_{1}''+\ggamma_{2}''$
\begin{equation}
\label{eq:gamma-decomposition}
    \ggamma':=\boldsymbol\sfx_\sharp\aalpha'
    \in \meas_+(\sfX_1\times \sfX_2),\quad
    \ggamma_{1}''=
    \boldsymbol\sfx_\sharp\aalpha_{1}''
    \in \meas_+( \sfX_1
    \times \{o_2\}),\quad
    \ggamma_{2}''=
    \boldsymbol\sfx_\sharp\aalpha_{2}''
    \in \meas_+(\{o_1\}\times \sfX_2).
\end{equation}
\begin{theorem}[Distinguished optimal couplings]
    \label{theo:decomposition}
    Let us suppose that $\sfH$ 
    satisfies \eqref{ss22:eq:assumption}
    and it is radially convex and let 
    $\mu_i\in \meas_+(\sfX_i)$ non trivial
    such that $\m_\sfH(\mu_1,\mu_1)<+\infty.$
    
    There exist Borel partitions $\{S_i',S_i''\}$
    of $\sfX_i$ and an optimal homogeneous and reduced  
    coupling
    $\aalpha\in \f H^1(\mu_1,\mu_2)
    $ not charging $\bfo$ such that 
    setting 
    \begin{equation}
        \label{eq:prelim}
        \mu_i':=\mu_i\mres S_i',\quad
        \mu_i(S_i')>0,\quad
         \mu_i'':=\mu_i\mres S_i'',    
    \end{equation}
    and keeping the above notation
    for $\varrho_i,\aalpha',\aalpha_{i}'',
    \ggamma,\ggamma',\ggamma_{i}''$ we have
    \begin{align}
    \label{eq:singular-alpha}
        \aalpha_{1}''&=
        \f p_\sharp(\mu_1''\otimes \delta_1)\otimes \delta_{\fo_2},&
        \aalpha_{2}''&=
        \delta_{\fo_1}\times \f p_\sharp(\mu_2''\otimes \delta_1),\\
    \label{eq:singular-gamma1}
    \ggamma_{1}''&=\mu_1''\otimes \delta_{o_2},&
        \ggamma_2''&=
        \delta_{o_1}\otimes \mu_2'',
    \end{align}
    \begin{equation}
    \label{eq:check1}
        \f h^1_i(\aalpha')=\mu_i',\quad
       \ggamma'=\ggamma\mres(S_1'\times S_2'),
        \quad
    \end{equation}
    \begin{equation}
    \begin{aligned}
    \aalpha={}\mathsf T[\boldsymbol\varrho]_\sharp\ggamma,\quad
    \aalpha'={}\mathsf T[\boldsymbol\varrho]_\sharp\ggamma',\quad
    \mathsf T[\boldsymbol\varrho](x_1,x_2):={}
    \big([x_1,\varrho_1(x_1,x_2)],[x_2,\varrho_2(x_1,x_2)]\big)
    \end{aligned}
\end{equation}
and
\begin{equation}
    \varrho_i>0\text{ on $S_1'\times S_2'$},
    \quad
    \varrho_1=0\text{ on $S_1''\times \sfX_2$},\quad
    \varrho_2=0\text{ on $\sfX_1\times S_2''$},
    \end{equation}
    \begin{equation}
    \varrho_1(x_1,o_2)=1 \text{ on $S_1''$},
    \quad
    \varrho_2(o_1,x_2)=1\text{ on $S_2''$}\quad
    \text{$\ggamma$-a.e.}.
\end{equation}
\end{theorem}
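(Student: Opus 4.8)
The idea is to take an optimal \emph{reduced} homogeneous coupling, normalise its ``full destruction/creation'' parts, and then use radial convexity to perform a surgery that turns the mass which is only \emph{partially} destroyed (resp.\ created) into transported mass carrying extra shrinkage (resp.\ growth); the crucial feature of this surgery is that it leaves the \emph{opposite} marginal untouched and does not increase the cost. Concretely, by Theorem~\ref{ss22:prop:mh}(1) and Proposition~\ref{prop:bari} there is an optimal reduced $\aalpha\in\f H^1(\mu_1,\mu_2)$ not charging $\bfo$; write $\aalpha=\mathsf T[\boldsymbol\varrho]_\sharp\ggamma$, $\ggamma=\boldsymbol\sfx_\sharp\aalpha$ (concentrated on $\sfX_{1,o}\times\sfX_{2,o}\setminus\{\boldsymbol o\}$), and decompose $\aalpha=\aalpha'+\aalpha_1''+\aalpha_2''$, $\ggamma=\ggamma'+\ggamma_1''+\ggamma_2''$ along $\f C',\f C_1'',\f C_2''$. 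Since the $\boldsymbol\sfx$-fibre over a point of $\sfX_1\times\sfX_2$ lies in $\{\sfr_1>0,\sfr_2>0\}$, reducedness forces $\varrho_1,\varrho_2>0$ $\ggamma'$-a.e., $\varrho_1>0=\varrho_2$ $\ggamma_1''$-a.e.\ and $\varrho_2>0=\varrho_1$ $\ggamma_2''$-a.e.; hence $\aalpha_i''=\mathsf T[\boldsymbol\varrho]_\sharp\ggamma_i''$, $\ggamma_1''\in\meas_+(\sfX_1\times\{o_2\})$, $\ggamma_2''\in\meas_+(\{o_1\}\times\sfX_2)$. Setting $\nu_1:=\f h^1_1(\aalpha_1'')$, $\nu_2:=\f h^1_2(\aalpha_2'')$, radial $1$-homogeneity (so $\sfH([x_1,r_1],\fo_2)=r_1\sfH([x_1,1],\fo_2)$, and symmetrically) lets us replace $\aalpha_1''$ by $\f p_\sharp(\nu_1\otimes\delta_1)\otimes\delta_{\fo_2}$ and $\aalpha_2''$ by $\delta_{\fo_1}\otimes\f p_\sharp(\nu_2\otimes\delta_1)$ without changing the homogeneous marginals nor the cost and keeping $\aalpha$ reduced. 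Put $\mu_i^{\mathrm t}:=\f h^1_i(\aalpha')$, so $\mu_1=\mu_1^{\mathrm t}+\nu_1$, $\mu_2=\mu_2^{\mathrm t}+\nu_2$.

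\noindent\textbf{The core surgery.} Let $f:=\tfrac{\de\nu_1}{\de\mu_1}\in[0,1]$ (Borel, since $0\le\nu_1\le\mu_1$) and $A:=\{0<f<1\}$. Replace the portion $\mathsf T[\boldsymbol\varrho]_\sharp(\ggamma'\mres(A\times\sfX_2))$ of $\aalpha'$ by $\mathsf T[(\tfrac{\varrho_1}{1-f\circ\pi^1},\,\varrho_2)]_\sharp(\ggamma'\mres(A\times\sfX_2))$ (with $\pi^1$ the first projection on $\sfX_1\times\sfX_2$) and delete the portion $\f p_\sharp((\nu_1\mres A)\otimes\delta_1)\otimes\delta_{\fo_2}$ of $\aalpha_1''$. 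Since $\pi^1_\sharp(\varrho_1\,\ggamma'\mres(A\times\sfX_2))=(1-f)\,\mu_1\mres A$, dividing the $\sfr_1$-weight by $1-f\circ\pi^1$ makes the first homogeneous marginal of the new transport part equal to $\mu_1\mres A$, exactly compensating the deleted destruction, while the $\sfr_2$-weight $\varrho_2$ is untouched, so the second marginal — hence membership in $\f H^1(\mu_1,\mu_2)$ — is preserved. For the cost, with $\lambda:=\tfrac1{1-f}>1$ on $A$ and $D_1(x_1):=\sfH([x_1,1],\fo_2)$, radial $1$-homogeneity and radial convexity give, $\ggamma'$-a.e.\ on $A\times\sfX_2$,
\begin{align*}
 \sfH_{x_1,x_2}(\lambda\varrho_1,\varrho_2)
 &=\lambda\,\sfH_{x_1,x_2}\bigl(\varrho_1,(1-f)\varrho_2\bigr)
 \le\lambda\bigl[(1-f)\,\sfH_{x_1,x_2}(\varrho_1,\varrho_2)+f\,\sfH_{x_1,x_2}(\varrho_1,0)\bigr]\\
 &=\sfH_{x_1,x_2}(\varrho_1,\varrho_2)+\tfrac{f}{1-f}\,\varrho_1\,D_1(x_1);
\end{align*}
integrating against $\ggamma'\mres(A\times\sfX_2)$ and using again $\pi^1_\sharp(\varrho_1\,\ggamma'\mres(A\times\sfX_2))=(1-f)\mu_1\mres A$ shows the extra transport cost is at most $\int_A f\,D_1\,\de\mu_1=\int_A D_1\,\de\nu_1$, which is precisely the deleted destruction cost. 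Thus the total cost does not increase, so by optimality it is unchanged: the modified plan is again optimal, reduced and not charging $\bfo$, its destroyed marginal equals $\mu_1\mres\{f=1\}$ and its transported marginal equals $\mu_1^{\mathrm t}=\mu_1\mres\{f<1\}$, so $\nu_1\perp\mu_1^{\mathrm t}$. The mirror surgery (inflating the $\sfr_2$-weight of the transport arriving at $\{0<\tfrac{\de\nu_2}{\de\mu_2}<1\}$ and deleting the corresponding creation) leaves the $\sfr_1$-weights alone, hence keeps the splitting of $\mu_1$, and gives $\nu_2\perp\mu_2^{\mathrm t}$ as well.

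\noindent\textbf{Nontriviality of $S_i'$ and conclusion.} If $\mu_1^{\mathrm t}(\sfX_1)=0$ then $\aalpha'=0$ (it is concentrated on $\{\sfr_1>0\}$), hence also $\mu_2^{\mathrm t}(\sfX_2)=0$ and $\aalpha=\f p_\sharp(\mu_1\otimes\delta_1)\otimes\delta_{\fo_2}+\delta_{\fo_1}\otimes\f p_\sharp(\mu_2\otimes\delta_1)$, of cost $\int D_1\,\de\mu_1+\int C_2\,\de\mu_2<\infty$ with $C_2(x_2):=\sfH(\fo_1,[x_2,1])$; then $D_1,C_2$ are finite and bounded on sets $U_1\subset\sfX_1$, $U_2\subset\sfX_2$ of positive $\mu_1$-, resp.\ $\mu_2$-measure, and a product‑type plan transporting $\mu_1\mres U_1$ onto $\mu_2\mres U_2$ has finite cost $\le\int_{U_1}D_1\,\de\mu_1+\int_{U_2}C_2\,\de\mu_2$ (radial convexity, i.e.\ $\sfH_{x_1,x_2}(1,s)\le D_1(x_1)+s\,C_2(x_2)$), so substituting it for the destruction over $U_1$ and the creation over $U_2$ does not raise the cost and produces an optimal coupling with $\mu_i^{\mathrm t}(\sfX_i)>0$, to which the previous steps apply without decreasing $\mu_i^{\mathrm t}(\sfX_i)$. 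Finally take Borel sets $S_i''$ carrying $\nu_i$ and disjoint from a carrier of $\mu_i^{\mathrm t}$, and $S_i':=\sfX_i\setminus S_i''$, $\mu_i':=\mu_i\mres S_i'$, $\mu_i'':=\mu_i\mres S_i''$; then $\mu_i'=\mu_i^{\mathrm t}=\f h^1_i(\aalpha')$, $\mu_i''=\nu_i$, $\mu_i(S_i')>0$. Identities \eqref{eq:singular-alpha}, \eqref{eq:singular-gamma1} and the normalizations $\varrho_1(x_1,o_2)=1$ on $S_1''$, $\varrho_2(o_1,x_2)=1$ on $S_2''$ are the content of the normalization above; $\varrho_1=0$ on $S_1''\times\sfX_2$ and $\varrho_2=0$ on $\sfX_1\times S_2''$ hold because $\ggamma$ does not charge those sets; and $\ggamma'$ is concentrated on $S_1'\times S_2'$ — since $\pi^1_\sharp(\varrho_1\ggamma')=\mu_1^{\mathrm t}$ lives on $S_1'$ and $\varrho_1>0$ $\ggamma'$-a.e.\ forces $\ggamma'$ to live on $S_1'\times\sfX_2$, and symmetrically on $\sfX_1\times S_2'$ — so $\ggamma'=\ggamma\mres(S_1'\times S_2')$ since $\ggamma_i''$ live off $S_1'\times S_2'$, which yields \eqref{eq:check1} and the $\mathsf T[\boldsymbol\varrho]$‑representations.

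\noindent\textbf{Main obstacle.} The delicate point is the cost estimate of the core surgery: one must remove the ``mixed'' behaviour at points of $\sfX_1$ while \emph{exactly preserving the other marginal} $\mu_2$ — obtained here by freezing the $\sfr_2$-weights and only rescaling the $\sfr_1$-weights, i.e.\ transporting with more shrinkage rather than destroying — and \emph{without increasing the cost}, which is precisely where radial convexity (equivalently, sublinearity of each $\sfH_{x_1,x_2}$) enters, through the displayed inequality. All the rest is measure‑theoretic bookkeeping with disintegrations and Radon–Nikodym densities.
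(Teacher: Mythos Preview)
Your proof is correct and follows essentially the same strategy as the paper: start from an optimal reduced coupling, identify the points where destruction/creation coexists with genuine transport, and inflate the transport densities by the factor $1/(1-f)$ (which coincides with the paper's $1+q_i$, since $q_i=\theta_i''/\theta_i'$ and $\theta_i'+\theta_i''=1$) so as to absorb the mixed destruction/creation into the transport part, using radial sublinearity of $\sfH_{x_1,x_2}$ for the cost estimate. The only cosmetic differences are that the paper performs the two inflations for $\mu_1$ and $\mu_2$ simultaneously in a single step while you do them sequentially, and in the degenerate case the paper replaces the whole plan by a product coupling (taking $S_i'=\sfX_i$) whereas you replace only a portion and then re-apply the surgery.
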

\begin{proof}
    Let $\tilde\aalpha=
    \tilde\aalpha'+\tilde\aalpha_{1}''+
    \tilde\aalpha_{2}''$ be an optimal reduced coupling
    in $\f H^1(\mu_1,\mu_2)$ 
    not charging $\bfo$.

    In the degenerate case when $\tilde\aalpha'$
    vanishes, then
    setting $\mu_i=m_i\nu_i$, 
    $\nu_i\in \prob(\sfX_i),$ $m_i=\mu_i(\sfX_i)>0,$
    we have
    \begin{displaymath}
        \m_\sfH(\mu_1,\mu_2)=
        \int_{\sfX_1}\sfH([x_1,m_1],\fo_2)
        \,\de\nu_1+
        \int_{\sfX_2}\sfH(\fo_1,[x_2,m_2])\,\de\nu_2\ge
        \int_{\sfX_1\times \sfX_2}\sfH([x_1,m_1],[x_2,m_2])\,\de(\nu_1\otimes \nu_2)
    \end{displaymath}
    so that the reduced coupling
    $\aalpha=([x_1,m_1],[x_2,m_2])_\sharp(\nu_1\otimes \nu_2)$
    satisfies all the assumptions
    with $S_i'=\sfX_i.$
    
    We can therefore suppose that $\aalpha'$ 
    is nontrivial and we set
    \begin{displaymath}
        \tilde\mu_i':=\f h^1_i(\tilde\aalpha'),
        \quad
        \tilde\mu_i'':=\f h^1_i(\tilde\aalpha_i''),\quad
        \tilde\ggamma:=\boldsymbol\sfx_\sharp\tilde\aalpha,\quad
        \tilde\ggamma':=\boldsymbol\sfx_\sharp
        \tilde\aalpha':=
        \tilde\ggamma\mres(\sfX_1\times \sfX_2),
    \end{displaymath}
    and we can write $\tilde\aalpha=
    \mathsf T[\tilde{\boldsymbol\varrho}]_\sharp\tilde\ggamma$
    as in \eqref{eq:b-representation}.
    
    The optimal cost $\m_\sfH(\mu_1,\mu_2)$ 
    can be decomposed as 
    \begin{equation}
        \label{eq:reduced-cost}
        \int \sfH\,\de\tilde\aalpha=
        \int_{\sfX_1\times \sfX_2}
        \sfH([x_1,\tilde\varrho_1],[x_2,\tilde\varrho_2])\,\de\tilde\ggamma'
        +
        \int_{\sfX_1} \sfH([x_1,1],\fo_2)\,\de\mu_1''+
        \int_{\sfX_2} \sfH(\fo_2,[x_2,1])\,\de\mu_2''.
    \end{equation}

    Since 
    $\f h^1_1(\tilde \aalpha_2'')=\boldsymbol 0_{\sfX_1}$ and 
    $\f h^1_2(\tilde \aalpha_1'')=\boldsymbol 0_{\sfX_2}$
    we have 
    $\mu_i=\tilde\mu_i'+\tilde\mu_i''$:
    we call $\theta_i',\theta_i''$
    Borel representatives of the Lebesgue densities 
    of $\mu_i',\mu_i''$ w.r.t.~$\mu_i$
    and we call
    \begin{displaymath}
        S_i'':=\Big\{x_i\in \sfX_i:
        \theta_{i}''(x_i)=1\Big\},\quad
        S_i':=\sfX_i\setminus S_i''.
    \end{displaymath}
    Since $\theta_i'+\theta_{i}''=1$
    $\mu_i$-a.e.~in $\sfX_i$, we clearly have
    $\theta_i'=0$ $\mu_i$-a.e.~in $S_i''$
    and 
    $\theta_i'>0$ $\mu_i$-a.e.~in $S_i'$.
    We also have $\mu_1'(S_1'')=\f h^1_1(\tilde\aalpha')(S_1'')=0$ 
    and since $\f h^1_1(\tilde\aalpha')=(\sfx_1)_\sharp(\tilde\varrho_1
    \tilde\ggamma')$
    we deduce that $\tilde\varrho_1=0$
    $\tilde\ggamma'$-a.e.~in $S_1''\times \sfX_2.$
    With a similar argument, we deduce that 
    $\tilde\varrho_2=0$
    $\tilde\ggamma'$-a.e.~in $\sfX_1\times S_2''.$
    
    We define $\mu_i',\mu_i''$ according
    to \eqref{eq:prelim}. 
    Notice that $\mu_i'=(\theta_i')^{-1}\tilde\mu_i'\ll \tilde\mu_i'$,
    whereas $\mu_i''=\tilde\mu_i''\mres S_i''.$
    The quotients $q_i(x_i):=\theta_i''/\theta_i'$
    are well defined $\mu_i$-a.e.~in $S_i'$
    and we have
    \begin{displaymath}
        \mu_i'=(1+q_i)\tilde\mu_i',\quad
        \tilde\mu_i''=
        \mu_i''+\theta_i''\mu_i'=
        \mu_i''+q_i\tilde\mu_i'.
    \end{displaymath}
    We set
    \begin{displaymath}
        \varrho_1:=
        \begin{cases}
            \tilde\varrho_1(1+q_1)
            &\text{in }S_1'\times \sfX_2,\\
            0&\text{in }S_1''\times \sfX_2,
        \end{cases}
        \qquad
        \varrho_2:=
        \begin{cases}
            \tilde\varrho_2(1+q_2)
            &\text{in }\sfX_1\times S_2',\\
            0&\text{in }\sfX_1\times S_2'',
        \end{cases}
    \end{displaymath}
    and we define $\aalpha_i''$ as in 
    \eqref{eq:singular-alpha}. Finally
    \begin{displaymath}
        \ggamma':=\tilde\ggamma'\mres(S_1'\times S_2'),\quad
        \aalpha':=\mathsf T[\boldsymbol\varrho]_\sharp\tilde\ggamma'=
    \mathsf T[\boldsymbol\varrho]_\sharp\ggamma'      ,
        \quad
        \aalpha:=\aalpha'+\aalpha_{1}''+
        \aalpha_{2}''.        
    \end{displaymath}
    Let us check \eqref{eq:check1}; since $\ggamma=\boldsymbol\sfx_\sharp\aalpha$, the equality $\ggamma'=\ggamma\mres(S_1\times S_2')$ is immediate, so that we have to prove that 
    $\f h^1_i(\aalpha')=\mu_i'$. 
    We just consider the case $i=1$ since the
    calculations in the case $i=2$ are completely analogous. For every test function $\zeta\in \rmC_b(\sfX_1)$ we get
    \begin{align*}
    \int (\zeta \circ \sfx_1) \sfr_1\,\de\aalpha'&=
    \int_{\sfX_1\times \sfX_2} \zeta(x_1)\varrho_1(x_1,x_2)\,\de\tilde\ggamma'(x_1,x_2)
    \\&=
    \int_{S_1'\times \sfX_2} \zeta(x_1)\tilde\varrho_1(x_1,x_2)\Big(1+
    q_1(x_1)\Big)\,\de\tilde\ggamma'(x_1,x_2)\\
    &=
    \int_{S_1'}\zeta(x_1)(1+q_1(x_1))\,\de
    \tilde\mu_1(x_1)
    =\int_{\sfX_1}\zeta\,\de
    \mu_1'.
\end{align*}
Let us now compute the $\sfH$-cost of $\aalpha$:
\begin{align*}
    \int\sfH \,\de\aalpha'&=
    \int 
    \sfH([x_1,\tilde\varrho_1+q_1\tilde\varrho_1],
    [x_2,\tilde\varrho_2+q_2\tilde\varrho_2])\,
    \de\tilde\ggamma'
    \\&\le 
    \int 
    \sfH([x_1,\tilde\varrho_1],
    [x_2,\tilde\varrho_2])\,
    \de\tilde\ggamma'
    +
    \int 
    q_1\tilde\varrho_1\sfH([x_1,1],
    \fo_2)\,
    \de\tilde\ggamma'+
    \int 
    q_2\tilde\varrho_2\sfH(\fo_1,
    [x_2,1])\,
    \de\tilde\ggamma'
    \\&=
    \int\sfH\,\de\tilde\aalpha'+
    \int 
    q_1\sfH([x_1,1],
    \fo_2)\,
    \de\tilde\mu_1'+
    \int 
    q_2\sfH(\fo_1,
    [x_2,1])\,
    \de\tilde\mu_2',\\
    \int\sfH \,\de\aalpha&=
    \int\sfH \,\de\aalpha'+
    \int\sfH \,\de\aalpha_1''+
    \int\sfH \,\de\aalpha_2''\\
    &=
    \int\sfH\,\de\aalpha'+
    \int\sfH([x_1,1],\fo_2) \,\de\mu_1''+
    \int\sfH(\fo_1,[x_2,1]) \,\de\mu_2''
    \\&\le 
    \int\sfH\,\de\tilde\aalpha'+
    \int 
    \sfH([x_1,1],
    \fo_2)\,
    \de(q_1\tilde\mu_1'+\mu_1'')
    \int 
    \sfH(\fo_1,
    [x_2,1])\,
    \de(q_2\tilde\mu_2'+\mu_2'')
    \\&=
    \int\sfH\,\de\tilde\aalpha'+
    \int 
    \sfH([x_1,1],
    \fo_2)\,
    \de \tilde\mu_1''+
    \int 
    \sfH(\fo_1,
    [x_2,1])\,
    \de \tilde\mu_2''=
    \int\sfH\,\de\tilde\aalpha=\m_\sfH(\mu_1,\mu_2).
\end{align*}
\end{proof}
As an application of the previous properties, we compare the 
formulation \ref{ss22:def:uh} of 
Unbalanced Optimal Transport 
via homogeneous couplings with
the formulation 
 based on the notion of semi-couplings
introduced by \cite{CPSV18} in a compact Euclidean setting. 
We obtain a considerable extension 
of the semi-coupling approach to 
a general, possibly non-compact, setting.

Given $\mu_i\in \meas_+(\sfX_i)$, $i=1,2$, we can consider the set
\begin{equation}
    \label{eq:semi-couplings}
    \hat \Gamma(\mu_1,\mu_2):=
    \Big\{(\gamma_1,\gamma_2)\in 
    (\meas_+(\sfX_1\times \sfX_2))^2:
    \pi^i_\sharp(\gamma_i)=\mu_i,\ i=1,2\Big\}.
\end{equation}
Every radially $1$-homogeneous cost function $\sfH$ as 
in \eqref{ss22:eq:assumption}
induces a functional on $\hat\Gamma(\mu_1,\mu_2)$ given by
\begin{equation}
    \label{eq:CSPV-functional}
    J_\sfH(\gamma_1,\gamma_2):=
    \int_{\sfX_1\times \sfX_2}
\sfH \Big(\Big[x_1,\frac{\de\gamma_1}{\de\gamma}\Big],
    \Big[x_2,\frac{\de\gamma_2}{\de\gamma}\Big]\Big)\,\de\gamma,\quad
    \gamma\in \meas_+(\sfX_1\times \sfX_2),\ 
    \gamma_i\ll\gamma.
\end{equation}
Since $\sfH$ is radially $1$-homogeneous,
\eqref{eq:CSPV-functional}
does not depend on the choice of the dominating measure $\gamma$ and one can consider the problem of minimizing $J_\sfH$ in $\hat\Gamma(\mu_1,\mu_2):$
\begin{equation}
    \label{eq:semi}
    \widehat{\mathscr U}_{\sfH}(\mu_1,\mu_2):=\inf\Big\{J_\sfH(\gamma_1,\gamma_2):
    (\gamma_1,\gamma_2)\in \hat\Gamma(\mu_1,\mu_2)\Big\}.    
\end{equation}
When $\widehat{\mathscr{U}_\sfH}(\mu_1,\mu_2)<\infty$, $\sfH$ satisfies \eqref{ss22:eq:assumption} and it is also radially convex,
and $\sfX_1,\sfX_2$ are compact, 
it is not difficult to prove the
existence of an optimal pair $(\gamma_1,\gamma_2)\in \hat\Gamma(\mu_1,\mu_2)$ attaining the infimum in \eqref{eq:semi} 
(see \cite[Prop.~3.4]{CPSV18}).
The direct approach when $\sfX_i$ are not compact is less clear, but existence of an optimal pair can be obtained as a consequence of the argument we are going to detail below in Theorem
\ref{theo:comparison}.
\begin{remark}
    The formulation \eqref{eq:semi} also allows for more general
    (l.s.c., radially $1$-homogeneous and convex) functions $\hat\sfH$
defined in 
$(\sfX_1\times [0,+\infty))\times (\sfX_2\times [0,+\infty))$ which are not compatible with the equivalence relation
\eqref{eq:cone-relation} inducing the 
topological cone $\f C[\sfX_1,\sfX_2]$.
This definition, however, 
may not satisfy the invariance property stated in Remark \ref{rem:invariance2}: consider, e.g., the case when
$\sfX_1=\sfX_2=[0,a]$, $a>0$,
$\mu_1=\delta_0,$ $\mu_2=\boldsymbol 0$,
and 
$\hat\sfH((x_1,r_1),(x_2,0))=r_1\mathrm e^{-|x_1-x_2|}$. 
We clearly have 
$\widehat{\m_{\hat \sfH}}(\mu_1,\mu_2)=\mathrm e^{-a}$.
In order to have an intrinsic formulation,
we will assume that $\sfH$ is compatible with the cone structure.
\end{remark}
\begin{theorem}
\label{theo:comparison}    If 
    $\sfH$ satisfies 
    \eqref{ss22:eq:assumption}
    then for every $\mu_i\in \meas_+(\sfX_i)$ non trivial
    we have 
    \begin{equation}
    \label{eq:equivalence1}
    \m_{\sfH}(\mu_1,\mu_2)\le \widehat{\mathscr{U}}_{\sfH}(\mu_1,\mu_2).
\end{equation}
If, in addition to \eqref{ss22:eq:assumption}, $\sfH$ is also radially convex,
then 
\begin{equation}
    \label{eq:equivalence2}
    \m_{\sfH}(\mu_1,\mu_2)= \widehat{\mathscr{U}}_{\sfH}(\mu_1,\mu_2).    
\end{equation}
and the infimum in \eqref{eq:semi} is attained.
\end{theorem}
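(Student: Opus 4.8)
The plan is to prove the two inequalities separately: \eqref{eq:equivalence1} holds under the sole assumption \eqref{ss22:eq:assumption} by a direct push‑forward construction, while the reverse inequality and attainment rely on the splitting provided by Theorem~\ref{theo:decomposition}.

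For \eqref{eq:equivalence1}, fix $(\gamma_1,\gamma_2)\in\hat\Gamma(\mu_1,\mu_2)$, which we may assume has $J_\sfH(\gamma_1,\gamma_2)<+\infty$. Set $\gamma:=\gamma_1+\gamma_2$ and let $\sigma_i:=\de\gamma_i/\de\gamma\in[0,1]$ be Borel versions of the densities, so $\sigma_1+\sigma_2=1$ $\gamma$‑a.e. Since $\f p$ is continuous, the map $F(x_1,x_2):=\big([x_1,\sigma_1(x_1,x_2)],[x_2,\sigma_2(x_1,x_2)]\big)$ from $\sfX_1\times\sfX_2$ to $\pc$ is Borel, and I would take $\aalpha:=F_\sharp\gamma$. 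Using $\sfr_i\circ F=\sigma_i$ one checks $\int\sfr_i\,\de\aalpha=\gamma_i(\sfX_1\times\sfX_2)=\mu_i(\sfX_i)<+\infty$ and, since $\sfx_i\circ F=x_i$ wherever $\sigma_i>0$ while $\sfr_i\circ F=0$ elsewhere,
\[
\f h_i^1(\aalpha)(A_i)=\int(\nchi_{A_i}\circ\sfx_i)\,\sfr_i\,\de\aalpha=\int_{A_i\times\sfX_2}\sigma_i\,\de\gamma=\pi^i_\sharp\gamma_i(A_i)=\mu_i(A_i),
\]
so $\aalpha\in\f H^1(\mu_1,\mu_2)$. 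Finally $\int\sfH\,\de\aalpha=\int\sfH([x_1,\sigma_1],[x_2,\sigma_2])\,\de\gamma=J_\sfH(\gamma_1,\gamma_2)$ (the latter being independent of the dominating measure by radial $1$‑homogeneity), hence $\m_\sfH(\mu_1,\mu_2)\le J_\sfH(\gamma_1,\gamma_2)$, and \eqref{eq:equivalence1} follows by taking the infimum.

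Assume now in addition that $\sfH$ is radially convex. If $\m_\sfH(\mu_1,\mu_2)=+\infty$ then \eqref{eq:equivalence1} already gives equality and every element of the nonempty set $\hat\Gamma(\mu_1,\mu_2)$ attains the (infinite) infimum; so suppose $\m_\sfH(\mu_1,\mu_2)<+\infty$ and apply Theorem~\ref{theo:decomposition}. It yields Borel partitions $\{S_i',S_i''\}$ with $\mu_i(S_i')>0$, the measures $\mu_i'=\mu_i\mres S_i'$ and $\mu_i''=\mu_i\mres S_i''$, a measure $\ggamma'\in\meas_+(\sfX_1\times\sfX_2)$ concentrated on $S_1'\times S_2'$ with barycentric densities $\varrho_i$, and an optimal reduced coupling $\aalpha=\aalpha'+\aalpha_1''+\aalpha_2''$ with $\aalpha'=\mathsf T[\boldsymbol\varrho]_\sharp\ggamma'$, $\aalpha_1''=\f p_\sharp(\mu_1''\otimes\delta_1)\otimes\delta_{\fo_2}$, $\aalpha_2''=\delta_{\fo_1}\times\f p_\sharp(\mu_2''\otimes\delta_1)$, $\f h_i^1(\aalpha')=\mu_i'$, and
\[
\m_\sfH(\mu_1,\mu_2)=\int\sfH([x_1,\varrho_1],[x_2,\varrho_2])\,\de\ggamma'+\int_{\sfX_1}\sfH([x_1,1],\fo_2)\,\de\mu_1''+\int_{\sfX_2}\sfH(\fo_1,[x_2,1])\,\de\mu_2''.
\]
I would choose the auxiliary probabilities $\nu_i:=\mu_i'/\mu_i(S_i')\in\prob(\sfX_i)$, which are concentrated on $S_i'$, and set $\gamma_1:=\varrho_1\ggamma'+\mu_1''\otimes\nu_2$, $\gamma_2:=\varrho_2\ggamma'+\nu_1\otimes\mu_2''$. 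Then $\pi^1_\sharp\gamma_1=\f h_1^1(\aalpha')+\mu_1''=\mu_1$ and likewise $\pi^2_\sharp\gamma_2=\mu_2$, so $(\gamma_1,\gamma_2)\in\hat\Gamma(\mu_1,\mu_2)$. The key observation is that $\gamma:=\ggamma'+\mu_1''\otimes\nu_2+\nu_1\otimes\mu_2''$ is a sum of three mutually singular pieces carried by $S_1'\times S_2'$, $S_1''\times S_2'$ and $S_1'\times S_2''$, on which $(\de\gamma_1/\de\gamma,\de\gamma_2/\de\gamma)$ equals $(\varrho_1,\varrho_2)$, $(1,0)$ and $(0,1)$ respectively; evaluating $J_\sfH$ against this $\gamma$, using $\sfH([x_1,1],\fo_2)$ and $\sfH(\fo_1,[x_2,1])$ do not depend on the remaining variable, $\nu_i(S_i')=1$ and $\aalpha'=\mathsf T[\boldsymbol\varrho]_\sharp\ggamma'$, reproduces exactly the right‑hand side of the displayed identity. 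Hence $\widehat{\mathscr{U}}_\sfH(\mu_1,\mu_2)\le J_\sfH(\gamma_1,\gamma_2)=\m_\sfH(\mu_1,\mu_2)$, which with \eqref{eq:equivalence1} gives \eqref{eq:equivalence2} and shows that $(\gamma_1,\gamma_2)$ is a minimizer of \eqref{eq:semi}.

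The main obstacle is the reverse inequality: one must pass from a homogeneous coupling on $\pc$, whose mass may sit over the fictitious vertices $\fo_i$, back to an honest pair of measures on $\sfX_1\times\sfX_2$. The decomposition of Theorem~\ref{theo:decomposition} is tailored for this, and the only delicate point is the bookkeeping — routing the completely destroyed/created masses $\mu_i''$ through auxiliary probabilities $\nu_i$ supported on $S_i'$ is what forces the reference measure $\gamma$ to split into mutually singular blocks, making the densities $\de\gamma_i/\de\gamma$, and hence the value of $J_\sfH$, immediately computable.
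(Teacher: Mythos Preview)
Your proof is correct and follows essentially the same approach as the paper: for \eqref{eq:equivalence1} you push a dominating measure forward along $(x_1,x_2)\mapsto([x_1,\sigma_1],[x_2,\sigma_2])$ exactly as the paper does (you simply fix the convenient choice $\gamma=\gamma_1+\gamma_2$), and for \eqref{eq:equivalence2} you invoke Theorem~\ref{theo:decomposition} and build the same semi-coupling $\gamma_1=\varrho_1\ggamma'+\mu_1''\otimes\nu_2$, $\gamma_2=\varrho_2\ggamma'+\nu_1\otimes\mu_2''$. Your explicit remark that the three pieces of $\gamma$ are mutually singular on $S_1'\times S_2'$, $S_1''\times S_2'$, $S_1'\times S_2''$ makes the density computation slightly more transparent than in the paper, but the argument is otherwise identical.
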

\begin{proof}
  We first observe that 
  every triple 
$(\gamma,\gamma_1,\gamma_2)\in 
\meas_+(\sfX_1\times \sfX_2)\times
\hat\Gamma(\mu_1,\mu_2)$ with
$\gamma_i\ll\gamma$ induces a
homogeneous coupling $\aalpha\in \f H^1(\mu_1,\mu_2)$ via the formula
\begin{equation}
\label{eq:semi-to-hom}
    \aalpha:=G_\sharp \gamma,\quad G(x_1,x_2):=\Big(\big[x_1,\varrho_1(x_1,x_2)\big],
    \big[x_2,\varrho_2(x_1,x_2)\big]\Big),
    \quad
    \varrho_i:=\frac{\de\gamma_i}{\de\gamma},
\end{equation}
such that
\begin{equation}
    \label{eq:cost}
    \int_{\f C[\sfX_1,\sfX_2]}\sfH\,\de\aalpha=
    J_\sfH(\gamma_1,\gamma_2).
\end{equation}
Therefore we immediately get
\eqref{eq:equivalence1}.

Let us now assume that $\sfH$ 
is also radially convex and let $\aalpha$
be a distinguished optimal coupling as in Theorem
\ref{theo:decomposition}.
Keeping the same notation of that Theorem
let us set $\mu_i(S_i')=m_i>0$
and let us set $\nu_i:=m_i^{-1}\mu_i'.$
We define
$\gamma:=\ggamma'+\mu_1''\otimes \nu_2+
\nu_1\otimes \mu_2''$,
$\gamma_1:=\varrho_1\gamma'+\mu_1''\otimes \nu_2$,
$\gamma_2:=\varrho_2\gamma'+\nu_1\otimes \mu_2''$,
obtaining
$\pi^i_\sharp\gamma_i=
\mu_i'+\mu_i''=\mu_i$ so that 
$(\gamma_1,\gamma_2)\in \hat\Gamma(\mu_1,\mu_2).$
Moreover
\begin{align*}
    J_\sfH(\gamma_1,\gamma_2)&=
    \int \sfH([x_1,\varrho_1],[x_2,\varrho_2])\,\de\ggamma'
    \\&\qquad+
    \int\sfH([x_1,1],[x_2,0])\,\de(\mu_1''\otimes\nu_2)+
    \int\sfH([x_1,0],[x_2,1])\,\de(\nu_1\otimes\mu_2'')
    \\&=\int\sfH\,\de\aalpha=
    \m_\sfH(\mu_1,\mu_2).\qedhere
\end{align*}
\end{proof}

\noindent
\subsection{The dual problem}
In the next results we study the dual definition of the Unbalanced Optimal Transport cost $\m_\sfH$, for which we need the following definitions.

\begin{definition}\label{def:thesetadm} Let $\sfH:\pc \to [0,+\infty]$ be a function; we define the set of continuous functions
\begin{equation}\label{eq:constr}
  \Phi_\sfH := \left\{ (\varphi_1, \varphi_2) \in \rmC_b(\sfX_1) \times \rmC_b(\sfX_2)\ :  \begin{array}{l}
    \varphi_1(x_1)r_1+ \varphi_2(x_2)r_2 \le \sfH([x_1, r_1], [x_2, r_2]) \\
    \text{for every } (x_1, x_2) \in \sfX_1 \times \sfX_2, \, (r_1, r_2) \in \R_+^2
  \end{array}\right\}
  \end{equation}
  and, for every $(\mu_1, \mu_2) \in \meas_+(\sfX_1) \times \meas_+(\sfX_2)$, the functional $\dual(\cdot; \mu_1, \mu_2): \rmC_b(\sfX_1) \times \rmC_b(\sfX_2) \to \R$ given by
\[
      \dual(\varphi_1, \varphi_2; \mu_1, \mu_2) := \int_{\sfX_1} \varphi_1 \de \mu_1 + \int_{\sfX_2}\varphi_2 \de \mu_2, \quad (\varphi_1, \varphi_2) \in \rmC_b(\sfX_1) \times \rmC_b(\sfX_2).
\]
\end{definition}

Before stating the main duality result, let
us briefly recall
the Fenchel-Moreau Theorem in the
framework of a pair of vector spaces $E,F$ placed in duality
by a nondegenerate bilinear map $\langle\cdot,\cdot,\rangle$,
see e.g.~\cite{Temam}.
We endow $E$ with the weak topology
$\sigma(E,F)$, the coarsest topology for which all
the functions $e\mapsto \langle e,f\rangle$, $f\in F$,
are continuous.

\begin{definition}
  Let $\mathscr F: E \to (-\infty, + \infty]$
  be not identically $+\infty$ and satisfying
  \begin{equation}\label{ss22:eq:5}
    \mathscr F(e)\ge \langle e,f\rangle-c\quad
    \text{for some $f\in F$, $c\in \R$ and every }e\in E.
  \end{equation}
  The polar (or conjugate) function of
  $\mathscr F$ is
  the function $\mathscr F^*: F \to (-\infty, + \infty]$
  defined by
  \begin{equation*}
  \mathscr F^*(f) := \sup_{e \in E}  \scalprod{e}{f} -
  \mathscr F(e) \quad
  \text{for every }f\in F.
\end{equation*}
\end{definition}

\begin{theorem}[Fenchel-Moreau] \label{ss22:teo:fenchel}
  Let $E$ and $F$ be vector spaces placed in duality by a nondegenerate bilinear map $\langle\cdot,\cdot,\rangle$ and let $\mathscr F: E \to (-\infty, + \infty]$
  be satisfying \eqref{ss22:eq:5} and not identically $+\infty$.
  Then the lower semicontinuous (w.r.t.~the topology $\sigma(E,F)$) and convex envelope of $\mathscr F$
  is given by the dual formula
  \begin{equation*}
  \cce{\mathscr F} = \mathscr F^{**}(e):=\sup_{f\in F}\langle e,f\rangle-
  \mathscr F^*(f)\quad\text{for every }e\in E.
\end{equation*}
In particular,
\begin{equation*}
  \text{if $\mathscr F$ is convex and lower semicontinuous
    then}\quad
  \mathscr F=\mathscr F^{**}.
\end{equation*}
\end{theorem}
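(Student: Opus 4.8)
The plan is to deduce the statement from the two Fenchel--Moreau inequalities plus a single Hahn--Banach separation argument, organised in three moves: the easy inequality $\mathscr F^{**}\le \cce{\mathscr F}$; the observation that conjugation does not see the passage to the convex lower semicontinuous envelope; and finally the biconjugation identity for a proper convex lower semicontinuous function, which is where the separation theorem enters.

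First I would record that $\mathscr F^*$ is proper: condition \eqref{ss22:eq:5} gives $\mathscr F^*(f)\le c<+\infty$ for the distinguished $f$ appearing there, while choosing any $e_0$ with $\mathscr F(e_0)<+\infty$ shows $\mathscr F^*(g)\ge \scalprod{e_0}{g}-\mathscr F(e_0)>-\infty$ for every $g\in F$. Hence $\mathscr F^{**}$ is well defined and, being the pointwise supremum of the $\sigma(E,F)$-continuous affine maps $e\mapsto \scalprod{e}{g}-\mathscr F^*(g)$, $g\in F$, it is convex and $\sigma(E,F)$-lower semicontinuous; moreover $\mathscr F^{**}\le \mathscr F$ is immediate from the definition of $\mathscr F^*$, so that $\mathscr F^{**}\le \cce{\mathscr F}$. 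Next I would verify the identity $(\cce{\mathscr F})^*=\mathscr F^*$: the inequality $\ge$ is clear because $\cce{\mathscr F}\le \mathscr F$, and $\le$ holds because each continuous affine minorant $\scalprod{\cdot}{g}-\mathscr F^*(g)$ of $\mathscr F$ is convex and lower semicontinuous, hence is also a minorant of $\cce{\mathscr F}$. Taking conjugates once more gives $(\cce{\mathscr F})^{**}=\mathscr F^{**}$, so it suffices to prove the special case $G=G^{**}$ for the function $G:=\cce{\mathscr F}$; note that $G$ inherits from \eqref{ss22:eq:5} a continuous affine minorant, so it takes values in $(-\infty,+\infty]$, is not identically $+\infty$, and is thus a proper convex lower semicontinuous function.

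The core step is then to prove $G\le G^{**}$ for such a $G$, by contradiction. Suppose $G^{**}(e_0)<t<G(e_0)$ for some $e_0\in E$ and $t\in\R$. Then $(e_0,t)$ does not lie in $\operatorname{epi}(G)$, which is a nonempty convex subset of $E\times\R$, closed because $G$ is lower semicontinuous. Since $E$ carries the weak topology $\sigma(E,F)$, the product $E\times\R$ is a locally convex Hausdorff space (the pairing being nondegenerate) whose topological dual is exactly $F\times\R$, so the geometric Hahn--Banach theorem provides $(g,\beta)\in F\times\R$ and $\gamma\in\R$ with $\scalprod{e_0}{g}+\beta t<\gamma<\scalprod{e}{g}+\beta s$ for every $(e,s)\in\operatorname{epi}(G)$. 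Sending $s\to+\infty$ with $e\in D(G)$ fixed forces $\beta\ge 0$. If $\beta>0$, I would rescale to $\beta=1$ and test the inequality at $s=G(e)$, $e\in D(G)$, to get $G^*(-g)\le -\gamma$, hence $G^{**}(e_0)\ge \scalprod{e_0}{-g}+\gamma>t$, a contradiction. If $\beta=0$, the inequality reduces to $\scalprod{e}{g}>\gamma>\scalprod{e_0}{g}$ on $D(G)$, which merely separates $e_0$ from $D(G)$; here I would perturb by a fixed $g_0\in D(G^*)$, nonempty since $G^*$ is proper, and estimate $G^*(g_0-\lambda g)\le G^*(g_0)-\lambda\gamma$ for every $\lambda>0$, so that $G^{**}(e_0)\ge \scalprod{e_0}{g_0}-G^*(g_0)+\lambda\bigl(\gamma-\scalprod{e_0}{g}\bigr)$; since $\gamma>\scalprod{e_0}{g}$, letting $\lambda\to+\infty$ forces $G^{**}(e_0)=+\infty$, again contradicting $G^{**}(e_0)<t<+\infty$. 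This proves $G=G^{**}$, hence $\cce{\mathscr F}=\mathscr F^{**}$; the last assertion is the particular case $\mathscr F=\cce{\mathscr F}$.

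I expect the only genuine obstacle to be the case $\beta=0$ of the separation step: there one must exploit the properness of $G^*$ --- equivalently, the standing assumption \eqref{ss22:eq:5} --- to tilt the separating hyperplane into one with a nontrivial $\R$-component, and one must be careful that the weak topology $\sigma(E,F)$ is chosen precisely so that the continuous dual of $E$ is $F$, so that the functional produced by Hahn--Banach is represented by an element of $F$ and the chain of conjugacy inequalities stays available.
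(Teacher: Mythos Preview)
Your proof is correct and follows the standard Hahn--Banach separation argument for the Fenchel--Moreau theorem. The paper, however, does not supply its own proof of this statement: it is quoted as a classical result (with a reference to Temam's book) and used as a black box in the subsequent duality theorem, so there is nothing to compare against beyond noting that your argument is the textbook one.
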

The following is the main duality result and it gives also an independent proof of Theorem \ref{ss22:teo:representation}.
\begin{theorem}[Duality] \label{ss22:teo:duality} Let $\sfH$ be as in \eqref{ss22:eq:assumption} and let $\cost{\sfH}$ and $\m_\sfH$ be as in Definition \ref{ss22:def:uh}. Then 
\begin{equation} \label{ss22:eq:dualformulation} \m_\sfH(\mu_1, \mu_2) = \cce{\cost{\sfH}}(\mu_1, \mu_2) = \sup \left \{ \dual(\varphi_1, \varphi_2; \mu_1, \mu_2) : (\varphi_1, \varphi_2) \in \Phi_\sfH \right \} \end{equation}
for every $(\mu_1, \mu_2) \in \meas_+(\sfX_1) \times \meas_+(\sfX_2)$.
\end{theorem}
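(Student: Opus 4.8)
The plan is to derive the dual formula as a direct application of the Fenchel--Moreau Theorem \ref{ss22:teo:fenchel}, applied to the function $\mathscr F:=\cost{\sfH}$ on the vector space $E:=\meas(\sfX_1)\times\meas(\sfX_2)$ placed in duality with $F:=\rmC_b(\sfX_1)\times\rmC_b(\sfX_2)$ through the bilinear form $\langle(\mu_1,\mu_2),(\varphi_1,\varphi_2)\rangle:=\int_{\sfX_1}\varphi_1\,\de\mu_1+\int_{\sfX_2}\varphi_2\,\de\mu_2$, which is nondegenerate by the discussion following \eqref{ss22:eq:2}; the associated weak topology $\sigma(E,F)$ is exactly the product weak topology in which the envelopes \eqref{eq:co}--\eqref{eq:cco} are computed. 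Since $\sfH$ is proper, $\cost{\sfH}$ is not identically $+\infty$, and since $\cost{\sfH}\ge 0$ it trivially satisfies the affine minorisation \eqref{ss22:eq:5} with $f=0$, $c=0$. Hence Theorem \ref{ss22:teo:fenchel} gives $\cce{\cost{\sfH}}=\cost{\sfH}^{**}$, and it remains to compute $\cost{\sfH}^{*}$ and then to connect $\cost{\sfH}^{**}$ with $\m_\sfH$.

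The first step is the computation of the conjugate. Because $\cost{\sfH}(\mu_1,\mu_2)$ is finite only when $(\mu_1,\mu_2)=(r_1\delta_{x_1},r_2\delta_{x_2})$ with $x_i\in\sfX_i$, $r_i\ge 0$, the supremum defining $\cost{\sfH}^{*}$ reduces to
\[
\cost{\sfH}^{*}(\varphi_1,\varphi_2)=\sup_{x_i\in\sfX_i,\, r_i\ge 0}\Big(\varphi_1(x_1)r_1+\varphi_2(x_2)r_2-\sfH([x_1,r_1],[x_2,r_2])\Big).
\]
If $(\varphi_1,\varphi_2)\in\Phi_\sfH$ the bracket is $\le 0$ for all admissible $x_i,r_i$ and equals $0$ at $r_1=r_2=0$, so $\cost{\sfH}^{*}(\varphi_1,\varphi_2)=0$. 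If instead $(\varphi_1,\varphi_2)\notin\Phi_\sfH$, there are $\bar x_i,\bar r_i$ with $\varphi_1(\bar x_1)\bar r_1+\varphi_2(\bar x_2)\bar r_2-\sfH([\bar x_1,\bar r_1],[\bar x_2,\bar r_2])=:\delta>0$; replacing $(\bar r_1,\bar r_2)$ by $(\lambda\bar r_1,\lambda\bar r_2)$ and invoking the radial $1$-homogeneity of $\sfH$ from \eqref{ss22:eq:assumption}, this quantity becomes $\lambda\delta\to+\infty$, so $\cost{\sfH}^{*}(\varphi_1,\varphi_2)=+\infty$. Therefore $\cost{\sfH}^{*}=\mathsf I_{\Phi_\sfH}$ and consequently
\[
\cost{\sfH}^{**}(\mu_1,\mu_2)=\sup_{(\varphi_1,\varphi_2)\in F}\big(\langle(\mu_1,\mu_2),(\varphi_1,\varphi_2)\rangle-\mathsf I_{\Phi_\sfH}(\varphi_1,\varphi_2)\big)=\sup_{(\varphi_1,\varphi_2)\in\Phi_\sfH}\dual(\varphi_1,\varphi_2;\mu_1,\mu_2).
\]

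The final step is to close the chain of inequalities with $\m_\sfH$, which also makes the argument independent of Theorem \ref{ss22:teo:representation}. For weak duality, fix $(\mu_1,\mu_2)\in\meas_+(\sfX_1)\times\meas_+(\sfX_2)$, $\aalpha\in\f H^1(\mu_1,\mu_2)$ and $(\varphi_1,\varphi_2)\in\Phi_\sfH$; using $\f h^1_i(\aalpha)=\mu_i$ and Remark \ref{rem:ossessivo},
\[
\dual(\varphi_1,\varphi_2;\mu_1,\mu_2)=\int_{\pc}\big(\varphi_1(x_1)r_1+\varphi_2(x_2)r_2\big)\,\de\aalpha\le\int_{\pc}\sfH\,\de\aalpha,
\]
and taking the infimum over $\aalpha$ and the supremum over $\Phi_\sfH$ yields $\cost{\sfH}^{**}(\mu_1,\mu_2)\le\m_\sfH(\mu_1,\mu_2)$. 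For the reverse inequality, by Theorem \ref{ss22:prop:mh} the functional $\m_\sfH$ is convex and lower semicontinuous on all of $E$ (it is $+\infty$ off the closed convex cone $\meas_+(\sfX_1)\times\meas_+(\sfX_2)$, and on that cone it is convex and l.s.c.), while \eqref{ss22:eq:recovering} together with the definition of $\cost{\sfH}$ gives $\m_\sfH\le\cost{\sfH}$ on $E$; since $\cce{\cost{\sfH}}$ is the largest l.s.c.\ convex function below $\cost{\sfH}$, we get $\m_\sfH\le\cce{\cost{\sfH}}=\cost{\sfH}^{**}$. Combining the two inequalities gives $\m_\sfH=\cce{\cost{\sfH}}=\sup\{\dual(\varphi_1,\varphi_2;\mu_1,\mu_2):(\varphi_1,\varphi_2)\in\Phi_\sfH\}$ on $\meas_+(\sfX_1)\times\meas_+(\sfX_2)$.

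The computation is short, and there is no deep obstacle: the whole difficulty is organisational, namely setting up Fenchel--Moreau in this concrete duality (automatic thanks to $\cost{\sfH}\ge0$ and properness) and carrying out the conjugate computation, where radial $1$-homogeneity is the essential ingredient forcing $\cost{\sfH}^{*}$ to be the $\{0,+\infty\}$-valued indicator of the convex set $\Phi_\sfH$. The one step that does a little genuine work is the comparison $\m_\sfH\le\cce{\cost{\sfH}}$, which relies on the convexity and lower semicontinuity of $\m_\sfH$ from Theorem \ref{ss22:prop:mh}; alternatively, one may at this point simply invoke Theorem \ref{ss22:teo:representation} for the identity $\cce{\cost{\sfH}}=\m_\sfH$, at the cost of losing the independence between the two proofs.
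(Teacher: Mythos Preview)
Your proof is correct and follows essentially the same approach as the paper: set up the duality pairing between $\meas(\sfX_1)\times\meas(\sfX_2)$ and $\rmC_b(\sfX_1)\times\rmC_b(\sfX_2)$, compute $\cost{\sfH}^{*}$ as the indicator of $\Phi_\sfH$, apply Fenchel--Moreau to obtain the dual formula for $\cce{\cost{\sfH}}$, and close the loop via the weak duality inequality together with the convexity and lower semicontinuity of $\m_\sfH$ from Theorem~\ref{ss22:prop:mh}. The only cosmetic difference is that you spell out explicitly the role of radial $1$-homogeneity in forcing $\cost{\sfH}^{*}=+\infty$ off $\Phi_\sfH$, which the paper leaves implicit.
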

\begin{proof} Set $E:= \meas(\sfX_1) \times \meas(\sfX_2)$ and $F:= \rmC_b(\sfX_1) \times \rmC_b(\sfX_2)$ with the bilinear form
\[ \scalprod{\cdot}{\cdot} : E \times F \to \R, \quad \quad ((\mu_1, \mu_2), (\varphi_1, \varphi_2)) \mapsto \dual(\varphi_1, \varphi_2; \mu_1, \mu_2). \]
This is a well defined nondegenerate bilinear form. We endow then $E$ with the topology $\sigma(E,F)$ which coincides exactly with the product weak topology.\\
Consider then the function $\cost{\sfH}: E \mapsto (-\infty, + \infty]$ defined as in Definition \ref{ss22:def:uh}. Then we have
\begin{align*}
 \cost{\sfH}^*(\varphi_1, \varphi_2) &= \sup_{(\mu_1, \mu_2) \in E} \left \{ \scalprod{(\mu_1, \mu_2)}{(\varphi_1, \varphi_2)} - \cost{\sfH}(\mu_1, \mu_2) \right \} \\
 &= \sup_{(\mu_1, \mu_2) \in \D{\sfX_1} \times \D{\sfX_2}} \left \{ \scalprod{(\mu_1, \mu_2)}{(\varphi_1, \varphi_2)} - \cost{\sfH}(\mu_1, \mu_2) \right \} \\
 &= \sup_{x_1, r_1, x_2, r_2} \left \{ \varphi_1(x_1) r_1 + \varphi_2(x_2) r_2 - \sfH([x_1,r_1], [x_2, r_2]) \right \} \\
 &= \begin{cases} 0 \quad &\text{ if } (\varphi_1, \varphi_2) \in \Phi_{\sfH} \\ + \infty \quad &\text{ elsewhere} \end{cases}. 
 \end{align*}
 Hence by Theorem \ref{ss22:teo:fenchel} we have
 \begin{equation}\label{ss22:eq:dualityfirst}
  \cce{\cost{\sfH}}(\mu_1, \mu_2) = \cost{\sfH}^{**}(\mu_1, \mu_2) = \sup \left \{ \dual(\varphi_1, \varphi_2; \mu_1, \mu_2) \mid (\varphi_1, \varphi_2) \in \Phi_\sfH \right \}.    
 \end{equation}
By Theorem \ref{ss22:prop:mh} we know that $\m_\sfH$ is convex and lower semicontinuous and stays below $\cost{\sfH}$ so that we clearly have $\m_\sfH \le \cce{\cost{\sfH}}$. The other inequality is immediate: take any $\aalpha \in \f{H}^1(\mu_1, \mu_2)$ and any $(\varphi_1, \varphi_2) \in \Phi_{\sfH}$; then
\[
\int_{\sfX_1} \varphi_1 \de \mu_1 + \int_{\sfX_2} \varphi_2 \de \mu_2 = \int_{\pc} \left ( (\varphi_1\circ \sfx_1)\sfr_1+ (\varphi_2\circ \sfx_2) \sfr_2 \right ) \de \aalpha\le \int_{\pc} \sfH \de \aalpha.
\]
Passing to the supremum in $\Phi_{\sfH}$ and to the infimum in $\f{H}^1(\mu_1, \mu_2)$ and using \eqref{ss22:eq:dualityfirst}, we conclude that $\m_\sfH \ge \cce{\cost{\sfH}}$.
\end{proof}

\begin{remark} If $\sfH$ has the form
\[ \sfH([x_1, r_1] , [x_2, r_2]) := \begin{cases} r_1 \sfc(x_1, x_2) \quad &\text{ if } r_1 = r_2 \ge 0 \\ + \infty \quad &\text{ elsewhere } \end{cases}, \]
for some $\sfc: \sfX_1 \times \sfX_2 \to [0, + \infty]$ lower semicontinuous function, we have that $\restricts{\m_\sfH}{\prob(\sfX_1) \times \prob(\sfX_2)}$ is exactly the classical Optimal Transport cost induced by $\sfc$. This was indeed exploited in \cite{SS20}.
\end{remark}

In Definition \ref{ss22:def:cof} we have introduced the closed and convex envelope of $\sfH$ which is obtained first convexifying $\sfH$ ``slice by slice" and then closing its graph globally in $\pc$. One could also compute first the closed convex envelope ``slice by slice" and then glue together the resulting functions (see the definition of $\cof{\sfH}$ below). In the following result we show, making use of Theorem \ref{ss22:teo:duality}, that the two procedures give raise to same object. 

We also show that, even if $\sfH$ is not convex, taking the closed and convex envelope of the singular cost $\cost{\sfH}$ is sufficient to also recover the Unbalanced Optimal Transport cost induced by the closed and convex envelope of $\sfH$.

\begin{corollary} Let $\sfH$ be as in \eqref{ss22:eq:assumption} and let $\cce{\sfH}$ be as in Definition \ref{ss22:def:cof}. Let us define $\cof{\sfH} : \pc \to [0, +\infty]$ as
\[
\cof{\sfH}([x_1, r_1], [x_2, r_2]) := \cce{\sfH_{x_1, x_2}}(r_1, r_2) \quad \text{ for every } (x_1, x_2) \in \sfX_1 \times \sfX_2, \, (r_1, r_2) \in \R_+^2.\]
Then
\[ \cce{\sfH}([x_1, r_1], [x_2, r_2]) = \cof{\sfH}([x_1, r_1], [x_2, r_2]) = \sup \left \{ \varphi(x_1)r_1+ \varphi_2(x_2)r_2 : (\varphi_1, \varphi_2) \in \Phi_\sfH \right \}\]
for every $(x_1, x_2) \in \sfX_1 \times \sfX_2$ and every $(r_1, r_2) \in \R_+^2$. Moreover
 \begin{equation}
     \label{eq:conv-cost}
\cce{\cost{\sfH}} = \cce{\cost{\cce{\sfH}}} =\m_{\sfH} = \m_{\cce{\sfH}} \text{ in } \meas(\sfX_1) \times \meas(\sfX_2).
 \end{equation}
\end{corollary}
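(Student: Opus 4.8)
The plan is to obtain the two displayed chains of equalities by evaluating Theorems~\ref{ss22:teo:representation} and~\ref{ss22:teo:duality} on pairs of weighted Dirac masses and combining them with one--dimensional Fenchel--Moreau duality on the radial slices $\sfH_{x_1,x_2}$. First I would dispose of the easy inequality
\[
\sup\big\{\varphi_1(x_1)r_1+\varphi_2(x_2)r_2:(\varphi_1,\varphi_2)\in\Phi_\sfH\big\}\le \cof{\sfH}([x_1,r_1],[x_2,r_2]):
\]
for $(\varphi_1,\varphi_2)\in\Phi_\sfH$ and a fixed $(x_1,x_2)$, the linear map $(r_1,r_2)\mapsto\varphi_1(x_1)r_1+\varphi_2(x_2)r_2$ is convex, lower semicontinuous and dominated by $\sfH_{x_1,x_2}$, hence dominated by its closed convex envelope $\cce{\sfH_{x_1,x_2}}=\cof{\sfH}([x_1,\cdot],[x_2,\cdot])$.

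For the reverse inequality I would compute the middle term through duality. By Theorems~\ref{ss22:teo:representation} and~\ref{ss22:teo:duality} applied with $\mu_i=r_i\delta_{x_i}$,
\[
\m_\sfH(r_1\delta_{x_1},r_2\delta_{x_2})=\cce{\cost{\sfH}}(r_1\delta_{x_1},r_2\delta_{x_2})=\sup\big\{\varphi_1(x_1)r_1+\varphi_2(x_2)r_2:(\varphi_1,\varphi_2)\in\Phi_\sfH\big\},
\]
so it is enough to show $\m_\sfH(r_1\delta_{x_1},r_2\delta_{x_2})\ge\cof{\sfH}([x_1,r_1],[x_2,r_2])$, and this is the delicate point. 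I would argue directly on the primal problem: any $\aalpha\in\f H^1(r_1\delta_{x_1},r_2\delta_{x_2})$ is concentrated on the product of the two rays through $x_1$ and $x_2$ (because $\f h^1_i(\aalpha)=(\sfx_i)_\sharp(\sfr_i\aalpha)=r_i\delta_{x_i}$ forces $\sfr_i\aalpha$ to live over $\{x_i\}$), so, setting $\beta:=(\sfr_1,\sfr_2)_\sharp\aalpha\in\meas_+(\R^2_+)$, one has $\int s_i\,\de\beta=r_i$ and $\int\sfH\,\de\aalpha=\int_{\R^2_+}\sfH_{x_1,x_2}\,\de\beta\ge\int_{\R^2_+}\cce{\sfH_{x_1,x_2}}\,\de\beta$. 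After normalising $\beta$ to a probability measure — which is where the positive $1$-homogeneity of $\cce{\sfH_{x_1,x_2}}$ enters, the degenerate case $\beta=0$, $r_1=r_2=0$ being trivial — Jensen's inequality for the lower semicontinuous convex function $\cce{\sfH_{x_1,x_2}}$ gives $\int\sfH\,\de\aalpha\ge\cce{\sfH_{x_1,x_2}}(r_1,r_2)=\cof{\sfH}([x_1,r_1],[x_2,r_2])$; taking the infimum over $\aalpha$ yields the claim. Chaining the inequalities forces $\cof{\sfH}([x_1,r_1],[x_2,r_2])=\m_\sfH(r_1\delta_{x_1},r_2\delta_{x_2})$ and equality of both with the supremum, for every $x_i\in\sfX_i$ and $r_i\ge0$.

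Next I would identify $\cof{\sfH}$ with $\cce{\sfH}$. Since the map $(r_1,r_2)\mapsto([x_1,r_1],[x_2,r_2])$ embeds $\R^2_+$ into $\pc$, a net in a fixed radial slice is in particular a net in $\pc$, so the joint lower semicontinuous envelope $\cce{\sfH}=\overline{\ce{\sfH}}$ is, on each slice, dominated by the slice--wise lower semicontinuous envelope $\overline{\ce{\sfH_{x_1,x_2}}}=\cce{\sfH_{x_1,x_2}}=\cof{\sfH}$; hence $\cce{\sfH}\le\cof{\sfH}$. Conversely, the identity just obtained shows that $\cof{\sfH}=\sup_{(\varphi_1,\varphi_2)\in\Phi_\sfH}\big((\varphi_1\circ\sfx_1)\sfr_1+(\varphi_2\circ\sfx_2)\sfr_2\big)$, and each summand $(\varphi_1\circ\sfx_1)\sfr_1+(\varphi_2\circ\sfx_2)\sfr_2$ is continuous on $\pc$ (with the harmless convention $\varphi_i(o_i):=0$, continuity at the vertices following from boundedness of $\varphi_i$ and $\sfr_i\to0$), so $\cof{\sfH}$ is lower semicontinuous; as moreover $\cof{\sfH}=\cce{\sfH_{x_1,x_2}}\le\ce{\sfH_{x_1,x_2}}=\ce{\sfH}$ pointwise, $\cof{\sfH}$ is a lower semicontinuous minorant of $\ce{\sfH}$, whence $\cof{\sfH}\le\overline{\ce{\sfH}}=\cce{\sfH}$. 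This proves the first displayed chain of equalities, and in particular $\cce{\sfH}$ satisfies~\eqref{ss22:eq:assumption}: it is proper since $\cce{\sfH}(\bfo)=0$ (recall $\sfH(\bfo)=0$ by Remark~\ref{rem:non-si-finisce-mai-di-dimenticare}), lower semicontinuous by the above, and positively $1$-homogeneous since on each slice it coincides with $\cce{\sfH_{x_1,x_2}}$, the closed convex envelope of the positively $1$-homogeneous $\sfH_{x_1,x_2}$.

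Finally, for~\eqref{eq:conv-cost} I would check that $\Phi_\sfH=\Phi_{\cce{\sfH}}$: the inclusion $\Phi_{\cce{\sfH}}\subseteq\Phi_\sfH$ is immediate from $\cce{\sfH}\le\sfH$, while if $(\varphi_1,\varphi_2)\in\Phi_\sfH$ then, fixing $(x_1,x_2)$, the linear minorant of $\sfH_{x_1,x_2}$ is also a minorant of $\cce{\sfH_{x_1,x_2}}=\cce{\sfH}([x_1,\cdot],[x_2,\cdot])$, so $(\varphi_1,\varphi_2)\in\Phi_{\cce{\sfH}}$. Applying Theorems~\ref{ss22:teo:representation} and~\ref{ss22:teo:duality} both to $\sfH$ and to $\cce{\sfH}$ (legitimate since $\cce{\sfH}$ satisfies~\eqref{ss22:eq:assumption}) we conclude, as functionals on $\meas_+(\sfX_1)\times\meas_+(\sfX_2)$,
\[
\m_\sfH=\cce{\cost{\sfH}}=\sup_{(\varphi_1,\varphi_2)\in\Phi_\sfH}\dual(\varphi_1,\varphi_2;\cdot,\cdot)=\sup_{(\varphi_1,\varphi_2)\in\Phi_{\cce{\sfH}}}\dual(\varphi_1,\varphi_2;\cdot,\cdot)=\cce{\cost{\cce{\sfH}}}=\m_{\cce{\sfH}},
\]
which is precisely~\eqref{eq:conv-cost}. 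The step I expect to be the real obstacle is the primal lower bound $\m_\sfH(r_1\delta_{x_1},r_2\delta_{x_2})\ge\cof{\sfH}([x_1,r_1],[x_2,r_2])$: one must recognise that homogeneous couplings with Dirac homogeneous marginals are supported on a single product of rays, recast the problem as a two--dimensional moment problem on $\R^2_+$, and carry out the $1$-homogeneous normalisation carefully before invoking Jensen's inequality.
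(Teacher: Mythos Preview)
Your proof is correct and follows essentially the same strategy as the paper's: both identify $\m_\sfH$ on Dirac pairs with $\cof{\sfH}$ via the Jensen argument on the radial slices (the paper packages this as an application of Theorem~\ref{ss22:prop:mh}(3) to $\cce{\sfH}$ and $\cof{\sfH}$, you redo it directly), and both invoke Theorem~\ref{ss22:teo:duality} for the supremum formula. The minor differences are organizational: to get $\cof{\sfH}\le\cce{\sfH}$ the paper observes that $\mathsf U_\sfH:=\m_\sfH|_{\Delta_+\times\Delta_+}$ inherits lower semicontinuity and radial convexity from $\m_\sfH$, hence lies below $\cce{\sfH}$, while you deduce lower semicontinuity of $\cof{\sfH}$ from its dual representation as a supremum of continuous functions; and for~\eqref{eq:conv-cost} the paper squeezes via $\m_\sfH\le\cost{\cce{\sfH}}$, whereas your route through $\Phi_\sfH=\Phi_{\cce{\sfH}}$ is a touch cleaner.
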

\begin{proof} We denote by $\mathsf{U}_\sfH : \pc \to [0, + \infty]$ the restriction of $\m_\sfH$ to $\Delta_+(\sfX_1) \times \Delta_+(\sfX_2) \cong \pc$ (see Lemma \ref{ss22:lem:homeo}).\\
It is clear that $\cce{\sfH} \le \cof{\sfH} \le \sfH$ so that
\[ \m_{\cce{\sfH}} \le  \m_{\cof{\sfH}} \le \m_{\sfH}\] 
and 
\[ \cce{\sfH} =\mathsf{U}_{\cce{\sfH}} \le \cof{\sfH} = \mathsf{U}_{\cof{\sfH}} \le \mathsf{U}_\sfH \le \sfH,\]
where we used Theorem \ref{ss22:prop:mh} and the radial convexity of $\cce{\sfH}$ and $\cof{\sfH}$.
 Moreover, since $\mathsf{U}_\sfH$ is, lower semicontinuous, convex and stays below $\sfH$, we have that $\mathsf{U}_\sfH \le \cce{\sfH}$. This gives that
 \[ \cce{\sfH} =\mathsf{U}_{\cce{\sfH}} = \cof{\sfH} = \mathsf{U}_{\cof{\sfH}} = \mathsf{U}_\sfH\]
 and in particular, using Theorem \ref{ss22:teo:duality}, that 
 \[ \cce{\sfH}([x_1, r_1], [x_2, r_2]) = \cof{\sfH}([x_1, r_1], [x_2, r_2]) = \sup \left \{ \varphi(x_1)r_1+ \varphi_2(x_2)r_2 : (\varphi_1, \varphi_2) \in \Phi_\sfH \right \}.\]
 The fact that $\cce{\sfH} = \mathsf{U}_\sfH$ gives that $\m_{\sfH}
 \le \cost{\cce{\sfH}}$ so that $\m_{\sfH} \le
 \cce{\cost{\cce{\sfH}}}$. However, by Theorem \ref{ss22:prop:mh}, we
 know that $\m_{\sfH} = \cce{\cost{\sfH}}$ so that $\cce{\cost{\sfH}}
 \le \cce{\cost{\cce{\sfH}}}$. Since, obviously, the other inequality
 holds, we have $\cce{\cost{\sfH}} =
 \cce{\cost{\cce{\sfH}}}$. Applying again
 Theorem \ref{ss22:prop:mh} to $\cce{\sfH}$ we conclude that
 \[ \cce{\cost{\sfH}} = \cce{\cost{\cce{\sfH}}} =\m_{\sfH} =
   \m_{\cce{\sfH}}.
\qedhere\]
\end{proof}
\begin{remark}
\label{rem:interesting-app}
As an interesting application of the previous Corollary, we obtain a clarifying  justification of two equivalent characterizations of the Hellinger-Kantorovich metric
(see (7.23) and (7.54) of \cite{LMS18}) in a (complete, separable) metric space
$(\sfX,\sfd)$, namely the formula
\begin{align}
\label{eq:cone-identity}
    \mathsf{H\kern-3pt K}^2
    (\mu_1,\mu_2)&=
    \min_{\aalpha_2\in \f H^2(\mu_1,\mu_2)}
    \int \mathsf d_{\f C}^2\,\de\aalpha_2=
     \min_{\bbeta_2\in \f H^2(\mu_1,\mu_2)}
    \int \mathsf d_{\pi/2,\f C}^2\,\de\bbeta_2,
\end{align}
where $\sfd_{\f C}$ is the canonical cone metric introduced in \eqref{ss22:eq:distcone}
and 
$\mathsf d_{\pi/2,\f C}$ is
the cone metric obtained by trucating the 
argument of the $\cos$ function at $\pi/2$:
\begin{equation}\label{ss22:eq:distconetrunc} \sfd_{\pi/2,\f{C}}([x,r],[y,s]) := \left ( r^2+s^2-2rs\cos(\sfd(x,y) \wedge \pi/2) \right )^{\frac{1}{2}}, \quad [x,r], [y,s] \in \f{C}[\sfX].
\end{equation}
By Remark \ref{rem:tp},
the identity \eqref{eq:cone-identity} is equivalent to 
\begin{align}
\label{eq:cone-identity2}
    \mathsf{H\kern-3pt K}^2
    (\mu_1,\mu_2)&=
    \min_{\aalpha\in \f H^1(\mu_1,\mu_2)}
    \int \mathsf d_{\f C}^2\circ \mathsf T_2\,\de\aalpha=
     \min_{\bbeta\in \f H^1(\mu_1,\mu_2)}
    \int \mathsf d_{\pi/2,\f C}^2 \circ \mathsf T_2\,\de\bbeta,
\end{align}
where $\mathsf T_2$ is defined in \eqref{eq:tq}; 
it is then not difficult to check that 
\begin{equation}
    \label{eq:convexification-app}
    \mathsf d^2_{\pi/2,\f C}\circ \mathsf T_2=
    \cce{\mathsf d^2_{\f C}\circ \mathsf T_2}.
\end{equation}
Therefore, the constant $\pi/2$ in \eqref{eq:cone-identity}
appears as a natural effect of the convexification of the cost function $\sfd_{\f C}^2\circ \mathsf T_2$ 
and the identity \eqref{eq:conv-cost}.
\end{remark}
\subsection{The Monge formulation}
We define the Monge formulation of the Unbalanced Optimal Transport problem. 
\begin{definition}[Transport-growth maps and Monge formulation]\label{def:tgmap}
    Given $\mu_1 \in \meas_+(\sfX_1)$ and Borel maps $(\mathsf{T}, \sfg): \sfX_1 \to \sfX_2 \times [0,+\infty)$ with $\sfg \in L^1(\sfX_1, \mu_1)$, we denote by $(\mathsf T,\sfg)_\star \mu_1$ the measure
    \[ (\mathsf{T}, \sfg)_\star \mu_1 := \mathsf{T}_\sharp (\sfg \mu_1) \in \meas_+(\sfX_2).\]
    We say that $(\mathsf{T}, \sfg)$ is a \emph{transport-growth map} connecting $\mu_1$ to $\mu_2$. If $\sfH : \pc \to [0,+\infty]$ is a proper Borel function, we define the Monge formulation of the Unbalanced Optimal Transport problem as
    \[ \mathscr M_{\sfH}(\mu_1,\mu_2):=\inf\Big\{\int_{\sfX_1}
    \sfH([x,1],[\mathsf T(x),\mathsf g(x)])\,\de\mu_1(x)\mid
    \mu_2=(\mathsf T,\mathsf g)_\star \mu_1\Big\}.\]
\end{definition}
While in this section we only study the relation between the Kantorovich (i.e.~the primal) and the Monge fofmulation of the Unbalanced Optimal Transport problem, in Theorem \ref{theo:themap} we will provide sufficient conditions for the existence of an optimal transport-growth map realizing the infimum above. Clearly we have
\[ \m_\sfH(\mu_1, \mu_2) \le \mathscr M_{\sfH}(\mu_1,\mu_2), \quad \mu_i \in \meas_+(\sfX_i)\]
as a consequence of the fact that any transport-growth map $(\mathsf{T},\sfg)$ induces an admissible $\aalpha \in \f{H}^1(\mu_1, \mu_2)$ via the formula
\begin{displaymath}
    \aalpha=([\text{id}_{\sfX_1},1],[\mathsf T,\mathsf g])_\sharp \mu_1.
    \end{displaymath}
     Recalling the definition
    \eqref{eq:reduced}
    of the reduced cost function $\mathsf h$, we also have
    \begin{displaymath}
        \int_{\pc} \sfH\,\de\aalpha=
        \int_{\sfX_1} \sfH([x,1],[\mathsf T(x),\mathsf g(x)])\,\de\mu_1(x)=
         \int_{\sfX_1} \mathsf h(x,\mathsf T(x);\mathsf g(x))\,\de\mu_1(x).        
    \end{displaymath}

We derive the main result of this subsection from the analogous one \cite{pratelli} in the classical Optimal Transport theory. To do that we need the following definition.

\begin{definition}\label{def:wass} Let $\sfH: \pc \to [0.+\infty]$ be a proper and lower semicontinuous function. We define
\[ \mathsf{OT}_\sfH(\alpha_1, \alpha_2):= \min \left \{ \int_{\pc} \sfH \de \ggamma : \ggamma \in \Gamma(\alpha_1, \alpha_2) \right \}, \quad  \alpha_i \in \meas_+(\f{C}[\sfX_i]),\]
where $\Gamma(\alpha_1, \alpha_2)$ is the set of transport plans from $\alpha_1$ to $\alpha_2$ defined as
\[ \Gamma(\alpha_1, \alpha_2) := \left \{ \ggamma \in \meas_+(\pc) : \pi^1_\sharp \ggamma = \alpha_1, \, \pi^2_\sharp \ggamma = \alpha_2 \right \}.\]
\end{definition}
Notice that $\Gamma(\alpha_1, \alpha_2)$ is not-empty if and only if $\alpha_1(\f{C}[\sfX_1])=\alpha_2(\f{C}[\sfX_2])$.
The following is an immediate consequence of \cite[Proof of Theorem B]{pratelli}.

\begin{theorem}\label{thm:pra} Let $\sfX_i$ be Polish spaces, let $\sfH: \pc \to [0,+\infty]$ be continuous and let $\alpha_i \in \meas_+(\f{C}[\sfX_i])$ be such that $\alpha_1$ is diffuse and $\alpha_1(\f C[\sfX_1]) >0$. Then for every $\ggamma \in \Gamma(\alpha_1, \alpha_2)$ such that $\int_{\pc} \sfH \de \ggamma <+\infty$ and every $\eps>0$ there exists a Borel map $\mathsf{F}: \f{C}[\sfX_1] \to \f{C}[\sfX_2] $ such that 
\[ \mathsf{F}_\sharp \alpha_1 = \alpha_2, \quad \int_{\f{C}[\sfX_1]} \sfH(\f{y}_1, \mathsf F (\f{y}_2)) \de \alpha_1(\f{y}_1) \le \int_{\pc} \sfH \de \ggamma  + \eps.\]    
\end{theorem}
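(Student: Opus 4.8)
The plan is to obtain the statement directly from Pratelli's proof of Theorem~B in \cite{pratelli}, once the cones $\f C[\sfX_1]$ and $\f C[\sfX_2]$ are viewed as ordinary Polish spaces and the measures are normalized. First I would check that, if $\sfd_i$ are complete separable metrics inducing the topologies of $\sfX_i$, then $\big(\f C[\sfX_i],\sfd_{\f C}\big)$, with $\sfd_{\f C}$ the cone metric of \eqref{ss22:eq:distcone}, is Polish: separability descends from $\sfX_i\times\R_+$ through the continuous quotient map $\f p$, while for completeness, given a $\sfd_{\f C}$-Cauchy sequence $([x_n,r_n])_n$, the bound $\sfd_{\f C}([x,r],[y,s])\ge|r-s|$ shows $r_n\to r$ for some $r\ge0$; if $r=0$ the sequence converges to the vertex $\fo_i$, and if $r>0$ then the identity $\sfd_{\f C}^2([x_n,r_n],[x_m,r_m])=(r_n-r_m)^2+2r_nr_m\big(1-\cos(\sfd_i(x_n,x_m)\wedge\pi)\big)\to0$ forces $\sfd_i(x_n,x_m)\to0$, so $(x_n)_n$ is eventually $\sfd_i$-Cauchy, $x_n\to x$, and $[x_n,r_n]\to[x,r]$. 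Thus the data $\big(\f C[\sfX_1],\f C[\sfX_2],\sfH,\ggamma\big)$ fall exactly within the classical optimal transport framework between Polish spaces with a continuous cost.

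Next, since $\Gamma(\alpha_1,\alpha_2)\neq\emptyset$ we have $m:=\alpha_1(\f C[\sfX_1])=\alpha_2(\f C[\sfX_2])$, which is positive by hypothesis, and I would pass to the probability measures $\hat\alpha_i:=m^{-1}\alpha_i$ and the coupling $\hat\ggamma:=m^{-1}\ggamma\in\Gamma(\hat\alpha_1,\hat\alpha_2)$, noting that $\hat\alpha_1$ is still diffuse, $\sfH$ is unchanged, and $\int_{\pc}\sfH\,\de\hat\ggamma=m^{-1}\int_{\pc}\sfH\,\de\ggamma<+\infty$. Then I would invoke the construction in \cite[Proof of Theorem B]{pratelli}, which, starting from an atomless Borel probability measure on a Polish space, a Borel probability measure on a Polish space, a continuous cost, a coupling of finite cost between them, and a number $\eps'>0$, produces a Borel map transporting the first marginal onto the second with cost at most the cost of the coupling plus $\eps'$ — the key point being that Pratelli's argument approximates the \emph{given} coupling, not merely the Kantorovich infimum. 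Applying this to $\hat\alpha_1,\hat\alpha_2,\sfH,\hat\ggamma$ with $\eps':=\eps/m$ gives a Borel map $\mathsf F:\f C[\sfX_1]\to\f C[\sfX_2]$ with $\mathsf F_\sharp\hat\alpha_1=\hat\alpha_2$ and $\int_{\f C[\sfX_1]}\sfH(\f{y}_1,\mathsf F(\f{y}_1))\,\de\hat\alpha_1\le\int_{\pc}\sfH\,\de\hat\ggamma+\eps/m$; multiplying through by $m$ and using the linearity of the push-forward and of $\ggamma\mapsto\int_{\pc}\sfH\,\de\ggamma$ yields
\[ \mathsf F_\sharp\alpha_1=\alpha_2,\qquad \int_{\f C[\sfX_1]}\sfH(\f{y}_1,\mathsf F(\f{y}_1))\,\de\alpha_1(\f{y}_1)\le\int_{\pc}\sfH\,\de\ggamma+\eps, \]
which is the claim.

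The only points that genuinely need to be checked — and the reason the result is ``an immediate consequence'' rather than a literal citation — are that the cones $\f C[\sfX_i]$ are Polish (done above) and that the proof of Theorem~B in \cite{pratelli} is indeed of this per-plan approximation type and remains valid for a $[0,+\infty]$-valued continuous cost when the reference coupling has finite cost. Both are short inspections, so I do not expect a substantive obstacle; the whole weight of the argument is carried by \cite{pratelli}.
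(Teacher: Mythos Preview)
Your proposal is correct and follows exactly the approach the paper indicates: the paper simply states that the theorem is ``an immediate consequence of \cite[Proof of Theorem B]{pratelli}'' without further details, and you have supplied precisely the verifications (Polishness of the cones, normalization to probabilities, and the per-plan nature of Pratelli's construction) that make this citation legitimate.
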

Choosing in particular an optimal $\ggamma$ realizing the minimum in the definition of $\mathsf{OT}_\sfH$ one can prove the equivalence between the Monge and Kantorovich formulations of the Optimal Transport problem. The connection between balanced and unbalanced Optimal Transport is of course given by the fact that if $\aalpha \in \f{H}^1(\mu_1, \mu_2)$ for measures $\mu_i \in \meas_+(\sfX_i)$, then $\aalpha \in \Gamma(\alpha_1, \alpha_2)$, where $\alpha_i:= \pi^i_\sharp \aalpha$. A stronger connection actually holds, as reported in the following result which is the analogue of \cite[Corollary 7.7, Corollary 7.13]{LMS18}; 
the proof of the second statement 
is identical and thus omitted,
the proof of the first statement follows immediately by Lemma 
\ref{ss22:le:mineq}.

\begin{proposition}
\label{ss22:lem:comparison} Let $\sfH$ be as in \eqref{ss22:eq:assumption}. Then for every $\mu_i \in \meas_+(\sfX_i)$ with $R=R(\mu_1,\mu_2)$ as in \eqref{eq:theradi}, it holds
\[ \m_{\sfH}(\mu_1, \mu_2) = \min \left \{ \mathsf{OT}_\sfH(\alpha_1, \alpha_2) : \alpha_i \in \prob(\f{C}_{R}[\sfX_i]) \, , \,  \f{h}^1(\alpha_i) = \mu_i, \, \, i=1, 2 \right \}.\]
In particular we have $\m_{\sfH}(\mu_1, \mu_2) = \mathsf{OT}_\sfH(\pi^1_\sharp \aalpha, \pi^2_\sharp \aalpha)$ for every optimal $\aalpha \in \f{H}^1_o(\mu_1, \mu_2)$ (cf.~\eqref{ss22:eq:optplan}).
If $\sfX=\sfX_1=\sfX_2$ and $(\mu_i)_{i=1}^N \subset \meas_+(\sfX)$ with $N \ge 2$, then there exist $(\alpha_i)_{i=1}^N \subset \prob(\f{C}[\sfX])$ such that 
 \[ \f{h}^1(\alpha_i) = \mu_i, \quad \m_\sfH(\mu_{i-1}, \mu_i) = \mathsf{OT}_\sfH(\alpha_{i-1}, \alpha_{i}) \quad \text{ for every }  i \in \{2, \dots, N\}.\]
\end{proposition}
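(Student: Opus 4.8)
The plan is to use the elementary fact that a classical transport plan $\ggamma\in\Gamma(\alpha_1,\alpha_2)$ on the product cone $\pc$ is, once one reads its homogeneous marginals correctly, the same object as a $1$-homogeneous coupling. Since $\sfr_i=\sfr\circ\pi^i$ and $\sfx_i=\sfx\circ\pi^i$, a change of variables in the push-forward gives, for every $\ggamma\in\meas_+(\pc)$ and $i=1,2$,
\[
\f{h}^1_i(\ggamma)=(\sfx_i)_\sharp(\sfr_i\ggamma)=\sfx_\sharp\big(\sfr\,\pi^i_\sharp\ggamma\big)=\f{h}^1(\pi^i_\sharp\ggamma),
\qquad
\int_{\pc}\sfr_i\,\de\ggamma=\int_{\f{C}[\sfX_i]}\sfr\,\de\pi^i_\sharp\ggamma .
\]
These two identities are the only computational input; the rest is bookkeeping combining them with Lemma \ref{ss22:le:mineq} and Theorem \ref{ss22:prop:mh}.

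First I would prove the inequality ``$\ge$''. Fix any admissible pair $\alpha_i\in\prob(\f{C}_R[\sfX_i])$, $\f{h}^1(\alpha_i)=\mu_i$, and any $\ggamma\in\Gamma(\alpha_1,\alpha_2)$. Then $\ggamma$ is a probability measure concentrated in $\pcr{R}$, so $\int(\sfr_1+\sfr_2)\,\de\ggamma\le 2R<+\infty$, i.e.~$\ggamma\in\mathcal{\f{M}}_+^1(\pc)$; moreover $\f{h}^1_i(\ggamma)=\f{h}^1(\pi^i_\sharp\ggamma)=\f{h}^1(\alpha_i)=\mu_i$ by the displayed identities, hence $\ggamma\in\f{H}^1(\mu_1,\mu_2)$ and $\int_{\pc}\sfH\,\de\ggamma\ge\m_\sfH(\mu_1,\mu_2)$. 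Thus $\mathsf{OT}_\sfH(\alpha_1,\alpha_2)\ge\m_\sfH(\mu_1,\mu_2)$ for every admissible pair; the admissible set is non-empty (writing $\mu_i=m_i\nu_i$ with $\nu_i\in\prob(\sfX_i)$ and $m_i=\mu_i(\sfX_i)\le R$, the measure $(1-m_i/R)\delta_{\fo_i}+(m_i/R)(\iota_R)_\sharp\nu_i$, $\iota_R(x):=[x,R]$, does the job), so the right-hand side is $\ge\m_\sfH(\mu_1,\mu_2)$; this already settles the case $\m_\sfH(\mu_1,\mu_2)=+\infty$.

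For ``$\le$'' and the attainment, assume $\m_\sfH(\mu_1,\mu_2)<+\infty$. By Lemma \ref{ss22:le:mineq} and Theorem \ref{ss22:prop:mh}(1) there is an optimal $\aalpha^*\in\f{H}^1(\mu_1,\mu_2)\cap\prob(\pcr{R})$. Setting $\alpha_i^*:=\pi^i_\sharp\aalpha^*\in\prob(\f{C}_R[\sfX_i])$ we get $\f{h}^1(\alpha_i^*)=\f{h}^1_i(\aalpha^*)=\mu_i$, so the pair is admissible, and $\aalpha^*\in\Gamma(\alpha_1^*,\alpha_2^*)$ forces $\mathsf{OT}_\sfH(\alpha_1^*,\alpha_2^*)\le\int_{\pc}\sfH\,\de\aalpha^*=\m_\sfH(\mu_1,\mu_2)$. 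Together with the previous paragraph this gives the equality, with the minimum attained at $(\alpha_1^*,\alpha_2^*)$ and the inner minimum defining $\mathsf{OT}_\sfH(\alpha_1^*,\alpha_2^*)$ attained at $\aalpha^*$.

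The ``in particular'' assertion is the same argument without the boundedness restriction: for an optimal $\aalpha\in\f{H}^1_o(\mu_1,\mu_2)$ put $\alpha_i:=\pi^i_\sharp\aalpha\in\meas_+(\f{C}[\sfX_i])$; then $\aalpha\in\Gamma(\alpha_1,\alpha_2)$ yields $\mathsf{OT}_\sfH(\alpha_1,\alpha_2)\le\int\sfH\,\de\aalpha=\m_\sfH(\mu_1,\mu_2)$, while for every $\ggamma\in\Gamma(\alpha_1,\alpha_2)$ the identities give $\f{h}^1_i(\ggamma)=\f{h}^1(\alpha_i)=\f{h}^1_i(\aalpha)=\mu_i$ and $\int\sfr_i\,\de\ggamma=\int\sfr_i\,\de\aalpha<+\infty$, so $\ggamma\in\f{H}^1(\mu_1,\mu_2)$ and $\int\sfH\,\de\ggamma\ge\m_\sfH(\mu_1,\mu_2)$; hence $\mathsf{OT}_\sfH(\alpha_1,\alpha_2)=\m_\sfH(\mu_1,\mu_2)$. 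The multi-marginal chain is then obtained exactly as in \cite[Corollary 7.13]{LMS18}, by gluing along a common representative $\alpha_i$ the optimal plans associated with the consecutive pairs $(\mu_{i-1},\mu_i)$, which is why it is omitted here. I expect the only genuinely delicate point to be keeping track of the direction in which the marginal and moment identities are used — one must verify that every competitor $\ggamma$ really lies in $\mathcal{\f{M}}_+^1(\pc)$ and carries the prescribed homogeneous marginals — everything else being a routine push-forward computation.
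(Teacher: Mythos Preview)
Your proof is correct and follows essentially the same approach as the paper: the paper simply observes that the first identity follows immediately from Lemma \ref{ss22:le:mineq} (since minimizing $\mathsf{OT}_\sfH$ over admissible pairs $(\alpha_1,\alpha_2)$ and then over $\Gamma(\alpha_1,\alpha_2)$ is exactly minimizing $\int\sfH\,\de\ggamma$ over $\ggamma\in\f{H}^1(\mu_1,\mu_2)\cap\prob(\pcr{R})$), and refers to \cite[Corollaries 7.7, 7.13]{LMS18} for the remaining claims. Your write-up just unpacks these references explicitly via the push-forward identity $\f{h}^1_i(\ggamma)=\f{h}^1(\pi^i_\sharp\ggamma)$, which is precisely the mechanism behind the paper's one-line appeal to Lemma \ref{ss22:le:mineq}.
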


We first show that the cost of any coupling $\aalpha \in \f{H}^1(\mu_1, \mu_2)$, $\mu_i \in \meas_+(\sfX_i)$, can be approximated by the cost of couplings with the same homogeneous marginals not charging the vertex of the first cone.

\begin{proposition}\label{prop:approxnov}
Let $\sfH: \pc \to [0,+\infty]$ be a proper, radially $1$-homogeneous and continuous function, and let $\mu_i \in \meas_+(\sfX_i)$ be such that $\mu_1(\sfX_1)>0$.
Then for every $\eps>0$ and every $\aalpha \in \f{H}^1(\mu_1, \mu_2)$ such that $\int_{\pc} \sfH \de \aalpha < +\infty$ there exists $\aalpha_\eps \in \f{H}^1(\mu_1, \mu_2)$ with $\aalpha_\eps(\{\f{y}_1 = \f{o}_1\})=0$ such that
\[  \int_{\pc} \sfH \de \aalpha_\eps  \le  \int_{\pc} \sfH \de \aalpha
+\eps .\]
\end{proposition}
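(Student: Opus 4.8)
The plan is to discard the portion of $\aalpha$ sitting on $\{\f y_1=\fo_1\}$ — the part describing creation of mass ``out of nothing'' — and to re-manufacture the corresponding piece of the second marginal by transporting it from $\sfX_1$ along infinitesimally thin filaments, paying for the spurious mass these filaments deposit on the first marginal by an infinitesimal thinning of the remaining part of $\aalpha$. Continuity of $\sfH$ — both at interior points and, decisively, at the vertex $\fo_1$, where a cone neighbourhood controls only the radial variable, cf.~\eqref{eq:o-neighborhoods} — is what makes both perturbations as cheap as we like.

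First I would normalise: since $\sfH(\bfo)\ge 0$, replacing $\aalpha$ by $\aalpha\mres\pco$ does not increase the $\sfH$-cost and leaves the homogeneous marginals untouched, so I may assume $\aalpha(\{\bfo\})=0$. Then I decompose $\aalpha=\aalpha_r+\aalpha_2''$ with $\aalpha_2'':=\aalpha\mres\big(\{\fo_1\}\times(\f C[\sfX_2]\setminus\{\fo_2\})\big)$ and $\aalpha_r:=\aalpha-\aalpha_2''$, so that $\aalpha_r$ lives on $\{\f y_1\ne\fo_1\}$. Set $\beta_2:=\pi^2_\sharp\aalpha_2''$, $\mu_2'':=\f h_2^1(\aalpha_2'')=\f h^1(\beta_2)$, $m:=\mu_2''(\sfX_2)$. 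If $m=0$ then $\beta_2=0$ and $\aalpha=\aalpha_r$ already does the job; so assume $m>0$. Then $\f h_1^1(\aalpha_r)=\mu_1$, $\f h_2^1(\aalpha_r)=\mu_2-\mu_2''$ and $\int\sfr_1\,\de\aalpha_r=\mu_1(\sfX_1)=:m_1>0$; note $0<\aalpha_r(\pc)<+\infty$ and that $\beta_2$ lives on $\{\sfr_2>0\}$.

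The heart of the argument is the competitor. Fixing $\eps>0$, put $\eta:=\eps/(2\aalpha_r(\pc))$, $\zeta:=\eps/(2(m+1))$. Using continuity of $\sfH$ I would pick Borel functions $\kappa_0\in[\tfrac12,1)$ on $\{\sfr_1>0\}$ and $\bar\lambda\in(0,1]$ on $\{\sfr_2>0\}$ with (writing $\bm{\f y}=([x_1,r_1],[x_2,r_2])$, $\f y_2=[x_2,r_2]$)
\[
\sfH([x_1,\kappa r_1],[x_2,r_2])\le\sfH(\bm{\f y})+\eta\quad(\kappa_0(\bm{\f y})\le\kappa\le1),\qquad
\sfH([x_1,s],[x_2,1])\le\sfH(\fo_1,[x_2,1])+\zeta\quad(0\le s\le\bar\lambda(\f y_2),\ \forall\,x_1\in\sfX_1),
\]
the second estimate being uniform in $x_1$ precisely because a neighbourhood of $\fo_1$ constrains only the radial coordinate. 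With $A:=\int\bar\lambda\,\sfr_2\,\de\beta_2\in(0,m]$, I choose $t\in(0,1]$ so small that $c_0:=t\int(1-\kappa_0)\sfr_1\,\de\aalpha_r\le A$, and set $\kappa:=1-t(1-\kappa_0)\in[\kappa_0,1)$, $\lambda:=(c_0/A)\bar\lambda\le\bar\lambda$, $\sigma(B):=\int\nchi_B(\sfx_1)(1-\kappa)\sfr_1\,\de\aalpha_r$ (so $\sigma\le\mu_1$, $\sigma(\sfX_1)=c_0$), $\rho_1:=\sigma/c_0\in\prob(\sfX_1)$. The competitor is
\[
\aalpha_\eps:=\Theta_\sharp\aalpha_r+\Lambda_\sharp(\rho_1\otimes\beta_2),\qquad
\Theta(\bm{\f y}):=([x_1,\kappa(\bm{\f y})r_1],[x_2,r_2]),\quad \Lambda(x_1,\f y_2):=([x_1,\lambda(\f y_2)\sfr_2(\f y_2)],\f y_2).
\]
Both summands avoid $\{\f y_1=\fo_1\}$ (their first radial coordinates are $>0$ on the relevant supports); both maps fix the second cone coordinate, so $\f h_2^1(\aalpha_\eps)=\f h_2^1(\aalpha_r)+\f h^1(\beta_2)=\mu_2$; and since $\f h_1^1(\Theta_\sharp\aalpha_r)=\mu_1-\sigma$ while $\f h_1^1(\Lambda_\sharp(\rho_1\otimes\beta_2))=\big(\int\lambda\,\sfr_2\,\de\beta_2\big)\rho_1=c_0\rho_1=\sigma$, also $\f h_1^1(\aalpha_\eps)=\mu_1$; as $\int(\sfr_1+\sfr_2)\,\de\aalpha_\eps<+\infty$ trivially, $\aalpha_\eps\in\f H^1(\mu_1,\mu_2)$. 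Finally, splitting the cost and using respectively the $\kappa_0$-estimate (with $\kappa\in[\kappa_0,1]$) and — after $\sfH\circ\Lambda(x_1,\f y_2)=\sfr_2(\f y_2)\sfH([x_1,\lambda(\f y_2)],[x_2,1])$ by radial $1$-homogeneity — the $\bar\lambda$-estimate together with $\int\sfr_2\,\de(\rho_1\otimes\beta_2)=m$ and $\int\sfr_2(\f y_2)\sfH(\fo_1,[x_2,1])\,\de\beta_2=\int\sfH\,\de\aalpha_2''$, one obtains $\int\sfH\,\de\aalpha_\eps\le\big(\int\sfH\,\de\aalpha_r+\eta\,\aalpha_r(\pc)\big)+\big(\int\sfH\,\de\aalpha_2''+\zeta m\big)\le\int\sfH\,\de\aalpha+\eps$.

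I expect the delicate point to be the marginal bookkeeping: the first-marginal mass released by the thinning $\Theta$ must be matched, in total \emph{and} in its distribution over $\sfX_1$, by the mass feeding the filaments — which is why the filaments have to be spread along the released measure $\sigma$ (rather than, say, along $\mu_1$) and the scaling $t$ has to be small enough that $c_0\le A$, so that $\lambda\le\bar\lambda$ and the $\zeta$-estimate is not lost. The only other things to verify — all routine — are the finiteness of the integrals involved (every measure in sight is finite and $\int\sfH\,\de\aalpha<+\infty$) and the Borel measurability of $\kappa_0$ and $\bar\lambda$, which follows from the continuity of $\sfH$ and separability of the $\sfX_i$ by a standard selection argument.
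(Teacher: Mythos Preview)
Your argument is correct and follows the same overall strategy as the paper's proof---split off the part of $\aalpha$ on $\{\fy_1=\fo_1\}$, use continuity of $\sfH$ to produce threshold functions (your $\kappa_0,\bar\lambda$ correspond to the paper's $1-\lambda''_\eps,\lambda'_\eps$), and replace the ``created'' mass by filaments emanating from $\sfX_1$ while thinning the remaining piece---but the execution of the marginal bookkeeping is genuinely different and more direct.

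Two things distinguish your route. First, you exploit the $x_1$-uniformity of cone neighbourhoods of $\fo_1$ to make $\bar\lambda$ a function of $\fy_2$ alone, whereas the paper's $\lambda'_\eps$ depends on the full pair $(\fy_1,\fy_2)$. Second---and this is the real simplification---you distribute the filaments along the \emph{released} measure $\rho_1=\sigma/c_0$, so that a single scalar $t$ makes the first-marginal identity $\f h^1_1(\Theta_\sharp\aalpha_r)+\f h^1_1(\Lambda_\sharp(\rho_1\otimes\beta_2))=(\mu_1-\sigma)+\sigma=\mu_1$ automatic. The paper instead lifts an arbitrary coupling $\ggamma\in\Gamma(\mu_1,c^{-1}\mu_2^o)$, applies the second-coordinate dilations $\dil_{\cdot,2}$, and then has to balance the first marginals through a chain of Radon--Nikodym densities $\varrho'_{1,\eps},\varrho''_{1,\eps},\varrho_\eps$ and auxiliary functions $\vartheta'_\eps,\vartheta''_\eps,\sigma'_\eps,\sigma''_\eps$. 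Your construction is shorter and the bookkeeping is transparent; the paper's version fits its systematic use of the $\dil$ formalism.

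One small correction: you do not need separability of the $\sfX_i$ for the measurability of $\kappa_0$ and $\bar\lambda$. Exactly as the paper observes for its $\lambda'_\eps,\lambda''_\eps$, these functions are upper semicontinuous---each is the supremum of the section of a closed set cut out by a lower semicontinuous function---and this holds on arbitrary completely regular spaces.
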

\begin{proof}
     We define the measures
\[ \aalpha^{tg}:= \aalpha \mres \{ \f{y}_1 \ne \f{o}_1\}, \quad \aalpha^{o}:= \aalpha \mres \{ \f{y}_1= \f{o}_1\}, \quad \mu_2^{tg}:= \f{h}^1_2(\aalpha^{tg}), \quad \mu_2^o:= \f{h}^1_2(\aalpha^o);\]
it is not restrictive to assume that $\mu_2^o(\sfX_2)>0$ so that $\aalpha^o(\pc)>0$. For a bounded Borel map $\vartheta:\pc\to (0,\infty)$ and a measure
$\bbeta\in \meas_+(\pc)$
we set 
\begin{align*}
\mathrm{prd}_{\vartheta,2}(\f y_1,\f y_2):={}&
(\f y_1,\f y_2/\vartheta(\f y_1,\f y_2)),\quad\f y_i\in \f C[\sfX_i],\\
\dil_{\vartheta,2}(\bbeta):={}&
(\mathrm{prd}_{\vartheta,2})_\sharp(\vartheta\,\bbeta).
\end{align*}
Notice that $\f h^1_2(\dil_{\vartheta,2}(\bbeta))=\f h^1_2(\bbeta)$
and 
\begin{equation}
    \label{eq:dil-cost}
    \int_{\pc} \sfH\,\de\big(
    \dil_{\vartheta,2}(\bbeta)\big)=
    \int_{\pc} \sfH(\vartheta(\f y_1,\f y_2)\f y_1,\f y_2)\,\de\bbeta(\f y_1, \f y_2).
\end{equation}
We fix $\eps>0$ and we set
\begin{align*}
    \lambda_\eps'(\f y_1,\f y_2):=
    \sup
    \Big\{&\lambda\in [0,1/2]:
    \sfH(r\f{y}_1,\f y_2)\le 
    \sfH(\f o_1,\fy_2)+\eps_0
    \text{ for every }
    r\in [0,\lambda]\Big\},\\
    \lambda_\eps''(\f y_1,\f y_2):=
    \sup
    \Big\{&\lambda\in [0,1/2]:
    \sfH(r\f y_1,\f y_2)\le \sfH(\f y_1,\fy_2)+\eps_0
    \text{ for every }
    r\in [1-\lambda,1]\Big\},
\end{align*}
where $\eps_0$ is given by
\[ \eps_0:= \frac{1}{2} \frac{\eps}{\mu_1(\sfX_1) + \aalpha^{tg}(\f C [\sfX_1, \sfX_2])}.\]
Since $\sfH$ is continuous, it is not difficult to check that $\lambda_\eps',\lambda_\eps''$
define strictly positive upper semicontinuous maps (thus Borel)
from $\pc$ to $(0,1/2].$ Now we set $c:=\mu_2^o(\sfX_2)/\mu_1(\sfX_1)$ and we consider any $ \ggamma \in \Gamma(\mu_1, c^{-1}\mu_2^o)$. 
We ``lift'' $\ggamma$ to a plan 
$\bbeta:=\ell_\sharp\ggamma\in \f H^1(\mu_1,\mu_2^o)$
using the map
$\ell(x_1,x_2):=([x_1,1],[x_2,c])$ and we consider the rescaled plan $\bbeta'_\eps$ and its first homogeneous marginal
\begin{displaymath}
    \bbeta_\eps'=\dil_{\lambda_\eps',2}(\bbeta),\quad
    \mu_{1,\eps}':=\f h^1_1(\bbeta_\eps').
\end{displaymath}
Since $\lambda_\eps'\le 1/2 $
we easily get 
$\mu_{1,\eps}'\le \frac 12 \mu_1\ll \mu_1$ and we call
$\varrho_{1,\eps}'$ the Lebesgue density of $\mu_{1,\eps}'$ with respect to $\mu_1$, i.e.
\begin{equation}
    \label{eq:Lebdens1}
    \mu_{1,\eps}'=\varrho_{1,\eps}'\mu_1,\quad
    \varrho_{1,\eps}'\quad\text{is a Borel map with values in $(0,1/2]$}.
\end{equation}
In a similar way we 
define the homogeneous plan
$\bbeta_\eps''$ and its first homogeneous marginal
\[  \bbeta_\eps'':= 
\dil_{(1-\lambda_\eps''),2}(\aalpha^{tg}),\quad
\mu_{1,\eps}'':=\f h^1_1(\bbeta''_\eps),\quad
\mu_{1,\eps}''=\varrho_{1,\eps}''\mu_1.\]
We can select a Borel representative of $\varrho_{1,\eps}''$ 
 such that 
 $(1-\varrho_{1,\eps}'')$ 
 takes values in $(0,1/2]$.
We eventually consider
\begin{align}
    \label{eq:defrho}
    \varrho_\eps:=\varrho_{1,\eps}'\land (1-\varrho_{1,\eps}''),\quad
    \vartheta_\eps':={}&
    \frac{\varrho_\eps}{\varrho_{1,\eps}'},&\quad
    \vartheta_\eps'':={}&
    \frac{\varrho_\eps}{1-\varrho_{1,\eps}''},\\
    \sigma_\eps':={}&
    \vartheta_\eps'\lambda_\eps',&\quad    \sigma_\eps'':={}&\vartheta_\eps''(1-\lambda_\eps'')+(1-\vartheta_\eps''),\\
    \aalpha_\eps^{o}:={}&
    \dil_{\sigma_\eps',2}
    (\bbeta),&
    \aalpha_\eps^{tg}:={}&
    \dil_{\sigma_\eps'',2}(\aalpha^{tg}).
    \end{align}
Since the $\dil_{\cdot,2}$
dialations preserve the second homogeneous marginals, 
we easily get
\begin{equation}
    \label{eq:second-marginal-ok}
    \f h^1_2(\aalpha_\eps^o)=\mu_2^o,
    \quad
    \f h^1_2(\aalpha_\eps^{tg})=
    \mu_2^{tg}.
\end{equation} 
Concerning the first marginals, for every test function $\zeta \in \rmC_b(\sfX_1)$ we have
\begin{align*}
    \int_{\pc} (\zeta \circ \sfx_1)\sfr_1\,\de\aalpha_\eps^o&=
    \int_{\pc} (\zeta \circ \sfx_1) \sfr_1\vartheta_\eps'\lambda_\eps' \,\de\bbeta=
    \int_{\pc} (\zeta \circ \sfx_1) \sfr_1\vartheta_\eps'\,\de\bbeta_\eps'
    \\&=
    \int_{\sfX_1}
    \zeta\vartheta_\eps'\,\de \mu_{1,\eps}'
    =
    \int_{\sfX_1}  \zeta\vartheta_\eps'
    \varrho_\eps'\,\de \mu_{1}=
    \int_{\sfX_1}    \zeta\rho_\eps \,\de \mu_{1},\\
    \int_{\pc} (\zeta \circ \sfx_1) \sfr_1\,\de\aalpha_\eps^{tg}&=
    \int_{\pc} (\zeta \circ \sfx_1) \sfr_1\sigma_\eps''\,\de\aalpha^{tg}\\&=
    \int_{\pc} (\zeta \circ \sfx_1) \sfr_1\vartheta_\eps''(1-\lambda_\eps'') \,\de\aalpha^{tg}
    +
    \int_{\pc} (\zeta\circ \sfx_1) \sfr_1(1-\vartheta_\eps'') \,\de\aalpha^{tg}
    \\&=
    \int_{\pc}(\zeta\circ\sfx_1)\sfr_1\vartheta_\eps''\,\de\bbeta_\eps''
    +
    \int_{\sfX_1} \zeta(1-\vartheta_\eps'') \,\de\mu_1
    \\&=
    \int_{\sfX_1} \zeta\vartheta_\eps''\,\de\mu_{1,\eps}''
    +
    \int_{\sfX_1} \zeta(1-\vartheta_\eps'') \,\de\mu_1
    \\&=
    \int_{\sfX_1} \zeta \Big(\vartheta_\eps''\varrho_{1,\eps}''
    +
    (1-\vartheta_\eps'')\Big) \,\de\mu_1
        \\&
        =
        \int_{\sfX_1} \zeta (1-\varrho_\eps)\,\de\mu_1
\end{align*}
so that 
\begin{align*}
    \f h^1_1(\aalpha_\eps^o)&=
    \varrho_\eps\mu_1,\quad
    \f h^1_1(\aalpha_\eps^{tg})=
    (1-\varrho_\eps)\mu_1.
\end{align*}
We deduce that the plan 
\[ \aalpha_\eps:= \aalpha_\eps^{tg} + \aalpha_\eps^o\quad\text{belongs to}\quad
\f H^1(\mu_1,\mu_2).\] 
By linearity, the $\sfH$-cost associated with $\aalpha_\eps$
is the sum of the corresponding costs 
associated with $\aalpha_\eps^{tg}$ and 
$\aalpha_\eps^o$;
using \eqref{eq:dil-cost}
and observing that 
$0< \sigma_\eps'\le \lambda_\eps'$,
$1-\lambda_\eps''\le \sigma_\eps''\le 1$ we obtain
\begin{align*}
    \int_{\pc} \sfH \de \aalpha_\eps^{o} 
    & = \int_{\pc} 
    \sfH\big(\sigma'_\eps \f{y}_1,\f y_2\big) \de \bbeta(\f y_1, \f y_2)\le 
    \int_{\pc} 
    \Big(\sfH\big(\f o_1,\f y_2\Big)+\eps_0\Big) \de \bbeta(\f y_1, \f y_2)
    \\&
    =
    \int_{\pc}\sfH \de \aalpha^o + \eps_0 \mu_1(\sfX_1) \le \int_{\pc}\sfH \de \aalpha^o + \eps/2, 
    \\
    \int_{\pc} \sfH \de \aalpha_\eps^{tg} & = 
    \int_{\pc} \sfH(\sigma_\eps''\f y_1,\f y_2) \de \aalpha^{tg}(\f y_1, \f y_2)
    \le 
    \int_{\pc}(\sfH +\eps) \de \aalpha^{tg} \\
    & = \int_{\pc} \sfH \de \aalpha^{tg} + \eps \aalpha^{tg}(\pc) \le \int_{\pc} \sfH \de \aalpha^{tg} + \eps/2.
    \qedhere
\end{align*}
\end{proof}

\begin{theorem}\label{thm:attained} Let $\sfX_i$ be Polish spaces, let $\sfH: \pc \to [0,+\infty]$ be a proper, radially $1$-homogeneous and continuous function, and let $\mu_i \in \meas_+(\sfX_i)$ be such that $\mu_1$ is a diffuse measure and $\mu_1(\sfX_1)>0$.
Then for every $\eps>0$ and every $\aalpha \in \f{H}^1(\mu_1, \mu_2)$ such that $\int_{\pc} \sfH \de \aalpha < +\infty$ there exists a transport-growth map $(\mathsf{T}, \sfg)$ connecting $\mu_1$ to $\mu_2$ such that
\[  \int_{\sfX_1} \sfH([x_1, 1],[\mathsf{T}(x_1), \sfg(x_1)]) \de \mu_1(x_1)
\le \int_{\pc} \sfH \de \aalpha + \eps.\]
In particular
\[ \m_\sfH(\mu_1, \mu_2) = \mathscr M_\sfH(\mu_1, \mu_2).\]
\end{theorem}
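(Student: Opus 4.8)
The plan is to derive the theorem from Pratelli's map‑existence result, Theorem \ref{thm:pra}, applied on the product cone, after normalising the competitor $\aalpha$ so that its first cone marginal is the lift of $\mu_1$ sitting at height one: only such plans correspond to transport–growth maps. Fix $\aalpha\in\f H^1(\mu_1,\mu_2)$ with $\int_{\pc}\sfH\,\de\aalpha<+\infty$ and $\eps>0$. First I would apply Proposition \ref{prop:approxnov} to obtain $\aalpha_\eps\in\f H^1(\mu_1,\mu_2)$ with $\aalpha_\eps(\{\fy_1=\fo_1\})=0$ and $\int_{\pc}\sfH\,\de\aalpha_\eps\le\int_{\pc}\sfH\,\de\aalpha+\eps/2$; this step is what makes a Monge representation possible, since a transport–growth map cannot create mass from the vertex of the first cone. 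Since $\aalpha_\eps$ does not charge $\{\sfr_1=0\}$ and $\int_{\pc}\sfr_1\,\de\aalpha_\eps=\mu_1(\sfX_1)<+\infty$, I would then apply Lemma \ref{ss22:le:dialations} with the weight $\vartheta:=\sfr_1$ (set to $1$ on the $\aalpha_\eps$‑negligible set $\{\sfr_1=0\}$ to keep it strictly positive) and put $\aalpha'_\eps:=\dil_{\vartheta,1}(\aalpha_\eps)$.

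By Lemma \ref{ss22:le:dialations}, $\aalpha'_\eps$ has the same $1$‑homogeneous marginals $\mu_1,\mu_2$ as $\aalpha_\eps$; radial $1$‑homogeneity of $\sfH$ makes the dilation cost‑neutral, $\int_{\pc}\sfH\,\de\aalpha'_\eps=\int_{\pc}\sfH(\sfr_1^{-1}\fy_1,\sfr_1^{-1}\fy_2)\,\sfr_1\,\de\aalpha_\eps=\int_{\pc}\sfH\,\de\aalpha_\eps$; and an elementary computation with $\f h^1$ identifies the first cone marginal $\alpha'_1:=\pi^1_\sharp\aalpha'_\eps$ with $\f p_\sharp(\mu_1\otimes\delta_1)$, i.e.\ $\alpha'_1$ is concentrated on the height‑one copy $\{[x_1,1]:x_1\in\sfX_1\}$ of $\sfX_1$. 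In particular $\alpha'_1$ is a finite measure on $\f C[\sfX_1]$ with $\alpha'_1(\f C[\sfX_1])=\mu_1(\sfX_1)>0$, it does not charge $\fo_1$, and it is diffuse because $\mu_1$ is (an atom of $\alpha'_1$ at $[\bar x_1,1]$ would force $\mu_1(\{\bar x_1\})>0$). Writing $\alpha'_2:=\pi^2_\sharp\aalpha'_\eps$, the plan $\aalpha'_\eps$ belongs to $\Gamma(\alpha'_1,\alpha'_2)$ and has finite $\sfH$‑cost, while $\f C[\sfX_1],\f C[\sfX_2]$ are Polish.

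Now I would invoke Theorem \ref{thm:pra} with $\ggamma:=\aalpha'_\eps$ and accuracy $\eps/2$: this produces a Borel map $\mathsf F\colon\f C[\sfX_1]\to\f C[\sfX_2]$ with $\mathsf F_\sharp\alpha'_1=\alpha'_2$ and $\int_{\f C[\sfX_1]}\sfH(\fy_1,\mathsf F(\fy_1))\,\de\alpha'_1\le\int_{\pc}\sfH\,\de\aalpha'_\eps+\eps/2\le\int_{\pc}\sfH\,\de\aalpha+\eps$. Since $\alpha'_1$ is carried by the height‑one copy of $\sfX_1$, I set $(\mathsf T(x_1),\sfg(x_1)):=\f q(\mathsf F([x_1,1]))$, overriding $\mathsf T(x_1)$ by a fixed point of $\sfX_2$ on the Borel set where $\mathsf F([x_1,1])=\fo_2$ (there $\sfg(x_1)=0$ anyway), so that $\mathsf T\colon\sfX_1\to\sfX_2$ and $\sfg\colon\sfX_1\to[0,+\infty)$ are Borel with $[\mathsf T(x_1),\sfg(x_1)]=\mathsf F([x_1,1])$ for every $x_1$. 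Then $\sfg\in L^1(\sfX_1,\mu_1)$ since $\int_{\sfX_1}\sfg\,\de\mu_1=\int_{\f C[\sfX_2]}\sfr\,\de\alpha'_2=\mu_2(\sfX_2)$, and pushing $\mu_1$ forward by $[\text{id}_{\sfX_1},1]$ and then by $\mathsf F$ gives $\mathsf T_\sharp(\sfg\,\mu_1)=\f h^1(\mathsf F_\sharp\alpha'_1)=\f h^1(\alpha'_2)=\mu_2$, so $(\mathsf T,\sfg)$ is a transport–growth map connecting $\mu_1$ to $\mu_2$; its cost equals $\int_{\f C[\sfX_1]}\sfH(\fy_1,\mathsf F(\fy_1))\,\de\alpha'_1\le\int_{\pc}\sfH\,\de\aalpha+\eps$, which is the first assertion. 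The identity $\m_\sfH(\mu_1,\mu_2)=\mathscr M_\sfH(\mu_1,\mu_2)$ then follows: the inequality $\m_\sfH\le\mathscr M_\sfH$ is already recorded, and if $\m_\sfH(\mu_1,\mu_2)<+\infty$ we apply the above to an optimiser $\aalpha\in\f H^1_o(\mu_1,\mu_2)$ (Theorem \ref{ss22:prop:mh}) and let $\eps\downarrow0$, while the case $\m_\sfH(\mu_1,\mu_2)=+\infty$ is trivial.

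The crux is the preparatory reduction: combining Proposition \ref{prop:approxnov} (to kill, up to $\eps$, the mass that $\aalpha$ transports from the vertex $\fo_1$) with the ray‑rescaling of Lemma \ref{ss22:le:dialations} (cost‑neutral precisely because $\sfH$ is radially $1$‑homogeneous) so as to reach a competitor whose first cone marginal is exactly $\f p_\sharp(\mu_1\otimes\delta_1)$. After that, Theorem \ref{thm:pra} does the genuine work of producing a deterministic map, and the remaining passage from the cone map $\mathsf F$ to the transport–growth pair $(\mathsf T,\sfg)$, including the verification $(\mathsf T,\sfg)_\star\mu_1=\mu_2$ and the equality of costs, is bookkeeping through $\f h^1$.
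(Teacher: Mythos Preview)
Your proof is correct and follows essentially the same approach as the paper: apply Proposition \ref{prop:approxnov} to remove mass from $\{\fy_1=\fo_1\}$, rescale via Lemma \ref{ss22:le:dialations} with $\vartheta=\sfr_1$ so the first cone marginal becomes $\f p_\sharp(\mu_1\otimes\delta_1)$, then invoke Pratelli's Theorem \ref{thm:pra} and read off $(\mathsf T,\sfg)$ from the resulting map $\mathsf F$. If anything, your write-up is slightly more careful in places (explicitly noting the cost-neutrality of the dilation, overriding $\mathsf T$ where $\mathsf F$ hits $\fo_2$, and verifying $(\mathsf T,\sfg)_\star\mu_1=\mu_2$ via $\f h^1$).
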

\begin{proof}
Let $\aalpha \in \f{H}^1(\mu_1, \mu_2)$ with $\int_{\pc} \sfH \de \aalpha <+\infty$ and $\eps>0$ be fixed. By Proposition \ref{prop:approxnov} we can find $\aalpha_\eps \in \f{H}^1(\mu_1, \mu_2)$ with $\aalpha_\eps(\{\f{y}_1 = \f{o}_1\})=0$ such that
\[  \int_{\pc} \sfH \de \aalpha_\eps  \le  \int_{\pc} \sfH \de \aalpha
+\eps/2 .\]
Using Lemma \ref{ss22:le:dialations} applied to $\aalpha_\eps$ and $\vartheta$ defined as
\[ \vartheta([x_1,r_1],[x_2,r_2]) = \begin{cases} r_1 \quad &\text{ if } r_1 >0, \\ 1 \quad &\text{ if } r_1=0, \end{cases}\]
we can assume that $\aalpha_\eps$ is concentrated on $\{ \sfr_1 =1\}$. Since $\mu_1$ is non atomic, we deduce that also $\alpha_\eps^1:=\pi^1_\sharp \aalpha_\eps$ is non atomic so that, also observing that $\sfH$ is continuous, we can find thanks to Theorem \ref{thm:pra} a Borel map $\mathsf F : \f{C}[\sfX_1] \to \f{C}[\sfX_2]$ such that $\mathsf F_\sharp \alpha_\eps^1 = \alpha_\eps^2 := \pi^2_\sharp \aalpha_\eps$ and 
\begin{equation}\label{eq:comb2}
\int_{\f{C}[\sfX_1]} \sfH(\f{y}_1, \mathsf{F}(\f{y}_1)) \de \alpha_\eps^1(\f{y}_1) - \eps/2 \le
\int_{\pc} \sfH \de \aalpha_\eps.
\end{equation}
We define the Borel maps
\[ \mathsf{T}(x_1):= (\sfx \circ \mathsf{F})([x_1,1]), \quad \sfg(x_1):= (\sfr \circ \mathsf{F})([x_1,1]), \quad x_1 \in \sfX_1,\]
and we notice that $(\mathsf{T},\sfg)_\star \mu_1 = \mu_2$ and
\[ \int_{\f{C}[\sfX_1]} \sfH(\f{y}_1, \mathsf{F}(\f{y}_1)) \de \alpha_\eps^1(\f{y}_1) = \int_{\sfX_1} \sfH([x_1, 1],[\mathsf{T}(x_1), \sfg(x_1)]) \de \mu_1(x_1). \]
This concludes the proof.
\end{proof}

\section{Existence of a maximazing pair for the dual problem}\label{sec:5}
In this section we provide sufficent conditions for the existence of a maximizing pair $(\varphi_1, \varphi_2) \in \Phi_\sfH$ (see Definition \ref{def:thesetadm}) i.e. such that $(\varphi_1, \varphi_2) \in \rmC_b(\sfX_1) \times \rmC_b(\sfX_2)$ and
\begin{align*}
    \varphi_1(x_1) r_1 + \varphi_2(x_2)r_2 \le \sfH([x_1, r_1], [x_2, r_2]) &\text{ for every } ([x_1,r_1],[x_2,r_2]) \in \pc,\\
 \int_{\sfX_1}\varphi_1 \de \mu_1 + \int_{\sfX_2} \varphi_2 \de \mu_2 &= \m_\sfH(\mu_1, \mu_2).
\end{align*}
A somehow complementary set of assumptions for which the same conclusion holds is presented in Appendix \ref{app:1}. The following result is a simple consequence of compactness.

\begin{lemma}\label{le:balls} Let $(\sfX_i, \sfd_i)$ be compact metric spaces and assume that $\Omega \subset \sfX_1 \times \sfX_2$ is an open set such that $\pi^i(\Omega)=\sfX_i$, $i=1,2$. Then there exists a finite set $\mathcal{U}:=\{ x_1^n, x_2^n, r_n \}_{n=1}^N \subset \sfX_1 \times \sfX_2 \times (0, + \infty)$ such that 
\begin{equation}\label{eq:decompss}
\bigcup_n B_{r_n}(x_i^n) = \sfX_i, \quad i=1, 2 \quad \quad \bigcup_{n} \overline{B_{r_n}(x_1^n)} \times \overline{B_{r_n}(x_2^n)} \subset \Omega.
\end{equation}
If $\mu_i \in \meas_+(\sfX_i)$ is such that $\supp{\mu_i}=\sfX_i$, then
\begin{equation}\label{eq:mm} m(\mathcal{U}, \mu_1, \mu_2) := \min_{i=1,2} \min_{n=1, \dots, N} \mu_i \left (B_{r_n}(x_i^n) \right )>0.\end{equation}
\end{lemma}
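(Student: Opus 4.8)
The plan is to construct $\mathcal U$ by a double application of the finite‑subcover argument, one for each projection. Since $\Omega$ is open and $\pi^1(\Omega)=\sfX_1$, to every $x\in\sfX_1$ I would attach a partner $y_x\in\sfX_2$ with $(x,y_x)\in\Omega$ and, because a product‑open neighbourhood of $(x,y_x)$ contains a product of two balls, a radius $\rho_x>0$ with $\overline{B_{\rho_x}(x)}\times\overline{B_{\rho_x}(y_x)}\subset\Omega$ (pick $\eps>0$ with $B_\eps(x)\times B_\eps(y_x)\subset\Omega$ and set $\rho_x:=\eps/2$, so that closed balls of radius $\rho_x$ sit inside open balls of radius $\eps$). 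The open balls $\{B_{\rho_x}(x):x\in\sfX_1\}$ cover the compact space $\sfX_1$, so finitely many of them, centred at points $a^1,\dots,a^K$, already cover it. Running the symmetric argument from $\pi^2(\Omega)=\sfX_2$ I would likewise obtain, for each $x\in\sfX_2$, a partner $z_x\in\sfX_1$ and a radius $\sigma_x>0$ with $\overline{B_{\sigma_x}(z_x)}\times\overline{B_{\sigma_x}(x)}\subset\Omega$, and a finite subcover of $\sfX_2$ centred at $b^1,\dots,b^L$.

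Then I would set $N:=K+L$ and take $\mathcal U$ to consist of the triples $(a^k,y_{a^k},\rho_{a^k})$, $k=1,\dots,K$, together with $(z_{b^l},b^l,\sigma_{b^l})$, $l=1,\dots,L$, re‑indexed by $n=1,\dots,N$. Each closed box $\overline{B_{r_n}(x_1^n)}\times\overline{B_{r_n}(x_2^n)}$ lies in $\Omega$ by construction; the first‑coordinate balls contain the finite cover $\{B_{\rho_{a^k}}(a^k)\}_k$ of $\sfX_1$ (the additional balls coming from the $b^l$‑part are harmless, being subsets of $\sfX_1$), so $\bigcup_n B_{r_n}(x_1^n)=\sfX_1$, and symmetrically $\bigcup_n B_{r_n}(x_2^n)=\sfX_2$. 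This gives \eqref{eq:decompss}. For the final claim, $\supp{\mu_i}=\sfX_i$ means precisely that $\mu_i$ assigns positive mass to every non‑empty open subset of $\sfX_i$; each $B_{r_n}(x_i^n)$ is non‑empty (it contains its centre) and open, hence $\mu_i(B_{r_n}(x_i^n))>0$, and $m(\mathcal U,\mu_1,\mu_2)$ is the minimum of finitely many strictly positive numbers, so it is strictly positive.

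There is no real obstacle here: the statement is a compactness exercise. The only points deserving attention are that a single radius $r_n$ must serve simultaneously the two coordinate balls and the containment of the closed box in $\Omega$ — which is why the partner point and the common radius are chosen jointly — and that the two finite covers, although they contribute ``mismatched'' balls in the opposite coordinate, can simply be amalgamated since being a cover is preserved under enlargement of the family.
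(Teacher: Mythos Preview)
Your argument is correct and is exactly the compactness argument the paper has in mind; the paper does not spell out a proof, stating only that the lemma ``is a simple consequence of compactness.'' Your construction---choosing for each point a partner in the other factor and a common radius so the closed product box sits in $\Omega$, extracting finite subcovers on each side, and amalgamating---is precisely that consequence.
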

In order to use this result, in this section, we are going to assume without mentioning it again that $\sfX_1$ and $\sfX_2$ are compact and metrizable spaces. We also assume that
\begin{equation}\label{ass:1}
    \begin{split}
        \sfH: \pc \to [0, + \infty) \text{ is continuous, radially $1$-homogeneous and convex } \\
        \text{and that there exists an open set } \Omega_\sfH \subset \sfX_1 \times \sfX_2 \text{ with } \pi^i(\Omega_\sfH) = \sfX_i \text{ such that }\\
    \lim_{r_1 \downarrow 0} \frac{ \sfH([x_1, r_1], [x_2, 1])-\sfH([\f{0}_1, [x_2,1])}{r_1} =-\infty \quad \text{ for every } (x_1, x_2) \in \Omega_\sfH,\\
    \lim_{r_2 \downarrow 0} \frac{ \sfH([x_1, 1], [x_2, r_2])-\sfH([x_1,1],\f{0}_2)}{r_2} =- \infty \quad \text{ for every } (x_1, x_2) \in \Omega_\sfH.
    \end{split}
\end{equation}
To simplify the notation, we set 
\[
 \sfH_1(x_1) := \sfH([x_1, 1], \f{o}_2), \quad \sfH_2(x_2) := \sfH( \f{o}_1, [x_2, 1]) \quad x_1 \in \sfX_1, \, x_2 \in \sfX_2
\]
and 
\begin{equation}\label{ss22:eq:bdd} \kappa_1:=\int_{\sfX_1} \sfH_1 \de \mu_1 < + \infty, \quad \kappa_2:=\int_{\sfX_2} \sfH_2 \de \mu_2 < + \infty.
\end{equation}
Clearly $\sfH \le \sfH_1+\sfH_2$; we remark that the meaning of \eqref{ass:1} is to impose a control on the derivatives of $\sfH$ at the boundary of $\R^2_+$ for a sufficiently large set of points $(x_1,x_2)$.
\begin{example}
    Both the functions in \eqref{eq:hk} and \eqref{eq:ghk} satisfy \eqref{ass:1} with $\Omega_{\sfH_{\mathsf{GHK}}}=\R^d \times \R^d$ and $\Omega_{\sfH_{\mathsf{HK}}}= \{ (x_1,x_2) \in \R^d \times \R^d : |x_1-x_2|< \pi/2 \}$.
\end{example}
We start with a few preliminary lemmas that provide bounds on pairs in $\Phi_\sfH$.

\begin{lemma} \label{ss22:le:balls} Assume that $\sfH$ is as in \eqref{ass:1} and let $\mathcal{U}=\{ x_1^n, x_2^n, r_n \}_{n=1}^N$ be as in Lemma \ref{le:balls} for distances $\sfd_i$ metrizing $\sfX_i$, $i=1,2$ and $\Omega=\Omega_\sfH$. If $\mu_i \in \meas_+(\sfX_i)$ are such that $\supp{\mu_i} = \sfX_i$, then any pair $(\varphi_1, \varphi_2) \in \Phi_\sfH$ such that
\[ \int_{\sfX_1} \varphi_1 \de \mu_1 + \int_{\sfX_2} \varphi_2 \de \mu_2 \ge 0\]
satisfies also
\[ \max_{\overline{B_{r_n}(x_i^n)}} \varphi_i \ge -\frac{\kappa_1+\kappa_2+1}{m(\mathcal{U}, \mu_1, \mu_2)} \quad \text{ for every } n \in \{1, \dots, N\} \text{ and } i=1,2,\]
where $\kappa_i$ are as in \eqref{ss22:eq:bdd} and $m(\mathcal{U}, \mu_1, \mu_2)$ is as in \eqref{eq:mm}.
\end{lemma}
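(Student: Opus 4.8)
The plan is to argue by contradiction, using only the defining inequality of $\Phi_\sfH$ specialized to configurations in which one of the two masses vanishes, together with the lower bound $m(\mathcal U,\mu_1,\mu_2)>0$ furnished by Lemma \ref{le:balls}.

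First I would record the pointwise bound $\varphi_i\le\sfH_i$ on $\sfX_i$: taking $r_2=0$ (so that $[x_2,r_2]=\fo_2$) in the constraint defining $\Phi_\sfH$ gives $\varphi_1(x_1)r_1\le\sfH([x_1,r_1],\fo_2)=r_1\sfH_1(x_1)$ by radial $1$-homogeneity, and choosing $r_1=1$ yields $\varphi_1\le\sfH_1$; symmetrically $\varphi_2\le\sfH_2$. Integrating against $\mu_i$ and recalling \eqref{ss22:eq:bdd} gives $\int_{\sfX_i}\varphi_i\de\mu_i\le\kappa_i$; note also that $\sfH_i\ge0$, since $\sfH\ge0$.

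Now suppose, for contradiction, that the asserted estimate fails for some index $n$ and, say, for $i=1$, that is, $\varphi_1<-\frac{\kappa_1+\kappa_2+1}{m(\mathcal U,\mu_1,\mu_2)}$ on $\overline{B_{r_n}(x_1^n)}$, hence on $B:=B_{r_n}(x_1^n)$. I would split $\int_{\sfX_1}\varphi_1\de\mu_1$ into the integral over $B$ and over $\sfX_1\setminus B$. On $B$, using $\mu_1(B)\ge m(\mathcal U,\mu_1,\mu_2)>0$ (Lemma \ref{le:balls} applies because $\supp{\mu_i}=\sfX_i$) together with the pointwise bound, one gets $\int_B\varphi_1\de\mu_1\le-\frac{\kappa_1+\kappa_2+1}{m(\mathcal U,\mu_1,\mu_2)}\,\mu_1(B)\le-(\kappa_1+\kappa_2+1)$. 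On the complement, $\varphi_1\le\sfH_1$ and $\sfH_1\ge0$ give $\int_{\sfX_1\setminus B}\varphi_1\de\mu_1\le\int_{\sfX_1}\sfH_1\de\mu_1=\kappa_1$. Adding these two bounds and then the bound $\int_{\sfX_2}\varphi_2\de\mu_2\le\kappa_2$ yields $\int_{\sfX_1}\varphi_1\de\mu_1+\int_{\sfX_2}\varphi_2\de\mu_2\le\big(\kappa_1-(\kappa_1+\kappa_2+1)\big)+\kappa_2=-1<0$, contradicting the hypothesis $\int_{\sfX_1}\varphi_1\de\mu_1+\int_{\sfX_2}\varphi_2\de\mu_2\ge0$. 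The case $i=2$ is symmetric.

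There is no genuinely hard step here: the argument uses only the constraint in $\Phi_\sfH$ with a vanishing coordinate, radial $1$-homogeneity, the positivity of $\sfH_i$, and the finiteness \eqref{ss22:eq:bdd}; the single place where the standing assumptions of the section enter is the application of Lemma \ref{le:balls}, which is what guarantees $m(\mathcal U,\mu_1,\mu_2)>0$ so that dividing by it is legitimate. Note that assumption \eqref{ass:1} itself plays no role in this particular lemma — only \eqref{ss22:eq:bdd} and the condition $\supp{\mu_i}=\sfX_i$ are needed.
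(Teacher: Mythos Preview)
Your proof is correct and follows essentially the same approach as the paper: both use the pointwise bound $\varphi_i\le\sfH_i$ (obtained from the $\Phi_\sfH$ constraint with one vanishing radius), split the integral over $B_{r_n}(x_i^n)$ and its complement, and derive a contradiction with the hypothesis $\int\varphi_1\de\mu_1+\int\varphi_2\de\mu_2\ge0$. The only cosmetic difference is that the paper first establishes the intermediate integral bound $\int_{B_{r_n}(x_i^n)}\varphi_i\de\mu_i\ge -(\kappa_1+\kappa_2+1)$ and then divides by $\mu_i(B_{r_n}(x_i^n))\ge m(\mathcal U,\mu_1,\mu_2)$, whereas you fold the division into the contradiction hypothesis from the start.
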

\begin{proof}We claim that 
\begin{equation} \label{ss22:eq:belowest} \int_{B_{r_i}(x_1^n)} \varphi_1 \de \mu_1 \ge -(\kappa_1+\kappa_2+1), \quad \int_{B_{r_i}(x_2^n)} \varphi_2 \de \mu_2 \ge -(\kappa_1+\kappa_2+1) \quad n=1, \dots, N.
\end{equation}
Indeed, if there exists $i \in \{1,2\}$ (say $i=1$) and $n \in \{1, \dots, N\}$ such that 
\[ \int_{B_{r_n}(x_1^n)} \varphi_1 \de \mu_1 < -(\kappa_1+\kappa_2+1),\]
then 
\[ \int_{\sfX_1} \varphi_1 \de \mu_1 = \int_{B_{r_n}(x_1^n)} \varphi_1 \de \mu_1 + \int_{\sfX_1 \setminus B_{r_n}(x_1^n)} \varphi_1 \de \mu_1 < -(\kappa_1+\kappa_1+1) + \kappa_1= -(\kappa_2+1).\]
Thus
\[ \kappa_2 \ge \int_{\sfX_2} \varphi_2 \de \mu_2 \ge -\int_{\sfX_1} \varphi_1 \de \mu_1 \ge \kappa_2+1,\]
a contradiction. By \eqref{ss22:eq:belowest} we have, for every $i=1,2$ and $n=1, \dots, N$, that
\[  \mu_i \left (B_{r_n}(x_i^n) \right ) \sup_{B_{r_n}(x_i^n)} \varphi_i  \ge \int_{B_{r_n}(x_i^n)} \varphi_i \de \mu_i \ge -(\kappa_1+\kappa_2+1), \]
hence
\[ \sup_{B_{r_n}(x_i^n)} \varphi_i \ge - \frac{\kappa_1+\kappa_2+1}{m(\mathcal{U}, \mu_1, \mu_2)}.\]
\end{proof}

\begin{lemma} \label{ss22:le:staccato} Assume that $\sfH$ is as in \eqref{ass:1} and that $\mu_i \in \meas_+(\sfX_i)$ are such that $\supp{\mu_i} = \sfX_i$. Then there exists a constant $\eps>0$ such that any pair $(\varphi_1, \varphi_2) \in \Phi_\sfH$ such that
\[ \int_{\sfX_1} \varphi_1 \de \mu_1 + \int_{\sfX_2} \varphi_2 \de \mu_2 \ge 0\]
satisfies also
\[\varphi_i(x_i) \le \sfH_i(x_i) - \eps \quad \text{ for every } x_i \in \sfX_i, \quad i=1,2.\]
\end{lemma}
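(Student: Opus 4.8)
Here is the proof I would give.

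The plan is to turn the ``infinite slope'' hypothesis \eqref{ass:1} into a uniform numerical gap by testing the defining inequality of $\Phi_\sfH$ with one mass equal to $1$ and the other equal to a small parameter $r>0$. If $(\varphi_1,\varphi_2)\in\Phi_\sfH$ and $(x_1,x_2)\in\sfX_1\times\sfX_2$, then for every $r>0$
\[
\varphi_1(x_1)\le \sfH([x_1,1],[x_2,r])-\varphi_2(x_2)\,r
=\sfH_1(x_1)+r\cdot\frac{\sfH([x_1,1],[x_2,r])-\sfH_1(x_1)}{r}-\varphi_2(x_2)\,r,
\]
and when $(x_1,x_2)\in\Omega_\sfH$ the middle difference quotient tends to $-\infty$ as $r\downarrow0$ by \eqref{ass:1}, so it will beat $-\varphi_2(x_2)$ as soon as we know $\varphi_2(x_2)$ is bounded below. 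That lower bound on the ``other'' potential at a good point is exactly what Lemma \ref{ss22:le:balls} provides on each of the finitely many balls produced by Lemma \ref{le:balls}. The only real issue is to choose $r$ uniformly in $(\varphi_1,\varphi_2)$ and in $x_1$.

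First I would fix metrics $\sfd_i$ on $\sfX_i$ and the finite family $\mathcal U=\{x_1^n,x_2^n,r_n\}_{n=1}^N$ of Lemma \ref{le:balls} applied with $\Omega=\Omega_\sfH$, so that the open balls $B_{r_n}(x_i^n)$ cover $\sfX_i$ and each product $K_n:=\overline{B_{r_n}(x_1^n)}\times\overline{B_{r_n}(x_2^n)}$ is a compact subset of $\Omega_\sfH$; I would also set $C_0:=(\kappa_1+\kappa_2+1)/m(\mathcal{U},\mu_1,\mu_2)$, which is finite and positive because $\sfH_i$ is bounded on the compact $\sfX_i$ (so $\kappa_i<+\infty$) and $m(\mathcal{U},\mu_1,\mu_2)>0$ by \eqref{eq:mm}. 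For each $n$ I would consider the continuous functions on $K_n$
\[
g_r^n(x_1,x_2):=\frac{\sfH([x_1,1],[x_2,r])-\sfH([x_1,1],\fo_2)}{r},\qquad
\tilde g_r^n(x_1,x_2):=\frac{\sfH([x_1,r],[x_2,1])-\sfH(\fo_1,[x_2,1])}{r}.
\]
Radial convexity of $\sfH$ makes $r\mapsto g_r^n$ and $r\mapsto\tilde g_r^n$ nondecreasing (they are difference quotients from the origin of convex functions of one radial variable), and \eqref{ass:1} forces $g_r^n,\tilde g_r^n\to-\infty$ pointwise on $K_n\subset\Omega_\sfH$ as $r\downarrow0$. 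Since each $K_n$ is compact and there are finitely many of them, a Dini-type argument for monotone families of continuous functions decreasing to $-\infty$ (for any $A>0$ the open sets $\{g_r^n<-A\}$ increase to $K_n$ as $r\downarrow0$, hence cover it for some small $r$) lets me pick a single $\eps\in(0,1]$ with $g_\eps^n\le -C_0-1$ and $\tilde g_\eps^n\le -C_0-1$ on all the $K_n$. This Dini/compactness step is what I expect to be the crux: hypothesis \eqref{ass:1} is only pointwise on $\Omega_\sfH$, and upgrading it to a threshold valid simultaneously over a finite cover of $\sfX_1$ and of $\sfX_2$ is exactly where compactness of the $\sfX_i$, the monotonicity coming from convexity, and the product structure of the balls all enter.

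Finally, given $(\varphi_1,\varphi_2)\in\Phi_\sfH$ with $\int_{\sfX_1}\varphi_1\,\de\mu_1+\int_{\sfX_2}\varphi_2\,\de\mu_2\ge0$ and $x_1\in\sfX_1$, I would pick $n$ with $x_1\in B_{r_n}(x_1^n)$ and let $\bar x_2$ be a maximum point of the continuous function $\varphi_2$ on the compact set $\overline{B_{r_n}(x_2^n)}$; then $\varphi_2(\bar x_2)\ge -C_0$ by Lemma \ref{ss22:le:balls} and $(x_1,\bar x_2)\in K_n$. Plugging $(r_1,r_2)=(1,\eps)$ into the displayed inequality and using $g_\eps^n(x_1,\bar x_2)\le -C_0-1$ and $\varphi_2(\bar x_2)\ge -C_0$ gives
\[
\varphi_1(x_1)\le \sfH_1(x_1)+\eps(-C_0-1)+C_0\,\eps=\sfH_1(x_1)-\eps .
\]
The symmetric estimate $\varphi_2(x_2)\le\sfH_2(x_2)-\eps$ for every $x_2\in\sfX_2$ follows verbatim with the two factors interchanged and $\tilde g_\eps^n$ in place of $g_\eps^n$ (using the first limit in \eqref{ass:1}), which establishes the claim with this $\eps$.
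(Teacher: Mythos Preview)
Your proof is correct and it takes a genuinely different route from the paper. The paper argues by contradiction: assuming a sequence $(\varphi_1^j,\varphi_2^j)\in\Phi_\sfH$ and points $z_j\in\sfX_1$ with $\sfH_1(z_j)-\varphi_1^j(z_j)\to 0$, it extracts (via compactness of $\sfX_1,\sfX_2$) limit points $(z,y)\in\Omega_\sfH$ with $\varphi_2^j(y_j)\ge -C$ along the sequence, passes to the limit in the inequality $\frac{\sfH([z_j,1],[y_j,r_2])-\sfH_1(z_j)}{r_2}\ge -C-\frac1{jr_2}$, and contradicts the infinite-slope hypothesis at $(z,y)$. Your argument is instead direct and constructive: you exploit the radial convexity in \eqref{ass:1} to get monotonicity of the difference quotients $g_r^n$, which upgrades the pointwise divergence to uniform divergence on each compact $K_n$ via the Dini-type open-cover argument; this yields an explicit $\eps$ for which the computation $\varphi_1(x_1)\le \sfH_1(x_1)+\eps\,g_\eps^n(x_1,\bar x_2)-\eps\,\varphi_2(\bar x_2)\le \sfH_1(x_1)-\eps$ closes. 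The paper's route is slightly more economical in that it does not invoke the convexity (only continuity and the pointwise limit), whereas your route makes the mechanism and the value of $\eps$ transparent and avoids the subsequence bookkeeping.
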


\begin{proof} 

We prove the statement for $i=1$, being the other case completely analogous. Suppose by contradiction that there exists $(\varphi_1^j, \varphi_2^j)_j \subset \Phi_\sfH$ with $\int_{\sfX_1} \varphi_1^j \de \mu_1 + \int_{\sfX_2} \varphi_2^j \de \mu_2 \ge 0$ and $(z_j)_j \subset \sfX_1$ such that
\[ \sfH_1(z_j)-\varphi_1^j(z_j) \to 0\]
as $j \to + \infty$. Up to passing to a subsequence, we can assume that
\[ 0 \le \sfH_1(z_j)-\varphi_1^j(z_j) \le \frac{1}{j} \quad \text{ for every } j \in \N
\]
and, by compactness of $\sfX_1$, the existence of $z \in \sfX_1$ such that $z_j \to z$. Let $\mathcal{U}=\{ x_1^n, x_2^n, r_n \}_{n=1}^N$ be as in Lemma \ref{le:balls} for distances $\sfd_i$ metrizing $\sfX_i$, $i=1,2$ and $\Omega=\Omega_\sfH$; let 
\[ C:=\frac{\kappa_1+\kappa_2+1}{m(\mathcal{U}, \mu_1, \mu_2)}. \]
Since $z \in B_{r_n}(x_1^n)$ (see \eqref{eq:decompss}) for some $n \in \{1, \dots, N\}$, we can assume, up to passing again to a subsequence, that $z_j \in B_{r_n}(x_1^n)$ for every $j \in \N$. By Lemma \ref{ss22:le:balls}, we can find, $y_j \in \overline{B_{r_n}(x_2^n)}$ such that $\varphi_2^j(y_j) \ge -C$. By compactness of $\sfX_2$, we can assume that $y_j \to y \in \overline{B_{r_n}(x_2^n)}$. We have thus proven the existence of $(z_j, y_j) \in \Omega$ such that $(z_j,y_j) \to (z,y) \in \Omega$ with 
\[ 0 \le \sfH_1(z_j)-\varphi_1^j(z_j) \le \frac{1}{j},\quad  \varphi_2^j(y_j) \ge -C \quad \text{ for every } j \in \N.
\]
We have
\[ r_1\left (\sfH_1(z_j)-\frac{1}{j} \right ) -Cr_2 \le \varphi_1^j(z_j) r_1 + \varphi_2^j(y_j)r_2 \le \sfH([z_j,r_1], [y_j,r_2]) \quad \text{ for every } r_1, r_2 \ge 0.\]
Choosing $r_1=1$, we get
\[ \frac{ \sfH([z_j, 1], [y_j, r_2]-\sfH_1(z_j)}{r_2} \ge -C - \frac{1}{jr_2} \quad \text{ for every } j \in \N, \, r_2 >0.\]
Passing first to the limit as $j \to + \infty$ and then to the limit as $r_2 \downarrow 0$, we obtain
\[ \lim_{r_2 \downarrow 0} \frac{ \sfH([z, 1], [y, r_2]-\sfH_1(x_\infty)}{r_2} \ge -C > - \infty,\]
a contradiction with \eqref{ass:1}.
\end{proof}

The following definition is simply the analogue of the classical $\mathsf{c}$-transform (see e.g.~\cite[Definition 6.1.2]{AGS08}).
\begin{definition}\label{def:htr} Let $(\varphi_1, \varphi_2) \in \Phi_\sfH$. We define the Borel functions $\varphi_1^\sfH : \sfX_2 \to \R$, $\varphi_1^{\sfH \sfH}: \sfX_1 \to \R$ as
\begin{align*}
    \varphi_1^{\sfH}(x_2) &:= \inf_{x_1 \in \sfX_1} \inf_{\alpha \ge 0} \biggl \{ \sfH([x_1, \alpha], [x_2, 1])-\alpha \varphi_1(x_1) \biggr \}, \quad x_2 \in \sfX_2,\\
    \varphi_1^{\sfH \sfH}(x_1) &:=\inf_{x_2 \in \sfX_2} \inf_{\alpha \ge 0} \biggl \{ \sfH([x_1, 1], [x_2, \alpha])-\alpha \varphi_1^\sfH(x_2) \biggr \}, \quad x_1 \in \sfX_1.
\end{align*}

\end{definition}

Using the previous lemmas, in the following proposition we prove that the $\sfH$-transform of a pair in $\Phi_\sfH$ can be computed restricting the minimization to a compact set. As a consequence, we obtain uniform bounds and uniform continuity for the admissible pair, as it happens in the classical case (see e.g.~the discussion after \cite[Definition 1.10]{Santambrogio15}).
\begin{proposition} \label{ss22:prop:propertiesphih} Assume that $\sfH$ is as in \eqref{ass:1} and that $\mu_i \in \meas_+(\sfX_i)$ are such that $\supp{\mu_i} = \sfX_i$. Then there exists constants $R>1$ and $M>0$ such that, if $(\varphi_1, \varphi_2) \in \Phi_\sfH$ are such that 
\[ \int_{\sfX_1} \varphi_1 \de \mu_1 + \int_{\sfX_2} \varphi_2 \de \mu_2 \ge 0,\]
then $\|\varphi_1^\sfH \|_\infty, \|\varphi_1^{\sfH \sfH} \|_\infty  \le M$ and 
\begin{align}\label{ss22:eq:varphihr1}
    \varphi_1^{\sfH}(x_2) &= \inf_{x_1 \in \sfX_1} \inf_{0 \le \alpha \le R} \biggl \{ \sfH([x_1, \alpha], [x_2, 1])-\alpha \varphi_1(x_1) \biggr \}, \quad x_2 \in \sfX_2,\\ \label{ss22:eq:varphihr2}
    \varphi_1^{\sfH \sfH}(x_1) &=\inf_{x_2 \in \sfX_2} \inf_{0 \le \alpha \le R} \biggl \{ \sfH([x_1, 1], [x_2, \alpha])-\alpha \varphi_1^\sfH(x_2) \biggr \}, \quad x_1 \in \sfX_1.
\end{align}
In particular, $(\varphi_1^{\sfH\sfH}, \varphi_1^{\sfH}) \in \Phi_\sfH$, $\varphi_1^{\sfH \sfH} \ge \varphi_1$, $\varphi_1^{\sfH} \ge \varphi_2$. Finally, if $\sfd_i$ are distances metrizing $\sfX_i$, $i=1,2$, then $\varphi_1^{\sfH\sfH}$ is $\sfd_1$-uniformly continuous and $\varphi_1^{\sfH}$ is $\sfd_2$-uniformly continuous, both with the same (uniform) $\sfd_1 \otimes_{\f{C}} \sfd_2$-modulus of continuity of $\sfH$ on $\pcr{R}$ (cf.~\eqref{ss22:eq:coner}).
\end{proposition}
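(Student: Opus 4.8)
The plan is to mimic the classical argument for $\mathsf c$-transforms, the extra ingredient being the control on the mass parameter $\alpha$ coming from the boundary singularity hypothesis \eqref{ass:1} together with the a priori bounds of Lemmas \ref{ss22:le:balls} and \ref{ss22:le:staccato}. First I would recall, from Lemma \ref{ss22:le:staccato}, the existence of $\eps>0$ with $\varphi_i(x_i)\le \sfH_i(x_i)-\eps$ for every $x_i$ and $i=1,2$, and from Lemma \ref{ss22:le:balls} a constant $C=(\kappa_1+\kappa_2+1)/m(\mathcal U,\mu_1,\mu_2)$ such that, on each ball $\overline{B_{r_n}(x_i^n)}$ of the covering $\mathcal U$ associated with $\Omega_\sfH$, the function $\varphi_i$ exceeds $-C$ at some point. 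Since $\sfH$ is continuous on the compact set $\pcr1$ it is bounded there, say by $C_0:=\max_{\pcr 1}\sfH$, and likewise $\|\sfH_i\|_\infty\le C_0$; note $\varphi_i\le C_0-\eps\le C_0$ everywhere.

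The key step is producing the radius $R$. Fix $x_2\in \sfX_2$ and consider the infimum defining $\varphi_1^\sfH(x_2)$. Taking $x_1$ arbitrary and $\alpha=1$ we get the upper bound $\varphi_1^\sfH(x_2)\le \sfH([x_1,1],[x_2,1])-\varphi_1(x_1)$; choosing $x_1$ in a ball of $\mathcal U$ whose $x_2$-companion ball contains $x_2$, and using $\varphi_1(x_1)\ge -C$ there, yields $\varphi_1^\sfH(x_2)\le C_0+C=:M_0$. For the lower bound and the truncation, write, using radial $1$-homogeneity, $\sfH([x_1,\alpha],[x_2,1])=\sfH_{x_1,x_2}(\alpha,1)$; for $\alpha\ge 1$ convexity in the radial variable gives $\sfH_{x_1,x_2}(\alpha,1)\ge \alpha\,\sfH_{x_1,x_2}(1,1/\alpha)\ge \alpha\big(\sfH_{x_1,x_2}(1,0)-\sup\big)$, more directly: by convexity of $\alpha\mapsto \sfH_{x_1,x_2}(\alpha,1)$ on $[0,\infty)$ and the bound $\sfH_{x_1,x_2}(1,1)\le C_0$, one has for $\alpha\ge 1$ that $\sfH_{x_1,x_2}(\alpha,1)-\sfH_{x_1,x_2}(1,1)\ge (\alpha-1)\big(\sfH_{x_1,x_2}(1,1)-\sfH_{x_1,x_2}(0,1)\big)\ge -(\alpha-1)C_0$, hence $\sfH([x_1,\alpha],[x_2,1])-\alpha\varphi_1(x_1)\ge -(\alpha-1)C_0-\alpha\varphi_1(x_1)\ge -(\alpha-1)C_0-\alpha(C_0-\eps)=\alpha\eps-(2\alpha-1)C_0$; this quantity exceeds $M_0+1$ as soon as $\alpha\ge R_0:=(M_0+1+C_0)/(\eps)$ provided $\eps>2C_0$ — which need not hold, so instead I would argue more carefully: use $\varphi_1(x_1)r_1+\varphi_2(x_2)r_2\le \sfH([x_1,r_1],[x_2,r_2])$ directly with the lower bound $\varphi_2(x_2)\ge$ nothing uniform, so the cleanest route is: by \eqref{ass:1} the incremental ratio of $\sfH$ at the vertex blows up on $\Omega_\sfH$, which by compactness and continuity forces $\sfH_{x_1,x_2}(\alpha,1)\ge \alpha\,\omega(\alpha)$ with $\omega(\alpha)\to+\infty$, uniformly once $(x_1,x_2)$ ranges in a neighbourhood; then $\sfH([x_1,\alpha],[x_2,1])-\alpha\varphi_1(x_1)\ge \alpha(\omega(\alpha)-C_0)\to+\infty$, so large $\alpha$ cannot realize the infimum and \eqref{ss22:eq:varphihr1} holds with a uniform $R>1$. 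The analogous computation with the roles of the two marginals exchanged, using $\varphi_1^\sfH\ge$ a uniform lower bound obtained in the same way and $\|\varphi_1^\sfH\|_\infty\le M_0$, gives \eqref{ss22:eq:varphihr2} with the same $R$; enlarging constants we set $M:=$ the resulting common bound on $\|\varphi_1^\sfH\|_\infty$ and $\|\varphi_1^{\sfH\sfH}\|_\infty$.

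Once the minimization is restricted to the compact set $\sfX_1\times[0,R]$ (resp.\ $\sfX_2\times[0,R]$), the remaining conclusions are the standard soft ones. That $(\varphi_1^{\sfH\sfH},\varphi_1^\sfH)\in\Phi_\sfH$ and the ordering $\varphi_1^{\sfH\sfH}\ge\varphi_1$, $\varphi_1^\sfH\ge\varphi_2$ follow verbatim from the definition of the $\sfH$-transform and from $(\varphi_1,\varphi_2)\in\Phi_\sfH$, exactly as in the classical $\mathsf c$-transform theory: $\varphi_1^\sfH(x_2)+\alpha$-version of the Fenchel–Young inequality. For equicontinuity: on $\pcr R$ the function $\sfH$ is continuous on a compact set, hence uniformly continuous, with some modulus $\varpi$ w.r.t.\ the metric $\sfd_1\otimes_{\f C}\sfd_2$ of \eqref{ss22:eq:prcdist}; writing $\varphi_1^\sfH(x_2)=\inf_{x_1,\,0\le\alpha\le R}\{\sfH([x_1,\alpha],[x_2,1])-\alpha\varphi_1(x_1)\}$ as an infimum of a family of functions of $x_2$ that are equi-uniformly-continuous (since only the $\sfH$-term depends on $x_2$, and $\sfd_{2,\f C}([x_2,1],[x_2',1])\le \sfd_2(x_2,x_2')\wedge\sqrt2\cdot$const), the infimum inherits the modulus $\varpi$; the same for $\varphi_1^{\sfH\sfH}$ in the $x_1$ variable. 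The main obstacle is the second paragraph: extracting from the pointwise divergence condition \eqref{ass:1} a \emph{uniform} (in $x_1,x_2$) coercivity estimate $\sfH_{x_1,x_2}(\alpha,1)\ge \alpha\,\omega(\alpha)$ with $\omega\to+\infty$, which is where compactness of $\sfX_1\times\sfX_2$ and of the relevant part of $\Omega_\sfH$, joint continuity of $\sfH$ on the product cone, and radial convexity all have to be combined; everything afterwards is routine bookkeeping.
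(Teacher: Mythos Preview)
Your proposal has a genuine gap in the decisive coercivity step. You correctly recall from Lemma \ref{ss22:le:staccato} the gap $\varphi_1(x_1)\le \sfH_1(x_1)-\eps$, but then never use it where it matters. After your convexity attempt (which, as you note, only works if $\eps>2C_0$), you fall back on the claim that \eqref{ass:1} forces $\sfH_{x_1,x_2}(\alpha,1)\ge \alpha\,\omega(\alpha)$ with $\omega(\alpha)\to+\infty$. This is false: by radial $1$-homogeneity $\sfH_{x_1,x_2}(\alpha,1)/\alpha=\sfH_{x_1,x_2}(1,1/\alpha)\to\sfH_1(x_1)$ as $\alpha\to+\infty$, and $\sfH_1$ is continuous on the compact $\sfX_1$, hence bounded. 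Hypothesis \eqref{ass:1} is a condition on the \emph{derivative} of $\sfH$ at the boundary and has already been fully exploited in the proof of Lemma \ref{ss22:le:staccato}; it gives no super-linear growth of $\sfH$ itself.

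What actually works (and is the paper's argument) is to combine the $\eps$-gap with uniform continuity of $\sfH$ near the vertex. Write
\[
\sfH([x_1,\alpha],[x_2,1])-\alpha\varphi_1(x_1)=\alpha\big(\sfH([x_1,1],[x_2,1/\alpha])-\varphi_1(x_1)\big).
\]
By uniform continuity of $\sfH$ on $\pcr{1}$ there is $0<\delta<1$ with $|\sfH([x_1,1],[x_2,r_2])-\sfH_1(x_1)|\le \eps/2$ whenever $0\le r_2\le\delta$. Hence for $\alpha>1/\delta$,
\[
\sfH([x_1,1],[x_2,1/\alpha])-\varphi_1(x_1)\ge \big(\sfH_1(x_1)-\tfrac{\eps}{2}\big)-\big(\sfH_1(x_1)-\eps\big)=\tfrac{\eps}{2},
\]
so the whole expression is $\ge \alpha\eps/2$, which exceeds $\sfH_2(x_2)+1$ (an upper bound for the infimum over $0\le\alpha\le R$, obtained by taking $\alpha=0$) once $\alpha$ is larger than an explicit $R$ depending only on $\eps,\delta,\|\sfH_1\|_\infty,\|\sfH_2\|_\infty$. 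Your abandoned convexity computation was actually close in spirit: the fix is to compare $\sfH_{x_1,x_2}(\alpha,1)$ to $\alpha\sfH_1(x_1)$ (its exact asymptotic slope) rather than to $\sfH_{x_1,x_2}(1,1)$, and to use $\varphi_1\le\sfH_1-\eps$ rather than $\varphi_1\le C_0$. The remaining parts of your plan (ordering $\varphi_1^{\sfH\sfH}\ge\varphi_1$, $\varphi_1^\sfH\ge\varphi_2$, membership in $\Phi_\sfH$, and the equicontinuity via the modulus of $\sfH$ on $\pcr R$) are fine and match the paper.
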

\begin{proof}
Let $(\varphi_1, \varphi_2)$ be as in the statement. By Lemma \ref{ss22:le:staccato} we know that there exists $\eps>0$ (not depending on the pair) such that
\[\varphi_1(x_1) \le \sfH_1(x_1) - \eps \quad \text{ for every } x_1 \in \sfX_1.\]
Then, by uniform continuity of $\sfH$ on $\pcr{1}$, we can find $0<\delta<1$ such that 
\[ \left | \sfH([x_1, 1], [x_2, r_2])-\sfH_1(x_1) \right | \le \frac{\eps}{2} \quad \text{ for every } 0 \le r_2 \le \delta.\]
If we define 
$R:= 1+ \frac{1}{\delta} + \frac{2}{\eps} \left (\max_{\sfX_2} \sfH_2 + \max_{\sfX_1} \sfH_1+ 1 \right)$, 
then, for every $\alpha > R$, we have
\begin{align*}
    \sfH([x_1, \alpha], [x_2, 1]) - \alpha \varphi_1(x_1) &= \sfH([x_1, \alpha], [x_2, 1]) - \sfH_1(x_1) \alpha + \alpha \left (\sfH_1(x_1)-\varphi_1(x_1) \right ) \\
    &= \alpha \left ( \sfH([x_1, 1], [x_2, 1/\alpha])-\sfH_1(x_1) \right ) + \alpha \left (\sfH_1(x_1)-\varphi_1(x_1) \right ) \\
    &\ge \alpha \frac{\eps}{2} \\
    & \ge \sfH_2(x_2)+1.
\end{align*}
Thus, for every $x_2 \in \sfX_2$, we get
\begin{align*}
 \inf_{x_1 \in \sfX_1} \inf_{\alpha > R} \left \{ \sfH([x_1, \alpha], [x_2, 1]) - \alpha \varphi_1(x_1) \right \} &\ge \sfH_2(x_2)+1 \\
 &> \inf_{x_1 \in \sfX_1} \inf_{0 \le \alpha \le  R} \left \{ \sfH([x_1, \alpha], [x_2, 1]) - \alpha \varphi_1(x_1) \right \}
\end{align*}
and this proves \eqref{ss22:eq:varphihr1}. The proof of \eqref{ss22:eq:varphihr2} is analogous.\\
The fact that $\varphi_1^\sfH \ge \varphi_2$, $\varphi_1^{\sfH \sfH} \ge \varphi_1$ and 
\[ \varphi_1^{\sfH \sfH}(x_1) r_1 + \varphi_1^{\sfH}(x_2) r_2 \le \sfH([x_1, r_1], [x_2, r_2]) \quad \text{ for every } (x_1, x_2) \in \sfX_1 \times \sfX_2, \, r_1, r_2 \ge 0\]
follows by the definition of $\varphi_1^\sfH$ and $\varphi_1^{\sfH \sfH}$. It is then clear that $\varphi_1^\sfH$ (resp.~$\varphi_1^{\sfH \sfH}$) is bounded from below by $\min_{x_2 \in \sfX_2} \varphi_2$ (resp.~$\min_{x_1 \in \sfX_1} \varphi_1)$ and by above by $\max_{x_2 \in \sfX_2}\sfH_2$ (resp.~$\max_{x_1 \in \sfX_2}\sfH_1$). Let $\sfd_i$ be distances metrizing $\sfX_i$, $i=1,2$; let now $x_2, x_2' \in \sfX_2$, then, recalling \eqref{ss22:eq:prcdist}, we have
\begin{equation} \label{ss22:eq:uniformcont}
\begin{split}
    \left |\varphi_1^{\sfH}(x_2)- \varphi_1^{\sfH}(x_2') \right | &\le \sup_{x_1 \in \sfX_1} \sup_{0 \le \alpha \le R} \left | \sfH([x_1, \alpha], [x_2, 1])- \sfH([x_1, \alpha], [x_2', 1]) \right |\\
    & \le \omega_\sfH^R \left ( (\sfd_1 \otimes_{\f{C}} \sfd_2) \left (([x_1, \alpha], [x_2, 1]), ([x_1, \alpha], [x_2', 1]) \right ) \right ) \\
    &= \omega_\sfH^R ( \sfd_{2, \f{C}}([x_2,1], [x_2',1]))\\
    &\le \omega_\sfH^R(\sfd_2(x_2, x_2')), 
\end{split}
\end{equation}
where $\omega_\sfH^R$ is the (uniform) modulus of continuity of $\sfH$ on $\pcr{R}$ and we have used that $\sfd_{2, \f{C}}([x_2,1], [x_2',1])) \le \sfd_2(x_2, x_2')$ (see \eqref{ss22:eq:distcone}, \eqref{ss22:eq:prcdist} and formula (7.5) in \cite{LMS18}). The analogous statement for $\varphi_1^{\sfH \sfH}$ follows by the same strategy. This proves that $\varphi_1^\sfH$ and $\varphi_1^{\sfH \sfH}$ are uniformly continuous with the same (uniform) modulus of continuity of $\sfH$ on $\pcr{R}$ and concludes the proof that $(\varphi_1^{\sfH\sfH}, \varphi_1^{\sfH}) \in \Phi_\sfH$. Let $\mathcal{U}=\{ x_1^n, x_2^n, r_n \}_{n=1}^N$ be as in Lemma \ref{le:balls} for the distances $\sfd_i$ and $\Omega=\Omega_\sfH$; we define (recalling Lemma \ref{ss22:le:balls})
\[M:= \frac{\kappa_1+\kappa_2+1}{m(\mathcal{U}, \mu_1, \mu_2)} +\omega_\sfH^R(\diam{\sfX_1}) + \omega_\sfH^R(\diam{\sfX_2}) + \max_{x_1 \in \sfX_1} \sfH_1 + \max_{x_2 \in \sfX_2}\sfH_2,\]
we have that $\varphi_1^\sfH \le \sfH_2 \le M$ and, by \eqref{ss22:eq:uniformcont}, we get
\[ \varphi_1^\sfH(x_2) \ge \varphi_1^\sfH (x_2')- \omega_\sfH^R(\sfd_2(x_2, x_2')) \ge - \frac{\kappa_1+\kappa_2+1}{m(\mathcal{U}, \mu_1, \mu_2)} - \omega_\sfH^R(\diam{\sfX_2}) \ge -M \quad \text{ for every } x_2 \in \sfX_2,
\]
where $x_2' \in \sfX_2$ is some point where $\varphi_1^\sfH$ is larger than $-\frac{\kappa_1+\kappa_2+1}{m(\mathcal{U}, \mu_1, \mu_2)}$ (whose existence is given by Lemma \ref{ss22:le:balls}). The proof for $\varphi_1^{\sfH \sfH}$ is the same.
\end{proof}

With the result of Proposition \ref{ss22:prop:propertiesphih} it is straightforward to obtain the existence of a maximizing pair.
\begin{theorem}[Existence of optimal continuous potentials] \label{ss22:theo:potexist} Assume that $\sfH$ is as in \eqref{ass:1} and that $\mu_i \in \meas_+(\sfX_i)$ are such that $\supp{\mu_i} = \sfX_i$. Then there exists $(\varphi_1, \varphi_2) \in \Phi_\sfH$ such that
\[ \int_{\sfX_1} \varphi_1 \de \mu_1 + \int_{\sfX_2} \varphi_2 \de \mu_2 = \m_\sfH(\mu_1, \mu_2). \]
\end{theorem}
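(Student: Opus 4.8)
The plan is to derive the statement from the duality formula of Theorem~\ref{ss22:teo:duality} by a direct-method (Arzel\`a--Ascoli) argument in $\rmC(\sfX_1)\times \rmC(\sfX_2)$, using the a priori bounds on the $\sfH$-transforms established in Proposition~\ref{ss22:prop:propertiesphih}. First I would record that $\m_\sfH(\mu_1,\mu_2)$ is finite and nonnegative: since $\sfH\le \sfH_1+\sfH_2$, the coupling in $\f H^1(\mu_1,\mu_2)$ sending all the mass of $\mu_1$ to the vertex $\fo_2$ and creating all the mass of $\mu_2$ from $\fo_1$ has $\sfH$-cost $\kappa_1+\kappa_2<+\infty$ (recall \eqref{ss22:eq:bdd}), so $\m_\sfH(\mu_1,\mu_2)\le \kappa_1+\kappa_2$; on the other hand $(0,0)\in\Phi_\sfH$ because $\sfH\ge 0$, so by Theorem~\ref{ss22:teo:duality} the supremum defining $\m_\sfH(\mu_1,\mu_2)$ is a finite number $\ge 0$. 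I then pick a maximizing sequence $(\varphi_1^j,\varphi_2^j)\in\Phi_\sfH$ with $\dual(\varphi_1^j,\varphi_2^j;\mu_1,\mu_2)\to \m_\sfH(\mu_1,\mu_2)$, and (discarding finitely many terms, or using the constant sequence $(0,0)$ if the supremum equals $0$) I may assume $\dual(\varphi_1^j,\varphi_2^j;\mu_1,\mu_2)\ge 0$ for all $j$.

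Next I would replace each pair by its double $\sfH$-transform $(\psi_1^j,\psi_2^j):=(\varphi_1^{j,\sfH\sfH},\varphi_1^{j,\sfH})$ from Definition~\ref{def:htr}. The hypotheses of Proposition~\ref{ss22:prop:propertiesphih} hold here (we use $\supp{\mu_i}=\sfX_i$ and $\dual(\varphi_1^j,\varphi_2^j;\mu_1,\mu_2)\ge 0$), so $(\psi_1^j,\psi_2^j)\in\Phi_\sfH$ with $\psi_1^j\ge \varphi_1^j$ and $\psi_2^j\ge\varphi_2^j$; since $\mu_i\ge 0$ this gives $\dual(\psi_1^j,\psi_2^j;\mu_1,\mu_2)\ge \dual(\varphi_1^j,\varphi_2^j;\mu_1,\mu_2)$, so $(\psi_1^j,\psi_2^j)$ is still a maximizing sequence. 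Crucially, the same proposition yields constants $R>1$, $M>0$ independent of $j$ with $\|\psi_1^j\|_\infty,\|\psi_2^j\|_\infty\le M$, and asserts that $\psi_1^j$ and $\psi_2^j$ inherit the uniform modulus of continuity $\omega_\sfH^R$ of $\sfH$ on $\pcr{R}$.

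Finally, since $\sfX_1$ and $\sfX_2$ are compact metric spaces, the families $\{\psi_1^j\}\subset \rmC(\sfX_1)$ and $\{\psi_2^j\}\subset \rmC(\sfX_2)$ are equibounded and equicontinuous, hence relatively compact for the uniform norm by Arzel\`a--Ascoli. Passing to a subsequence, $\psi_i^j\to\varphi_i$ uniformly with $\varphi_i\in\rmC(\sfX_i)=\rmC_b(\sfX_i)$. The constraint defining $\Phi_\sfH$ is stable under pointwise limits, so $(\varphi_1,\varphi_2)\in\Phi_\sfH$; and uniform convergence on the compact sets $\sfX_i$ gives $\dual(\varphi_1,\varphi_2;\mu_1,\mu_2)=\lim_j\dual(\psi_1^j,\psi_2^j;\mu_1,\mu_2)=\m_\sfH(\mu_1,\mu_2)$. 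Combined with the inequality $\dual\le\m_\sfH$ valid on all of $\Phi_\sfH$ by Theorem~\ref{ss22:teo:duality}, this shows that $(\varphi_1,\varphi_2)$ is an optimal pair. The only genuinely delicate ingredient — the equicontinuity and uniform bound on the transformed potentials — is already isolated in Proposition~\ref{ss22:prop:propertiesphih}, where it relies on the singular-slope condition \eqref{ass:1} (through Lemmas~\ref{ss22:le:balls} and \ref{ss22:le:staccato}) allowing the mass variable in the $\sfH$-transforms to be truncated to the compact range $[0,R]$; granting that, the present argument is a routine compactness extraction.
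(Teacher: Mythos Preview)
Your proposal is correct and follows essentially the same approach as the paper: pick a maximizing sequence in $\Phi_\sfH$ with nonnegative dual value, replace it by the sequence of $\sfH$-transforms $(\varphi_1^{j,\sfH\sfH},\varphi_1^{j,\sfH})$, invoke Proposition~\ref{ss22:prop:propertiesphih} for equiboundedness and equicontinuity, and extract a uniformly convergent subsequence via Arzel\`a--Ascoli. The only cosmetic difference is that you spell out the finiteness of $\m_\sfH(\mu_1,\mu_2)$ and the stability of the constraint under pointwise limits, whereas the paper handles the degenerate case $\m_\sfH(\mu_1,\mu_2)=0$ separately and passes to the limit by dominated convergence.
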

\begin{proof} If $\m_\sfH(\mu_1, \mu_2)=0$, we can take $\varphi_1$ and $\varphi_2$ to be the null functions. We thus assume that $\m_\sfH(\mu_1, \mu_2) >0$. If this is the case, we can find a maximizing sequence $(\varphi_1^j, \varphi_2^j)_j \subset \Phi_{\sfH}$ for the dual problem \eqref{ss22:eq:dualformulation} with 
\[ \int_{\sfX_1} \varphi_1^j \de \mu_1 + \int_{\sfX_2} \varphi_2^j \de \mu_2 \ge 0 \quad \text{ for every } j \in \N.\]
If we consider distances $\sfd_i$ metrizing $\sfX_i$, by Proposition \ref{ss22:prop:propertiesphih} we have that $(\varphi_1^{j,\sfH \sfH}, \varphi_1^{j, \sfH})_j \subset \Phi_\sfH$ is a maximizing sequence of equi-uniformly continuous and equi-bounded functions. By Arzelà–Ascoli theorem, we can assume, up to passing to a subsequence, that there exists a pair $(\varphi_1, \varphi_2) \in \Phi_\sfH$ such that $(\varphi_1^{j,\sfH \sfH}, \varphi_1^{j,\sfH}) \to (\varphi_1, \varphi_2)$ uniformly on the compact space $\sfX_1 \times \sfX_2$. By dominated convergence, we have
\[ \int_{\sfX_1} \varphi_1 \de \mu_1 + \int_{\sfX_2} \varphi_2 \de
  \mu_2 = \lim_j \left ( \int_{\sfX_1} \varphi_1^{j, \sfH \sfH} \de
    \mu_1 + \int_{\sfX_2} \varphi_1^{j, \sfH} \de \mu_2 \right ) =
  \m_\sfH(\mu_1, \mu_2).
\qedhere\]
\end{proof}

\section{Optimality conditions}\label{sec:6}
In this section we provide sufficient and necessary conditions for a plan $\aalpha \in \f{H}^1(\mu_1, \mu_2)$ to be optimal. In the following $\sfX_1$ and $\sfX_2$ are completely regular spaces and we will often assume that
\begin{equation}\label{ass:3}
\sfH: \pc \to [0, + \infty] \text{ is a proper, radially $1$-homogeneous, convex and l.s.c.~function}.
\end{equation}

The cyclical monotonicity of the support of an admissible plan plays a crucial role also in the unbalanced setting. 
We recall here the important definition of 
cyclical monotonicity of a given set with respect to 
a cost $\sfH$. 
\begin{definition}[$\sfH$-cyclical monotonicity]
\label{def:cmon}
Let $\Gamma \subset \pc$ and let $\sfH: \pc \to [0,+\infty]$; we say that $\Gamma$ is $\sfH$-cyclically monotone if for every finite family of points $\{(\f{y}_1^i, \f{y}_2^i)\}_{i=1}^N \subset \Gamma$  and every permutation $\sigma$ of $\{1, \dots, N\}$ it holds
\[ \sum_{i=1}^N \sfH(\f{y}_1^i, \f{y}_2^i) \le \sum_{i=1}^N \sfH(\f{y}_1^i, \f{y}_2^{\sigma(i)}).\]
\end{definition}
\noindent
The following result shows that optimal $1$-homogeneous couplings
are concentrated on a \emph{$\sfH$-cyclically monotone} set $\Gamma$
which is also \emph{a radial convex cone}.
It extends 
to the unbalanced optimal transport setting
the classical \cite[Necessity part of Theorem 6.14]{AGS08}.
\begin{proposition}[Necessity of cyclical
  monotonicity]\label{prop:necee} Let $\sfH$ be as in \eqref{ass:3},
  let $\mu_i \in \meas_+(\sfX_i)$ for $i=1,2$, let $\aalpha \in
  \f{H}^1_o(\mu_1, \mu_2)$ be optimal and suppose that $\int_{\pc}
  \sfH \de \aalpha < + \infty$. Then $\aalpha$ is concentrated on a $\sigma$-compact (thus Borel) 
  radial convex cone $\Gamma \subset \pc$ which is $\sfH$-cyclically
  monotone.
\end{proposition}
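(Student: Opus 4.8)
The plan is to adapt the classical strategy for the necessity of cyclical monotonicity to the conical setting, combining it with the radial convexity and homogeneity of $\sfH$ to upgrade the set to a radial convex cone.

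\medskip\noindent\textbf{Step 1: $\sfH$-cyclical monotonicity of a $\sigma$-compact carrier.}
First I would recall that, by the Radon property and Theorem~\ref{ss22:prop:mh}(1), the optimal plan $\aalpha$ (which we may assume not to charge $\bfo$ and to be supported in $\pcr{R}$ with $R=R(\mu_1,\mu_2)$) is concentrated on a $\sigma$-compact set. Next I would prove, following the classical argument (e.g.~the necessity part of \cite[Theorem 6.14]{AGS08}), that $\aalpha$ is concentrated on an $\sfH$-cyclically monotone Borel set. The key point here is the measurability/selection argument: for each $N$ and each permutation $\sigma$ of $\{1,\dots,N\}$, the set of ``bad'' $N$-tuples in $(\pc)^N$ where the cyclical inequality fails is Borel (since $\sfH$ is l.s.c.) and is $\aalpha^{\otimes N}$-negligible by optimality — otherwise one could cut out a small piece of $\aalpha$ along each of the $N$ fibers, permute the second coordinates via $\sigma$, and strictly decrease the cost while keeping the homogeneous marginals fixed (here radial $1$-homogeneity is exactly what guarantees the surgery preserves $\f h^1_i$, because we move entire fibers $[x_i,r_i]$ rigidly). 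Intersecting over the countably many $(N,\sigma)$ and then intersecting with the $\sigma$-compact carrier from Theorem~\ref{ss22:prop:mh}(1), we obtain a $\sigma$-compact $\sfH$-cyclically monotone set $\Gamma_0$ on which $\aalpha$ is concentrated. One subtlety to address: the ``bad'' tuples argument needs $\int_{\pc}\sfH\de\aalpha<+\infty$ (assumed) so that the cost decrement is well-defined and finite; this is why the hypothesis appears in the statement.

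\medskip\noindent\textbf{Step 2: taking the radial convex conical envelope.}
Having $\Gamma_0$, I would set $\Gamma := \cce{\hmg{\Gamma_0}}$, or more precisely the radial convex cone generated by $\Gamma_0$ described explicitly via Carath\'eodory in $\R^2$ as in Remark~\ref{rem:non-si-finisce-mai-di-dimenticare}: points $\big([x_1,\sum_{i=0}^2\lambda_i r_1^i],[x_2,\sum_{i=0}^2\lambda_i r_2^i]\big)$ for $([x_1,r_1^i],[x_2,r_2^i])\in\Gamma_0$, $\lambda_i\ge 0$ (no constraint on $\sum\lambda_i$, to also get the cone property). Two things must be checked: (a) $\Gamma$ is still $\sfH$-cyclically monotone, and (b) $\aalpha$ is still concentrated on $\Gamma$, which is immediate since $\Gamma_0\subset\Gamma$ (and $\Gamma$ can be taken $\sigma$-compact because it is a continuous image of $\sigma$-compact sets: $\Gamma_0^3\times[0,+\infty)^3$ maps onto it via the continuous cone operations). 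For (a), I would use that for fixed $(x_1,x_2)$ the section $\Gamma_{x_1,x_2}$ is the convex cone generated by $(\Gamma_0)_{x_1,x_2}$ in $\R^2_+$, together with the radial $1$-homogeneity and radial convexity of $\sfH$: the function $(r_1,r_2)\mapsto\sfH([x_1,r_1],[x_2,r_2])$ is sublinear, so the cyclical monotonicity inequality, which is an inequality between sums of values of sublinear functions, passes from the generators to nonnegative combinations and sums. Concretely, any finite family of points of $\Gamma$ is obtained from a finite family of points of $\Gamma_0$ by nonnegative scalings and additions within fibers; writing the cyclical inequality for $\Gamma$ and expanding via sublinearity reduces it to (a sum of) cyclical inequalities for $\Gamma_0$.

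\medskip\noindent\textbf{Main obstacle.}
I expect the delicate part to be precisely the stability of $\sfH$-cyclical monotonicity under passing to the radial convex conical envelope (Step 2(a)): one must be careful that permutations act on the second components $\f y_2$ while the convex-cone operation mixes radial coordinates, so the bookkeeping of which scalars multiply which fiber in $\sum_i\sfH(\f y_1^i,\f y_2^{\sigma(i)})$ versus $\sum_i\sfH(\f y_1^i,\f y_2^i)$ must be done cleanly; the inequality $\sfH([x_1,r_1+r_1'],[x_2,r_2+r_2'])\le \sfH([x_1,r_1],[x_2,r_2])+\sfH([x_1,r_1'],[x_2,r_2'])$ (subadditivity, from $1$-homogeneity $+$ convexity) is the workhorse. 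A second, more routine, technical point is the measurable-selection/negligibility argument in Step 1, which I would handle exactly as in the balanced case since $\sfH$ being l.s.c.\ on a metrizable (indeed, we may work on $\pcr R$ which is metrizable when the $\sfX_i$ are, but in the general completely regular case one instead argues via the duality/potentials — however, since the statement only requires \eqref{ass:3}, I would use the standard approximation of $\aalpha$ by plans with finitely many values of the second marginal, or simply cite the classical proof verbatim as the argument only uses l.s.c.\ and the linear structure of the objective and constraints).
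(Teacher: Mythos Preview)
Your Step~2 contains a genuine gap: $\sfH$-cyclical monotonicity is \emph{not} in general preserved under passing to the radial (convex) cone envelope, even for $\sfH$ satisfying \eqref{ass:3}. Take $\sfX_1=\{A,C\}$, $\sfX_2=\{B,D\}$ and $\sfH([x_1,r_1],[x_2,r_2])=r_1+r_2-2c(x_1,x_2)\sqrt{r_1r_2}$ with $c(A,B)=1$, $c(C,D)=0$, $c(A,D)=c(C,B)=\tfrac12$ (continuous, radially $1$-homogeneous and convex, and well-defined on the cone since all fibers agree on the axes). Then $\Gamma_0=\{([A,1],[B,1]),([C,1],[D,1])\}$ is $\sfH$-cyclically monotone (for any repetition pattern the permuted sums all equal $2$), yet the pair $([A,\tfrac14],[B,\tfrac14])$, $([C,1],[D,1])$ in $\hmg{\Gamma_0}$ violates it: the identity gives $0+2=2$ while the swap gives $\tfrac34+\tfrac34=\tfrac32<2$. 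Your proposed ``sublinearity bookkeeping'' cannot close this, because sublinearity only bounds $\sfH$ at a radial combination from \emph{above}, whereas what is needed is a \emph{lower} bound on the permuted side $\sum_i\sfH(\f y_1^i,\f y_2^{\sigma(i)})$.

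The paper's proof avoids this trap by using duality (Theorem~\ref{ss22:teo:duality}) rather than surgery. A maximizing sequence $(\varphi_1^k,\varphi_2^k)\in\Phi_\sfH$ yields nonnegative, radially sublinear residuals $\sfH_k:=\sfH-\varphi_1^k(x_1)r_1-\varphi_2^k(x_2)r_2$ with $\int\sfH_k\,\de\aalpha\to 0$; a subsequence converges to $0$ on a $\sigma$-compact carrier $G$ of $\aalpha$. The crucial point is that the property ``$\sfH_k\to 0$'' (not bare cyclical monotonicity) \emph{does} pass to $\Gamma=\hmg{\ce G}$, since $0\le\sfH_k(\text{combination})\le\sum_j\lambda_j\,\sfH_k(\text{generators})\to 0$. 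Cyclical monotonicity of $\Gamma$ then follows in one line from the global linear lower bound $\sfH\ge\varphi_1^k r_1+\varphi_2^k r_2$, which is invariant under permuting the second components. So duality is not a stylistic shortcut here: it supplies exactly the extra structure (approximate potentials) that raw cyclical monotonicity of $\Gamma_0$ lacks --- indeed, in the counterexample above $\Gamma_0$ is cyclically monotone but the plan it carries is \emph{not} optimal for its homogeneous marginals (the optimal value is $\tfrac32$, not $2$).
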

\begin{proof} Let $\{ (\varphi_1^k, \varphi_2^k) \}_{k \ge 1} \subset \Phi_\sfH$ be a maximizing sequence for the dual problem \eqref{ss22:eq:dualformulation} and let us define
\[ \sfH_k([x_1, r_1], [x_2, r_2]) := \sfH([x_1, r_1], [x_2, r_2]) - \varphi_1^k(x_1)r_1 - \varphi_2^k(x_2)r_2, \quad ([x_1, r_1], [x_2, r_2 ]) \in \pc.\] 
Since 
\[\lim_{k\to\infty}\int \sfH_k([x_1, r_1], [x_2, r_2])\,\de\aalpha=0,\]
there exist a subsequence $m \mapsto k(m)$ and a $\sigma$-compact (thus Borel) subset $G \subset \pc$ on which $\aalpha$ is concentrated
s.t. $\sfH_{k(m)} \to 0$ on $G$ as $m \to + \infty$. Since
$\sfH_{k(m)}$ is a radially convex and $1$-homogeneous function,
this convergence takes place also on the
radial convex cone $\Gamma:=\hmg{\ce G}$ generated by $G$:
\[  \Gamma   :=
  \left \{ \Big(\big [x_1, \sum_{i=0}^2\lambda_ir_1^i], [x_2,
    \sum_{i=0}^2\lambda_ir_2^i]) \mid ([x_1,r_1^i],[x_2, r_2^i]) \in \Gamma,\
    \lambda_i\ge0\Big)
  \right \}.\]
 Writing $G$ as a countable union of an increasing sequence
of compact sets $K_n\subset \pc$,
it is easy to see that $ \Gamma =\cup_{n\in \N} \hat K_n$, where
$\hat K_n:=\hmg{\ce{K_n}}$ and it is not restrictive to assume that
$(\f o_1, \f o_2)\in K_n$ . Since each $\ce{K_n}$ is clearly compact in $\pc$
and $\hmg{\ce{K_n}}=\cup_{m\in \N}\big\{(m \f y_1,m \f y_2): (\f y_1,\f
y_2)\in \ce{K_n}\big\} $ is the union of a countable family of compact
sets,
we conclude that $\Gamma$ is $\sigma$-compact as well. 
Let now $\{(\f{y}_1^i, \f{y}_2^i)\}_{i=1}^N \subset \Gamma$ be a
finite family of points and let $\sigma$ be a permutation of $\{1,
\dots, N\}$.
Then
\begin{align*}
  \sum_{i=1}^N \sfH(\f{y}_1^i, \f{y}_2^{\sigma(i)})
  &\ge \sum_{i=1}^N
    \left (
    r_1^i
    \varphi_1^{k(m)}(x_1^i)
    +
    r_2^{\sigma(i)}
    \varphi_2^{k(m)}(x_2^{\sigma(i)})
    \right )
  \\
  &= \sum_{i=1}^N \left (
    r_1^i \varphi_1^{k(m)}(x_1^i)
    +
    r_2^{i} \varphi_2^{k(m)}(x_2^{i})  \right )
  \\ &= \sum_{i=1}^N\left ( \sfH(\f{y}_1^i, \f{y}_2^i) -
       \sfH_{k(m)}(\f{y}_1^i, \f{y}_2^i) \right ).
\end{align*}
Letting $m \to + \infty$, we obtain the sought $\sfH$-cyclical
monotonicity of $\Gamma$.
\end{proof}

\begin{remark}
   The previous proof shows that
  the radial convex cone $\hmg{\ce G}$  generated by a $\sigma$-compact set
  $G\subset \pc$
  is $\sigma$-compact.
  In particular, for a finite Radon measure $\aalpha\in \meas_+(\pc)$
  the following properties are equivalent:
  \begin{enumerate}[(i)]
  \item $\aalpha$ is concentrated on a Borel set $G$ such that
    the generated radial convex cone  $\hmg{\ce G}$  is $\sfH$-cyclically
    monotone;
  \item $\aalpha$ is concentrated on a $\sigma$-compact radial convex
    cone $\Gamma$ which is $\sfH$-cyclically
    monotone.
  \end{enumerate}
\end{remark}

We devote the remaining part of this section to formulate a converse
statement to Proposition \ref{prop:necee}.
We first introduce a few notions
related to the natural directed graph structures
induced by $\sfH$ and subsets $\Gamma$ of
$\pc$. A similar approach has already been considered when dealing with optimality conditions for possibly infinite costs in the classical Optimal Transport theory \cite{beiblo,biancara}.

\subsection{Simple directed graphs and oriented walks}
\label{subs:graph}

Recall that a directed graph $\mathcal G$ in a set $\mathsf Z$ is an ordered
pair $(V,\mathcal A)$ where $V\subset \mathsf Z$ and $\mathcal A$ is a set of ordered pairs in
$V\times V$.
A (oriented) $\mathcal A$-walk $P$ in $V$
is just a sequence of
elements $(\f y^0,\cdots,\f y^N)$ in $V^{N+1}$, $N\in \N_+$, such that
each pair of consecutive elements $(\f y^{h-1},\f y^{h})$ belongs to
$\mathcal A$, $h=1,\cdots, N$.
We denote by $\mathcal P(\f y'\to \f y''|V,\mathcal A)$ the
collections of $\mathcal A$-walks in $V$ whose first 
and last elements are $\f y'$ and $\f y''$ respectively.
We will omit to write $V$ when $V=\mathsf Z$ and we will also omit to
write $\mathcal A$ if the set of arcs is clear from the context.
When $\f y'=\f y''$ then we say that $\mathcal P$ is a cycle.

If $P'\in \mathcal P(\fy^1\to\fy^2)$ and
$P''\in \mathcal P(\fy^2\to\fy^3)$ 
we can construct a new walk $P=P'+P''
\in \mathcal P(\fy^1\to\fy^3)$ by joining $P'$ with $P''$.

We say that a (ordered) pair of points $(\f y',\f y'')\in \mathsf Z$
is $\mathcal A$-connected if $\mathcal P(\f y'\to\f y'')$ is not empty. 
A directed graph $(V,\mathcal A)$ is connected if every pair of
points in $V$ is $\mathcal A$-connected.

Given a function $ \sfH:\pc\to [0,+\infty]$
and a set
$\Gamma\subset \rmD(\sfH)\subset \pc$,
we construct a directed (bipartite) graph $\mathcal G_{\sfH,\Gamma}$
whose vertices belong to 
\[
  \mathsf Z=\mathsf Z[\sfX_1,\sfX_2]:=\f
  C[\sfX_1]\sqcup \f C[\sfX_2],
  \quad\text{the disjoint union of
$\f
C[\sfX_1]$ and $\f C[\sfX_2]$.}
\]

We first consider the set of
arcs $\mathcal A_\sfH$ consisting of all
the (ordered) pairs $(\f y_2,\f y_1)\in \mathsf Z \times \mathsf Z  $ such that $\f y_i\in \f C[\sfX_i]$ and
$\sfH(\f y_1,\f y_2)<+\infty$.
We can also easily identify $\Gamma$ with a set of arcs, i.e.~all
the (ordered) pairs $(\f y_1,\f y_2)\in \mathsf Z \times \mathsf Z  $ such that
$(\f y_1,\f y_2)\in \Gamma$
(notice that $\Gamma$ can be canonically identified
with a subset of $\mathsf Z\times \mathsf Z$).

We eventually set
\[
  \mathcal A_{\sfH,\Gamma}:=\mathcal A_\sfH\cup \Gamma.
\]
When $\sfH$ is the null function $0$ (or any finite function)
we have $\mathcal A_0=\f C[\sfX_2]\times \f C[\sfX_1]$ and
\[
  \mathcal A_{0,\Gamma}=\big(\f C[\sfX_2]\times \f C[\sfX_1]\big)\cup \Gamma.
\]
Recalling that $\pi^i:\pc\to\f C[\sfX_i]$ are the canonical
projections, we also consider the subsets
\begin{equation}
    \label{eq:projectionG}
  \Gamma_i:=\pi^i(\Gamma)\subset \f C[\sfX_i],\quad
  V_\Gamma :=\piGamma1\sqcup
  \piGamma2\subset \mathsf Z;
\end{equation}
$V_\Gamma$ is the collection of all the
vertices obtained by applying to $\Gamma$
the two projections on
$\f C[\sfX_i]$, $i=1,2$, canonically identified with the corresponding 
subsets of $\mathsf Z$. We set
\[ \arc_{\sfH, \Gamma} := \mathcal A_{\sfH,\Gamma} \cap (V_\Gamma \times V_\Gamma).\]

Notice that if the initial and final points $\f y^0, \f y^{N}$ of a
$\mathcal A_{\sfH,\Gamma}$-walk $P$ belong to
$\f C[\sfX_1]$ we will have 
\begin{equation}
  \label{eq:16}
  \begin{gathered}
    N=2n\text{ even},\quad
    \f y_1^k:=\f y^{2k}\in \f C[\sfX_1]\quad\text{for $0\le k\le n$,}\quad
    \f y_2^k:=\f y^{2k+1}\in \f C[\sfX_2]\quad\text{for $0\le k<n$},\\
    (\f y^{2k},\f y^{2k+1})=(\f y_1^k,\f y_2^k)\in \Gamma,\quad
    \sfH(\f y^{2k+2},\f y^{2k+1})= \sfH(\f y_1^{k+1},\f
    y_2^k)<+\infty,\\
    \fy^{2k}=\fy_1^k\in \piGamma1,\  \fy^{2k+1}=\fy_2^k\in
    \piGamma2\quad\text{for every }k=0,\cdots,n-1.
  \end{gathered}
\end{equation}
In particular, if also $\fy^N$ belongs to $\piGamma1$ then $P\in
\mathcal P(\fy^0\to\fy^N|V_\Gamma,\mathcal \arc_{\sfH,\Gamma})$.  More
generally,
if the initial and final points of a $\mathcal A_{\sfH,\Gamma}$-walk
$P$ belong to $V_\Gamma$ then all the points of $P$ belong to $V_\Gamma$.

\begin{definition}[$\sfH$-connectedness]
  We say that a set $\Gamma\subset \rmD(\sfH)$ is $\sfH$-connected if
  the graph $(V_\Gamma,\mathcal \arc_{\sfH,\Gamma})$ 
  is connected or,
  equivalently, if every pair of points of $\Gamma_1$ is
  $\mathcal A_{\sfH,\Gamma}$-connected.
\end{definition}

 Let us now define the
``oriented'' cost function
$\check\sfH:\mathsf Z\times \mathsf Z\to
\bar \R$ starting from a cost function $\sfH$ as in \eqref{ass:3}: 
\[
  \check\sfH(\f y',\f y''):=
  \begin{cases}
    -\sfH(\f y',\f y'')&\text{if }\f y'\in \f C[\sfX_1],\ \f y''\in \f
    C[\sfX_2],\\
    +\sfH(\f y'',\f y')&\text{if }\f y''\in \f C[\sfX_1],\ \f y'\in \f
    C[\sfX_2],\\
    0&\text{otherwise}.
  \end{cases}
\]
The cost $\Theta(P)$ of a walk $P=(\f y^0,\cdots,\f y^N)$ in $(\mathsf
Z,\mathcal A_{0,\Gamma})$ with $\Gamma \subset \rmD(\sfH)$  is defined by
\[
  \Theta(P):=\sum_{h=1}^N \check\sfH(\f y^{h-1},\f y^h);
\]
notice that $\Theta$ is well defined and takes values in 
$\R\cup\{+\infty\}$ since $\Gamma\subset \rmD(\sfH)$, so that 
the negative contributions of the arcs to $\Theta(P)$ are always
finite. It is also easy to check that 
\begin{equation}
  \label{eq:33}
  \text{$\Theta(P)$ is finite if and only if $P$ is a $\mathcal A_{\sfH,\Gamma} $-walk}.
\end{equation}
In the case of \eqref{eq:16} we can equivalently write
\[
  \Theta(P)= \sum_{ k=0}^{ n-1} \Big(\sfH(\f y_1^{k+1},\f y_2^k)-\sfH(\f y_1^{k},\f y_2^k)\Big).
\]
 Clearly, $\Gamma \subset \rmD(\sfH)$ is $\sfH$-cyclically monotone according to Definition \ref{def:cmon} if and only if
\begin{equation}
\label{eq:cyclic-standard}
\text{for every cycle $P\in \mathcal P(\f y\to \f y)$ in $(\mathsf Z,\mathcal A_{0,\Gamma})$
  we have }\quad
\Theta(P)\ge 0.
\end{equation}
Recalling \eqref{eq:33} it is immediate to see that it is sufficient
to check
condition \eqref{eq:cyclic-standard} on $\mathcal
A_{\sfH,\Gamma}$-cycles in $V_\Gamma$.
 We are going to use the following notation: if   $P=(\fy^0,\cdots, \fy^N)$  and  
$A=(\fy^k,\fy^{k+1})$ is an internal arc for some $0< k<N-1$, we can ``remove'' the arc
$A$ from $P$ obtaining a new walk $P^{-k}$ 
by setting $P^{-k}:=(\fy^0, \cdots, \fy^{k-1}, \fy^{k+1}, \fy^N)$.

\begin{lemma}[The effect of removing the vertex $(\fo_1,\fo_2)$]
  \label{le:remove}

  Let $\sfH$ be satisfying the standard assumption
   \eqref{ass:3}
  and let $\Gamma\subset \rmD(\sfH)$ be a radial cone.
  
  If $P$ is a $\mathcal A_{\sfH,\Gamma}$-walk 
  and $P_\fo$ is the walk obtained removing from $P$ all the internal arcs of the form $(\fo_1,\fo_2)$, then 
  \begin{equation}
    \label{eq:34}
    P_\fo\text{ is a $\mathcal A_{\sfH,\Gamma}$-walk and}\quad 
    \Theta(P_\fo)\le \Theta(P).
  \end{equation}
\end{lemma}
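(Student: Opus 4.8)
The plan is to prove Lemma \ref{le:remove} by reducing to the case of removing a single internal arc $(\fo_1,\fo_2)$ and then iterating. Since $P$ is a $\mathcal A_{\sfH,\Gamma}$-walk, by \eqref{eq:33} its cost $\Theta(P)$ is finite, and by the bipartite structure recorded in \eqref{eq:16} the vertices of $P$ alternate between $\f C[\sfX_1]$ and $\f C[\sfX_2]$: each arc is either a $\Gamma$-arc going from $\f C[\sfX_1]$ to $\f C[\sfX_2]$, or an $\mathcal A_\sfH$-arc going from $\f C[\sfX_2]$ to $\f C[\sfX_1]$. An internal occurrence of the vertex $(\fo_1,\fo_2)$ — which, as a point of $\mathsf Z$, may appear either as the vertex $\fo_1\in\f C[\sfX_1]$ or as $\fo_2\in\f C[\sfX_2]$ — sits between two consecutive arcs. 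The key observation is that $\Gamma$ is a radial cone and, by Remark \ref{rem:non-si-finisce-mai-di-dimenticare}, $\sfH$ being proper, l.s.c.~and radially $1$-homogeneous forces $\sfH(\fo_1,\fo_2)=0$ and $(\fo_1,\fo_2)\in\Gamma$ (since $\Gamma$ is a radial cone contained in $\rmD(\sfH)$, its sections are cones and hence contain the origin).

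\textbf{First step: removing one internal vertex $\fo$.} Suppose $\fy^k = \fo_i$ for some internal index $0<k<N$. There are two subcases according to whether $\fo$ appears on the $\f C[\sfX_1]$-side or the $\f C[\sfX_2]$-side. Consider, say, $\fy^{2k}=\fo_1\in\f C[\sfX_1]$ (the other case being symmetric): then the incoming arc is $(\fy_2^{k-1},\fo_1)\in\mathcal A_\sfH$, i.e.~$\sfH(\fo_1,\fy_2^{k-1})<+\infty$, and the outgoing arc is $(\fo_1,\fy_2^k)\in\Gamma$. Removing $\fy^{2k}$ produces the pair $(\fy_2^{k-1},\fy_2^k)$, which is a pair of vertices both in $\f C[\sfX_2]$, hence (with the convention $\check\sfH=0$ on such pairs, which corresponds to the arc set $\mathcal A_{0,\Gamma}$) contributes $0$ to the cost. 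But one must check this is still a legitimate $\mathcal A_{\sfH,\Gamma}$-walk: here I use that $\Gamma$ is a radial cone, so $(\fo_1,\fy_2^k) = 0\cdot(\fy_1^{k},\fy_2^k)$ being in $\Gamma$ together with — actually the cleaner route is: after removing a same-side consecutive pair, one must re-examine the adjacent arcs. The honest bookkeeping is: removing an internal $\fo$-vertex merges two arcs into a "zero arc" between two same-side vertices; to get back a genuine alternating bipartite walk one removes that zero arc as well, thereby splicing $\fy_2^{k-1}$ directly... this does not immediately work since they are on the same side. The correct mechanism, which I will carry out, is that an internal vertex equal to $(\fo_1,\fo_2)$ in a $\mathcal A_{\sfH,\Gamma}$-walk is always flanked by arcs whose \emph{other} endpoints can be joined: the contribution of the two flanking arcs to $\Theta$ is $\check\sfH(\fy^{k-1},\fo)+\check\sfH(\fo,\fy^{k+1})$, and since one of these equals $\pm\sfH(\cdot,\fo_i)$ while the other equals $\mp\sfH(\fo_i,\cdot)$, and by $1$-homogeneity $\sfH([x,r],\fo_2)=r\,\sfH([x,1],\fo_2)\ge 0$ etc., I will show $\check\sfH(\fy^{k-1},\fo)+\check\sfH(\fo,\fy^{k+1})\ge \check\sfH(\fy^{k-1},\fy^{k+1})$ whenever the latter is defined, and in the same-side case the removed cost is nonnegative so dropping it can only decrease $\Theta$.

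\textbf{Second step: iteration and finiteness.} Having shown that a single removal yields a walk $P'$ with $\Theta(P')\le\Theta(P)$, and that $P'$ is again a $\mathcal A_{\sfH,\Gamma}$-walk (using \eqref{eq:33}: $\Theta(P')$ is still finite since we only decreased a finite quantity and no new $+\infty$ arc was created — the arcs we do not touch keep their finiteness, and any newly created arc is a zero-cost same-side arc in $\mathcal A_{0,\Gamma}$), I iterate finitely many times, once for each internal occurrence of $(\fo_1,\fo_2)$, until none remains; the result is exactly $P_\fo$, and the inequalities chain to give $\Theta(P_\fo)\le\Theta(P)$.

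\textbf{The main obstacle} I anticipate is the case analysis at an internal $\fo$-vertex: one must treat separately whether $\fo$ appears as $\fo_1$ or $\fo_2$, and whether the flanking arcs are $\Gamma$-arcs or $\mathcal A_\sfH$-arcs, and then verify in each configuration that splicing past $\fo$ does not raise the cost and does not destroy the $\mathcal A_{\sfH,\Gamma}$-walk property. The technical heart is the pointwise inequality $\sfH([x,0],\fy_2)=0$ and $\sfH(\fy_1,[x,0])=0$ (forced by radial $1$-homogeneity via \eqref{eq:expand}, since the $r_1=r_2=0$ branch gives $0$, and e.g.~$\sfH_{x_1,x_2}(0,r_2)=r_2 h^\infty_{x_1,x_2}\ge 0$ with the relevant $x$-argument being the cone vertex so that effectively the value is controlled), together with the fact that a $\Gamma$-arc out of $\fo_1$ costs $-\sfH(\fo_1,\cdot)\le 0$ and an $\mathcal A_\sfH$-arc into $\fo_1$ costs $+\sfH(\fo_1,\cdot)\ge 0$, so the pair through $\fo_1$ contributes a difference that is bounded below by the corresponding direct contribution; making all these sign bookkeepings airtight, while keeping track of the alternating bipartite structure, is the delicate part, but each individual verification is elementary.
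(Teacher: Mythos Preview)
You have misread what is being removed. The lemma speaks of removing internal \emph{arcs} of the form $(\fo_1,\fo_2)$, i.e.\ consecutive pairs $\fy^k=\fo_1,\ \fy^{k+1}=\fo_2$; the operation replaces the three-arc segment $(\fy^{k-1},\fo_1,\fo_2,\fy^{k+2})$ by the single arc $(\fy^{k-1},\fy^{k+2})$, with $\fy^{k-1}\in\f C[\sfX_2]$ and $\fy^{k+2}\in\f C[\sfX_1]$. Your plan instead tries to remove a single vertex $\fo_i$, which leaves two same-side vertices adjacent; such a pair is \emph{not} an arc of $\mathcal A_{\sfH,\Gamma}$ (nor of $\mathcal A_{0,\Gamma}$, contrary to what you assert in Step~2), so the intermediate object is not a valid walk and the whole induction breaks. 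Your own text recognizes this (``this does not immediately work since they are on the same side'') but never recovers from it.

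The second, more serious gap is the key inequality. Your proposed ``technical heart'' $\sfH([x,0],\fy_2)=0$ is false: $\sfH(\fo_1,[x_2,r_2])=r_2\,\mathsf h^\infty_{x_1,x_2}$ by \eqref{eq:expand} and this is typically strictly positive (e.g.\ $\sfH_{\mathsf{GHK}}(\fo_1,[x_2,r_2])=r_2$). What is actually needed is the \emph{subadditivity} of $\sfH_{x_1,x_2}$, which follows from the radial convexity and $1$-homogeneity in \eqref{ass:3} and which you never invoke: writing $\fy^{k-1}=[x^{k-1},r^{k-1}]$, $\fy^{k+2}=[x^{k+2},r^{k+2}]$, one has
\[
\Theta(P)-\Theta(P')=\sfH_{x^{k+2},x^{k-1}}(0,r^{k-1})+\sfH_{x^{k+2},x^{k-1}}(r^{k+2},0)-\sfH_{x^{k+2},x^{k-1}}(r^{k+2},r^{k-1})\ge 0,
\]
and the same bound shows $\sfH(\fy^{k+2},\fy^{k-1})<\infty$, so $(\fy^{k-1},\fy^{k+2})\in\mathcal A_\sfH$ and $P'$ is again an $\mathcal A_{\sfH,\Gamma}$-walk. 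That single computation plus induction is the entire proof.
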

\begin{proof}
  Let $P=(\fy^0,\cdots, \fy^N)$ be a $\mathcal A_{\sfH,\Gamma}$-walk.
  The proof follows by a simple induction argument if we show that 
  $\Theta$ decreases if we remove an internal arc of the form
  $(\fo_1,\fo_2)$. We can thus assume that 
  $\fy^k=\fo_1,\fy^{k+1}=\fo_2$ for some $0< k<N$
    and we set 
    $P' :=P^{-k}$.
  We notice that $\fy^{k-1}=[x^{k-1},r^{k-1}]\in \f C[\sfX_2]$ and
  $\fy^{k+2}=
  [x^{k+2},r^{k+2}]\in \f
  C[\sfX_1]$ so that 
  \begin{align*}
    \Theta(P)-\Theta(P')
    &=\check\sfH(\fy^{k-1},\fo_1)+\check\sfH(\fo_2,\fy^{k+2})-
      \check\sfH(\fy^{k-1},\fy^{k+2})
      \\&=
    \sfH(\fo_1, \fy^{k-1})+\sfH(\fy^{k+2}, \fo_2)-
    \sfH(\fy^{k+2},\fy^{k-1})
    \\&=
    \sfH_{x^{k+2},x^{k-1}}(0,r^{k-1})+
    \sfH_{x^{k+2},x^{k-1}}(r^{k+2},0)-
    \sfH_{x^{k+2},x^{k-1}}(r^{k+2},r^{k-1})
    \ge0
  \end{align*}
  thanks to the subadditivity of $\sfH_{x^{k+2},x^{k-1}}$.
  The same argument shows that $(\fy^{k-1},\fy^{k+2})\in \mathcal
  A_{\sfH}$ so that $P'$ is an $\mathcal A_{\sfH,\Gamma}$-walk. 

\end{proof}

\begin{corollary}
  Under the same assumptions of Lemma \ref{le:remove},
  if $\Gamma$ is $\sfH$-connected then
  also $\Gamma_\fo:=\Gamma\setminus\{(\fo_1,\fo_2)\}$
  is $\sfH$-connected and if $\Gamma_\fo$ is
  $\sfH$-cyclically monotone then also $\Gamma$ is $\sfH$-cyclically
  monotone (i.e.~the arc $(\f o_1,\f o_2)$ is irrelevant for $\sfH$-connectedness
  and $\sfH$ cyclical monotonicity).
  \end{corollary}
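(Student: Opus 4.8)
The corollary has two assertions, and for both the strategy is to move information between a walk in the graph $(\mathsf Z,\mathcal A_{0,\Gamma})$ and the corresponding walk with the vertex $(\fo_1,\fo_2)$ deleted, invoking Lemma \ref{le:remove} to control the cost $\Theta$. The plan is to treat $\sfH$-connectedness first and $\sfH$-cyclical monotonicity second.

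\emph{Preservation of $\sfH$-connectedness.} Assume $\Gamma$ is $\sfH$-connected and fix $\f y', \f y'' \in \pi^1(\Gamma_\fo)=\pi^1(\Gamma)\setminus\{\fo_1\}$ (the projection is unchanged unless $\Gamma$ consisted only of the vertex, a degenerate case that can be dismissed separately, or unless $\fo_1$ was hit only through the pair $(\fo_1,\fo_2)$, in which case $\pi^1(\Gamma_\fo)=\pi^1(\Gamma)$ or $\pi^1(\Gamma)\setminus\{\fo_1\}$ and the statement is about the former). By $\sfH$-connectedness of $\Gamma$ there is an $\mathcal A_{\sfH,\Gamma}$-walk $P$ from $\f y'$ to $\f y''$ inside $V_\Gamma$. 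Apply Lemma \ref{le:remove}: the walk $P_\fo$ obtained by removing all internal occurrences of the arc $(\fo_1,\fo_2)$ is again an $\mathcal A_{\sfH,\Gamma}$-walk, and since $P$ had endpoints in $\pi^1(\Gamma)\setminus\{\fo_1\}$, the endpoints of $P_\fo$ are unchanged; moreover, after deletion the only $\Gamma$-arcs still used are of the form $([x_1,r_1],[x_2,r_2])$ with not both $r_i=0$, hence they lie in $\Gamma_\fo$, while the $\mathcal A_\sfH$-arcs are untouched. Thus $P_\fo$ certifies that $\f y'$ and $\f y''$ are $\mathcal A_{\sfH,\Gamma_\fo}$-connected, so $\Gamma_\fo$ is $\sfH$-connected. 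The one point needing care is that removing $(\fo_1,\fo_2)$ could in principle also remove $\fo_1$ or $\fo_2$ as an \emph{endpoint} of a sub-arc we wanted to keep; but Lemma \ref{le:remove} handles exactly this by splicing the neighbours $\f y^{k-1}\in\f C[\sfX_2]$ and $\f y^{k+2}\in\f C[\sfX_1]$ via an arc in $\mathcal A_\sfH$, using subadditivity.

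\emph{Preservation of $\sfH$-cyclical monotonicity.} Assume $\Gamma_\fo$ is $\sfH$-cyclically monotone; by the remark following \eqref{eq:cyclic-standard} it suffices to check $\Theta(P)\ge 0$ for every $\mathcal A_{\sfH,\Gamma}$-cycle $P$ in $V_\Gamma$. Given such a cycle $P$, Lemma \ref{le:remove} produces $P_\fo$, again an $\mathcal A_{\sfH,\Gamma}$-cycle (deleting internal arcs from a closed walk keeps it closed, and the endpoint is untouched since $(\fo_1,\fo_2)$ is removed only when it is \emph{internal}; if the cycle itself equals $(\fo_1,\fo_2,\fo_1)$ then $\Theta(P)=\check\sfH(\fo_1,\fo_2)+\check\sfH(\fo_2,\fo_1)=-\sfH(\fo_1,\fo_2)+\sfH(\fo_1,\fo_2)=0$, so this trivial case is fine), with $\Theta(P_\fo)\le\Theta(P)$. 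Now $P_\fo$ uses only $\Gamma$-arcs lying in $\Gamma_\fo$, hence it is an $\mathcal A_{\sfH,\Gamma_\fo}$-cycle, and $\sfH$-cyclical monotonicity of $\Gamma_\fo$ gives $\Theta(P_\fo)\ge 0$. Therefore $\Theta(P)\ge\Theta(P_\fo)\ge 0$, and $\Gamma$ is $\sfH$-cyclically monotone.

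\textbf{Main obstacle.} There is essentially no hard analytic content here — everything reduces to Lemma \ref{le:remove}, which was already proved using only the subadditivity of the slices $\sfH_{x_1,x_2}$ (a consequence of radial $1$-homogeneity plus convexity). The only genuinely delicate bookkeeping is making sure that when $(\fo_1,\fo_2)$ appears as an \emph{endpoint} of the walk (rather than internally) the argument still closes: for connectedness this does not arise because we start from points in $\pi^1(\Gamma_\fo)$, and for cyclic monotonicity the endpoint of a cycle is arbitrary but the removal operation in Lemma \ref{le:remove} only touches internal arcs, so the potentially problematic configuration is exactly the length-two cycle $(\fo_1,\fo_2,\fo_1)$, which is handled by a one-line direct computation. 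Hence the ``hard part'' is merely writing the edge cases cleanly rather than any real difficulty.
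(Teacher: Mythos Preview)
Your argument has the right backbone—everything does indeed reduce to Lemma~\ref{le:remove}—but the bookkeeping you flag as ``merely writing the edge cases cleanly'' is precisely where the proof is incomplete.

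\textbf{Connectedness.} Your identification $\pi^1(\Gamma_\fo)=\pi^1(\Gamma)\setminus\{\fo_1\}$ is not correct in general: if $(\fo_1,\fy_2)\in\Gamma$ for some $\fy_2\neq\fo_2$, then $\fo_1\in\pi^1(\Gamma_\fo)$ and you must produce an $\mathcal A_{\sfH,\Gamma_\fo}$-walk with $\fo_1$ as an endpoint. But Lemma~\ref{le:remove} only removes \emph{internal} copies of $(\fo_1,\fo_2)$; if the $\mathcal A_{\sfH,\Gamma}$-walk you start from happens to begin with the arc $(\fo_1,\fo_2)$ (which it perfectly well may), that arc survives the deletion and the resulting walk is not in $\mathcal A_{\sfH,\Gamma_\fo}$. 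The paper handles this by an explicit construction: when $\fo_1\in\pi^1(\Gamma_\fo)$, pick $\fy_2'\neq\fo_2$ with $(\fo_1,\fy_2')\in\Gamma_\fo$, use this as the first (or last) arc, and then connect $\fy_2'$ to the other endpoint by the already-established case where neither endpoint is a vertex. The case $\fy'=\fy''=\fo_1$ requires doing this at both ends. This is not subtle, but it is not covered by your argument.

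\textbf{Cyclical monotonicity.} The same issue recurs: if a cycle reads $(\fo_1,\fo_2,\fy^2,\ldots,\fo_1)$, the first arc $(\fo_1,\fo_2)$ is not internal, so $P_\fo$ still contains it and your claim ``$P_\fo$ uses only $\Gamma$-arcs lying in $\Gamma_\fo$'' fails. The trivial cycle $(\fo_1,\fo_2,\fo_1)$ is not the only problematic configuration. The paper's fix is to first cyclically permute the cycle so that it starts and ends at some $\fy\in V_\Gamma\setminus\{\fo_1,\fo_2\}$ (which exists unless the cycle is made entirely of vertex arcs, where $\Theta=0$); then every occurrence of $(\fo_1,\fo_2)$ is internal and Lemma~\ref{le:remove} applies cleanly.

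Both gaps are easily repaired along the lines above, and once repaired your proof coincides with the paper's.
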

  \begin{proof}
    Let $\Gamma_{\fo,i}:=\pi^i(\Gamma_\fo)$
    and let $\fy',\fy''\in \Gamma_{\fo,1}.$
    If both are different from
    $\fo_1$, since $\Gamma$ is $\sfH$-connected and $\fy', \fy'' \in \Gamma_1$, we can find a $\mathcal A_{\sfH,\Gamma}$-walk $P\in
    \mathcal P(\fy'\to\fy'')$ and, by the previous Lemma \ref{le:remove} we can remove
    from $P$ all the internal arcs of the form $(\fo_1,\fo_2)$ 
    obtaining
    a $\mathcal A_{\sfH,\Gamma_\fo}$-walk $P_\fo\in \mathcal
    P(\fy'\to\fy'')$.

    If $\fy'=\fo_1\neq \fy''$, since $\fo_1\in \pi^1(\Gamma_\fo)$, we
    can find
    $\fy_2'\in \f C[\sfX_2]\setminus \{\fo_2\}$ such that
    $(\fo_1,\fy_2')\in \Gamma$
    and, by the same argument above, a $\mathcal
    A_{\sfH,\Gamma_\fo}$-walk $P'\in \mathcal P(\fy_2'\to \fy'')$. It
    follows that $P:=(\fo_1,\fy_2')+P'$ is a $\mathcal
    A_{\sfH,\Gamma_\fo}$-walk connecting $\fo_1$ to $\fy''$.

    If $\fy'\neq \fo_1=\fy''$, 
    we can find $\fy_2''\in \f C[\sfX_2]\setminus \{\fo_2\}$
    such that
    $(\fo_1,\fy_2'')\in \Gamma\subset \rmD(\sfH)$, so that
    in particular $\sfH(\fo_1,\fy_2'')<\infty$ and $(\fy_2'',\fo_1)\in
    \mathcal A_\sfH$. By the same argument above, we can find $P''\in \mathcal P(\fy'\to \fy_2'')$ which is a $\mathcal
    A_{\sfH,\Gamma_\fo}$-walk
     so that   $P:=P''+(\fy_2'',\fo_1)\in \mathcal P(\fy'\to\fo_1)$
    is a $\mathcal
    A_{\sfH,\Gamma_\fo}$-walk as well.

    If $\fy'=\fy''=\fo_1$, the two arguments above show that we can find $\fy'_2, \fy''_2 \in \f C[\sfX_2]\setminus \{ \fo_2\}$ such that $(\fo_1, \fy_2') \in \Gamma$ and $(\fy_2'', \fo_1) \in \mathcal A_\sfH$. Since both $\fy_2', \fy_2'' \in \Gamma_2$ are different from $\fo_2$, we can apply again Lemma \ref{le:remove} and obtain a $\mathcal{A}_{\sfH, \Gamma_\fo}$-walk $P''' \in \mathcal{P}(\fy_2'\to \fy_2'')$. We can thus define $P:=(\fo_1, \fy_2') + P''' + (\fy_2'', \fo_1) \in \mathcal{P}(\fy'\to\fy'')$ which is a $\mathcal{A}_{\sfH, \Gamma_\fo}$-walk.  
    
    Let us now suppose that $\Gamma_\fo$ is $\sfH$-cyclically monotone 
    and let $P$ be a $\mathsf A_{\sfH,\Gamma}$-cycle in $V_\Gamma$.
    It is not restrictive to suppose that $P$ contains at least an
    element
    $\fy\in V_\Gamma\setminus\{\fo_1,\fo_2\}$ (otherwise
    $\Theta(P)=0$). By a cyclic permutation of the arcs of $P$ we
    can assume that the initial and final point of $P$ is $\fy$. 
    By Lemma \ref{le:remove} we can construct a cycle $P_\fo$ in
    $(V_{\Gamma_\fo},\mathcal A_{\sfH,\Gamma_\fo})$ obtained by removing all
    the arcs $(\fo_1,\fo_2)$ from $P$ with the same initial and final
    point. Since $\Gamma_\fo$ is $\sfH$-cyclically monotone, we 
    have $\Theta(P_\fo)\ge0$. The inequality \eqref{eq:34} shows that
    $\Theta(P)\ge0$ as well.   
  \end{proof}

The next Proposition shows how $\sfH$-cyclical monotonicity reflects on walks that are not precisely a cycle (cf.~\eqref{eq:cyclic-standard}) but are of the form $P\in \mathcal P(\f y\to\lambda\f y)$  for some $\lambda>0$.

Given $(\f y_1, \f y_2)=([x_1, r_1], [x_2, r_2])\in \rmD(\sfH)$,  we denote by $\partial \sfH(\f y_1,\f y_2)$ the subdifferential of $\sfH_{x_1, x_2}$ 
at the point $(r_1, r_2) \in \R_+^2$, defined as
\[ 
\partial \sfH(\f y_1,\f y_2) := \left\{ (a,b) \in \R^2\ :\vert \begin{array}{l}
    \sfH_{x_1, x_2}(s_1, s_2) - \sfH_{x_1, x_2}(r_1, r_2) \ge a(s_1-r_1)+ b(s_2-r_2) \\
    \text{ for every } (s_1, s_2) \in \R_+^2
  \end{array}\right\}.
  \]
Notice that 
$\partial\sfH(\lambda \f y_1,\lambda \f y_2)=
\partial\sfH(\f y_1,\f y_2)$ for every $\lambda>0$. 
As usual, the 
proper domain of the subdifferential is denoted by
$ \rmD(\partial \sfH) := \left \{ (\f y_1, \f y_2) \in \f C[X_1,X_2]
\mid \partial \sfH(\f y_1,\f y_2)\ne \emptyset \right \}$.
It is well known that 
$\rmD(\partial \sfH)$ 
contains 
the \emph{radial interior}
\[
\rintt{\rmD(\sfH)}:=
\Big\{(\f y_1,\f y_2)\in 
\f C[X_1,X_2]:
(r_1,r_2)\in \intt{\rmD(\sfH_{x_1,x_2})}\Big\},
\]
where the interior of $\rmD(\sfH_{x_1,x_2})$
refers to the usual topology of $\R^2$; 
in particular, if $(\f y_1, \f y_2) \in \rintt{\rmD(\sfH)}$, then $r_1>0, r_2>0$. 

 For every $(\f y_1,\f y_2)\in \rintt{\rmD(\sfH)}$ we can consider the quantity 
\begin{equation}
    \label{eq:support}
   \mathsf a(\f y_1,\f y_2):=
   \sup \Big\{|a|:
   (a,b)\in \partial\sfH(\f y_1,\f y_2)\Big\};
 \end{equation}
 notice that
 \begin{align*}
   \mathsf a(\lambda \f y_1,\lambda \f y_2)&=
   \mathsf a(\f y_1,\f y_2)&&\quad\text{for every }\lambda>0,\\   
   -\mathsf a(\f y_1,\f y_2) |b| \le ab&\le \mathsf a(\f y_1,\f y_2) |b|&&\quad
   \text{for every }(a,b)\in \partial\sfH(\f y_1,\f y_2).
 \end{align*}
 If $\fy_2=\fo_2$ and $(\fy_1,\fo_2)\in \rmD(\sfH)$ we also set
 \[
   \mathsf a(\fy_1,\fo_2):=\sfH([x_1,1],\fo_2)=
   r_1^{-1}\sfH(\fy_1,\fo_2)\quad
   \text{where }
   \fy_1=[x_1,r_1].
 \]
 
\begin{proposition}\label{ss22:prop:Subdiff} Let $\sfH$ be as in
  \eqref{ass:3}, let $\Gamma \subset  \rmD(\sfH)$
  be
  a 
 $\sfH$-cyclically monotone radial cone, and
  let $(\fy_1,\fy_2)=([x_1,r_1],[x_2,r_2])\in\Gamma_\fo= \Gamma \setminus \{(\fo_1, \fo_2)\}$,
  such that
  $(\fy_1,\fy_2)\in \rintt{\rmD(\sfH)}$
  (and therefore $r_i>0$)
  or $\fy_2=\fo_2$ (and therefore $r_1>0$).  Let $\f y_1'=[x_1,r_1'] \in \f C[\sfX_1]$ with $r_1'>0$; then:
  \begin{enumerate}
  \item
     for every $\eps>0$ 
    there exists a $\mathcal A_{\sfH,\Gamma_\fo}$-walk $P_\eps\in \mathcal
    P(\f y_1'\to\f y_1)$
    with
    \begin{equation}
      \label{eq:21}
      \Theta(P_\eps)\le \mathsf a(\f y_1,\f y_2)|r_1'-r_1|+\eps;
    \end{equation}
  \item if $P\in \mathcal P(\f y_1\to \f y_1')$ is any $\mathcal A_{\sfH,\Gamma}$-walk, 
    then
    \begin{equation}
      \label{eq:22}
      \Theta(P)\ge
       -\mathsf a(\f y_1,\f y_2)|r_1'-r_1|.
    \end{equation}
  \end{enumerate}

\end{proposition}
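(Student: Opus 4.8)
The plan is to prove the upper bound (1) by an explicit construction of $\mathcal A_{\sfH,\Gamma_\fo}$-walks, and then to deduce the lower bound (2) from (1) together with the $\sfH$-cyclical monotonicity of $\Gamma$, used in the equivalent form \eqref{eq:cyclic-standard}.

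For (1) I would distinguish the two admissible configurations of $(\fy_1,\fy_2)$. If $\fy_2=\fo_2$ (hence $r_1>0$): since $\Gamma$ is a radial cone, the scaled pair $(\fy_1',\fo_2)=(r_1'/r_1)(\fy_1,\fo_2)$ again lies in $\Gamma_\fo$, and $\sfH(\fy_1,\fo_2)<+\infty$; hence $P_\eps:=(\fy_1',\fo_2,\fy_1)$ is an $\mathcal A_{\sfH,\Gamma_\fo}$-walk in $\mathcal P(\fy_1'\to\fy_1)$, and since $\sfH([x_1,r],\fo_2)=r\,\mathsf a(\fy_1,\fo_2)$ one gets $\Theta(P_\eps)=(r_1-r_1')\mathsf a(\fy_1,\fo_2)\le \mathsf a(\fy_1,\fo_2)\,|r_1'-r_1|$, so \eqref{eq:21} holds even with $\eps=0$. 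If instead $(\fy_1,\fy_2)\in\rintt{\rmD(\sfH)}$, set $f:=\sfH_{x_1,x_2}$ and $\mu_0:=r_1'/r_1$; then $r_1,r_2>0$, $\rmD(f)$ is a cone with $(r_1,r_2)$ in its interior, and the compact arc $K:=\{\mu(r_1,r_2):\mu\text{ between }\mu_0\text{ and }1\}$ is contained in $\intt{\rmD(f)}$. I would take a fine \emph{monotone} partition $\mu_0,\mu_1,\dots,\mu_N=1$ and form the ``zig-zag'' walk
\[
P_\eps:=\bigl(\mu_0\fy_1,\ \mu_0\fy_2,\ \mu_1\fy_1,\ \mu_1\fy_2,\ \dots,\ \mu_{N-1}\fy_2,\ \mu_N\fy_1\bigr);
\]
its arcs $(\mu_k\fy_1,\mu_k\fy_2)=\mu_k(\fy_1,\fy_2)$ belong to $\Gamma_\fo$ (as $\mu_k>0$), and its arcs $(\mu_k\fy_2,\mu_{k+1}\fy_1)$ have finite $\sfH$-cost $f(\mu_{k+1}r_1,\mu_k r_2)$ as soon as the mesh is small enough to keep every point $(\mu_{k+1}r_1,\mu_k r_2)$ in a fixed neighbourhood of $K$ inside $\intt{\rmD(f)}$. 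Using $\mu_k\sfH(\fy_1,\fy_2)=f(\mu_k r_1,\mu_k r_2)$ one finds
\[
\Theta(P_\eps)=\sum_{k=0}^{N-1}\bigl(f(\mu_{k+1}r_1,\mu_k r_2)-f(\mu_k r_1,\mu_k r_2)\bigr).
\]

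The decisive step is the estimate of this sum. Picking $p_k=(p_k^1,p_k^2)\in\partial f(\mu_{k+1}r_1,\mu_k r_2)$ (nonempty, the point being interior to $\rmD(f)$), convexity gives $f(\mu_{k+1}r_1,\mu_k r_2)-f(\mu_k r_1,\mu_k r_2)\le |p_k^1|\,|\mu_{k+1}-\mu_k|\,r_1$, and I claim that for every $\delta>0$ one has $|p^1|\le \mathsf a(\fy_1,\fy_2)+\delta$ whenever $p\in\partial f(z)$ with $z$ in a sufficiently small neighbourhood of $K$. This follows from the upper semicontinuity (closed graph) of the subdifferential of the convex function $f$ on the open set $\intt{\rmD(f)}$, the compactness of $K$, and the scaling invariance $\partial f(\mu(r_1,r_2))=\partial\sfH(\fy_1,\fy_2)$, which bounds $|p^1|$ by $\sup\{|a|:(a,b)\in\partial\sfH(\fy_1,\fy_2)\}=\mathsf a(\fy_1,\fy_2)$ at points of $K$. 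Since the partition is monotone, $\sum_k|\mu_{k+1}-\mu_k|=|1-\mu_0|$, hence $\Theta(P_\eps)\le(\mathsf a(\fy_1,\fy_2)+\delta)\,r_1|1-\mu_0|=(\mathsf a(\fy_1,\fy_2)+\delta)\,|r_1-r_1'|$, and \eqref{eq:21} follows by taking $\delta$ with $\delta|r_1-r_1'|\le\eps$.

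For (2), given a $\mathcal A_{\sfH,\Gamma}$-walk $P\in\mathcal P(\fy_1\to\fy_1')$ and $\eps>0$, I apply (1) to produce a $\mathcal A_{\sfH,\Gamma_\fo}$-walk $P_\eps\in\mathcal P(\fy_1'\to\fy_1)$ with $\Theta(P_\eps)\le \mathsf a(\fy_1,\fy_2)|r_1'-r_1|+\eps$. Then $P+P_\eps$ is a cycle in $(\mathsf Z,\mathcal A_{0,\Gamma})$, so \eqref{eq:cyclic-standard} gives $0\le\Theta(P+P_\eps)=\Theta(P)+\Theta(P_\eps)$, whence $\Theta(P)\ge -\mathsf a(\fy_1,\fy_2)|r_1'-r_1|-\eps$; letting $\eps\downarrow0$ yields \eqref{eq:22}. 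The main obstacle is the zig-zag estimate in (1): one has to keep the ``mixed'' points $(\mu_{k+1}r_1,\mu_k r_2)$ inside the open effective domain — so that the intermediate costs remain finite — while simultaneously controlling the first subgradient component there uniformly by $\mathsf a(\fy_1,\fy_2)+\delta$; this is exactly what makes the hypothesis $(\fy_1,\fy_2)\in\rintt{\rmD(\sfH)}$ and the loss $\eps$ necessary in the statement.
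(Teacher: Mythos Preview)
Your proof is correct and follows the same overall architecture as the paper: the case $\fy_2=\fo_2$ is handled identically (with the two-step walk $(\fy_1',\fo_2,\fy_1)$), the case $(\fy_1,\fy_2)\in\rintt{\rmD(\sfH)}$ is handled by a zig-zag walk along the ray through $(\fy_1,\fy_2)$, and (2) is deduced from (1) by closing the cycle and invoking \eqref{eq:cyclic-standard}.

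The implementation of the zig-zag estimate differs. You take an arbitrary monotone partition $\mu_0,\dots,\mu_N$ of $[r_1'/r_1,1]$, evaluate a subgradient at each mixed point $(\mu_{k+1}r_1,\mu_k r_2)$, and control all first components uniformly by $\mathsf a(\fy_1,\fy_2)+\delta$ via the closed-graph/local-boundedness property of $\partial f$ on $\intt{\rmD(f)}$ together with compactness of the radial segment $K$. The paper instead uses the \emph{geometric} partition $\mu_k=(r_1/r_1')^{k/n}$: then radial $1$-homogeneity factors every increment as $r_1'\vartheta_n^{k}\bigl(\bar\sfH(\vartheta_n,q)-\bar\sfH(1,q)\bigr)$ with $\vartheta_n=(r_1/r_1')^{1/n}$, so a \emph{single} subgradient $(a_n,b_n)\in\partial\bar\sfH(\vartheta_n,q)$ suffices, and the geometric sum telescopes to $a_n(r_1'-r_1)$; one then passes to the $\limsup$ using $(\vartheta_n,q)\to(1,q)$. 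The paper's trick exploits the homogeneity to collapse the whole estimate to one point of the subdifferential and avoids any explicit uniform bound over a neighbourhood of $K$; your argument is more generic (it would work for any convex $f$ without the homogeneity) at the cost of invoking the upper semicontinuity of $\partial f$. Both routes give exactly \eqref{eq:21}, and your derivation of \eqref{eq:22} coincides with the paper's.
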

\begin{proof}
  Notice that $\fy_1,\fy_1'\in \f C[\sfX_1] \setminus \{ \fo_1\}$;
  if $\fy_2\neq \fo_2$ we set 
$\bar{\sfH}:=\sfH_{x_1,x_2}$, $q:= \frac{r_2}{r_1} >0$; 

\smallskip\noindent
Claim (1).
The case $\fy_2=\fo_2$ is simple: since $(\fy_1,\fo_2)\in \Gamma
\subset \rmD(\sfH)$
we have $\mathsf a(\fy_1,\fo_2)= r_1^{-1} \sfH(\fy_1,\fo_2)<\infty$;
since $\Gamma$ is a radial cone and $(\fy_1',\fo_2)=r_1'/r_1
(\fy_1,\fo_2)$ we have $(\fy_1',\fo_2)\in \Gamma$ with
$\sfH(\fy_1',\fo_2)= r_1' \mathsf a(\fy_1,\fo_2)$.
The walk $P=(\fy_1',\fo_2,\fy_1)$ belongs
to $(V_{\Gamma_\fo},\arc_{\sfH,\Gamma_\fo})$
and
\begin{displaymath}
  \Theta(P)=\sfH(\fy_1,\fo_2)-\sfH(\fy_1',\fo_2)=
  (r_1-r_1') \mathsf a(\fy_1,\fy_2) \le \mathsf a(\fy_1,\fy_2)|r_1-r_1'|.  
\end{displaymath}

Let us now consider the case
$\fy_2\neq \fo_2$. For every $n \in \N$, we define 
 
$\vartheta_n:=\left (\frac{r_1}{r_1'} \right)^{1/n}$ 
 and we consider the points
\[ \f{y}_1^{k} := \left [ x_1, r_1'  \left ( {\vartheta_n} \right
    )^k  \right ], \quad \f{y}_2^{k} := \left [  x_2, r_1'q \left
      ( {\vartheta_n} \right )^{k}  \right ], \quad k=0, \dots,
  n-1,\quad
  \f{y}_1^{n} := \left [ x_1, r_1\right ]=\f y_1,
\]
inducing a walk $P_n$ connecting $\f y_1'$ to $\f y_1$ according to \eqref{eq:16},
since
\[ \f{y}_1^{0}= [x_1,
 r_1' ] = \f{y}_1', \quad (\f{y}_1^{k}, \f{y}_2^{k}) = (\rho_{n,k} [x_1, r_1], \rho_{n,k}[x_2, r_2]) \in {\Gamma}, \quad k=0, \dots, n-1,\]
where $\rho_{n,k} = \frac{r_1'}{r_1} \left ( {\vartheta_n} \right
)^{ k}$. We have

\begin{align*}
\Theta( P_n ) &= \sum_{k=0}^{n-1} \left ( \sfH(\f{y}_1^{k+1}, \f{y}_2^{k})
            - \sfH(\f{y}_1^{k}, \f{y}_2^{k}) \right )
  \\
          &= \sum_{k=0}^{n-1} \left ( \bar{\sfH}
            \left ( r_1' \left({\vartheta_n} \right )^{k+1}, 
            r_1' q \left({\vartheta_n} \right )^{k}\right ) -
            \bar{\sfH} \left ( r_1' \left ({\vartheta_n} \right )^{k}
         ,
            r_1'q \left ({\vartheta_n} \right )^{k}\right ) \right ) \\
&= \sum_{k=0}^{n-1}  r_1'  \left ({\vartheta_n} \right )^{k} \left ( \bar{\sfH}({\vartheta_n} ,q) - \bar{\sfH}\left (1, q \right ) \right ) \\
&\le \sum_{k=1}^n r_1'  \left ({\vartheta_n} \right )^{k} \left ( 1- {\vartheta_n} \right ) a_n 
 = r_1' a_n  \left ( 1 - \left ({\vartheta_n}\right)^n 
 \right ) 
=  a_n (r_1'-r_1)
\end{align*}
where 
$(a_n, b_n) \in \partial \bar{H} \left ( {\theta_n}, q \right ) \ne \emptyset$ 
for $n$ sufficiently large, since $\left ({\theta_n}, q \right ) \to (1, q)$ 
as $n \to + \infty$ and 
$\partial \bar{\sfH}(1,q)=\partial \bar{\sfH}(r_1,r_2)$ 
with $(r_1,r_2) \in \intt{\rmD(\partial \bar{\sfH})}
=
\intt{\rmD(\bar{\sfH})}$. This proves that $P_n$ is a walk in $\mathcal{A}_{\sfH, \Gamma_\fo}$ and, passing to the $\limsup$ as $n\to\infty$ we have
\[
  \limsup_{n\to\infty} a_n  (r_1'-r_1)\le \mathsf a(\f
  y_1,\f y_2)|r_1'-r_1|
\]
which yields \eqref{eq:21}.

\smallskip\noindent
Claim (2).
If $P\in \mathcal P(\fy_1 \to \fy_1')$
is any $\mathcal A_{\sfH,\Gamma}$-walk 
connecting
$\f y_1$ to $\f y_1'$, we can consider the cycle $\tilde P_\eps=P+P_\eps$
starting and ending in $\f y_1$
obtained by joining $P$ with the walk $P_\eps$ given by the previous
claim. \eqref{eq:cyclic-standard} and the $\sfH$-cyclycal monotonicity of $\Gamma$ yield
\begin{displaymath}
  0\le \Theta(\tilde P_\eps)=\Theta(P)+\Theta(P_\eps)
\end{displaymath}
which immediately gives \eqref{eq:22}.
\end{proof}

We can now give sufficient conditions for the $\sfH$-connectedness of
a set $\Gamma\subset \rmD(\sfH)$, with $\sfH: \pc \to [0,+\infty]$ as in \eqref{ass:3}. 
We set
\[
  \sfH_{\rm inf}(\f y_1,\f y_2):=\inf\Big\{\sfH(\lambda_1\f y_1,\lambda_2 \f y_2):\lambda_1,\lambda_2>0\Big\}, \quad (\f y_1, \fy_2) \in \f C[\sfX_1] \times \f C[\sfX_2]. 
\]
Notice that 
we have
\begin{equation}
  \label{eq:25}
  \text{if }\f y_i=[x_i,r_i]\neq \f o_{ i}\text{ then }
  \sfH_{\rm inf}(\f y_1,\f y_2)<\infty\quad\Leftrightarrow\quad
  \exists\,s_i>0:\sfH([x_1,s_1],[x_2,s_2])<\infty
\end{equation}
whereas
\begin{equation}
  \label{eq:26}
  \begin{gathered}
    \sfH_{\rm inf}(\f y_1,\f o_{ 2})<\infty\quad\Leftrightarrow\quad
    \sfH(\f y_1,\f o_{ 2})<\infty,\\
    \sfH_{\rm inf}(\f o_{ 1},\f y_2)<\infty\quad\Leftrightarrow\quad
    \sfH(\f o_{ 1},\f y_2)<\infty.
  \end{gathered}
\end{equation}
We can also easily see that
$\sfH_{\rm inf}(\f y_1,\f y_2)<\infty$ if and only
if it is possibile to connect $\fy_1$ (resp.~$\fy_2$)
to a multiple of $\fy_2$ (resp.~of $\fy_1$) with finite $\sfH$-cost:
\[
  \begin{aligned}
    \sfH_{\rm inf}(\f y_1,\f y_2)<\infty\quad&\Leftrightarrow\quad
    \exists\,\lambda_2>0:\sfH(\fy_1,\lambda_2\fy_2)<\infty\\
    &\Leftrightarrow\quad
    \exists\,\lambda_1>0:\sfH(\lambda_1\fy_1,\fy_2)<\infty.
  \end{aligned}
\]

\begin{theorem}[Sufficient conditions for
  $\sfH$-connectedness]
  \label{thm:connectedness}
  If $\sfH$ is as in \eqref{ass:3} and $\Gamma\subset \rmD(\sfH)$ is a $\sfH$-cyclically monotone radial cone satisfying at least
  one of the following conditions:
  \begin{enumerate}
  \item $\Gamma$ is $\sfH_{\rm inf}$-connected
    and
    $\Gamma\setminus \Big(\{\f o_{ 1}\}\times \f C[\sfX_2]\cup \f
    C[\sfX_1]\times \{\f o_{ 2}\}\Big)\subset \rintt{\rmD(\sfH)}$,
  \item $\sfH_{\rm inf}$ is finite on $\piGamma1\times
    \piGamma2$ 
     (recall \eqref{eq:projectionG}) 
    and
    \begin{equation}
      \label{eq:36}
      \Gamma\cap \rintt{\rmD(\sfH)}\neq \emptyset,
    \end{equation}
  \item $\sfH_{\rm inf}$ is finite on $\piGamma1\times
    \piGamma2$ and
    \begin{equation}
    \text{there exist points $\bar \fy_i\in \f C[\sfX_i]\setminus\{\fo_i\}$ such that
    $(\bar \fy_1,\fo_2)\in \Gamma$ and $(\fo_1,\bar \fy_2)\in
    \Gamma$},
  \label{eq:11} 
  \end{equation}
  \end{enumerate}
  then
    $\Gamma$ is $\sfH$-connected.
\end{theorem}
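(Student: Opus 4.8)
The plan is to show, in all three cases, that any two points $\fy', \fy'' \in \Gamma_1$ can be joined by an $\mathcal A_{\sfH,\Gamma}$-walk. By the Corollary following Lemma~\ref{le:remove} it suffices to treat $\Gamma_\fo = \Gamma \setminus \{(\fo_1,\fo_2)\}$, so I will assume throughout that the endpoints and the intermediate vertices we construct are different from the vertices of the cones, except where the vertices $\fo_i$ appear explicitly in the hypotheses. The basic building block is the following: given $(\fy_1,\fy_2) \in \Gamma_\fo$ with $(\fy_1,\fy_2) \in \rintt{\rmD(\sfH)}$ or $\fy_2 = \fo_2$, Proposition~\ref{ss22:prop:Subdiff}(1) produces, for any $\fy_1' = [x_1, r_1']$ with the same base point $x_1$ and $r_1' > 0$, an $\mathcal A_{\sfH,\Gamma_\fo}$-walk from $\fy_1'$ to $\fy_1$. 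Thus within a single ``radial fiber'' over a base point $x_1 \in \piGamma1$, all points are mutually $\mathcal A_{\sfH,\Gamma_\fo}$-connected (concatenate the walk $\fy_1' \to \fy_1$ with a walk $\fy_1 \to \fy_1''$ obtained the same way, after a cyclic/radial rescaling). The analogous statement holds on the $\sfX_2$-side. The remaining task is to connect different base points.

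For case (1): $\Gamma$ is $\sfH_{\rm inf}$-connected, which means that for any $\fy', \fy'' \in \Gamma_1$ there is a walk in $(V_\Gamma, \arc_{\sfH_{\rm inf}, \Gamma})$. Such a walk alternates arcs in $\Gamma$ (from $\f C[\sfX_1]$ to $\f C[\sfX_2]$) with arcs where only $\sfH_{\rm inf}$ is finite (from $\f C[\sfX_2]$ to $\f C[\sfX_1]$). The point is to upgrade each $\sfH_{\rm inf}$-finite arc $(\fy_2^k, \fy_1^{k+1})$ to a genuine $\sfH$-finite arc: since $\sfH_{\rm inf}(\fy_1^{k+1}, \fy_2^k) < \infty$, by the characterization recalled before the theorem there are positive multiples $\lambda, \lambda'$ with $\sfH(\lambda \fy_1^{k+1}, \lambda' \fy_2^k) < \infty$; using the radial-cone property of $\Gamma$ we may rescale the $\Gamma$-arcs landing at $\fy_1^{k+1}$ and leaving $\fy_2^k$ (within their fibers, via the building block above) to reach those multiples, where the $\sfH$-arc is now finite. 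The hypothesis $\Gamma\setminus(\{\fo_1\}\times\f C[\sfX_2]\cup \f C[\sfX_1]\times\{\fo_2\}) \subset \rintt{\rmD(\sfH)}$ guarantees that all the intermediate points we touch lie in the radial interior, so Proposition~\ref{ss22:prop:Subdiff}(1) applies at each step, and the special arcs through $\fo_i$ are handled by the $\fy_2 = \fo_2$ clause and its symmetric counterpart. Patching these pieces together yields an $\mathcal A_{\sfH,\Gamma}$-walk from $\fy'$ to $\fy''$.

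For cases (2) and (3): now $\sfH_{\rm inf}$ is finite on all of $\piGamma1 \times \piGamma2$, so the graph $(V_\Gamma, \arc_{\sfH_{\rm inf},\Gamma})$ is automatically connected (any $\fy_1 \in \Gamma_1$ connects to any $\fy_2' \in \Gamma_2$ via an $\sfH_{\rm inf}$-arc, and conversely), hence $\Gamma$ is $\sfH_{\rm inf}$-connected and we are reduced to case (1)'s scheme; the only thing to check is that we have enough room in the radial interior. In case (2) this is supplied by a single seed point $\bar{\boldsymbol\fy} \in \Gamma \cap \rintt{\rmD(\sfH)}$: first connect every $\fy \in \Gamma_1$ to (a rescaled copy of) this seed — use an $\sfH_{\rm inf}$-finite arc from $\pi^2(\bar{\boldsymbol\fy})$ back to $\fy$, upgrade it to $\sfH$-finite by rescaling within fibers, and route through $\bar{\boldsymbol\fy}$ where Proposition~\ref{ss22:prop:Subdiff}(1) is applicable — thereby showing all of $\Gamma_1$ connects to one common vertex, which gives connectedness. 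In case (3) the seed is replaced by the pair of boundary points $(\bar\fy_1, \fo_2), (\fo_1, \bar\fy_2) \in \Gamma$: these let us route through the vertex using the $\fy_2 = \fo_2$ branch of Proposition~\ref{ss22:prop:Subdiff}(1) (and its mirror image), and $\sfH$-finiteness of the connecting arc follows from \eqref{eq:26} together with $\sfH_{\rm inf}$-finiteness on $\piGamma1 \times \piGamma2$.

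The main obstacle I expect is bookkeeping: making sure that every intermediate point produced during an ``upgrade'' of an $\sfH_{\rm inf}$-arc to an $\sfH$-arc — and every point appearing in the walks furnished by Proposition~\ref{ss22:prop:Subdiff} — actually lies in $\rintt{\rmD(\sfH)}$ (or on the admissible part of the boundary), so that the subdifferential is nonempty and the walk has finite $\Theta$-cost. In case (1) this is exactly what the extra inclusion hypothesis buys us; in cases (2)–(3) one must verify it is enough to touch the radial interior (resp.\ the boundary seed) only at the ``turning points'' of the walk, while all other vertices can be taken on $\Gamma$ itself and connected along their fibers. Everything else — concatenation of walks, cyclic permutation of cycles, radial rescaling — is routine once this is in place.
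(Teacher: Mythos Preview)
Your plan is correct and follows essentially the same strategy as the paper: in case~(1) you upgrade each $\sfH_{\rm inf}$-arc $(\fy_2,\fy_1)$ to an $\mathcal A_{\sfH,\Gamma}$-connection by rescaling the target and inserting a fiber walk via Proposition~\ref{ss22:prop:Subdiff}(1), and in cases~(2)--(3) you route everything through the seed $(\bar\fy_1,\bar\fy_2)$ (resp.\ through $\fo_1$), showing bidirectional $\mathcal A_{\sfH,\Gamma}$-walks between any $\fy_1\in\Gamma_1$ and the seed. Two small cleanups: first, you never need the ``symmetric counterpart'' of Proposition~\ref{ss22:prop:Subdiff} on the $\sfX_2$-side --- only fiber walks in $\f C[\sfX_1]$ are required, since the arcs needing upgrade go from $\f C[\sfX_2]$ to $\f C[\sfX_1]$ and the $\Gamma$-arcs already land in $\f C[\sfX_2]$ without adjustment (for $\fy_1=\fo_1$ use~\eqref{eq:26} directly rather than any symmetric proposition); second, in cases~(2)--(3) the phrase ``reduced to case~(1)'s scheme'' is slightly misleading, since the full inclusion hypothesis of~(1) is absent --- but your subsequent explanation (applying Proposition~\ref{ss22:prop:Subdiff} only at the seed, where $\rintt{\rmD(\sfH)}$ or the $\fy_2=\fo_2$ clause is available) is exactly what the paper does and is correct as stated.
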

\begin{proof} We divide the proof in claims.

\medskip\noindent
  Claim (1). It is sufficient to prove that
  if a pair of points $(\f y_2,\f y_1)\in \piGamma2\times 
  \piGamma1$
  satisfies $\sfH_{\rm inf}(\f y_1,\f y_2)<+\infty$
  then we can find a $\mathcal A_{\sfH,\Gamma}$ walk $P\in \mathcal P(\f y_2\to \f y_1)$.
  If $\f y_1=\f o_1$ or $\f y_2=\f o_2$ we have $\sfH (\f y_1,\f
  y_2)<+\infty$ thanks to \eqref{eq:26},
  so that the arc $(\f y_2,\f y_1)$ belongs to $\mathcal A_{\sfH}$.

  We can thus suppose that $\f y_i=[x_i,r_i]$ with $r_i>0$.
  Recalling \eqref{eq:25} and using the
  $1$-homogeneity of $\sfH$ we can find $r_1'>0$ such
  that, setting $\f y_1'=[x_1,r_1']$, we get  $\sfH(\f y_1',\f y_2)<+\infty$. On the other hand, since $\f y_1\in
  \piGamma1$ we can also find $\f y_2'$ such that
  $(\f y_1,\f y_2')\in \Gamma$.

  If $\f y_2'=\f o_2$ then $(\f y_1',\f o_2)\in \Gamma$ as well, since
  $\Gamma$ is a radial cone, and $\sfH(\f y_1,\f o_2)<+\infty$ since
  $\Gamma\subset \rmD(\sfH)$. We conclude that
  $P=(\f y_2,\f y_1', \f o_2,\f y_1)$ is an admissible walk in
  $(V_\Gamma,\arc_{\sfH,\Gamma})$.

  If $\f y_2'\neq \f o_2$ then $(\f y_1,\f y_2')\in \Gamma\cap
  \rintt{\rmD(\sfH)}$ and we can apply
  the first claim of Proposition \ref{ss22:prop:Subdiff} to find a
  walk
  $P\in \mathcal P(\f y_1'\to \f y_1)$ 
  in
  $(V_\Gamma,\arc_{\sfH,\Gamma})$. Joining $(\f y_2,\f y_1')$
  with $P$ we obtain a connection from $\f y_2$ to $\f y_1$.

  \medskip\noindent
  Claim (2).
  Let us pick $(\bar{\f y}_1,\bar{\f y}_2)\in \Gamma\cap
  \rintt{\rmD(\sfH)}$, $\bar{\f y}_i=[\bar x_i,\bar r_i]$ with $\bar
  r_i>0$ and $\bar q:=\bar r_2/\bar r_1$. It is sufficient to show that for every $\f y_1\in
  \piGamma1$ we can find a walk $P''\in \mathcal P(\bar{\f y}_1\to \f y_1)$ and a walk $P'\in \mathcal P(\f y_1\to\bar{\f y}_1)$ in $(V_\Gamma,\mathcal A_{\sfH,\Gamma})$.

  Since $\sfH_{\rm inf}(\f y_1,\bar {\f y}_2)<\infty$,
  we can find $\f y_2=[\bar x_2,r_2]$ with $r_2>0$
  such that $\sfH(\f y_1,\f y_2)<+\infty$, so that $(\f y_2,\f y_1)\in
  \mathcal A_{\sfH,\Gamma}$.
  Since $\Gamma$ is a radial cone, setting $\f y_1':=[\bar
  x_1,r_2/\bar q]$ we have
  $(\f y_1',\f y_2)\in \Gamma$; moreover $(r_2/\bar q,r_2)=r_2/\bar
  r_2(\bar r_1,\bar r_2)\in \intt{\rmD(\sfH_{\bar x_1,\bar x_2})}$ so
    that  $(\f y_1',\f y_2)\in \rintt{\rmD(\sfH)}$.
    By Claim (1) of Proposition
  \ref{ss22:prop:Subdiff}
  we can eventually find a walk $\tilde P\in \mathcal P(\bar {\f y}_1\to \f y_1')$
  in $(V_\Gamma,\arc_{\sfH,\Gamma})$. Thus joining $\tilde P$ with $(\f y_1',\f y_2,\f
  y_1)$ we obtain a walk $P''$ connecting $\bar {\f y}_1$ to $\f y_1$.

  In order to find the second walk connecting $\f y_1$ to $\bar {\f
    y}_1$,
  we first select $\f y_2=[x_2,r_2]$ so that $(\f y_1,\f y_2)\in \Gamma$ (using
  the fact that $\f y_1\in \piGamma1$). Since $\sfH_{\rm inf}(\bar
  {\f y}_1,\f y_2)<\infty$ we find $\lambda_1>0$ such that 
  $\sfH( \lambda_1\bar
  {\f y}_1, \f y_2)<\infty$. By Proposition \ref{ss22:prop:Subdiff}
  we can eventually join $\lambda_1\bar {\f y}_1$
  to $\bar {\f y}_1$ by a walk $\tilde P$ in
  $(V_\Gamma,\arc_{\sfH,\Gamma})$.
  The walk $P'$ obtained by joining $(\f y_1,\fy_2,\lambda_1\bar {\f
    y_1})$
  with $\tilde P$ provides the requested connection from $\f y_1$ to $\bar{\f
    y}_1 $ in $(V_\Gamma,\arc_{\sfH,\Gamma})$.

      \medskip\noindent
      Claim (3). The proof is quite similar to the previous claim. We show that, for every $\fy_1\in \piGamma1$, we can connect $\fy_1$ to $\fo_1$ and $\fo_1$ to $\fy_1$ with a $\mathcal A_{\sfH,\Gamma}$-walk. Let $\fy_1\in \piGamma1$ be fixed. 
      Since $\sfH_{\rm inf}(\f y_1,\bar {\f y}_2)<\infty$,
      we can find $\f y_2=\lambda_2 \bar{\fy}_2$, $\lambda_2>0$, 
  such that $\sfH(\f y_1,\f y_2)<+\infty$, so that $(\f y_2,\f y_1)\in
  \mathcal A_{\sfH,\Gamma}$. The $\mathcal A_{\sfH,\Gamma}$-walk
  $(\fo_1,\fy_2,\fy_1)$
  connects $\fo_1$ to $\fy_1$.

  In order to find the second walk connecting $\f y_1$ to $\fo_1$ 
  we first select $\f y_2=[x_2,r_2]$ so that $(\f y_1,\f y_2)\in \Gamma$ (using
  the fact that $\f y_1\in \piGamma1$). Since $\sfH_{\rm inf}(\bar
  {\f y}_1,\f y_2)<\infty$ we find $\lambda>0$ such that 
  $\sfH( \lambda_1\bar
  {\f y}_1, \f y_2)<\infty$. Since $\Gamma$ is a radial cone, 
  $(\lambda \bar {\fy_1},\fo_2)\in \Gamma$ so that 
  the walk $P'=(\fy_1,\fy_2,\lambda \bar{\fy}_1,\fo_2,\fo_1)$ connects
  $\fy_1$ to $\fo_1$ and belongs to $(V_\Gamma,\arc_{\sfH,\Gamma})$.
\end{proof}

\begin{remark}\label{rem:common} If 
  there is a common
  vector
   $(r_1,r_2)\neq (0,0)$ 
  in the intersection $\bigcap\limits_{(x_1,x_2)\in \sfX_1\times \sfX_2}
  \rmD(\sfH_{x_1,x_2})$ of all the domains of the functions
  $\sfH_{x_1,x_2}$,
  then
  $\sfH_{\rm inf}$ is finite in $(\f C[\sfX_1] \setminus \{\fo_1\})  \times (\f C[\sfX_2] \setminus \{\fo_2\})$. Then, if $\Gamma \subset (\f C[\sfX_1] \setminus \{\fo_1\})  \times (\f C[\sfX_2] \setminus \{\fo_2\})$ and $\Gamma \subset \rintt{\rmD(\sfH)}$, condition (1) of Theorem
  \ref{thm:connectedness} is satisfied and $\Gamma$ is $\sfH$-connected.
  If $\sfH$ is finite 
  then it is immediate to check that any $\Gamma \subset \f C[\sfX_1] \times \f C[\sfX_2]$ is  $\sfH$-connected. 
\end{remark}

\subsection{Sufficient conditions for optimality}
The next result provides
sufficient conditions in order to
guarantee the optimality of a $1$-homogeneous coupling concentrated on a
$\sfH$-cyclically monotone radial cone $\Gamma$.
The main assumptions concern 
$\sfH$-connectedness of $\Gamma$
(see the previous Theorem
\ref{thm:connectedness}
for simple conditions guaranteeing this property),
the fact that $\Gamma$ has nonempty intersection with the radial interior
$\rintt{\rmD(\sfH)}$ of $\rmD(\sfH)$
(or, alternatively, that \eqref{eq:11} holds)
and an 
integrability condition \eqref{ss22:cond1}
which minimic the usual condition
stated in the framework of balanced OT.
To avoid trivial cases, we will
assume that the measures $\mu_i$ have strictly positive mass.
We also mention that the result below provides the existence of a relaxed solution for the dual problem, i.e.~a pair of optimal potentials that are only Borel measurable. 

In order to treat potentials that may take infinite values, we use the notation
\[ \zeta_1(x_1) r_1 +_o \zeta_2(x_2)r_2 := \lim_{n \to + \infty}  (-n \vee \zeta_1(x_1)r_1 \wedge n) +  (-n \vee \zeta_1(x_1)r_2 \wedge n) \]
for functions $\zeta_i : \sfX_i \to \R \cup \{\pm \infty\}$, with the convention that $\pm \infty \cdot 0=0$. In particular $\zeta_1(x_1) r_1 +_o \zeta_2(x_2)r_2=0$ in case $\zeta_1(x_1)=\pm \infty$, $\zeta_2(x_2)=\mp \infty$ and $r_1, r_2 >0$.

\begin{theorem}[Sufficiency of $\sfH$-cyclical monotonicity]
  \label{ss22:suff1} Let $\sfH$ be as in \eqref{ass:3},
  let $\mu_i \in \meas_+(\sfX_i)$ with $\mu_i(\sfX_i)>0$ for $i=1,2$, and let $\aalpha \in
  \f{H}^1(\mu_1,\mu_2)$ be an admissible $1$-homogeneous coupling
  concentrated on a $\sigma$-compact \emph{radial cone $\Gamma\subset \rmD(\sfH)$}
  such that
  $\Gamma$ is $\sfH$-cyclically monotone and $\sfH$-connected.
We set  
\[
  \piGamma i:=\pi^i(\Gamma),\quad
  S_i:= \sfx (\piGamma i\setminus\{\fo_i\}).
\]

  \begin{enumerate}
    \item If $\Gamma$ satisfies one of the two conditions
      \eqref{eq:36} or \eqref{eq:11}
      then the measures $\mu_i$ are concentrated on the sets $S_i$
      (in particular $S_i\neq \emptyset$) and
      there exist Borel functions $\varphi_i:\sfX_i\to \R \cup \{\pm \infty\}$ which are real valued on $S_i$  and such that
      \begin{align}
        \label{eq:37}
        \varphi_1(x_1)r_1 +_o \varphi_2(x_2)r_2&\le
        \sfH([x_1,r_1],[x_2,r_2])\quad\text{for every }x_i\in \sfX_i,\
                                             r_i\ge0,\\ \label{eq:38}
        \varphi_1(x_1)r_1+\varphi_2(x_2)r_2&=
        \sfH([x_1,r_1],[x_2,r_2])\quad\text{if
                                             }([x_1,r_1],[x_2,r_2])\in \Gamma.
      \end{align}
    \item If moreover there exist nonnegative Borel functions $\varrho_i\in
      L^1_+(\sfX_i,\mu_i)$ such that 
\begin{align} \begin{split} \label{ss22:cond1} &\mu_1 \left ( \left \{ x_1 \in \sfX_1 \mid \int_{\sfX_2} \sfH([x_1, \varrho_2(x_2)], [x_2, 1]) \de \mu_2(x_2) < + \infty \right \} \right ) >0, \\
&\mu_2 \left ( \left \{ x_2 \in \sfX_2 \mid \int_{\sfX_1} \sfH([x_1, 1], [x_2,\varrho_1(x_1)]) \de \mu_1(x_1) < + \infty \right \} \right ) >0,
\end{split}
\end{align}
\end{enumerate}

then $\aalpha$ is optimal, $\displaystyle \int_{\pc} \sfH \de \aalpha
< + \infty$, the functions $\varphi_i$ belong to
$\mathcal{L}^1(\sfX_i,\mu_i)$ and provide a relaxed solution 
for the dual problem \eqref{ss22:eq:dualformulation} i.e.
\[
  \begin{gathered}
     \varphi_1(x_1)r_1+_o\varphi_2(x_2)r_2\le
    \sfH([x_1,r_1],[x_2,r_2])\quad\text{for every }x_i\in \sfX_i,\
                                             r_i\ge0,\\
    \int_{\sfX_1}\varphi_1 \de \mu_1 + \int_{\sfX_2} \varphi_2 \de
    \mu_2 = \m_\sfH(\mu_1, \mu_2).
  \end{gathered}
\]
\end{theorem}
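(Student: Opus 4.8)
The argument follows the classical Rockafellar-type construction, adapted to the conical/homogeneous setting via the graph-theoretic machinery of Section \ref{subs:graph}. The key point is that $\sfH$-connectedness and $\sfH$-cyclical monotonicity together allow us to define potentials as infima of walk costs $\Theta(P)$, while the radial cone structure of $\Gamma$ forces these potentials to be $1$-homogeneous in the mass variable, hence genuinely of the form $\varphi_i(x_i)r_i$.

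\textbf{Step 1: construction of the potentials (part (1)).} First I would reduce to $\Gamma_\fo=\Gamma\setminus\{(\fo_1,\fo_2)\}$ using the Corollary after Lemma \ref{le:remove}, which preserves both $\sfH$-connectedness and $\sfH$-cyclical monotonicity. Next, since $\aalpha\in\f H^1(\mu_1,\mu_2)$ is concentrated on $\Gamma$ and $\f h^1_i(\aalpha)=\mu_i$, the measure $\mu_i$ is concentrated on $\sfx_i(\pi^i(\Gamma)\setminus\{\fo_i\})=S_i$ (the mass at $\fo_i$ contributes nothing to $\f h^1_i$), which in particular shows $S_i\ne\emptyset$. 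Now fix a base point: under \eqref{eq:36} pick $(\bar\fy_1,\bar\fy_2)\in\Gamma\cap\rintt{\rmD(\sfH)}$ with $\bar\fy_i=[\bar x_i,\bar r_i]$, $\bar r_i>0$; under \eqref{eq:11} pick the points $\bar\fy_i$ appearing there. For $x_1\in S_1$, writing $[\,x_1,1\,]$, define
\[
  \varphi_1(x_1):=\inf\Big\{\Theta(P):P\in\mathcal P(\bar\fy_1\to[x_1,1]\,|\,V_\Gamma,\arc_{\sfH,\Gamma_\fo})\Big\},
\]
and symmetrically (or by an $\sfH$-transform) $\varphi_2$ on $S_2$; set $\varphi_i=+\infty$ off $S_i$ (the sign is immaterial once $+_o$ is used). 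The crucial estimates come from Proposition \ref{ss22:prop:Subdiff}: claim (2) gives $\Theta(P)\ge -\mathsf a(\bar\fy_1,\bar\fy_2)|r'_1-\bar r_1|>-\infty$ along any walk, so $\varphi_1(x_1)>-\infty$ on $S_1$; claim (1) produces, for each $\eps>0$, a walk realizing the infimum up to $\eps$, which both shows finiteness ($<+\infty$) on $S_1$ via $\sfH$-connectedness and yields the "cocycle" inequality $\varphi_1(x_1)\le\varphi_1(x_1')+ (\text{cost of any }\arc_{\sfH,\Gamma_\fo}\text{-walk }[x_1',1]\to[x_1,1])$. From this and the definition of $\arc_{\sfH}$ one extracts \eqref{eq:37}: for $r_1,r_2\ge0$ and $x_i\in\sfX_i$, appending the arc $([x_1,r_1],[x_2,r_2])$ (when $\sfH<\infty$; otherwise the inequality is trivial) to an optimal walk gives $\varphi_2(x_2)r_2\le\sfH([x_1,r_1],[x_2,r_2])-\varphi_1(x_1)r_1$, and the $1$-homogeneity of $\sfH$ together with the radial cone structure of $\Gamma$ makes the bound scale correctly in $(r_1,r_2)$, so the potentials multiply the masses linearly and the $+_o$ convention handles the ambiguous case when one potential is $\pm\infty$ and the corresponding mass is positive. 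Equation \eqref{eq:38} follows because if $([x_1,r_1],[x_2,r_2])\in\Gamma$ then $(x_1,x_2)\in S_1\times S_2$ and one can both append and prepend this arc to optimal walks, squeezing the inequality to an equality; radial $1$-homogeneity of $\sfH$ and of $\Gamma$ lets us pass from $([x_1,r_1],[x_2,r_2])$ to the normalized representatives and back.

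\textbf{Step 2: integrability and duality (part (2)).} Given the Borel functions $\varrho_i\in L^1_+(\sfX_i,\mu_i)$ satisfying \eqref{ss22:cond1}, I would first use \eqref{eq:37} with $r_1=\varrho_1(x_1)$, $r_2=1$ to get $\varphi_1(x_1)\varrho_1(x_1)+_o\varphi_2(x_2)\le\sfH([x_1,\varrho_1(x_1)],[x_2,1])$; integrating in $x_1$ against $\mu_1$ over the set of positive $\mu_1$-measure where the right-hand side is $\mu_1$-integrable shows that for such $x_2$, $\varphi_2(x_2)\le$ (something finite) $-\,\int\varphi_1\varrho_1\,\de\mu_1$, giving an upper bound on $\varphi_2$ on a set of positive measure, and symmetrically an upper bound on $\varphi_1$; combined with the pointwise lower bounds from Step 1 (which are $\mu_i$-a.e.\ finite since $\mu_i$ is concentrated on $S_i$) and a standard Fubini/measurable-selection argument, this forces $\varphi_i\in\mathcal L^1(\sfX_i,\mu_i)$. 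Next, $\int_{\pc}\sfH\,\de\aalpha=\int\big(\varphi_1(x_1)r_1+\varphi_2(x_2)r_2\big)\,\de\aalpha$ by \eqref{eq:38} (valid $\aalpha$-a.e.\ since $\aalpha$ is concentrated on $\Gamma$), and $\int(\varphi_1\circ\sfx_1)\sfr_1\,\de\aalpha+\int(\varphi_2\circ\sfx_2)\sfr_2\,\de\aalpha=\int\varphi_1\,\de\mu_1+\int\varphi_2\,\de\mu_2$ by the definition of the homogeneous marginals $\f h^1_i$; hence $\int_{\pc}\sfH\,\de\aalpha=\int\varphi_1\,\de\mu_1+\int\varphi_2\,\de\mu_2<+\infty$. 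Finally, by the easy inequality in the proof of Theorem \ref{ss22:teo:duality}, for any $\aalpha'\in\f H^1(\mu_1,\mu_2)$ and any pair satisfying \eqref{eq:37} (the relaxed constraint, which—by a truncation/approximation argument using $+_o$ and monotone convergence—still yields $\int(\varphi_1\circ\sfx_1)\sfr_1+(\varphi_2\circ\sfx_2)\sfr_2\,\de\aalpha'\le\int\sfH\,\de\aalpha'$) we get $\int\varphi_1\,\de\mu_1+\int\varphi_2\,\de\mu_2\le\int\sfH\,\de\aalpha'$; taking the infimum over $\aalpha'$ gives $\le\m_\sfH(\mu_1,\mu_2)$, and equality with $\int\sfH\,\de\aalpha$ shows $\aalpha$ is optimal and the $\varphi_i$ solve the (relaxed) dual problem \eqref{ss22:eq:dualformulation}.

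\textbf{Main obstacle.} The delicate point is Step 1: establishing that the walk-infimum $\varphi_1$ is \emph{finite} (not just $>-\infty$) $\mu_1$-a.e., which is exactly where $\sfH$-connectedness of $\Gamma$ and the radial-interior hypothesis \eqref{eq:36} (or \eqref{eq:11}) enter, via Proposition \ref{ss22:prop:Subdiff}; one must carefully combine the connectedness (to reach $[x_1,1]$ from $\bar\fy_1$ by some $\arc_{\sfH,\Gamma_\fo}$-walk with finite $\Theta$, using that $\Theta(P)<\infty$ iff $P$ is an $\mathcal A_{\sfH,\Gamma}$-walk) with the subdifferential bounds (to keep the infimum bounded below) and with Borel measurability of the resulting functions (a measurable-selection issue, since the infimum is over an uncountable family of walks—handled by restricting to walks of bounded length with endpoints in a $\sigma$-compact set, as in \cite{beiblo,biancara}). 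The $+_o$ bookkeeping in \eqref{eq:37} when potentials take infinite values on $\sfX_i\setminus S_i$ is routine but must be done with care so that the inequality is genuinely compatible with the constraint in \eqref{ss22:eq:dualformulation} after truncation.
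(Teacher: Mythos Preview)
Your overall strategy matches the paper's: construct $\varphi_1$ as a walk-cost infimum, obtain $\varphi_2$ by an $\sfH$-transform, then derive integrability from \eqref{ss22:cond1} and conclude by duality. There is, however, a genuine gap in Step~1.

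The definition $\varphi_1(x_1)=\inf\{\Theta(P):P\in\mathcal P(\bar\fy_1\to[x_1,1])\}$ does not by itself yield \eqref{eq:38}. The difficulty is precisely what you gloss over with ``the $1$-homogeneity of $\sfH$ makes the bound scale correctly'': if $P$ is a walk from $\bar\fy_1$ to $[x_1,1]$ then $\alpha P$ is a walk from $\alpha\bar\fy_1$ to $[x_1,\alpha]$, so scaling moves the \emph{starting} point away from $\bar\fy_1$. Concretely, to prove equality on $\Gamma$ you must compare the infimum at $[x_1',1]$ with that at $[x_1,1]$ after appending the $\Gamma$-arc $([x_1,r_1],[x_2,r_2])$ and the $\mathcal A_\sfH$-arc $([x_2,r_2],[x_1',\alpha r_2])$; the resulting walk lands at $[x_1',\alpha r_2]$, and rescaling to reach $[x_1',1]$ sends the initial point to $(r_1/(\alpha r_2))\bar\fy_1$, which is not admissible in your infimum. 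The paper resolves this by defining $\Phi$ on all of $\piGamma1$ as
\[
\Phi(\fy_1)=\inf\Big\{\Theta(P)+\bar\sfa\,|r_1-\lambda\bar r_1|:P\in\mathcal P(\lambda\bar\fy_1\to\fy_1),\ \lambda>0\Big\},
\]
taking the infimum over \emph{all} rescalings $\lambda\bar\fy_1$ of the base point together with a penalty $\bar\sfa\,|r_1-\lambda\bar r_1|$ calibrated by Proposition~\ref{ss22:prop:Subdiff}. This makes $\Phi(\alpha\fy_1)=\alpha\Phi(\fy_1)$ immediate, and then $\zeta_1(x_1):=\Phi([x_1,1])$ has the correct behaviour. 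Relatedly, your use of claim~(2) of Proposition~\ref{ss22:prop:Subdiff} for the lower bound is misapplied: that claim gives $\Theta(P)\ge-\sfa|r_1'-r_1|$ only for walks between points with the \emph{same} $x$-coordinate. The paper obtains $\Phi(\fy_1)>-\infty$ by first concatenating with a reverse walk $\bar P\in\mathcal P(\fy_1\to\bar\fy_1)$ (from $\sfH$-connectedness), so that the combined walk runs from $\lambda\bar\fy_1$ to $\bar\fy_1$, to which claim~(2) then applies.

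A smaller issue in Step~2: your integrability argument is circular. You integrate $\varphi_1(x_1)\varrho_1(x_1)+_o\varphi_2(x_2)\le\sfH(\cdots)$ in $x_1$, but this presupposes $\varphi_1\varrho_1\in L^1(\mu_1)$. The paper uses \eqref{ss22:cond1} \emph{pointwise}: since $\mu_1$ is concentrated on $S_1$, the first line of \eqref{ss22:cond1} provides a single $x_1\in S_1$ with $\int_{\sfX_2}\sfH([x_1,\varrho_2(x_2)],[x_2,1])\,\de\mu_2(x_2)<\infty$; inserting $r_1=\varrho_2(x_2)$, $r_2=1$ into \eqref{eq:37} gives $\varphi_2^+(x_2)\le\sfH([x_1,\varrho_2(x_2)],[x_2,1])+\varphi_1(x_1)^-\varrho_2(x_2)$, whence $\varphi_2^+\in L^1(\mu_2)$ because $\varphi_1(x_1)$ is a fixed real number and $\varrho_2\in L^1(\mu_2)$. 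Full integrability then follows from \eqref{eq:38} and the homogeneous-marginal identity.
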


\begin{proof}
 We set
  \[
    \Gamma_{\f o}:=\Gamma\setminus \{(\f o_1,\f o_2)\},\quad
    \f C_{\f o}[\sfX_1,\sfX_2]:=\pc\setminus \{(\f o_1,\f o_2)\}.
\]
  We first construct a candidate function $\Phi:\f C[\sfX_1] \to \R$
  inducing a potential $\zeta_1$ via the identity
  $\Phi([x_1,r_1])=\zeta_1(x_1)r_1$.

  \medskip\noindent
  \textbf{Step 1 (Definition of $\Phi$ in $\piGamma 1$)}. We pick a point $(\bar{\f y}_1,\bar {\f y}_2) = ([\bar{x}_1, \bar{r}_1], [\bar{x}_2, \bar{r}_2]) \in \Gamma_\fo$ such that
$(\bar{\f y}_1,\bar {\f y}_2) \in
\rintt{\rmD(\sfH)}$ (condition \eqref{eq:36}) or $\bar \fy_2=\fo_2$
(condition \eqref{eq:11}).
We set (see \eqref{eq:support})
\[
  \bar \sfa:=\sfa(\bar \fy_1,\bar \fy_2).
\]

We define $\Phi: \f{C}[\sfX_1]\to [-\infty, + \infty]$ as
\begin{equation}
  \label{eq:29}
  \Phi(\f{y}_1):=
  \inf \Big\{ \Theta(P)+\bar \sfa
  |r_1-\lambda \bar r_1|: P\in \mathcal P(\lambda\bar \fy_1\to\fy_1),
  \lambda>0
  \Big\},
  \quad \f{y}_1 =[x_1,r_1]\in \f{C}[\sfX_1]. 
\end{equation}
Since $\Gamma$ is $\sfH$-connected and $\lambda \bar{\f y}_1 \in \Gamma_1$,
for every $\fy_1\in \piGamma1$ we can find
$P\in \mathcal P(\lambda \bar \fy_1\to\fy_1)$ with finite cost $\Theta(P)$
so that
\[
  \Phi(\fy_1)<+\infty\quad\text{for every }\fy_1\in \piGamma1.
\]
In particular, choosing $\fy_1:=[\bar x_1,r]=\alpha \bar \fy_1$, $r>0$, 
with $\alpha:=r/\bar r_1$, $\lambda=\alpha$, 
and $P=(\alpha \bar \fy_1,\alpha \bar \fy_2,\alpha \bar \fy_1)$ we have $\Theta(P)=0$ and we immediately get
  \begin{equation}
    \Phi([\bar{x}_1, r]) \le 0\quad
    \text{for every }r> 0.\label{eq:9}
\end{equation}
  
By Proposition \ref{ss22:prop:Subdiff},
every walk $P\in \mathcal P(\lambda\bar \fy_1 \to\bar\fy_1)$, $\lambda>0$
satisfies
\begin{displaymath}
  \Theta(P)
  \ge -\mathsf a(\bar {\f y}_1,\bar {\f
        y}_2)|\lambda \bar r_1-\bar r_1| \quad \text{ for every } \lambda>0,
\end{displaymath}
so that \eqref{eq:29} yields $\Phi(\bar \fy_1)\ge0$;
combining with \eqref{eq:9} we obtain
\[
    \Phi(\bar\fy_1) =0.
\]
Since for every $\f y_1 \in \f C[\sfX_1]$, $\lambda>0$, $P=(\lambda \bar\fy_1, \fy^1, \cdots, \fy^{N-1} ,\fy_1)
\in \mathcal P(\lambda\bar\fy_1 \to\fy_1)$
and every $\alpha>0$ we have
$P_\alpha:=(\alpha\lambda\bar\fy_1,\alpha \fy^1 ,\cdots,\alpha \fy^{N-1},\alpha\fy_1)\in
\mathcal P(\alpha\lambda\bar \fy_1,\alpha\fy_1)$
with
\begin{displaymath}
  \Theta(P_\alpha)+\bar \sfa|\alpha r_1-\alpha\lambda \bar r_1|=
  \alpha\Big(\Theta(P)+\bar\sfa|r_1-\lambda \bar r_1|\Big)
\end{displaymath}
(since $\Gamma$ is a radial cone and $\sfH$ is positively $1$-homogeneous)
we get that
\[
  \Phi(\alpha \f{y}_1) = \alpha\Phi(\f{y}_1) \quad \text{ for
      every } \f{y}_1 \in \f{C}[\sfX_1],\  \alpha >0. 
\]
If now $\fy_1\in \piGamma1$ and $\lambda>0$,
we use the $\sfH$-connectedness of $\Gamma$ to find a $\mathcal
A_{\sfH,\Gamma}$-walk
$\bar P\in \mathcal P(\fy_1\to \bar \fy_1)$.
If $P\in \mathcal P(\lambda \bar \fy_1\to\fy_1)$, $P+\bar P$ is a walk
in $\mathcal P(\lambda\bar\fy_1\to\bar\fy_1)$ 
so that \eqref{eq:22} and  
the identity $\sfa(\lambda \bar \fy_1,\lambda\bar \fy_2) =
\sfa(\bar \fy_1,\bar \fy_2) =\bar \sfa$
yield
\begin{displaymath}
  -\bar\sfa|\lambda-1|\bar r_1\le \Theta(P+\bar P)=
  \Theta(P)+\Theta(\bar P)
\end{displaymath}\
so that 
\[
  \Phi(\fy_1)\ge -\Theta(\bar P) - \sfa |r_1-\bar{r}_1| >-\infty.
\]
Arguing as in \cite[Step 1 of Therem 6.14]{AGS08} and using the fact that $\Gamma$ is a $\sigma$-compact
set, we can see that
$\Phi$ is a Borel function. 

Let us now consider $\f y_1,\f{y}_1' \in \piGamma1$,
$\f y_2\in \f C[\sfX_2]$ such that 
$(\f{y}_1, \f{y}_2) \in {\Gamma}$,
and any walk $P\in \mathcal P(\lambda \bar \fy_1 \to \fy_1)$;
if we consider the walk $P'=P+(\fy_1,\fy_2,\fy_1')$,
$P'$ is an admissible walk joining $\lambda\bar \fy_1$ to $\fy_1'$
so that 
\[
\Phi(\f{y}_1') \le \sfH(\f{y}_1', \f{y}_2) - \sfH(\f{y}_1, \f{y}_2) +
\Theta(P)+\bar\sfa|r_1-\lambda
\bar r_1|
.
\]
Passing then to the infimum among all the walks $P\in \mathcal P
(\lambda \bar \fy_1,\fy_1)$, $\lambda >0$, 
we deduce that
\[
   \Phi(\f y_1')\le
  \Phi(\f  y_1)+\sfH(\f y_1',\f y_2)-\sfH(\f y_1,\f y_2)\quad
  \text{for every }\f y_1'\in \piGamma1,
  \ (\f y_1,\f y_2)\in \Gamma.
\]
Restricting $\Phi$ to $\piGamma1$ (notice that $\Gamma_i$ is Borel since $\Gamma$ is $\sigma$-compact) we have proven that there exists a Borel
function $\Phi:\piGamma1 \to \R $ such that
\begin{alignat}{2} \label{eq:la18}
    \Phi([\bar{x}_1, r_1]) &=0 \quad &&\quad \text{ for every } r_1 >0,\\
    \label{ss22:eq:phi5} \Phi(\lambda \f{y}_1) &= \lambda
    \Phi(\f{y}_1) \quad &&\quad \text{ for every } \f{y}_1 \in
    \piGamma1, \                     
    \lambda >0,\\
    \label{ss22:eq:phi7}  \Phi(\f y_1')&\le
  \Phi(\f  y_1)+\sfH(\f y_1',\f y_2)-\sfH(\f y_1,\f y_2)
  \quad &&\quad\text{ for every } \f{y}_1' \in \piGamma1, \ (\f{y}_1, \f{y}_2) \in {\Gamma}.
\end{alignat}

\quad \\
\textbf{Step 2 (Definition of $\Psi$)}. We define $\Psi: \piGamma2 \to [-\infty, + \infty]$ as
\[ \Psi(\f{y}_2):= \inf_{\f{y}_1 \in \piGamma1} \{ \sfH(\f{y}_1, \f{y}_2) - \Phi(\f{y}_1) \}, \quad \f y_2 \in \Gamma_2. \]
It is clear from the definition that 
\[ \Phi(\f y_1) + \Psi(\f{y}_2) \le \sfH(\f{y}_1, \f{y}_2) \quad
  \text{ for every } (\f{y}_1, \f{y}_2) \in \piGamma1\times \piGamma2. \]
Since $\Phi$ is  real valued on $\piGamma1$  and for every $\fy_2\in \piGamma2$ there
exists
$\fy_1\in \piGamma1$ such that $(\fy_1,\fy_2)\in \Gamma$ so that
$\sfH(\fy_1,\fy_2)<\infty$,
we immediately get $\Psi(\fy_2)<\infty$ for every $\fy_2\in
\piGamma2$. 
By the definition of $\Psi$, the fact that $\Gamma$ is a radial cone,
the radial $1$-homogeneity of $\sfH$ and of $\Phi$ given by
\eqref{ss22:eq:phi5},
it also easily follows that
\[ \Psi(\lambda \f{y}_2) = \lambda \Psi(\f{y}_2) \quad \text{ for
    every } \f{y}_2 \in \piGamma2, \ 
  \lambda >0.\]
The inequality in \eqref{ss22:eq:phi7} immediately yields
\[ \Psi(\f{y}_2) + \Phi(\f{y}_1) =\sfH(\f{y}_1, \f{y}_2)
  \quad \text{ for every } (\f{y}_1, \f{y}_2) \in {\Gamma} \]
and also gives that $\Psi(\f{y}_2) \in \R$ for every $\f{y}_2 \in
\piGamma 2$.
The Borel measurability of $\Psi$ can be checked as in \cite[Step 2 of
Theorem 6.14]{AGS08}.

Summarizing the second step, we have proven that there exists a Borel function $\Psi:\piGamma2 \to \R$ such that
\begin{alignat}{2}
    \label{ss22:eq:psi1} \Phi(\fy_1)+\Psi(\fy_2) &\le \sfH(\fy_1,\fy_2) \quad
    &&\quad \text{ on } \piGamma1\times \piGamma2,\\
     \label{ss22:eq:psi2} \Psi(\lambda \f{y}_2) &= \lambda \Psi(\f{y}_2) \quad &&\quad \text{ for every } \f{y}_2 \in \piGamma2, \ \lambda >0,\\
     \label{ss22:eq:psi5} \Phi(\f{y}_1)+\Psi(\f{y}_2)  &=\sfH(\f{y}_1, \f{y}_2) \quad &&\quad \text{ for every } (\f{y}_1, \f{y}_2) \in {\Gamma}.
\end{alignat}
\quad \\
\textbf{Step 3 (Definition of $\varphi_1$ and $\varphi_2$)}. First of all notice that $S_i$ is Borel since $\Gamma$ is $\sigma$-compact. The following chain of inequalities shows that $\mu_i$ is concentrated on $S_i$:
 \begin{align*}
 \mu_i \left ( \sfX_i \setminus S_i \right ) &= \f{h}_i^1(\aalpha)\left ( \sfX_i \setminus \sfx_i \left(\Gamma \setminus \{ \f{y}_i=\f{o}_i\} \right) \right ) \\
 &=(\sfr_i \aalpha) \left (\sfx_i^{-1} \left ( \sfX_i \setminus \sfx_i \left(\Gamma \setminus \{ \f{y}_i=\f{o}_i\} \right) \right ) \right )\\
 &=(\sfr_i \aalpha) \left ( \pc \setminus \sfx_i^{-1} \left (\sfx_i \left(\Gamma \setminus \{ \f{y}_i=\f{o}_i\} \right) \right ) \right )\\
 &\le (\sfr_i \aalpha) \left ( \pc \setminus \left(\Gamma \setminus \{ \f{y}_i=\f{o}_i\} \right) \right )\\
 &\le (\sfr_i \aalpha) \left (\pc \setminus \Gamma \right ) + (\sfr_i \aalpha) \left ( \Gamma \cap \{ \f{y}_i=\f{o}_i\} \right )\\
 &\le \int_{\pc \setminus \Gamma}\sfr_i \de \aalpha + \int_{\{\sfr_i=0\}} \sfr_i \de \aalpha\\
 &=0.
\end{align*}
Since we have assumed that the two measures $\mu_i$ are positive, this also shows that $S_i \ne \emptyset$, $i=1,2$.
Let us
define $\zeta_i: S_i\to \R$ as
\[ \zeta_1(x_1) := \Phi([x_1, 1]), \quad \zeta_2(x_2) :=
  \Psi([x_2, 1]), \quad x_i \in S_i= \mathsf x (\piGamma i\setminus \{\fo_i\}).\]
Notice that $\zeta_1, \zeta_2$ are Borel functions.
We claim that \begin{align}
                \label{ss22:eq:varphi1}\zeta_1(x_1)r_1+ \zeta_2(x_2)r_2
                &\le \sfH([x_1, r_1],[x_2, r_2]) \quad \text{ for
                  every } x_i\in S_i, r_i\ge 0,\\
                \label{ss22:eq:varphi2}\zeta_1(x_1)r_1+ \zeta_2(x_2)r_2
                &= \sfH([x_1, r_1],[x_2, r_2]) \quad \text{ if } ([x_1, r_1], [x_2,r_2]) \in {\Gamma}.
\end{align}
Notice that the product $\zeta(x_i)r_i$ in \eqref{ss22:eq:varphi2} has to be intended equal to $0$ in case $x_i \notin S_i$ and $r_i=0$.
We distinguish four cases:
\begin{enumerate}[label=(\roman*)]
    \item if $r_1=r_2=0$, both sides in \eqref{ss22:eq:varphi1} and \eqref{ss22:eq:varphi2} are equal to $0$;
    \item if $r_1,r_2\neq 0$, then $\Phi([x_1,
      r_1])=r_1\zeta_1(x_1)$ and $\Psi([x_2,
      r_2])=r_2\zeta_2(x_2)$ by \eqref{ss22:eq:phi5}
      and \eqref{ss22:eq:psi2}, 
      so that \eqref{ss22:eq:varphi1} corresponds to
      \eqref{ss22:eq:psi1} and
      \eqref{ss22:eq:varphi2}
      corresponds to
      \eqref{ss22:eq:psi5}; 
    \item if $r_1>0=r_2$,  then $[x_2, r_2]= \f o_2 $ and
      $\zeta_2(x_2)r_2=0$:
      for \eqref{ss22:eq:varphi1} we can just pass to the limit as
      $r_2\downarrow0$
      in the same inequality, using the fact that
      $r_2\mapsto \mathsf H([x_1,r_1],[x_2,r_2])$ is continuous at $0$. 
    Regarding \eqref{ss22:eq:varphi2}, observe that saying that $([x_1,r_1],\f o_2) \in \Gamma$ gives in particular that $\f o_2 \in \Gamma_2$ so that \eqref{ss22:eq:psi2} forces $\Psi(\f o_2)=0$. Then \eqref{ss22:eq:varphi2}, is exactly \eqref{ss22:eq:psi5} with $\fy_2=\fo_2$;  
    \item the case $r_1=0<r_2$ is completely analogous to the previous
      one also using \eqref{eq:la18}.
\end{enumerate}
Now we can use the $\sfH$-transform (compare also with Definition \ref{def:htr}) to extend the potentials $(\zeta_1, \zeta_2)$ to the whole spaces $\sfX_i$ while keeping intact the relations in \eqref{ss22:eq:varphi1} and \eqref{ss22:eq:varphi2}: we define $\varphi_i: \sfX_i \to \R \cup \{\pm \infty\}$ as
\begin{align} \label{eq:defphi1}
    \varphi_2(x_2) &:= \inf_{x_1 \in S_1, r_1 \ge 0} \left \{ \sfH([x_1,r_1],[x_2,1])-\zeta_1(x_1)r_1 \right \}, \quad x_2 \in \sfX_2,\\ \label{eq:defphi2}
    \varphi_1(x_1) &:= \inf_{x_2 \in \sfX_2, r_2 \ge 0} \left \{ \sfH([x_1,1],[x_2,r_2])-\varphi_2(x_2)r_2 \right \}, \quad x_1 \in \sfX_1,
\end{align}
with the convention that $\pm \infty \cdot 0 =0$ and $\sfH([x_1,1],[x_2,r_2])-\varphi_2(x_2)r_2 = + \infty$ if $\sfH([x_1,1],[x_2,r_2])=+\infty$, $\varphi_2(x_2)r_2=+\infty$.
It is not difficult to check that $\varphi_i$ are Borel function, $\varphi_i(x_i) \in \R$ if $x_i \in S_i$ and 
\begin{align} \label{ss22:eq:varphi11}
        \varphi_1(x_1)r_1+_o\varphi_2(x_2)r_2&\le
        \sfH([x_1,r_1],[x_2,r_2])\quad\text{for every }x_i\in \sfX_i,\
                                             r_i\ge0,\\ \label{ss22:eq:varphi12}
        \varphi_1(x_1)r_1+\varphi_2(x_2)r_2&=
        \sfH([x_1,r_1],[x_2,r_2])\quad\text{if
                                             }([x_1,r_1],[x_2,r_2])\in \Gamma.
      \end{align}
Notice that on $\Gamma$ there is no need to use $+_o$ since either $x_i \in S_i$ (hence $\varphi(x_i) \in \R$) or $r_i=0$ (hence $\varphi(x_i)r_i=0$).
\quad \\
\textbf{Step 4 (Conclusion)}. Since $\mu_1$ is concentrated on $S_1$, using \eqref{ss22:cond1}, we can find some $x_1 \in S_1$
such that $\int_{S_2} \sfH([x_1,
\varrho_2(x_2)x],[x_2,1])\de \mu_2(x_2)< +\infty$;
by \eqref{ss22:eq:varphi11} we get that 
\[ \varphi_1(x_1)\varrho_2(x_2)+\varphi_2(x_2) \le \sfH([x_1, \varrho_2(x_2)],[x_2,1]) \quad \text{ for every } x_2 \in S_2\]
so that 
\[ \varphi_2^+(x_2) \le \sfH([x_1, \varrho_2(x_2) ],[x_2, 1])+\varphi_1(x_1)^-\varrho_2(x_2) \quad \text{ for every } x_2 \in  S_2,\]
where we denoted by $u^+$ and $u^-$ the positive and negative part
respectively of a real number $u$.
This gives that $\varphi_2^+ \in \mathcal{L}^1(\sfX_2, \mu_2)$. The argument for $\varphi_1$ is the same. We can thus conclude that 
\begin{align*}
    \int_{\pc} \sfH \de \aalpha &= \int_{\Gamma} \sfH \de \aalpha\\
    & =\int_{\Gamma} (\varphi_1(x_1)r_1 + \varphi_2(x_2)r_2 ) \de \aalpha([x_1,r_1],[x_2,r_2]) \\
    & = \int_{\sfX_1} \varphi_1 \de \mu_1 + \int_{\sfX_2} \varphi_2 \de \mu_2,
\end{align*}
showing that $\varphi_i \in \mathcal{L}^1(\sfX_i, \mu_i)$. Moreover, if $\tilde{\aalpha} \in \f{H}^1(\mu_1, \mu_2)$, setting $\tilde{\alpha}_i:=\pi^i_\sharp \tilde{\aalpha}$, we have
\[ \int_{\f{C}[\sfX_i]} (\varphi_i \sfr)^{\pm} \de \tilde{\alpha}_i = \int_{\f{C}[\sfX_i]} \varphi_i^{\pm} \sfr \de \tilde{\alpha}_i = \int_{\sfX_i} \varphi_i^{\pm} \de \mu_i \in \R \Rightarrow \int_{\f{C}[\sfX_i]} \varphi_i \sfr \de \tilde{\alpha}_i = \int_{\sfX_i} \varphi_i \de \mu_i \]
so that $(\varphi_i \circ \sfx_i) \sfr_i \in L^1(\pc, \tilde{\aalpha})$ and thus $(\varphi_1 \circ \sfx_1) \sfr_1 + (\varphi_2 \circ \sfx_2) \sfr_2 \in L^1(\pc, \tilde{\aalpha})$. We deduce that the everywhere defined function
\[ ([x_1,r_1], [x_2,r_2]) \mapsto \varphi_1(x_1) r_1 +_o \varphi_2(x_2)r_2\]
is a representative of the $L^1(\pc, \tilde{\aalpha})$-equivalence class $(\varphi_1 \circ \sfx_1) \sfr_1 + (\varphi_2 \circ \sfx_2) \sfr_2$ (notice that the set where the sum would be undefined has null $\tilde{\aalpha}$-measure). Hence, for every $\tilde{\aalpha} \in \f{H}^1(\mu_1, \mu_2)$, we have
\begin{align*}
\int_{\pc} \sfH \de \tilde{\aalpha} &\ge \int_{\pc} (\varphi_1(x_1)r_1 +_o \varphi_2(x_2)r_2 ) \de \tilde{\aalpha}([x_1,r_1],[x_2,r_2]) \\
& = \int ((\varphi_1 \circ \sfx_1) \sfr_1 + (\varphi_2 \circ \sfx_2) \sfr_2) \de \tilde{\aalpha}\\
& = \int_{\f{C}[\sfX_1]} \varphi_1 \sfr \de \tilde{\alpha}_1 + \int_{\f{C}[\sfX_1]} \varphi_2 \sfr \de \tilde{\alpha}_2 \\
& = \int_{\sfX_1} \varphi_1 \de \mu_1 + \int_{\sfX_2} \varphi_2 \de \mu_2 \\
&= \int_{\Gamma} (\varphi_1(x_1)r_1 + \varphi_2(x_2)r_2 ) \de \aalpha ([x_1,r_1],[x_2,r_2]) \\
&= \int_{\pc} \sfH \de \aalpha,
\end{align*}
showing both that $\aalpha$ is optimal and that 
\[ \int_{\sfX_1}\varphi_1 \de \mu_1 + \int_{\sfX_2} \varphi_2 \de \mu_2 = \m_\sfH(\mu_1, \mu_2).
\qedhere\]
\end{proof}

\begin{remark}[$\supp{\mu_i}$ and $S_i$] \label{rem:si}
In the same setting of Theorem \ref{ss22:suff1}, since $\mu_i$ is concentrated on $S_i$, we clearly have that $\supp{\mu_i} \subset \overline{S_i}$. Moreover, we can assume that $S_i \subset \supp{\mu_i}$ since, up to defining 
\[ \Gamma':= \Gamma \cap \hmg{\supp{\aalpha}},\]
we see that $\aalpha$ is still concentrated on $\Gamma'$ and $\Gamma'$ is also a $\sigma$-compact radial cone contained in $\rmD(\sfH)$ which is $\sfH$-cyclically monotone and $\sfH$-connected. It is also easy to check that $\sfx(\tilde{\Gamma}_i \setminus \{ \f{0}_i\}) \subset \supp{\mu_i}$. In this case we thus have $S_i \subset \supp{\mu_i}=\overline{S_i}$.
\end{remark}
\begin{remark}[The case of $\sfH$ finite]\label{rem:hfin} In the same setting of Theorem \ref{ss22:suff1}, in case the cost function $\sfH$ does not attain the value $+\infty$, it is easy to check that the potentials $(\varphi_1, \varphi_2)$ defined via the $\sfH$-transform of the pair ($\zeta_1, \zeta_2)$ as in \eqref{eq:defphi1} and \eqref{eq:defphi2} cannot attain the value $+\infty$ so that the use of $+_o$ is not needed in \eqref{eq:37}.
\end{remark}

We conclude this section showing that, under suitable additional assumptions on $\sfH$, an optimal plan $\aalpha$ must me induced by a transport-growth map as in Definition \ref{def:tgmap} in the sense that
\[ \aalpha = ([\text{id}_{\R^d}, 1],[\mathsf{T}, \mathsf{g}])_\sharp \mu_1,\quad
 \mu_2=(\mathsf T,\mathsf g)_\star \mu_1.\]
The proof is based on the classical approach to the existence of optimal transport maps (see e.g.~\cite[Theorem 6.2.4]{AGS08}): one shows via the existence of optimal potentials as in Theorem \ref{ss22:suff1} that the set on which $\aalpha$ is concentrated is a graph. Condition (1) below is used to guarantee that the optimal potential $\varphi_1$ is approximately differentiable at a.e.~point, while condition (3) is the analogue of the classical twist condition (see e.g.~\cite{twist}). On the other hand condition (2) (cp.~with the condition in \eqref{ass:1})  is used to prevent that $\f o_1$ belongs to the first projection of the support of an optimal plan $\aalpha$: this corresponds to the fact that some of the mass of $\mu_2$ does not come from $\mu_1$ but it is created: clearly in this case there cannot be any transport-growth map connecting $\mu_1$ to $\mu_2$.
\begin{theorem}[Existence of an optimal transport-growth map]\label{theo:themap}
Let $\sfX_1=\sfX_2=\R^d$, $\sfH$ be as in \eqref{ass:3} and finite, $\mu_i \in \meas_+(\R^d)$ with $\mu_i(\R^d)>0$ for $i=1,2$. Assume that condition \eqref{ss22:cond1} is satisfied, $\mu_1 \ll \mathcal{L}^d$ and that $\m_\sfH(\mu_1, \mu_2) <+\infty$. Assume in addition that:
\begin{enumerate}
    \item \label{item1} $\sfH$ is partially differentiable w.r.t.~$x_1,r_1$ in 
    $(\f C(\R^d)\setminus \{\f o_1\})
    \times (\f C(\R^d)\setminus \{\f o_2\})$
    with continuous partial derivatives;
    \item \label{item3a}for every $x_2 \in \supp{\mu_2}$ there exist $x_1 \in \supp{\mu_1}$ and $\eps >0$ such that $\mathrm{B}(x_1, \eps) \times \{x_2\} \subset A_\sfH$ where
        \[
            A_\sfH:= \left \{(x,y) \in \R^d \times \R^d : \partial_{r_1} \sfH(x,0;y,1)=\lim_{r_1 \downarrow 0} \frac{\sfH([x,r_1],[y,1])- \sfH(\f o_1, [y,1])}{r_1}=-\infty \right \};
        \]
    \item \label{item2} for every $x_1 \in \R^d$ the map 
    \[  [y, q] \mapsto \begin{pmatrix}
           \partial_{x_1} \sfH([x_1,1],[y,q]) \\
           \partial_{r_1} \sfH([x_1,1],[y,q])
         \end{pmatrix}
         \]
         is injective.
\end{enumerate}
Then there exist Borel maps $(\mathsf{T},\mathsf{g}):\R^d \to \R^d \times [0,+\infty)$ with $\sfg \in L^1(\sfX_1, \mu_1)$ such that 
\[ \mu_2 = (\mathsf T,\mathsf g)_\star \mu_1 = \mathsf{T}_\sharp (\mathsf{g} \mu_1),  
\quad \int_{\R^d} \sfH([x_1, 1], [\mathsf{T}(x_1),\mathsf g(x_1)]) \de \mu_1(x_1) = 
\m_\sfH(\mu_1, \mu_2)\]
given by the formula
\[
 (\mathsf{T}, \mathsf{g})(x_1)= (\partial_{x_1,r_1} \sfH)^{-1} (\tilde{\nabla} \varphi_1(x_1), \varphi_1(x_1)) \quad \text{for $\mu_1$-a.e.~}x_1 \in \R^d,
\]
where $(\partial_{x_1,r_1} \sfH)^{-1}$ denotes the inverse of the map in item \ref{item2} and $\tilde{\nabla}$ stands for the approximate differential.

\end{theorem}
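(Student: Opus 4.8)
The plan is to reduce the Monge attainment to the structure of an optimal homogeneous coupling via the relaxed duality of Theorem~\ref{ss22:suff1}. First, since $\m_\sfH(\mu_1,\mu_2)<+\infty$, by Theorem~\ref{ss22:prop:mh} and Proposition~\ref{prop:necee} there is an optimal reduced coupling $\aalpha\in\f H^1_o(\mu_1,\mu_2)$ concentrated on a $\sigma$-compact radial convex cone $\Gamma\subset\rmD(\sfH)$ which is $\sfH$-cyclically monotone; since $\sfH$ is finite, $\Gamma$ is automatically $\sfH$-connected (Remark~\ref{rem:common}), and conditions \eqref{eq:36} or \eqref{eq:11} are available (finiteness of $\sfH$ also means $\rintt{\rmD(\sfH)}$ is everything outside the axes, so $\Gamma\cap\rintt{\rmD(\sfH)}\neq\emptyset$ unless $\Gamma$ sits on the axes, a case one handles separately or via \eqref{eq:11}). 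Then Theorem~\ref{ss22:suff1}, using the hypothesis \eqref{ss22:cond1}, yields Borel potentials $\varphi_i\in\mathcal L^1(\sfX_i,\mu_i)$, real-valued on $S_i=\sfx(\pi^i(\Gamma)\setminus\{\fo_i\})$, with $\varphi_1(x_1)r_1+\varphi_2(x_2)r_2\le\sfH([x_1,r_1],[x_2,r_2])$ everywhere and equality on $\Gamma$, and $\aalpha$ is optimal.

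Next I would rule out creation of mass at the first marginal, i.e.\ show $\aalpha(\{\fy_1=\fo_1\})=0$, so that $\aalpha$ is concentrated on $\Gamma\cap\{\sfr_1>0\}$ and hence (after the dilation of Lemma~\ref{ss22:le:dialations} with $\vartheta=\sfr_1\vee 1$ as in Theorem~\ref{thm:attained}) on $\{\sfr_1=1\}$. The key point is condition \eqref{item3a}: if $\aalpha$ charged $\{\fy_1=\fo_1\}$ over a set of second marginal of positive $\mu_2$-mass, I would pick $x_2\in\supp\mu_2$ in that set, use \eqref{item3a} to find $x_1\in S_1$, $\eps>0$ with $\mathrm B(x_1,\eps)\times\{x_2\}\subset A_\sfH$, and derive a contradiction with the admissibility inequality $\varphi_1(x_1')r_1+\varphi_2(x_2)\le\sfH([x_1',r_1],[x_2,1])$: dividing by $r_1$ and letting $r_1\downarrow0$ forces $\varphi_1(x_1')=-\infty$ on $\mathrm B(x_1,\eps)$, contradicting $\varphi_1\in L^1(\mathrm B(x_1,\eps),\mu_1)$ together with $\mu_1\ll\mathcal L^d$ and $x_1\in\supp\mu_1$ (using Remark~\ref{rem:si} to arrange $S_1\subset\supp\mu_1$). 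Hence $\aalpha$ is concentrated on $\{\sfr_1=1\}$ and projects to a plan with first marginal $\mu_1$.

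Then comes the graph argument. Writing $\aalpha$ as concentrated on $\Gamma\cap\{\sfr_1=1\}$, for $\mu_1$-a.e.\ $x_1$ and every $([x_1,1],[x_2,r_2])\in\Gamma$ the function $x_1'\mapsto\varphi_1(x_1')-\inf_{[y,q]}\{\sfH([x_1',1],[y,q])-\varphi_2(y)q\}=0$ is maximized, so the map $x_1'\mapsto\sfH([x_1',1],[x_2,r_2])-\varphi_2(x_2)r_2-\varphi_1(x_1')$ attains its minimum value $0$ at $x_1$. Using $\mu_1\ll\mathcal L^d$, item \eqref{item1} (continuous partial derivatives of $\sfH$ in $x_1,r_1$ away from the vertices) and the $L^1$ representation formula for $\varphi_1$ as an infimum of functions with a uniform modulus, one shows $\varphi_1$ is approximately differentiable $\mu_1$-a.e.\ (classical argument, e.g.\ via locally semiconcave-type bounds or the countable-family structure as in \cite[Theorem 6.2.4]{AGS08}); at a point of approximate differentiability the first-order condition gives $\tilde\nabla\varphi_1(x_1)=\partial_{x_1}\sfH([x_1,1],[x_2,r_2])$ and $\varphi_1(x_1)=\partial_{r_1}\sfH([x_1,1],[x_2,r_2])$ (the latter from differentiating the equality $\varphi_1(x_1)r_1+\varphi_2(x_2)r_2=\sfH([x_1,r_1],[x_2,r_2])$ along $\Gamma$ in $r_1$ at $r_1=1$, recalling $\Gamma$ is a radial cone). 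By the injectivity in item \eqref{item2}, the pair $[x_2,r_2]=(\partial_{x_1,r_1}\sfH)^{-1}(\tilde\nabla\varphi_1(x_1),\varphi_1(x_1))$ is uniquely determined, so $\Gamma\cap\{\sfr_1=1\}$ is, $\aalpha$-a.e., the graph of the Borel map $(\mathsf T,\mathsf g)(x_1):=\f q\big((\partial_{x_1,r_1}\sfH)^{-1}(\tilde\nabla\varphi_1(x_1),\varphi_1(x_1))\big)$. Consequently $\aalpha=([\mathrm{id},1],[\mathsf T,\mathsf g])_\sharp\mu_1$, which by \eqref{eq:induced}--\eqref{eq:twmap} gives $\mu_2=(\mathsf T,\mathsf g)_\star\mu_1$ and $\int\sfH([x_1,1],[\mathsf T,\mathsf g])\,\de\mu_1=\int\sfH\,\de\aalpha=\m_\sfH(\mu_1,\mu_2)$; the finiteness of this integral forces $\mathsf g\in L^1(\mu_1)$.

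The main obstacle is the approximate differentiability step: $\varphi_1$ is only an $L^1$ Borel function obtained through a double $\sfH$-transform, so one must extract enough regularity to apply a first-order condition. The remedy is the standard one: $\varphi_1$ coincides $\mu_1$-a.e.\ with the restriction to the (full-measure) set $S_1\subset\supp\mu_1$ of a function expressible as an infimum of a family $\{\sfH([\cdot,1],[y,q])-\varphi_2(y)q\}$ with locally uniform Lipschitz/modulus bounds coming from \eqref{item1} on compact subsets of $\R^d\times(\f C(\R^d)\setminus\{\fo_2\})$ (one first truncates $q$ to a compact range using \eqref{ss22:cond1} and the boundedness arguments of Proposition~\ref{ss22:prop:propertiesphih}-type), hence $\varphi_1$ is locally the infimum of a bounded family of $C^1$ functions, thus approximately differentiable $\mathcal L^d$-a.e.\ by Rademacher/Stepanov together with Federer's theorem; a secondary technical point is checking measurability of $(\mathsf T,\mathsf g)$, which follows from continuity of $(\partial_{x_1,r_1}\sfH)^{-1}$ on its image (a consequence of \eqref{item1}, \eqref{item2} and invariance of domain) and Borel measurability of $\tilde\nabla\varphi_1$.
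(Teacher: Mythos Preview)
Your outline is essentially the paper's proof: obtain optimal $\aalpha$ concentrated on a $\sigma$-compact $\sfH$-cyclically monotone radial cone $\Gamma$, invoke Theorem~\ref{ss22:suff1} (with \eqref{ss22:cond1}) to get relaxed potentials $\varphi_i$, use assumption~\eqref{item3a} together with the equality $\varphi_2(x_2)=\sfH(\fo_1,[x_2,1])$ on $\Gamma$ to rule out $r_1=0$, and then combine approximate differentiability of $\varphi_1$ with the twist condition~\eqref{item2} to conclude that $\Gamma\cap\{\sfr_1=1\}$ is a graph. Two places are sharper in the paper and you should tighten them: (i) the approximate differentiability is obtained by defining $\varphi_1^R(x_1):=\inf_{(x_2,r_2)\in B(0,R)\times[0,R]}\{\sfH([x_1,1],[x_2,r_2])-\varphi_2(x_2)r_2\}$, which is locally Lipschitz by~\eqref{item1}, and showing $\{\varphi_1=\varphi_1^R\}$ exhausts $S_1$ as $R\to\infty$ --- your appeal to Proposition~\ref{ss22:prop:propertiesphih}-type truncation is not available here since that result needs compactness and~\eqref{ass:1}; (ii) the identity $\varphi_1(x_1)=\partial_{r_1}\sfH([x_1,1],[x_2,r_2])$ is \emph{not} obtained by ``differentiating along $\Gamma$'' (the radial-cone direction rescales both radii simultaneously and yields only homogeneity), but rather as the first-order condition at the interior minimizer $(x_1,r_1)$ of $(x_1',r_1')\mapsto \sfH([x_1',r_1'],[x_2,r_2])-\varphi_1(x_1')r_1'$, which follows from \eqref{eq:37tris}--\eqref{eq:38tris}. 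Also, $\mathsf g\in L^1(\mu_1)$ comes from $\int\mathsf g\,\de\mu_1=\mu_2(\R^d)<\infty$, not from finiteness of the $\sfH$-integral.
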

\begin{proof}
Since $\m_\sfH(\mu_1, \mu_2)<+\infty$ by assumption, Theorem \ref{ss22:prop:mh} gives the existence of an optimal $1$-homogeneous coupling $\aalpha \in \f{H}^1_o(\mu_1, \mu_2)$ concentrated on $\pc \setminus \{ (\f{0}_1, \f{0}_2)\}$. On the other hand, using Proposition \ref{prop:necee} we get that $\aalpha$ is concentrated on a $\sigma$-compact and $\sfH$-cyclically monotone radial convex cone $\tilde{\Gamma}$. As in Remark \ref{rem:si}, we can define
\[ \Gamma:= ( \hmg{\supp{\aalpha}} \cap \rmD(\sfH) \cap \tilde{\Gamma}) \setminus \{ (\f{0}_1, \f{0}_2)\} \]
and we obtain that $\Gamma$ is still a $\sigma$-compact radial cone which is $\sfH$-cyclically monotone, contained in $\rmD(\sfH)$ and such that $(\f{0}_1, \f{0}_2) \notin \Gamma$. Clearly $\aalpha$ is concentrated on $\Gamma$ and by Remark \ref{rem:si} we also have that $S_i:= \sfx(\Gamma_i \setminus \{\f{0}_i\})$ is such that $S_i \subset \supp{\mu_i} = \overline{S_i}$. We want to apply Theorem \ref{ss22:suff1} to deduce the existence of optimal potentials. To do so, we need to check that $\Gamma$ is $\sfH$-connected and that at least one of the two conditions \eqref{eq:36} or \eqref{eq:11} is satisfied: since $\sfH$ is everywhere finite and radially convex, then
\[\rintt{\rmD(\sfH)} = \f{C}_{\f o}[\R^d] \times \f{C}_{\f o}[\R^d] = \{ (\f y_1, \f y_2) \in \f{C}[\R^d] \times\f{C}[\R^d] : \f y_i \ne \f 0_i\}.\]
This in particular gives that at least one of the conditions \eqref{eq:36} or \eqref{eq:11} must be satisfied by $\Gamma$ (otherwise at least one between $\mu_1$ and $\mu_2$ must be the null measure, which is not allowed). Moreover, $\Gamma$ is clearly $\sfH$-connected since $\sfH$ is everywhere finite.

\medskip
\quad \\
By Theorem \ref{ss22:suff1} and Remark \ref{rem:hfin}, we can find Borel functions $\varphi_i:\sfX_i \to [-\infty, + \infty)$ such that $\varphi_i \in \mathcal{L}^1(\R^d, \mu_i)$, $\varphi_i(x_i) \in \R$ if $x_i \in S_i$ and 

\begin{align}
        \label{eq:37tris}
        \varphi_1(x_1)r_1+\varphi_2(x_2)r_2&\le
        \sfH([x_1,r_1],[x_2,r_2])\quad\text{for every }x_i\in \R^d,\
                                             r_i\ge0,\\  \label{eq:38tris}
        \varphi_1(x_1)r_1+\varphi_2(x_2)r_2&=
        \sfH([x_1,r_1],[x_2,r_2])\quad\text{if
                                             }([x_1,r_1],[x_2,r_2])\in \Gamma.
\end{align}
\medskip
\quad \\
We want to show that $\varphi_1$ defined as above is approximately differential $\mu_1$-a.e.~in $\R^d$. Let us define, for every $R>0$, the functions
\[ \varphi_1^R(x_1):= \inf_{(x_2,r_2) \in \mathrm{B}(0,R) \times [0,R]} \left \{ \sfH([x_1,1],[x_2,r_2])- \varphi_2(x_2)r_2\right \}, \quad x_1 \in \R^d.\]
By assumption \ref{item1} we deduce that $\varphi_1^R$ is locally Lipschitz and therefore differentiable $\mathcal{L}^d$-a.e. for $R$ sufficiently large. Moreover, by definition of $S_1$ and since $\Gamma$ is a radial cone, for every $x_1 \in S_1$, we can find $[x_2,r_2] \in \f{C}[\R^d]$ such that $([x_1,1],[x_2,r_2]) \in \Gamma$ so that \eqref{eq:38tris} holds for $([x_1,1],[x_2,r_2])$. This, together with \eqref{eq:37tris}, implies that for $\mu_1$-a.e.~$x_1 \in S_1$ the decreasing family of sets $\{ \varphi < \varphi_1^R\}$ has a $\mu_1$-negligible intersection, i.e.~$\mu_1$-a.e.~$x_1 \in S_1$ belongs to $\{\varphi_1 = \varphi_1^R\}$ for $R$ large enough. It follows that for $\mu_1$-a.e.~$x_1 \in S_1$ the following two conditions are satisfied: $x_1$ is a point of density $1$ of $\{ \varphi_1= \varphi_1^R\}$ for some $R$ and $\varphi_1^R$ is differentiable at $x_1$. We deduce that $\varphi_1$ is approximately differentiable at $x_1$ and $\tilde{\nabla}\varphi_1(x_1) = \nabla \varphi_1^R(x_1)$. Let us denote by $A_1 \subset S_1$ the full $\mu_1$-measure set where this happens.

\medskip
\quad \\
Let now $([x_1,r_1],[x_2,r_2]) \in \Gamma$ with $x_1 \in A_1$. Notice that the map
\[ (x_1',r_1') \mapsto \sfH([x_1',1],[x_2,r_2]) - \varphi_1(x_1')r_1'\] 
attains its minimum at $(x_1,r_1)$. Let us show that it must be that $r_1>0$: if, by contradiction $r_1=0$, then we get
    \[ \sfH([x_1',r_1'],[x_2,r_2])- \varphi_1(x_1')r_1' \ge \sfH(\f o_1, [x_2,r_2]) \text{ for every } (x_1',r_1') \in \R^d \times [0,+\infty).\]
Since $[x_2,r_2] \ne \f o_2$ then $x_2 \in S_2$; by assumption \ref{item3a} and since $\supp{\mu_1}$ cannot contain isolated points, we can find some $x_1' \in S_1$ such that $(x_1', x_2) \in A_\sfH$. We can divide this expression by $r_2>0$ so that
\[ -\infty = \lim_{r_1' \downarrow 0} \frac{ \sfH([x_1', r_1'/r_2],[x_2,1])-\sfH(\f o_1, [x_2,1])}{r_1'} \ge \frac{1}{r_2}\varphi_1(x_1'),\]
a contradiction with the fact that $\varphi_1(x_1') \in \R$.

\medskip
\quad \\
By differentiation we get that
\begin{align*}
\tilde{\nabla}\varphi_1(x_1)r_1 &= \partial_{x_1} \sfH([x_1,r_1],[x_2,r_2]),\\
\varphi_1(x_1)&= \partial_{r_1}\sfH([x_1,r_1],[x_2,r_2]).
\end{align*}
Using the radial $1$-homogeneity of $\sfH$ and the radial $0$-homogeneity of its subdifferential in $r_1$, we deduce that 
\begin{align*}
\tilde{\nabla}\varphi_1(x_1) &= \partial_{x_1} \sfH([x_1,1],[x_2,r_2/r_1]),\\
\varphi_1(x_1)&= \partial_{r_1}\sfH([x_1,1],[x_2,r_2/r_1]).
\end{align*}
By the invertibility assumption \ref{item2}, we deduce that 
\[ [x_2,r_2/r_1]=(\partial_{x_1,r_1} \sfH )^{-1}(\tilde{\nabla}\varphi_1(x_1), \varphi_1(x_1)).\]
Since the set $\Gamma \cap \sfx_1^{-1}(A_1)$ has full $\aalpha$ measure (this follows by the fact that $0 \notin \sfr_1(\Gamma)$), this concludes the proof of the theorem.
\end{proof}
\begin{remark}
    Theorem \ref{theo:themap} could also be proven under a different set of hypotheses, namely dropping the assumption that $\sfH$ is finite everywhere and substituting condition \ref{item3a} by imposing that $\sfH$ is finite on an open cone (cp.~with \eqref{ass:2}): there exists numbers $q_i \ge 0$ such that, setting
\[ U_{q_1 q_2} := \left \{ ([x_1, r_1], [x_2,r_2]) \in \pc \mid r_2 > r_1q_1, \, r_1 > r_2 q_2 \right \},\]
we have $\rmD(\sfH)=U_{q_1q_2}$. Clearly in this case the partial differentiability condition in \ref{item1} and the map in \ref{item2} should be restricted to $\{ x_1 \in \R^d : \sfH([x_1,1],[x_2,r_2])<+\infty\}$ and $\{ [y,q] \in \f{C}[\R^d] : ([x_1,1],[y,q]) \in \rmD(\sfH) \}$, respectively. The proof stays unchanged and we only have to observe that in this case:
\begin{enumerate}
    \item $\rintt{\rmD(\sfH)}=\rmD(\sfH)=U_{q_1 q_2}$ so that $\Gamma \subset \rintt{\rmD(\sfH)}$, hence condition \eqref{eq:36} is satisfied. By Remark \ref{rem:common} $\Gamma$ is also $\sfH$-connected;
    \item whenever $([x_1,r_1],[x_2,r_2]) \in \Gamma$, since $\Gamma \subset U_{q_1 q_2}$, we have $r_1>0$.
\end{enumerate}
\end{remark}

\begin{example}
The function $\sfH_{\mathsf{GHK}}$ in \eqref{eq:ghk} satisfies the hypotheses of Theorem \ref{theo:themap}: $\sfH_{\mathsf{GHK}}$ is a finite, radially $1$-homogeneous, convex and continuous function and condition \eqref{ss22:cond1} is satisfied with $\varrho_i=1$. Condition (1) is clearly satisfied. Condition (3) follows by
\[
    \frac{\sfH_{\mathsf{GHK}}([x_1,r_1],[x_2,1])-\sfH_{\mathsf{GHK}}(\f o_1, [x_2,1])}{r_1} = 1-\frac{2}{\sqrt{r_1}} e^{-|x_1-x_2|^2/2} \to - \infty \text{ as } r_1 \downarrow 0,
\]    
for every $x_1,x_2 \in \R^d$.
Condition (2) is easily seen to be satisfied since, if
\[
\begin{pmatrix}
           \partial_{x_1} \sfH_{\mathsf{GHK}}([x_1,1],[y,q]) \\
           \partial_{r_1} \sfH_{\mathsf{GHK}}([x_1,1],[y,q])
         \end{pmatrix} = 
\begin{pmatrix}
           \sqrt{q}e^{-|x_1-y|^2/2}(x_1-y) \\
           1-\sqrt{q} e^{-|x_1-y|^2/2}
         \end{pmatrix} 
         = 
\begin{pmatrix}
           v_0 \\
           c_0
         \end{pmatrix} 
\]
for some $(v_0,c_0) \in \R^d \times \R$, then
\[ [y,q]=\left [x_1-v_0(1-c_0)^{-1},(1-c_0)^2 e^{|v_0|^2(1-c_0)^{-2}}\right ] \text { if } c_0 \ne 1, \quad [y,q] =\f o_2 \text { if } c_0=1.
         \]
\end{example}

\appendix
\section{Metric and topological properties}\label{sec:7}
In this section we study a few metric and topological properties of the Unbalanced Optimal Transport functional assuming that the cost function is related to a metric on the cone. As usual we fix a completely regular space $\sfX$ and an exponent $p \in [1,+\infty)$. We consider a function $\dcc: \f{C}[\sfX, \sfX] \to [0,+\infty]$ 
such that
\begin{equation}\label{ass:4}
\begin{split}
    \dcc \text{ is an extended metric on } \f{C}[\sfX] \text{ such that } \dcc^p \text{ is radially $1$-homogeneous, proper and lsc.} 
\end{split}
\end{equation}
We consider the Unbalanced Optimal Transport functional induced by $\dcc^p$ on $\meas_+(\sfX)$ and the (extended) Wasserstein $p$-metric \cite{Villani09, AGS08} induced by $\dcc$ on $\prob(\f{C}[\sfX])$.

\begin{definition} Let $\dcc$ be as in \eqref{ass:4}. We define $\dist: \meas_+(\sfX) \times \meas_+(\sfX) \to [0, +\infty]$ and $\wass: \prob(\f{C}[\sfX]) \times \prob(\f{C}[\sfX]) \to [0, + \infty]$ as
\begin{align*}
\dist(\mu_1, \mu_2) := \m_{\dcc^p}^{1/p}(\mu_1, \mu_2) = \left ( \min \left \{ \int_{\pcs} \dcc^p \de \aalpha : \aalpha \in \f{H}^1(\mu_1, \mu_2) \right \} \right )^{1/p}, \quad  \mu_1, \mu_2 \in \meas_+(\sfX),\\
\wass(\alpha_1, \alpha_2) := \mathsf{OT}_{\dcc^p}^{1/p}(\mu_1, \mu_2)= \left ( \min \left \{ \int_{\pcs} \dcc^p \de \ggamma : \ggamma \in \Gamma(\alpha_1, \alpha_2) \right \} \right )^{1/p}, \quad  \alpha_1, \alpha_2 \in \prob(\f{C}[\sfX]),
\end{align*}
where $\mathsf{OT}_{\dcc^p}$ is as in Definition \ref{def:wass}.
Finally we set 
\begin{align*}
    \prob_{ \dcc,p}(\f{C}[\sfX]) &:= \left \{ \alpha \in \prob(\f{C}[\sfX] : \int_{\f{C}[\sfX]} \dcc^p(\f{y},\f{o}) \de \alpha(\f{y}) < +\infty \right \},\\
    \meas_{\dcc,p}(\sfX) &:= \left \{ \mu \in \meas_+(\sfX) : \int_\sfX \dcc^p([x,1],\f{o}) \de \mu(x) < + \infty \right \}.
\end{align*}
\end{definition}

\begin{remark}
If $\mu \in \meas_{\dcc, p}(\sfX)$ then every $\alpha \in \prob(\f{C}[\sfX])$ such that $\f{h}^1(\alpha)=\mu$ is an element of $\prob_{\dcc, p} (\f{C}[\sfX])$.
\end{remark}

\begin{remark} As explained in Remark \ref{rem:tp}, we limit our analysis to the case in which $\dcc^p$ is radially $1$-homogeneous. In case $\dcc^p$ is radially $q$-homogeneous, $q\in (1,+\infty)$,
we can argue as in Remark \ref{rem:tp} and consider its composition with $\mathsf T_q$, which  is a radially $1$-homogeneous metric, obtaining 
\[ \m_{\dcc^p \circ \mathsf{T}_q}(\mu_1, \mu_2) = \inf \left \{ \int_{\pcs} \dcc^p \de \aalpha : \aalpha \in \f{H}^q(\mu_1, \mu_2) \right \}. \]
As a relevant example, the case $\dcc=\mathsf{d}_{\f{C}}$ (see \eqref{ss22:eq:distcone}) and $p=q=2$ fits into this setting and the resulting functional $\dist$ is the Helliger-Kantorovich metric on non-negative measures introduced in \cite{LMS18}.
\end{remark}
 
The next two theorems are generalizations of \cite[Corollary 7.14, Theorem 7.15]{LMS18} and, although the proofs are similar, there are some modifications to be taken into account so that we report them. Similar results are also treated in \cite{DP20, DM22}. 
First of all we show that $\dist$ is indeed a metric.
 
 \begin{theorem}\label{ss22:thmmetric} Let $\dcc$ be as in \eqref{ass:4}. Then $(\meas_+(\sfX), \dist)$ is an extended metric space and $(\meas_{\dcc,p}(\sfX), \dist)$ is a metric space.
\end{theorem}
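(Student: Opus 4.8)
The plan is to verify the metric axioms for $\dist$, handling $(\meas_+(\sfX),\dist)$ with values in $[0,+\infty]$ first and then noting that on $\meas_{\dcc,p}(\sfX)$ all distances are finite. Symmetry of $\dist$ is immediate from the symmetry of the metric $\dcc$ together with the fact that $\aalpha\mapsto \sigma_\sharp\aalpha$ (where $\sigma$ swaps the two cone factors) is a bijection between $\f H^1(\mu_1,\mu_2)$ and $\f H^1(\mu_2,\mu_1)$ preserving the cost $\int \dcc^p\,\de\aalpha$. For the identity of indiscernibles: if $\mu_1=\mu_2=\mu$ then the reduced coupling induced by the identity transport-growth map, i.e.\ $\aalpha=([\mathrm{id}_\sfX,1],[\mathrm{id}_\sfX,1])_\sharp\mu$ (or more precisely $\f p_\sharp(\mu\otimes\delta_1)$ placed diagonally), lies in $\f H^1(\mu,\mu)$ and has cost $\int \dcc^p([x,1],[x,1])\,\de\mu=0$, so $\dist(\mu,\mu)=0$. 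Conversely, if $\dist(\mu_1,\mu_2)=0$, then by Theorem~\ref{ss22:prop:mh}(1) there is an optimal $\aalpha\in\f H^1(\mu_1,\mu_2)$ with $\int\dcc^p\,\de\aalpha=0$; since $\dcc^p\ge0$, $\aalpha$ is concentrated on $\{\dcc(\f y_1,\f y_2)=0\}=\{\f y_1=\f y_2\}$ (using that $\dcc$ is a metric on $\f C[\sfX]$), whence $\boldsymbol\sfx_\sharp\aalpha$ is concentrated on the diagonal and $\sfr_1=\sfr_2$ $\aalpha$-a.e.; then $\mu_1=\f h^1_1(\aalpha)=\f h^1_2(\aalpha)=\mu_2$.

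The triangle inequality is the substantive point. The plan is to use the gluing lemma together with the triangle inequality for $\dcc$ on the cone and Minkowski's inequality in $L^p$. Given $\mu_1,\mu_2,\mu_3\in\meas_+(\sfX)$, use Proposition~\ref{ss22:lem:comparison}: pick $\alpha_i\in\prob(\f C_R[\sfX])$ with $\f h^1(\alpha_i)=\mu_i$ realizing $\dist(\mu_1,\mu_2)^p=\mathsf{OT}_{\dcc^p}(\alpha_1,\alpha_2)$, and likewise $\alpha_2',\alpha_3'$ realizing $\dist(\mu_2,\mu_3)^p=\mathsf{OT}_{\dcc^p}(\alpha_2',\alpha_3')$. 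The difficulty — and this is the main obstacle — is that the intermediate representatives $\alpha_2$ and $\alpha_2'$ of $\mu_2$ on the cone need not coincide: unlike in the balanced Wasserstein setting, the homogeneous marginal map $\f h^1$ is far from injective. The remedy, following \cite[Corollary 7.14]{LMS18}, is first to prove the triangle inequality for $\mathsf{OT}_{\dcc^p}$ on $\prob(\f C[\sfX])$ — which is just the classical Wasserstein triangle inequality via gluing of plans on the Polish-like space $\f C[\sfX]$ (or rather its completion/embedding), using that $(\prob(\f C[\sfX]),\mathsf{OT}_{\dcc^p}^{1/p})$ is the $p$-Wasserstein space over the extended metric space $(\f C[\sfX],\dcc)$ — and then to select a \emph{single} optimal representative $\alpha_2$ of $\mu_2$ that works simultaneously. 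Concretely, one takes optimal $\aalpha_{12}\in\f H^1_o(\mu_1,\mu_2)$ and $\aalpha_{23}\in\f H^1_o(\mu_2,\mu_3)$, sets $\alpha_2:=\pi^2_\sharp\aalpha_{12}$, and then \emph{re-solves} the second problem: $\dist(\mu_2,\mu_3)^p=\m_{\dcc^p}(\mu_2,\mu_3)=\min\{\mathsf{OT}_{\dcc^p}(\beta_2,\beta_3):\f h^1(\beta_i)=\mu_i\}\le \mathsf{OT}_{\dcc^p}(\alpha_2,\beta_3)$ for a suitable $\beta_3$ with $\f h^1(\beta_3)=\mu_3$ — here one uses the freedom in Proposition~\ref{ss22:lem:comparison} that \emph{any} cone representative of $\mu_2$ may be used. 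Having aligned the middle representative, glue: there is $\ggamma_{12}\in\Gamma(\alpha_1,\alpha_2)$ and $\ggamma_{23}\in\Gamma(\alpha_2,\beta_3)$ optimal, and the gluing lemma produces $\aalpha\in\meas_+(\f C[\sfX]^3)$ with the prescribed pairwise marginals; projecting to the $(1,3)$-factors gives $\ggamma_{13}\in\Gamma(\alpha_1,\beta_3)$, hence $\f h^1$-marginals $\mu_1,\mu_3$, so that
\[
\dist(\mu_1,\mu_3)\le\Big(\int\dcc^p\,\de\ggamma_{13}\Big)^{1/p}\le\Big(\int\dcc^p(\f y_1,\f y_3)\,\de\aalpha\Big)^{1/p}\le\Big(\int(\dcc(\f y_1,\f y_2)+\dcc(\f y_2,\f y_3))^p\,\de\aalpha\Big)^{1/p},
\]
and Minkowski's inequality in $L^p(\aalpha)$ bounds this by $\dist(\mu_1,\mu_2)+\dist(\mu_2,\mu_3)$.

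Finally, for the second assertion one checks that $\dist$ is finite on $\meas_{\dcc,p}(\sfX)$: given $\mu_i\in\meas_{\dcc,p}(\sfX)$ with $m_i=\mu_i(\sfX)$, one exhibits an explicit admissible homogeneous coupling of finite cost, e.g.\ the ``annihilate-and-create'' coupling $\aalpha=\f p_\sharp(\mu_1\otimes\delta_1)\otimes\delta_{\fo_2}+\delta_{\fo_1}\otimes\f p_\sharp(\mu_2\otimes\delta_1)$ (in the notation of Theorem~\ref{theo:decomposition}), whose cost is $\int_\sfX\dcc^p([x,1],\fo)\,\de\mu_1+\int_\sfX\dcc^p(\fo,[x,1])\,\de\mu_2<+\infty$ by the definition of $\meas_{\dcc,p}(\sfX)$ and symmetry of $\dcc$. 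Hence $\dist$ restricted to $\meas_{\dcc,p}(\sfX)$ takes values in $[0,+\infty)$, and together with the already-established symmetry, identity of indiscernibles, and triangle inequality, this shows $(\meas_{\dcc,p}(\sfX),\dist)$ is a genuine (finite) metric space while $(\meas_+(\sfX),\dist)$ is an extended metric space. The one technical caveat to treat carefully throughout is the role of the vertex $\fo$ and the point $o$ added to $\sfX$: by Remark~\ref{rem:invariance2} and Proposition~\ref{prop:bari} we may always work with reduced couplings not charging $\bfo$, which keeps all the marginal identities clean.
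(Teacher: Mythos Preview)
Your overall strategy matches the paper's, and symmetry and the identity of indiscernibles are handled exactly as in the paper. The triangle-inequality step, however, is muddled in a way that matters.

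You correctly identify the real issue: the two cone representatives $\alpha_2=\pi^2_\sharp\aalpha_{12}$ and $\alpha_2'=\pi^1_\sharp\aalpha_{23}$ of $\mu_2$ need not coincide. But your proposed fix is written backwards. You display
\[
\dist(\mu_2,\mu_3)^p=\min\{\mathsf{OT}_{\dcc^p}(\beta_2,\beta_3):\f h^1(\beta_i)=\mu_i\}\le \mathsf{OT}_{\dcc^p}(\alpha_2,\beta_3),
\]
which is the wrong direction: for the triangle inequality you need $\mathsf{OT}_{\dcc^p}(\alpha_2,\beta_3)^{1/p}\le\dist(\mu_2,\mu_3)$, i.e.\ that \emph{some} $\beta_3$ achieves the minimum with this particular $\alpha_2$ fixed. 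Your justification (``any cone representative of $\mu_2$ may be used'') is not what the first statement of Proposition~\ref{ss22:lem:comparison} says; that statement minimizes over \emph{both} lifts simultaneously. What you actually need is precisely the \emph{second} statement of Proposition~\ref{ss22:lem:comparison}: given $\mu_1,\mu_2,\mu_3$ there exist $\alpha_1,\alpha_2,\alpha_3\in\prob(\f C[\sfX])$ with $\f h^1(\alpha_i)=\mu_i$ and $\dist(\mu_{i-1},\mu_i)=\wass(\alpha_{i-1},\alpha_i)$ for $i=2,3$. This is exactly how the paper proceeds, and once you have it the gluing argument is superfluous: the Wasserstein triangle inequality for $\wass$ gives directly
\[
\dist(\mu_1,\mu_3)\le\wass(\alpha_1,\alpha_3)\le\wass(\alpha_1,\alpha_2)+\wass(\alpha_2,\alpha_3)=\dist(\mu_1,\mu_2)+\dist(\mu_2,\mu_3).
\]

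Your finiteness argument via the ``annihilate-and-create'' coupling is correct and slightly different from the paper's (which instead shows that lifts of measures in $\meas_{\dcc,p}(\sfX)$ lie in $\prob_{\dcc,p}(\f C[\sfX])$ and then uses finiteness of $\wass$ there); both work.
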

\begin{proof}
Define $\mathsf{T}: \pcs \to \pcs$ as
\[ \mathsf{T}(\f{y}_1,\f{y}_2) = (\f{y}_2,\f{y}_1), \quad (\f{y}_1,\f{y}_2) \in \pcs.\]
Then, for every $\mu_1, \mu_2 \in \meas_+(\sfX)$, $\mathsf{T}_\sharp: \f{H}^1(\mu_1, \mu_2) \to \f{H}^1(\mu_2, \mu_1)$ is a bijection satisfying
\[\int_{\pcs} \dcc^p \de \aalpha = \int_{\pcs} \dcc^p \de \mathsf{T}_\sharp \aalpha\]
by the symmetry of $\dcc$. This gives that $\dist(\mu_1, \mu_2) = \dist(\mu_2, \mu_1)$ for every $\mu_1, \mu_2 \in \meas_+(\sfX)$.\\
If $\mu \in \meas_+(\sfX)$ and we define
\[ \aalpha = ((\text{id}_{\f{C}[\sfX]}, \text{id}_{\f{C}[\sfX]}) \circ \f{p} )_{\sharp} (\mu \otimes \delta_1) \in \f{H}^1(\mu, \mu),\]
we obtain that 
\[ \dist^p(\mu, \mu) \le \int_{\pcs} \dcc^p \de \aalpha = \int_{\pcs} \dcc^p(\f{y}, \f{y}) \de (\f{p}_{\sharp}(\mu \otimes \delta_1))(\f{y})=0.\]
If, on the other hand, $\mu_1, \mu_2 \in \meas_+(\sfX)$ are s.t.~$\dist(\mu_1, \mu_2) =0$ and $\aalpha \in \f{H}^1_o(\mu_1, \mu_2)$ is optimal, we get that $\aalpha$ is concentrated on the diagonal $\{ (\f{y}, \f{y}) : \f{y} \in \f{C}[\sfX] \}$, so that $\mu_1 = \f{h}^1_1(\aalpha) = \f{h}_2^1(\aalpha) =\mu_2$. This proves that $\dist(\mu_1, \mu_2)=0$ if and only if $\mu_1=\mu_2$.\\ Finally if $\mu_1, \mu_2, \mu_3 \in \meas_+(\sfX)$, we can find, thanks to Proposition \ref{ss22:lem:comparison}, $\alpha_1, \alpha_2, \alpha_3 \in \prob(\f{C}[\sfX])$ such that 
 \[ \f{h}^1(\alpha_i) = \mu_i, \quad \dist(\mu_{i-1}, \mu_i) = \wass(\alpha_{i-1}, \alpha_{i}) \quad  i=2,3.\]
Using again Proposition \ref{ss22:lem:comparison}, we have
\[ \dist(\mu_1, \mu_3) \le \wass(\alpha_1, \alpha_3) \le \wass(\alpha_1, \alpha_2) + \wass(\alpha_2, \alpha_3) = \dist(\mu_1, \mu_2) + \dist(\mu_2, \mu_3).\]
This proves that $\dist$ satisfies the triangle inequality and concludes the proof that $(\meas_+(\sfX), \dist)$ is an extended metric space. \\
If $\mu \in \meas_{\dcc,p}(\sfX)$ and $\alpha \in \prob(\f{C}[\sfX])$ is s.t. $\f{h}^1(\alpha) = \mu$, then
\[ \int_{\f{C}[\sfX]} \dcc^p(\f{y}, \f{o}) \de \alpha(\f{y}) = \int_{\f{C}[\sfX]} r \dcc^p([x,1], \f{o}) \de \alpha([x,r]) = \int_{\sfX} \dcc^p([x,1],\f{o}) \de \mu(x) < + \infty,\]
so that $\alpha \in \prob_{\dcc,p}(\f{C}[\sfX])$. Then, again from Proposition \ref{ss22:lem:comparison}, if $\mu_1, \mu_2 \in \meas_{\dcc,p}(\sfX)$ we can find $\alpha_1, \alpha_2 \in \prob_{ \dcc,p}(\f{C}[\sfX])$ s.t.~$\f{h}^1(\alpha_i) = \mu_i$ for $i=1,2$ so that
\[ \dist(\mu_1, \mu_2) \le \wass(\alpha_1, \alpha_2) < + \infty.\]
\end{proof}

The following theorem extends to the unbalanced setting the well known result for the Wasserstein distance (see e.g.~\cite[Remark 7.1.11]{AGS08}).
\begin{theorem}\label{ss22:thmmnarrow} Let $\dcc$ be a metric in $\f C[\sfX]$ inducing the  topology  
of $\f C[\sfX]$ such that $\dcc^p$ is radially $1$-homogeneous.
If $(\mu_n)_n \subset \meas_{\dcc, p}(\sfX)$ and $\mu \in \meas_{\dcc, p}(\sfX)$, then
\[ \lim_{n \to + \infty} \dist(\mu_n, \mu) = 0 \quad \Longleftrightarrow  \quad \begin{cases} \mu_n \weakto \mu \\ \int_{\sfX} \dcc^p([x,1],\f{o}) \de \mu_n(x) \to \int_{\sfX} \dcc^p([x,1],\f{o}) \de \mu(x) \end{cases}.\]
In particular if $\sfX$ is separable, also $(\meas_{\dcc,p}(\sfX), \dist)$ is separable.
\end{theorem}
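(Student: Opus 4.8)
The plan is to transfer the statement to the analogous (and classical) characterization of convergence for the Wasserstein distance $\wass$ on $\prob_{\dcc,p}(\f{C}[\sfX])$ (see \cite[Remark 7.1.11]{AGS08} or \cite[Theorem 6.9]{Villani09}: $\wass(\alpha_n,\alpha)\to 0$ iff $\alpha_n\weakto\alpha$ and $\int\dcc^p(\cdot,\fo)\,\de\alpha_n\to\int\dcc^p(\cdot,\fo)\,\de\alpha$), using Proposition \ref{ss22:lem:comparison} and the compactness Lemma \ref{ss22:lemma:generalmarginals} as the bridge between the cone $\f{C}[\sfX]$ and the base $\sfX$. Two preliminary facts will be used throughout. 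By Remark \ref{rem:trivial-cases} and the radial $1$-homogeneity of $\dcc^p$, every $\aalpha\in\f{H}^1(\mu,\bm{0}_\sfX)$ has the form $\alpha\otimes\delta_\fo$ with $\f{h}^1(\alpha)=\mu$, whence
\[
  \dist^p(\mu,\bm{0}_\sfX)=\int_\sfX\dcc^p([x,1],\fo)\,\de\mu(x)=:\mathsf m_p(\mu),\qquad\mu\in\meas_+(\sfX),
\]
and more generally $\int\dcc^p(\cdot,\fo)\,\de\alpha=\mathsf m_p(\f{h}^1(\alpha))$ for every $\alpha\in\prob(\f{C}[\sfX])$, so the $p$-th cone moment depends only on the homogeneous marginal. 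Secondly, since $\dcc$ induces the topology of $\f{C}[\sfX]$ and $\sfr\equiv 1$ on $\{[x,1]:x\in\sfX\}$, no sequence $[x_k,1]$ can $\dcc$-converge to $\fo$; hence $c_0:=\inf_{x\in\sfX}\dcc^p([x,1],\fo)>0$ and $c_0\,\mu(\sfX)\le\mathsf m_p(\mu)$, so any bound on $\mathsf m_p$ bounds the total mass.

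For the implication ``$\dist(\mu_n,\mu)\to0\Rightarrow\cdots$'' I would first obtain the moment convergence from $|\dist(\mu_n,\bm{0}_\sfX)-\dist(\mu,\bm{0}_\sfX)|\le\dist(\mu_n,\mu)$ together with the identity above, and deduce $R:=\sup_n(\mu_n(\sfX)+\mu(\sfX))<+\infty$. By Theorem \ref{ss22:prop:mh} and Lemma \ref{ss22:le:mineq} pick optimal $\aalpha_n\in\f{H}^1(\mu_n,\mu)\cap\prob(\pcrs{R})$ not charging the vertex, and set $\beta_n:=\pi^1_\sharp\aalpha_n$, $\gamma_n:=\pi^2_\sharp\aalpha_n\in\prob(\f{C}_R[\sfX])$, so that $\f{h}^1(\beta_n)=\mu_n$, $\f{h}^1(\gamma_n)=\mu$ and $\wass^p(\beta_n,\gamma_n)\le\int\dcc^p\,\de\aalpha_n\to0$. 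Since $\f{h}^1(\gamma_n)=\mu$ is constant, Lemma \ref{ss22:lemma:generalmarginals}(1) gives a subnet with $\gamma_n\to\gamma$, $\f{h}^1(\gamma)=\mu$; as its moments are constantly $\mathsf m_p(\mu)=\int\dcc^p(\cdot,\fo)\,\de\gamma$, the classical criterion yields $\wass(\gamma_n,\gamma)\to0$ along this subnet, hence $\wass(\beta_n,\gamma)\to0$ and $\beta_n\weakto\gamma$; testing against functions $(\varphi\circ\sfx)\,\sfr\in\rmC_b(\f{C}_R[\sfX])$, $\varphi\in\rmC_b(\sfX)$, gives $\mu_n=\f{h}^1(\beta_n)\weakto\f{h}^1(\gamma)=\mu$ along that subnet. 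Since this produces a weakly convergent sub-subnet inside every subnet of $(\mu_n)$, we get $\mu_n\weakto\mu$.

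For the converse I would argue by contradiction: if $\dist(\mu_n,\mu)\not\to0$, fix $\delta>0$ and a subsequence with $\dist(\mu_{n_k},\mu)\ge\delta$. From $\mu_n\weakto\mu$ (test against $1$) we again get the uniform mass bound $R$, and we choose arbitrary lifts $\beta_k\in\prob(\f{C}_R[\sfX])$ of $\mu_{n_k}$. Since $\f{h}^1(\beta_k)=\mu_{n_k}\weakto\mu$, Lemma \ref{ss22:lemma:generalmarginals}(1) yields a subnet $\beta'_j\to\beta$ with $\f{h}^1(\beta)=\mu$; writing $\mu'_j:=\f{h}^1(\beta'_j)$ (a subnet of $(\mu_{n_k})$), we have $\int\dcc^p(\cdot,\fo)\,\de\beta'_j=\mathsf m_p(\mu'_j)\to\mathsf m_p(\mu)=\int\dcc^p(\cdot,\fo)\,\de\beta$, so the classical criterion gives $\wass(\beta'_j,\beta)\to0$. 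Any $\wass$-optimal coupling of $\beta'_j$ and the lift $\beta$ of $\mu$ is automatically a $1$-homogeneous coupling of $\mu'_j$ and $\mu$ (its $\f{h}^1_i$-marginals are $\mu'_j$ and $\mu$), so $\dist(\mu'_j,\mu)\le\wass(\beta'_j,\beta)\to0$, contradicting $\dist(\mu_{n_k},\mu)\ge\delta$. For the final assertion, by Proposition \ref{ss22:lem:comparison} the map $\f{h}^1:(\prob_{\dcc,p}(\f{C}[\sfX]),\wass)\to(\meas_{\dcc,p}(\sfX),\dist)$ is $1$-Lipschitz and surjective (each $\mu\ne\bm{0}_\sfX$ admits the lift $\mu(\sfX)^{-1}(x\mapsto[x,\mu(\sfX)])_\sharp\mu$), and $\prob_{\dcc,p}(\f{C}[\sfX])$ is $\wass$-separable when $\f{C}[\sfX]$ is, which holds when $\sfX$ is; the $\dist$-closure of the image of a countable $\wass$-dense set is then all of $\meas_{\dcc,p}(\sfX)$.

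The delicate point — essentially the only one beyond bookkeeping — is the passage between narrow convergence on the cone and on the base: one must keep the lifts inside a fixed slab $\f{C}_R[\sfX]$ (Lemma \ref{ss22:le:mineq}), extract from arbitrary lifts a subnet converging to \emph{some} lift of the limit measure (Lemma \ref{ss22:lemma:generalmarginals}), and use that the cone moments equal $\mathsf m_p(\mu_n)$ and hence converge, so that the classical Wasserstein criterion applies. This mirrors the scheme of \cite[Corollary 7.14, Theorem 7.15]{LMS18}; in the stated (merely metrizable) generality the classical criterion is invoked on $\f{C}[\sfX]$, which is unproblematic when $\f{C}[\sfX]$ is separable (hence when $\sfX$ is) and in general reduces to that case by completing $\sfX$ and using the invariance of $\meas_+(\sfX)$ under enlargement of the ambient space (Remark \ref{rem:invariance2}).
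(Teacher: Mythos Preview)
Your argument is correct and for the forward implication ($\dist\to 0 \Rightarrow$ weak + moments) it is essentially the paper's proof: both extract optimal couplings in $\prob(\pcrs{R})$, use Lemma \ref{ss22:lemma:generalmarginals} on the second cone-marginals (whose homogeneous marginal is the constant $\mu$), upgrade narrow convergence to $\wass$-convergence via constancy of the cone moments, and then test against $(\varphi\circ\sfx)\sfr\in\rmC_b(\f{C}_R[\sfX])$ to recover weak convergence of the base measures. The only cosmetic difference is that the paper phrases the weak-convergence step as a contradiction on a fixed test function $\xi$, while you run a subnet/sub-subnet argument; these are equivalent.

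For the reverse implication your route genuinely differs. The paper avoids any contradiction or compactness extraction: it writes down explicit lifts
\[
  \alpha_n:=\f{p}_\sharp\big(m_n^{-1}\mu_n\otimes\delta_{m_n}\big),\qquad
  \alpha:=\f{p}_\sharp\big(m^{-1}\mu\otimes\delta_{m}\big),
\]
checks directly that $\alpha_n\weakto\alpha$ and that the cone moments converge (both are one-line computations using $m_n\to m$ and the $1$-homogeneity of $\dcc^p$), applies the classical $\wass$-criterion, and concludes via $\dist\le\wass$. Your approach instead takes arbitrary lifts in $\prob(\f{C}_R[\sfX])$, invokes Lemma \ref{ss22:lemma:generalmarginals} to manufacture a limiting lift $\beta$ of $\mu$ along a subnet, and then reaches the same endpoint. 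Both work; the paper's explicit construction is shorter and sidesteps the compactness lemma, while your argument has the mild advantage of not needing to verify that the concrete map $[x,m_n]\mapsto[x,m]$ behaves well against all $\varphi\in\rmC_b(\f{C}[\sfX])$. Your separability remark (via the $1$-Lipschitz surjection $\f{h}^1$) is fine and is implicit in the paper.
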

\begin{proof}  
Let us first observe that 
\eqref{eq:o-neighborhoods}
yields  
\begin{equation}\label{ss22:eq:claim}
    \text{there exists $a>0$ such that } \dcc^p([x,1], \f{o}) \ge a \quad \text{ for every } x \in \sfX.
\end{equation}
We first prove the $\Rightarrow$ implication. Notice that, denoting by $\bm{0}_\sfX$ the null measure in $\sfX$, we have
\begin{equation}\label{ss22:eq:moment}
 \dist(\nu, \bm{0}_\sfX) = \int_{\sfX}\dcc^p([x,1], \f{o})\de \nu(x) \quad \text{ for every } \nu \in \meas_{\dcc, p}(\sfX)
\end{equation}
so that, by triangle inequality, we get
\[ \int_{\sfX} \dcc^p([x,1], \f{o}) \de \mu_n(x) = \dist^p(\mu_n, \bm{0}_\sfX) \to \dist^p(\mu, \bm{0}_\sfX) = \int_{\sfX} \dcc^p([x,1], \f{o}) \de \mu(x).\]
We show that $\mu_n \weakto \mu$ by contradiction: assume that there exist $\xi \in \rmC_b(\sfX)$ and a (unrelabeled) subsequence s.t.
\begin{equation}\label{ss22:eq:tobec}
    \inf_n \left | \int_{\sfX} \xi \de \mu_n - \int_{\sfX} \xi \de \mu \right | > 0.
\end{equation}
Observe that 
\[ \mu_n(\sfX) \le \frac{1}{a} \int_\sfX \dcc^p([x,1],\f{o}) \de \mu_n(x) \to \frac{1}{a}\int_\sfX \dcc^p([x,1],\f{o}) \de \mu(x) < + \infty,\]
so that $R:= (\sup_n \mu_n(\sfX) + \mu(\sfX)) < + \infty$.
By Theorem \ref{ss22:prop:mh} and Lemma \ref{ss22:le:mineq}, we can find $(\aalpha_n)_n \subset \prob(\pcrs{R})$ such that $\aalpha_n \in \f{H}^1_o(\mu_n,\mu)$ is optimal for every $n \in \N$. Let us define $\alpha^1_n := \pi^1_\sharp \aalpha_n$, $\alpha^2_n := \pi^2_\sharp \aalpha_n$, $n \in \N$. Since $\f{h}^1(\alpha^2_n)=\mu$ for every $n \in \N$, we obtain by Lemma \ref{ss22:lemma:generalmarginals} the existence of a subsequence $k \mapsto n(k)$ and $\alpha_2 \in \prob(\f{C}_R[\sfX])$ with $\f{h}^1(\alpha_2)=\mu$ such that $\alpha^2_{n(k)} \weakto \alpha_2$. Moreover
\[ \int_{\f{C}[\sfX]} \dcc^p(\f{y}, \f{o}) \de \alpha^2_n = \int_{\sfX} \dcc^p([x,1], \f{0}) \de \mu(x) \quad \text{ for every } n \in \N,\]
giving that (see e.g.~\cite[Proposition 7.1.5]{AGS08}) $\wass(\alpha^2_{n(k)}, \alpha_2) \to 0$. Then
\[ \wass(\alpha_{n(k)}^1, \alpha_2) \le \wass(\alpha_{n(k)}^1, \alpha_{n(k)}^2) + \wass(\alpha_{n(k)}^2, \alpha_2) = \dist(\mu_{n(k)}, \mu) + \wass(\alpha_n^2, \alpha_2) \to 0,\]
where we used Proposition \ref{ss22:lem:comparison}. Thus $\wass(\alpha_{n(k)}^1, \alpha_2) \to 0$ and, in particular, $\alpha_{n(k)}^1 \weakto \alpha_2$ so that 
\begin{equation}\label{ss22:eq:contradiction}
    \int_{\sfX} \xi \de \mu_{n(k)} = \int_{\f{C}[\sfX]} \xi(x) r \de \alpha_{n(k)}^1([x,r]) \to \int_{\f{C}[\sfX]} \xi(x) r \de \alpha_2([x,r]) = \int_{\sfX} \xi \de \mu,
\end{equation}
where we used that the map 
\[ [x,r] \mapsto r \xi(x)\]
belongs to $\rmC_b(\f{C}_R[\sfX])$ and $\alpha^1_{n(k)}$ is concentrated on $\f{C}_R[\sfX]$ for every $k \in \N$. Since \eqref{ss22:eq:contradiction} is a contradiction with \eqref{ss22:eq:tobec}, this concludes the proof of the $\Rightarrow$ implication.\\
Let us prove the $\Leftarrow$ implication. If $\mu = \bm{0}_\sfX$, we have already by \eqref{ss22:eq:moment} that $\dist(\mu_n, \mu) \to 0$. Let us then assume that $m:=\mu(\sfX)>0$. Up to passing to a (unrelabeled) subsequence, we can assume that $m_n:=\mu_n(\sfX) \ge m/2 > 0$ for every $n \in \N$.
 Let us define $\alpha_n, \alpha \in \prob(\f{C}[\sfX])$ as
\[ \alpha:= \f{p}_{\sharp} \left ( m^{-1} \mu \otimes \delta_{m} \right ), \quad \alpha_n:= \f{p}_{\sharp} \left ( m_n^{-1} \mu_n \otimes \delta_{m_n} \right ) \quad n \in \N.\]
It is easy to check that $\f{h}^1(\alpha_n) = \mu_n$, $n \in \N$, $\f{h}^1(\alpha) = \mu$ and $\alpha_n \weakto \alpha$. To conclude is then sufficient  to show that $\wass(\alpha_n, \alpha) \to 0$ and then apply Lemma \ref{ss22:lem:comparison}. The $1$-homogeneity of $\varrho^p$ yields
\begin{align*}
    \int_{\f{C}[\sfX]} \dcc^p(\f{y}, \f{o}) \de \alpha_n(\f y) &= 
    \int_{\sfX} m_n^{-1}\dcc^p([x,m_n], \f{o}) \de \mu_n(x)
    \\&=
    \int_{\sfX} \dcc^p([x,1], \f{o}) \de \mu_n(x) 
    \stackrel{n\to\infty}\to \int_{\sfX} \dcc^p([x,1], \f{o}) \de \mu(x) 
    \\&=
     \int_{\sfX}m^{-1} \dcc^p([x,m], \f{o}) \de \mu(x) =
    \int_{\f{C}[\sfX]} \dcc^p(\f{y}, \f{o}) \de \alpha(\f y),
\end{align*}
and we get that $\wass(\alpha_n, \alpha) \to 0$ applying \cite[Proposition 7.1.5]{AGS08}.
\end{proof}

\section{The case of a cost function finite on an open cone}\label{app:1}

In this appendix we repeat the constructions of Section \ref{sec:5} under different assumptions on $\sfH$. As in Section \ref{sec:5}, we assume that $\sfX_1$ and $\sfX_2$ are compact and metrizable spaces. Given non negative numbers $q_i$, $i=1,2$, we define the open cone
\[ U_{q_1 q_2} := \left \{ ([x_1, r_1], [x_2,r_2]) \in \pc \mid r_2 > r_1q_1, \, r_1 > r_2 q_2 \right \}.\]
\[\]
Using this notation, we assume that
\begin{equation}\label{ass:2}
    \begin{split}
        \sfH: &\pc \to [0, + \infty] \text{ is continuous, radially $1$-homogeneous and convex and } \\
        &\text{that there exists non-negative numbers $q_1, q_2$ such that } \rmD(\sfH)=U_{q_1 q_2} \text{ and }\\
    &\lim_{r_1 \downarrow q_2} \inf_{(x_1,x_2) \in \sfX_1 \times \sfX_2} \sfH([x_1,r_1],[x_2,1]) =\lim_{r_2 \downarrow q_1} \inf_{(x_1,x_2) \in \sfX_1 \times \sfX_2} \sfH([x_1,1],[x_2,r_2]) = + \infty.
\end{split}
\end{equation}

Notice that, if $q_1=q_2=0$, we are simply assuming that $\sfH$ is finite on the whole open cone.

\begin{proposition} \label{ss22:prop:boundpair1} Assume that $\sfH$ is as in \eqref{ass:2} and that $\mu_i \in \meas_+(\sfX_i)$ are such that
\[ \supp{\mu_i}=\sfX_i, \, i=1,2, \quad q_1 < \frac{\mu_2(\sfX_2)}{\mu_1(\sfX_1)}, \quad q_2 < \frac{\mu_1(\sfX_1)}{\mu_2(\sfX_2)},\]
where $q_i$ are as in \eqref{ass:2}. Then there exists a constant $C>0$ such that, for every $(\varphi_1, \varphi_2) \in \Phi_\sfH$ with 
\[ \int_{\sfX_1}\varphi_1 \de \mu_1 + \int_{\sfX_2} \varphi_2 \de \mu_2 \ge 0,\]
it holds
\[ \varphi_1(x_1) \le C, \quad \varphi_2(x_2) \le C \quad \text{ for every } (x_1, x_2) \in \sfX_1 \times \sfX_2 \]
and there exists $(\bar{x}_1, \bar{x}_2) \in \sfX_1 \times \sfX_2$ such that $\varphi_1(\bar{x}_1) \ge -C$, $\varphi_2(\bar{x}_2) \ge -C$.
\end{proposition}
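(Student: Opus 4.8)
\textbf{Proof plan for Proposition \ref{ss22:prop:boundpair1}.} The structure of the argument should closely mirror the proof of Lemma \ref{ss22:le:staccato} together with the lower bound part of Lemma \ref{ss22:le:balls}, adapted to the situation where the domain of $\sfH$ is the open cone $U_{q_1q_2}$ and the blow-up of $\sfH$ occurs not at the boundary $\{r_i=0\}$ but along the two boundary rays $\{r_1=q_2 r_2\}$ and $\{r_2=q_1 r_1\}$ of $U_{q_1q_2}$. The key new ingredient is that, because $q_1 < \mu_2(\sfX_2)/\mu_1(\sfX_1)$ and $q_2 < \mu_1(\sfX_1)/\mu_2(\sfX_2)$, there is a ``room'' between the masses of $\mu_1$ and $\mu_2$ that lets us evaluate the constraint $\varphi_1(x_1)r_1 + \varphi_2(x_2)r_2 \le \sfH([x_1,r_1],[x_2,r_2])$ at radii $(r_1,r_2)$ lying strictly inside $U_{q_1q_2}$ and compatible with the total masses.

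First I would establish the \emph{lower bound}: there is a point $(\bar x_1,\bar x_2)$ with $\varphi_i(\bar x_i)\ge -C$. Fix $r_1,r_2>0$ with $q_1 < r_2/r_1 < 1/q_2$ and moreover $r_1\mu_1(\sfX_1) \le r_2\mu_2(\sfX_2)$ and $r_2\mu_2(\sfX_2)\le r_1\mu_1(\sfX_1)\cdot$(something)—concretely, one can simply take $r_1=\mu_2(\sfX_2)$, $r_2=\mu_1(\sfX_1)$, since then $r_2/r_1 = \mu_1(\sfX_1)/\mu_2(\sfX_2)$ lies in $(q_1,1/q_2)$ by hypothesis, so that $([x_1,r_1],[x_2,r_2])\in U_{q_1q_2}=\rmD(\sfH)$ for all $x_1,x_2$, and $\sfH$ is bounded there by a constant $K$ since $\sfH$ is continuous and $\sfX_1\times\sfX_2$ compact (after rescaling into $\pcr{R}$ for suitable $R$). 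Integrating the constraint against $\mu_1\otimes\mu_2/(\mu_1(\sfX_1)\mu_2(\sfX_2))$ — or more simply, testing with $r_1=r_2$ rescaled appropriately — gives
\[
r_1\!\int_{\sfX_1}\!\varphi_1\,\de\mu_1 + r_2\!\int_{\sfX_2}\!\varphi_2\,\de\mu_2 \;=\; r_1 r_2\Big(\tfrac{1}{r_2}\!\int\varphi_1\de\mu_1 + \tfrac{1}{r_1}\!\int\varphi_2\de\mu_2\Big)
\]
which, combined with $\int\varphi_1\de\mu_1+\int\varphi_2\de\mu_2\ge0$ and the uniform bound $\varphi_1(x_1)r_1+\varphi_2(x_2)r_2\le K$, yields that $\int_{\sfX_i}\varphi_i\,\de\mu_i$ is bounded below by a constant depending only on $K$ and the masses. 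Then, exactly as in Lemma \ref{ss22:le:balls}, from $\int_{\sfX_i}\varphi_i\,\de\mu_i\ge -C'$ and $\varphi_i$ bounded above (which I prove next), one extracts a point $\bar x_i$ where $\varphi_i(\bar x_i)\ge -C$, using that $\mu_i(\sfX_i)>0$.

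Next, the \emph{upper bound} $\varphi_i\le C$. This is the analogue of Lemma \ref{ss22:le:staccato}, and here the blow-up condition in \eqref{ass:2} plays the role that \eqref{ass:1} played before. Suppose for contradiction that there are pairs $(\varphi_1^j,\varphi_2^j)\in\Phi_\sfH$ with nonnegative dual value and points $z_j\in\sfX_1$ (say) with $\varphi_1^j(z_j)\to+\infty$. Using the lower bound already obtained and a covering argument as in Lemma \ref{ss22:le:balls}/Lemma \ref{ss22:le:staccato} — choosing balls on which $\mu_2$ has positive mass — one finds $y_j\in\sfX_2$ with $\varphi_2^j(y_j)\ge -C$; extract convergent subsequences $z_j\to z$, $y_j\to y$. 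The constraint at $(z_j,y_j)$ reads $\varphi_1^j(z_j)r_1 + \varphi_2^j(y_j)r_2\le\sfH([z_j,r_1],[y_j,r_2])$; choosing $r_2=1$ and letting $r_1\downarrow q_2$ (so that the pair exits $U_{q_1q_2}$) forces $\sfH([z_j,r_1],[y_j,1])$ to stay finite on a sequence where the left side $\to+\infty\cdot q_2 - C$: if $q_2>0$ this already contradicts finiteness of $\sfH$ near the boundary combined with the first limit in \eqref{ass:2}; if $q_2=0$ one instead sends $r_1\to+\infty$ (equivalently $r_2/r_1\downarrow q_1=0$... ) — more precisely, one rescales $([z_j,r_1],[y_j,1]) = r_1([z_j,1],[y_j,1/r_1])$ and uses the second limit in \eqref{ass:2} as $1/r_1\downarrow q_1$. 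Either way we reach a contradiction with \eqref{ass:2}.

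\textbf{Main obstacle.} The delicate point is bookkeeping the two regimes $q_i>0$ versus $q_i=0$ simultaneously and making sure the ``test radii'' used for the lower bound are genuinely interior to $U_{q_1q_2}$ \emph{and} the rescaled points stay in a fixed truncated cone $\pcr{R}$ where $\sfH$ is uniformly continuous and bounded — the hypotheses $q_1<\mu_2(\sfX_2)/\mu_1(\sfX_1)$ and $q_2<\mu_1(\sfX_1)/\mu_2(\sfX_2)$ are exactly what guarantees this is possible, and one must use them at precisely this step. The rest is a routine transcription of the compactness/covering machinery of Section \ref{sec:5}.
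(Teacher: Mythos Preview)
Your lower-bound strategy (integrate the constraint against $\nu_1\otimes\nu_2$ at a fixed interior radius pair, then combine the resulting inequality on $I_i:=\int\varphi_i\,\de\mu_i$ with $I_1+I_2\ge0$) does work and is a somewhat different route from the paper's: there the argument is by contradiction, supposing $\max\varphi_1^j\to-\infty$, using $I_1+I_2\ge0$ to find $x_2^j$ with $\varphi_2^j(x_2^j)\to+\infty$, and then evaluating the constraint at a single interior point $(\bar r_1,\bar r_2)$ with $\bar r_1<\tfrac{m_1}{m_2}\bar r_2$ to get a contradiction with finiteness of $\sfH$ there. Two small corrections: your concrete test pair $r_1=m_2,\ r_2=m_1$ need not lie in $U_{q_1q_2}$ (the hypotheses give $q_2<m_1/m_2$, not $q_2<m_2/m_1$); you must pick $r_1/r_2$ just above $q_2$ for the $I_1$-bound and $r_2'/r_1'$ just above $q_1$ for the $I_2$-bound. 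And you do \emph{not} need the upper bound on $\varphi_i$ to extract the point $\bar x_i$ from the integral lower bound---$\int\varphi_i\,\de\mu_i\ge -C'$ alone gives $\max\varphi_i\ge -C'/m_i$.

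Your upper-bound argument, however, has a genuine gap. You propose to send $r_1\downarrow q_2$ (or $1/r_1\downarrow q_1$) and invoke the last line of \eqref{ass:2}, but that condition says $\sfH\to+\infty$ at the boundary of $U_{q_1q_2}$: the constraint $\varphi_1^j(z_j)r_1+\varphi_2^j(y_j)\le\sfH([z_j,r_1],[y_j,1])$ becomes \emph{vacuous} there, not violated, so no contradiction arises. You are transplanting the mechanism of \eqref{ass:1} (where $\sfH$ stays finite at the boundary but its normal derivative blows up) into the setting of \eqref{ass:2} (where $\sfH$ itself blows up); these are opposite situations. In fact this proposition does not use the blow-up condition at all---only continuity of $\sfH$ on $U_{q_1q_2}$ and compactness of $\sfX_1\times\sfX_2$. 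Once you have $\bar x_2$ with $\varphi_2(\bar x_2)\ge -D$, fix any $(\bar r_1,\bar r_2)$ with $q_2<\bar r_1/\bar r_2<1/q_1$ and read off directly
\[
\varphi_1(x_1)\ \le\ \frac{1}{\bar r_1}\Big(\sfH([x_1,\bar r_1],[\bar x_2,\bar r_2])-\bar r_2\,\varphi_2(\bar x_2)\Big)\ \le\ \frac{1}{\bar r_1}\Big(\max_{x_1',x_2'}\sfH([x_1',\bar r_1],[x_2',\bar r_2])+\bar r_2 D\Big),
\]
which is exactly the paper's argument. The blow-up hypothesis in \eqref{ass:2} is used only in Proposition~\ref{ss22:prop:uniformcont1}, to confine the $\sfH$-transform minimization to a compact interval of $\alpha$'s.
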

\begin{proof}
We start from the last claim for $\varphi_1$. Assume by contradiction that there exists a sequence $(\varphi_1^j, \varphi_2^j)_j \subset \Phi_\sfH$ with $\int_{\sfX_1}\varphi_1^j \de \mu_1 + \int_{\sfX_2} \varphi_2^j \de \mu_2 \ge 0$ such that $\max_{x_1 \in \sfX_1} \varphi_1^j(x_1) \to - \infty$. Let $(x_1^j)_j \subset \sfX_1$ be the sequence of points where the maxima are attained. We thus have
\[ \varphi_1^j(x_1^j) \mu_1(\sfX_1) + \int_{\sfX_2} \varphi_2^j \de \mu_2 \ge 0 \quad \text{ for every } j \in \N\]
so that we can find $(x_2^j)_j \subset \sfX_2$ such that
\[ \varphi_2^j(x_2^j) \ge - \frac{\varphi_1^j(x_1^j)\mu_1(\sfX_1)}{ \mu_2(\sfX_2)} \quad \text{ for every } j \in \N.\]
Since $(\varphi_1^j, \varphi_2^j) \in \Phi_\sfH$, we have
\[ \varphi_1^j(x_1^j) \left ( r_1 - \frac{\mu_1(\sfX_1)}{ \mu_2(\sfX_2)} r_2 \right ) \le \varphi_1^j(x_1^j)r_1 + \varphi_2^j(x_2^j) \le \sfH([x_1^j,r_1], [x_2^j, r_2]) \quad \text{ for every } r_1, r_2 \ge 0.\]
We can assume, up to passing to a subsequence, that $(x_1^j, x_2^j) \to (x_1, x_2) \in \sfX_1 \times \sfX_2$.
Thanks to \eqref{ass:2} and the assumptions on $\mu_i$, we can find $\bar{r}_1, \bar{r}_2 >0$ such that
\[ \sfH([x_1, \bar{r}_1], [x_2, \bar{r}_2]) < + \infty, \quad \bar{r}_1 - \frac{\mu_1(\sfX_1)}{\mu_2(\sfX_2)} \bar{r}_2 < 0.\]
 We thus have that 
\[ + \infty \le \sfH([x_1, \bar{r}_1], [x_2, \bar{r}_2],\]
a contradiction with \eqref{ass:2}. The proof for $\varphi_2$ is the same; we have thus proven that there exists a constant $D>0$ independent of $(\varphi_1, \varphi_2)$ and a point $(\bar{x}_1, \bar{x}_2) \in \sfX_1 \times \sfX_2$ such that
\[ \varphi_1(\bar{x}_1) \ge -D, \quad \varphi_2(\bar{x}_2) \ge -D.\]
Thus, if we set 
\[ C:= D + \max_{(x_1, x_2) \in \sfX_1 \times \sfX_2} \sfH([x_1, \bar{r}_1], [x_2,\bar{r}_2]),\]
where $\bar{r}_1$ and $\bar{r}_2$ are as above, we get that 
\[ \varphi_1(x_1) \le \frac{1}{\bar{r}_1} \left ( \sfH([x_1, \bar{r}_1], [\bar{x}_2,\bar{r}_1]) - \bar{r}_2\varphi_2(\bar{x}_2) \right ) \le C \quad \text{ for every } x_1 \in \sfX_1\]
and the corresponding statement for $\varphi_2$.
\end{proof}
In the following result, which is the analogue of Proposition \ref{ss22:prop:propertiesphih}, we use the notation of Definition \ref{def:htr}.

\begin{proposition} \label{ss22:prop:uniformcont1} Assume that $\sfH$ is as in \eqref{ass:2} and that $\mu_i \in \meas_+(\sfX_i)$ are such that
\[ \supp{\mu_i}=\sfX_i, \, i=1,2, \quad q_1 < \frac{\mu_2(\sfX_2)}{\mu_1(\sfX_1)}, \quad q_2 < \frac{\mu_1(\sfX_1)}{\mu_2(\sfX_2)},\]
where $q_i$ are as in \eqref{ass:2}. Then there exist constants $a_i, a_s, b_i, b_s, M>0$ such that, if $(\varphi_1, \varphi_2) \in \Phi_\sfH$ are such that 
\[ \int_{\sfX_1} \varphi_1 \de \mu_1 + \int_{\sfX_2} \varphi_2 \de \mu_2 \ge 0,\]
then $\|\varphi_1^\sfH \|_\infty, \|\varphi_1^{\sfH \sfH} \|_\infty  \le M$ and
\begin{align}\label{ss22:eq:trans11}
    \varphi_1^{\sfH}(x_2) &= \inf_{x_1 \in \sfX_1} \inf_{a_i \le \alpha \le a_s} \biggl \{ \sfH([x_1, \alpha], [x_2, 1])-\alpha \varphi_1(x_1) \biggr \}, \quad x_2 \in \sfX_2,\\ \label{ss22:eq:trans21}
    \varphi_1^{\sfH \sfH}(x_1) &=\inf_{x_2 \in \sfX_2} \inf_{b_i \le \alpha \le b_s} \biggl \{ \sfH([x_1, 1], [x_2, \alpha])-\alpha \varphi_1^\sfH(x_2) \biggr \}, \quad x_1 \in \sfX_1.
\end{align}
Moreover the sets 
\begin{align*} 
\f{C}_0^1 &:=\{(\f{y}_1, \f{y}_2) \in \pc \mid a_i \le \sfr(\f{y}_1) \le a_s, \, \sfr(\f{y}_2)=1 \},\\
\quad \f{C}_0^2 &:=\{(\f{y}_1, \f{y}_2) \in \pc \mid b_i \le \sfr(\f{y}_2) \le b_s, \, \sfr(\f{y}_1)=1 \}
\end{align*}
are compact subset of $U_{q_1 q_2}$ and $(\varphi_1^{\sfH\sfH}, \varphi_1^{\sfH}) \in \Phi_\sfH$, $\varphi_1^{\sfH \sfH} \ge \varphi_1$, $\varphi_1^{\sfH} \ge \varphi_2$. Finally, if $\sfd_i$ are distances metrizing $\sfX_i$, $i=1,2$, then $\varphi_1^{\sfH\sfH}$ is $\sfd_1$-uniformly continuous with the same (uniform) $\sfd_1 \otimes_{\f{C}} \sfd_2$-modulus of continuity of $\sfH$ on $\f{C}_0^2$ and  $\varphi_1^{\sfH}$ is $\sfd_2$-uniformly continuous with the same (uniform) $\sfd_1 \otimes_{\f{C}} \sfd_2$-modulus of continuity of $\sfH$ on $\f{C}_0^1$.
\end{proposition}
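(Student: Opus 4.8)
The plan is to mimic the proof of Proposition \ref{ss22:prop:propertiesphih}, with Proposition \ref{ss22:prop:boundpair1} taking the place of Lemmas \ref{ss22:le:balls}--\ref{ss22:le:staccato} and with the blow-up of $\sfH$ at the boundary of the cone $U_{q_1q_2}$ built into \eqref{ass:2} replacing the control on the normal derivatives of $\sfH$ at $\{r_i=0\}$ used there. First I would fix two reference exponents $\alpha_\ast\in(q_2,1/q_1)$ and $\beta_\ast\in(q_1,1/q_2)$; these intervals are non-empty since the assumptions on $q_1,q_2$ and the masses $\mu_i(\sfX_i)$ force $q_1q_2<1$. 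Set $\kappa_1:=\max_{\sfX_1\times\sfX_2}\sfH([x_1,\alpha_\ast],[x_2,1])$ and $\kappa_2:=\max_{\sfX_1\times\sfX_2}\sfH([x_1,1],[x_2,\beta_\ast])$, which are finite because $\sfH$ is continuous and the compact slices over which the maxima are taken are contained in the open cone $\rmD(\sfH)=U_{q_1q_2}$. Let $C>0$ and $(\bar x_1,\bar x_2)\in\sfX_1\times\sfX_2$ be the constant and the point provided by Proposition \ref{ss22:prop:boundpair1}. Evaluating the infimum defining $\varphi_1^\sfH$ at $(\bar x_1,\alpha_\ast)$ and using $\varphi_1\le C$ gives the uniform upper bound $\varphi_1^\sfH\le\kappa_1+\alpha_\ast C=:M_1$; since $\varphi_1^\sfH\ge\varphi_2$ follows at once from $(\varphi_1,\varphi_2)\in\Phi_\sfH$, in particular $\varphi_1^\sfH(\bar x_2)\ge\varphi_2(\bar x_2)\ge-C$, and evaluating the infimum defining $\varphi_1^{\sfH\sfH}$ at $(\bar x_2,\beta_\ast)$ gives $\varphi_1^{\sfH\sfH}\le\kappa_2+\beta_\ast C=:M_2$.

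Next I would localize the two infima. In the infimum defining $\varphi_1^\sfH$ only the exponents $\alpha$ with $([x_1,\alpha],[x_2,1])\in\rmD(\sfH)$, that is $\alpha\in(q_2,1/q_1)$, matter; by \eqref{ass:2} and the radial $1$-homogeneity identity $\sfH([x_1,\alpha],[x_2,1])=\alpha\,\sfH([x_1,1],[x_2,1/\alpha])$ one has $\inf_{\sfX_1\times\sfX_2}\sfH([x_1,\alpha],[x_2,1])\to+\infty$ both as $\alpha\downarrow q_2$ and as $\alpha\uparrow 1/q_1$, while $-\alpha\varphi_1(x_1)\ge-\alpha C$ stays bounded below for $\alpha$ in a compact subinterval. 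Hence I can choose $q_2<a_i\le a_s<1/q_1$, independent of the pair, so that $\sfH([x_1,\alpha],[x_2,1])-\alpha\varphi_1(x_1)>M_1$ for every $x_i\in\sfX_i$ and every $\alpha\in(q_2,1/q_1)\setminus[a_i,a_s]$; since $\varphi_1^\sfH\le M_1$, the value of the infimum does not change if $\alpha$ is restricted to $[a_i,a_s]$, which is \eqref{ss22:eq:trans11}. The same argument, with $M_1$, $\varphi_1$, $(q_2,1/q_1)$ replaced by $M_2$, $\varphi_1^\sfH$, $(q_1,1/q_2)$ (using the bound $\varphi_1^\sfH\le M_1$), produces $q_1<b_i\le b_s<1/q_2$ and \eqref{ss22:eq:trans21}. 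Then $\f{C}_0^1$ and $\f{C}_0^2$ are the continuous images of the compact sets $\sfX_1\times[a_i,a_s]\times\sfX_2$ and $\sfX_1\times\sfX_2\times[b_i,b_s]$, hence compact, and $[a_i,a_s]\subset(q_2,1/q_1)$, $[b_i,b_s]\subset(q_1,1/q_2)$ give $\f{C}_0^1,\f{C}_0^2\subset U_{q_1q_2}=\rmD(\sfH)$.

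Finally, since $\sfH$ is uniformly continuous on each of the compact sets $\f{C}_0^1,\f{C}_0^2\subset U_{q_1q_2}$, I would write $\varphi_1^\sfH$ as the infimum of the family $\{(x_1,x_2)\mapsto\sfH([x_1,\alpha],[x_2,1])-\alpha\varphi_1(x_1):x_1\in\sfX_1,\ a_i\le\alpha\le a_s\}$ and combine $|\inf_\lambda f_\lambda(z)-\inf_\lambda f_\lambda(z')|\le\sup_\lambda|f_\lambda(z)-f_\lambda(z')|$ with $\sfd_{2,\f{C}}([x_2,1],[x_2',1])\le\sfd_2(x_2,x_2')$, exactly as in \eqref{ss22:eq:uniformcont}, to conclude that $\varphi_1^\sfH$ is $\sfd_2$-uniformly continuous with the modulus of continuity $\omega$ of $\sfH$ on $\f{C}_0^1$, and symmetrically that $\varphi_1^{\sfH\sfH}$ is $\sfd_1$-uniformly continuous with the modulus $\omega'$ of $\sfH$ on $\f{C}_0^2$. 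Uniform continuity then promotes the one-point lower bounds to global ones: $\varphi_1^\sfH\ge-C-\omega(\diam\sfX_2)$ and, using $\varphi_1^{\sfH\sfH}\ge\varphi_1$ together with $\varphi_1(\bar x_1)\ge-C$, $\varphi_1^{\sfH\sfH}\ge-C-\omega'(\diam\sfX_1)$; combining with $M_1,M_2$ yields $\|\varphi_1^\sfH\|_\infty,\|\varphi_1^{\sfH\sfH}\|_\infty\le M$ for a uniform $M$. The inequalities $\varphi_1^{\sfH\sfH}\ge\varphi_1$, $\varphi_1^\sfH\ge\varphi_2$ and the membership $(\varphi_1^{\sfH\sfH},\varphi_1^\sfH)\in\Phi_\sfH$ then follow verbatim from the corresponding part of the proof of Proposition \ref{ss22:prop:propertiesphih}, radial $1$-homogeneity turning the defining inequalities of the $\sfH$-transforms into the constraint of $\Phi_\sfH$. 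I expect the localization step to be the only genuinely new point, and the main obstacle: one must use \eqref{ass:2} to trap the minimizing exponent $\alpha$ inside a fixed compact subinterval bounded away from \emph{both} endpoints of the admissible range, uniformly over all normalized pairs in $\Phi_\sfH$, and it is exactly there that the hypotheses relating $q_1,q_2$ to $\mu_1(\sfX_1),\mu_2(\sfX_2)$ are used, via Proposition \ref{ss22:prop:boundpair1}.
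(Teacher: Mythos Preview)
Your proposal is correct and follows essentially the same line as the paper's proof: use Proposition~\ref{ss22:prop:boundpair1} to get uniform upper bounds and a reference point, use the blow-up of $\sfH$ at the boundary of $U_{q_1q_2}$ to trap the minimizing $\alpha$ in a fixed compact subinterval, then repeat verbatim the uniform-continuity and $\Phi_\sfH$-membership arguments from Proposition~\ref{ss22:prop:propertiesphih}. Two small slips worth fixing: (i) the upper bound $\varphi_1^{\sfH}\le\kappa_1+\alpha_\ast C$ obtained by evaluating at $(\bar x_1,\alpha_\ast)$ uses $\varphi_1(\bar x_1)\ge -C$, not $\varphi_1\le C$; (ii) your localization phrasing ``$-\alpha C$ stays bounded below'' does not cover the case $q_1=0$ (where $\alpha$ is unbounded above); there one must keep the factored form $\alpha\big(\sfH([x_1,1],[x_2,1/\alpha])-\varphi_1(x_1)\big)\ge \alpha\big(\inf\sfH([x_1,1],[x_2,1/\alpha])-C\big)$ and use $\varphi_1\le C$ together with the second limit in \eqref{ass:2}, exactly as the paper does in its treatment of the range $\alpha>a_s$.
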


\begin{proof}
Let $(\varphi_1, \varphi_2)$ be as in the statement. Let us set
\[ \delta := \frac{1}{2} \frac{1-q_1 q_2}{1+q_1+q_2}\]
so that, for every $0<\eps\le \delta$, we have $q_2+\eps < (q_1+\eps)^{-1}$. Let us fix a point $\bar{\alpha} \in (q_2+\delta, \frac{1}{q_1+\delta}) $ and let us define
\[ m:= \max_{(x_1, x_2) \in \sfX_1 \times \sfX_2} \sfH([x_1, \bar{\alpha}], [x_2,1]) < + \infty,\]
since $([x_1, \bar{\alpha}], [x_2,1]) \in U_{q_1 q_2}$ for every $(x_1, x_2) \in \sfX_1 \times \sfX_2$. \\
By \eqref{ass:2}, we know that for every $L>0$, there exists $\eps_L>0$ such that
\begin{align*}
    \sfH([x_1, r_1],[x_2, 1] &\ge L \quad \text{ for every } (x_1, x_2) \in \sfX_1 \times \sfX_2, \,  0 \le r_1 < q_2 + \eps_L,\\
    \sfH([x_1, 1],[x_2, r_2] &\ge L \quad \text{ for every } (x_1, x_2) \in \sfX_1 \times \sfX_2, \,  0 \le r_2 < q_1 + \eps_L.
\end{align*}
Let 
\[ L:= \max \left \{ m + C ( \bar{\alpha}+q_2+\delta), (q_1+\delta)(m+\bar{\alpha}C)+C \right\}, \]
where $C$ comes from Proposition \ref{ss22:prop:boundpair1}, and let us take any $a_i, a_s >0$ such that
\[ q_2 < a_i < q_2+\eps_L \wedge \delta , \quad \frac{1}{q_1+\eps_L \wedge \delta} < a_s< \frac{1}{q_1}, \]
so that $0<a_i < a_s$, $\f{C}_0^1 \subset U_{q_1 q_2}$ and $a_i \le \bar{\alpha} \le a_s$.\\
If $\alpha>a_s$, then, for every $(x_1, x_2) \in \sfX_1 \times \sfX_2$, we have
\begin{align*}
    \sfH([x_1, \alpha], [x_2, 1]) - \alpha \varphi_1(x_1)
    &= \alpha \left ( \sfH([x_1, 1], [x_2, 1/\alpha])-\varphi_1(x_1) \right ) \\
    &\ge \alpha (L-C) \\
    & \ge a_s(L-C) \\
    & \ge m+ \bar{\alpha}C \\
    & \ge \sfH([\bar{x}_1,\bar{\alpha}]), [x_2,1]-\bar{\alpha}\varphi_1(\bar{x}_1),
\end{align*}
where $\bar{x}_1$ comes from Proposition \ref{ss22:prop:boundpair1}. If $\alpha<a_i$, then, for every $(x_1, x_2) \in \sfX_1 \times \sfX_2$, we have
\begin{align*}
    \sfH([x_1, \alpha], [x_2, 1]) - \alpha \varphi_1(x_1)
    & \ge L - \alpha C \\
    & \ge L- Ca_i\\
    & \ge m+ \bar{\alpha}C \\
    & \ge \sfH([\bar{x}_1,\bar{\alpha}], [x_2,1])-\bar{\alpha}\varphi_1(\bar{x}_1).
\end{align*}
Thus, for every $x_2 \in \sfX_2$, we get
\begin{align*}
 \inf_{x_1 \in \sfX_1} \inf_{0 \le \alpha < a_i \vee \alpha > a_s} \left \{ \sfH([x_1, \alpha], [x_2, 1]) - \alpha \varphi_1(x_1) \right \} &\ge \sfH([\bar{x}_1,\bar{\alpha}], [x_2,1])-\bar{\alpha}\varphi_1(\bar{x}_1) \\
 &> \inf_{x_1 \in \sfX_1} \inf_{a_i \le \alpha \le  a_s} \left \{ \sfH([x_1, \alpha], [x_2, 1]) - \alpha \varphi_1(x_1) \right \}
\end{align*}
and this proves \eqref{ss22:eq:trans11}. The proof of \eqref{ss22:eq:trans21} is analogous.\\
The remaining part of the proof is identical to the one of Proposition \ref{ss22:prop:propertiesphih}.
\end{proof}
By Proposition \ref{ss22:prop:uniformcont1} we obtain the analogue of Theorem \ref{ss22:theo:potexist} also in this setting with exactly the same proof. 
\begin{theorem}\label{ss22:theo:potexist2} Assume that $\sfH$ is as in \eqref{ass:2} and that $\mu_i \in \meas_+(\sfX_i)$ are such that
\[ \supp{\mu_i}=\sfX_i, \, i=1,2, \quad q_1 < \frac{\mu_2(\sfX_2)}{\mu_1(\sfX_1)}, \quad q_2 < \frac{\mu_1(\sfX_1)}{\mu_2(\sfX_2)},\]
where $q_i$ are as in \eqref{ass:2}. Then there exists $(\varphi_1, \varphi_2) \in \Phi_\sfH$ such that
\[ \int_{\sfX_1} \varphi_1 \de \mu_1 + \int_{\sfX_2} \varphi_2 \de \mu_2 = \m_\sfH(\mu_1, \mu_2). \]
\end{theorem}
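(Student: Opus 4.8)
The plan is to reproduce, essentially word for word, the proof of Theorem~\ref{ss22:theo:potexist}, substituting Proposition~\ref{ss22:prop:uniformcont1} for Proposition~\ref{ss22:prop:propertiesphih} at the one point where compactness of the $\sfH$-transformed potentials is invoked. Before that I would record why the two strict inequalities $q_1<\mu_2(\sfX_2)/\mu_1(\sfX_1)$ and $q_2<\mu_1(\sfX_1)/\mu_2(\sfX_2)$ are the relevant hypotheses here: they make $\m_\sfH(\mu_1,\mu_2)$ finite (unlike in \eqref{ass:1}, where $\sfH$ is finite everywhere and this is automatic). Indeed the homogeneous coupling $\aalpha:=\bigl([x_1,\mu_1(\sfX_1)],[x_2,\mu_2(\sfX_2)]\bigr)_\sharp\bigl(\mu_1(\sfX_1)^{-1}\mu_1\otimes\mu_2(\sfX_2)^{-1}\mu_2\bigr)$ belongs to $\f H^1(\mu_1,\mu_2)$ and, by the two mass conditions, is concentrated on a compact subset of $U_{q_1 q_2}$ on which the continuous function $\sfH$ is bounded, so $\m_\sfH(\mu_1,\mu_2)<+\infty$. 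If $\m_\sfH(\mu_1,\mu_2)=0$ the null pair $(0,0)\in\Phi_\sfH$ already realizes the equality, so we may assume $\m_\sfH(\mu_1,\mu_2)>0$.

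Then I would proceed as follows. By the duality formula \eqref{ss22:eq:dualformulation} (Theorem~\ref{ss22:teo:duality}) choose a maximizing sequence $(\varphi_1^j,\varphi_2^j)_j\subset\Phi_\sfH$ for $\sup\{\dual(\varphi_1,\varphi_2;\mu_1,\mu_2):(\varphi_1,\varphi_2)\in\Phi_\sfH\}=\m_\sfH(\mu_1,\mu_2)$; discarding finitely many terms we may assume $\dual(\varphi_1^j,\varphi_2^j;\mu_1,\mu_2)\ge 0$ for every $j$. Applying the $\sfH$-transform of Definition~\ref{def:htr} twice produces the pair $(\varphi_1^{j,\sfH\sfH},\varphi_1^{j,\sfH})$, and Proposition~\ref{ss22:prop:uniformcont1} guarantees that it still lies in $\Phi_\sfH$, dominates $(\varphi_1^j,\varphi_2^j)$ pointwise (hence is again a maximizing sequence), and is equi-bounded by the constant $M$ and equi-uniformly continuous, with the modulus of continuity of $\sfH$ restricted to the compact sets $\f C_0^1,\f C_0^2\subset U_{q_1 q_2}$. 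Since $\sfX_1,\sfX_2$ are compact and metrizable, the Arzelà–Ascoli theorem yields a subsequence converging uniformly to some $(\varphi_1,\varphi_2)\in\rmC_b(\sfX_1)\times\rmC_b(\sfX_2)$; the constraint \eqref{eq:constr} is stable under uniform limits, so $(\varphi_1,\varphi_2)\in\Phi_\sfH$, and dominated convergence then gives $\dual(\varphi_1,\varphi_2;\mu_1,\mu_2)=\lim_j\dual(\varphi_1^{j,\sfH\sfH},\varphi_1^{j,\sfH};\mu_1,\mu_2)=\m_\sfH(\mu_1,\mu_2)$, which is the assertion.

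With Proposition~\ref{ss22:prop:uniformcont1} granted there is no real obstacle left: all the delicate analysis — localizing the two $\sfH$-transforms to the compact slices $\f C_0^1,\f C_0^2$ of the open cone $U_{q_1 q_2}$, where the boundary growth condition in \eqref{ass:2} together with the strict mass inequalities are precisely what force the infima defining the transforms to be attained on those compacta, and the resulting uniform bounds and uniform continuity — has already been done there. The only things to verify explicitly in the present statement are that the hypotheses match those of Proposition~\ref{ss22:prop:uniformcont1} (they do) and that the finiteness remark above genuinely uses \eqref{ass:2} and the two mass conditions (as indicated).
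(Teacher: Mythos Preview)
Your proposal is correct and follows essentially the same route as the paper, which simply says ``exactly the same proof'' as Theorem~\ref{ss22:theo:potexist} with Proposition~\ref{ss22:prop:uniformcont1} replacing Proposition~\ref{ss22:prop:propertiesphih}. Your first paragraph adds a useful explicit verification that $\m_\sfH(\mu_1,\mu_2)<+\infty$ under the mass conditions (without which the statement would be vacuous, since a pair in $\Phi_\sfH$ yields a finite dual value); the paper leaves this implicit.
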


\printbibliography
\end{document}